\theoremstyle{plain}
\newtheorem{theorem} {Theorem}[section]
\newtheorem{lemma} [theorem]{Lemma}
\newtheorem{corollary} [theorem]{Corollary}
\newtheorem{proposition} [theorem]{Proposition}
\theoremstyle{definition}
\begin{document}

\newcommand{\E}{\mathbb{E}\!}
\newcommand{\ES}{\mathbb{E}}
\renewcommand{\P}{\mathbb{P}}
\newcommand{\R}{\mathbb{R}}
\newcommand{\N}{\mathbb{N}}
\newcommand{\smallsum}{\textstyle\sum}
\newcommand{\tr}{\operatorname{trace}}
\newcommand{\citationand}{\&}
\newcommand{\dt}[1][t]{\, \mathrm{d} #1}
\newcommand{\grid}{\,\mathcal{P}}
\newcommand{\euler}{Z^{\,N}}
\newcommand{\leuler}{\tilde{Z}^{\,N}}
\newcommand{\exteuler}{\bar{Z}^{\,N}}

\newcommand{\floor}[3]{\lfloor #1\rfloor_{#2,#3}}
\newcommand{\tstep}[2]{h_{#1,#2}}
\newcommand{\mesh}[2]{\|#1\|^{#2}_{\grid_T}}
\newcommand{\bigbrack}[2]{\big( #1\big)^{#2}}
\newcommand{\bignorm}[3]{\bnl #1\bnr^{#2}_{#3}}
\newcommand{\bigsharp}[2]{\big[ #1\big]^{#2}}
\newcommand{\lpn}[3]{L^{#1}(#2;#3)}
\newcommand{\eulerm}[1]{Z^{\,\Theta_{#1}}}
\newcommand{\eulerpart}[1]{Z^{#1}}

\newcommand{\expeuler}[1]{Z^{\,\text{exp},#1}}
\newcommand{\impeuler}[1]{Z^{\,\text{imp},#1}}

\newcommand{\embed}[2]{\kappa^I_{#1,\,#2}}

\newcommand{\groupC}[1]{C^{#1}}
\newcommand{\deltaset}[1]{\mathbb{D}_{#1}}
\newcommand{\set}{{\mathbb{H}}}
\newcommand{\Co}{\chi}
\newcommand{\Cr}[1]{\rho_{#1}}

\title{Weak convergence rates of spectral\\ Galerkin
approximations for SPDEs\\ with nonlinear diffusion coefficients}

\author{Daniel Conus,
Arnulf Jentzen, and 
Ryan Kurniawan\\[2mm]
\emph{Lehigh University, USA
and ETH Zurich, Switzerland}}

\maketitle

\begin{abstract}
Strong convergence rates for (temporal, spatial, and noise) numerical approximations 
of semilinear stochastic evolution equations (SEEs) with smooth and regular nonlinearities are well understood in 
the scientific literature. Weak convergence rates for numerical approximations of 
such SEEs have been investigated for about two decades and are far away from being well 
understood: roughly speaking, no essentially sharp weak convergence rates are known 
for parabolic SEEs with nonlinear diffusion coefficient functions; see 
Remark~2.3 in [A.\ Debussche, Weak approximation of stochastic partial differential equations: the 
nonlinear case, Math.\ Comp.\ 80 (2011), no.\ 273, 89--117] for details. In this article
we solve the weak convergence problem emerged from Debussche's article in the case of
spectral Galerkin approximations and establish essentially sharp
weak convergence rates for spatial spectral Galerkin approximations of semilinear SEEs
with nonlinear diffusion coefficient functions. 
Our solution to the weak convergence problem 
does not
use Malliavin calculus.
Rather, key ingredients in our 
solution to the weak convergence problem emerged from Debussche's article 
are the use of appropriately modified versions of the spatial Galerkin approximation processes
and applications of a mild It\^{o} type formula 
for solutions and numerical approximations of semilinear SEEs. 
This article solves the weak convergence problem emerged from 
Debussche's article merely in the case of spatial spectral Galerkin approximations
instead of other more complicated numerical approximations.
Our method of proof extends, however, to a number of other kinds of spatial and temporal numerical approximations for semilinear SEEs.
\end{abstract}

 \tableofcontents

% % % % % % % % % % % % % % % % % % % % % % % % % % % % % % % % % % % % %
% % % % % % % % % % % % % % % % % % % % % % % % % % % % % % % % % % % % % 
%				Introduction											%

\section{Introduction}
\label{sec:intro}

Both strong and numerically weak convergence rates for numerical approximations of 
finite dimensional stochastic ordinary differential equations (SODEs) with smooth 
and regular 
% (Lipschitz and boundedness assumptions) 
nonlinearities are 
well understood in the literature; see, e.g., the monographs 
Kloeden \& Platen~\cite{kp92} and Milstein~\cite{m95}.
The situation is different in the case of 
possibly infinite dimensional semilinear stochastic evoluation
equations (SEEs).
While strong convergence rates for (temporal, spatial, and noise) numerical approximations 
of semilinear SEEs with smooth and regular nonlinearities are well understood in 
the scientific literature, 
weak convergence rates for numerical approximations of 
such SEEs have been investigated since about 14 years ago and are far away from being well 
understood: roughly speaking, no essentially sharp weak convergence rates are known 
for parabolic SEEs with nonlinear diffusion coefficient functions (see Remark~2.3 in 
Debussche~\cite{Debussche2011} for details).
In this article
we solve 
the weak convergence problem 
emerged from Debussche's article 
in the case of
spectral Galerkin approximations and establish essentially
sharp weak convergence rates for spatial spectral Galerkin approximations of semilinear SEEs
with nonlinear diffusion coefficient functions. 
To illustrate the weak convergence problem emerged from 
Debussche's article and our solution to the problem we consider the following setting as a special case of
our general setting in Section~\ref{sec:weak_irregular} below.
Let 
$ ( H, \left< \cdot, \cdot \right>_H, \left\| \cdot \right\|_H ) $
and
$ ( U, \left< \cdot, \cdot \right>_U, \left\| \cdot \right\|_U ) $
be separable $ \R $-Hilbert spaces, let 
$ T \in (0,\infty) $, 
let $ ( \Omega, \mathcal{F}, \P, ( \mathcal{F}_t )_{ t \in [0,T] } ) $
be a stochastic basis,
let $ ( W_t )_{ t \in [0,T] } $ be an $ \operatorname{Id}_U $-cylindrical
$ ( \Omega, \mathcal{F}, \P, ( \mathcal{F}_t )_{ t \in [0,T] } ) $-Wiener process,
let $ (e_n)_{ n \in \N } \subseteq H $ be an orthonormal basis of $ H $,
let $ ( \lambda_n )_{ n \in \N } \subseteq ( 0, \infty ) $ 
be an increasing sequence,
let 
$ A \colon D(A) \subseteq H \to H $ be a closed linear operator 
such that 
$
  D( A ) 
  = 
  \{ 
    v \in H 
  \colon 
    \sum_{ n \in \N }
    \left| 
      \lambda_n
      \langle e_n, v \rangle_H 
    \right|^2
  < \infty
  \}
$
and 
$ 
  \forall \, n \in \N \colon A e_n = - \lambda_n e_n $,
let
$  
  ( 
    H_r 
    ,
    \langle \cdot, \cdot \rangle_{ H_r }
    ,
    \left\| \cdot \right\|_{ H_r } 
  )
$,
$ r \in \R $,
be a family of interpolation spaces associated to $ - A $
(cf., e.g., \cite[Section~3.7]{sy02}),
let
$ 
  \iota \in [ 0, \nicefrac{ 1 }{ 4 } ]
$, 
$ \xi \in H_{ \iota } $,
$ 
\gamma \in [ 0, \nicefrac{ 1 }{ 2 } ] 
$,
and let
$ 
  F \in 
  \cap_{ r < \iota - \gamma }
  C^4_b( H_{ \iota } , H_r ) 
$, 
$
  B \in 
  \cap_{ r < \iota - \nicefrac{ \gamma }{ 2 } }
  C^4_b( H_{ \iota } , HS( U, H_r ) )
$.  
For two $ \R $-Hilbert spaces 
$ ( V_1, \langle\cdot,\cdot\rangle_{V_1}, \left\| \cdot \right\|_{ V_1 } ) $ 
and 
$ ( V_2, \langle\cdot,\cdot\rangle_{V_2}, \left\| \cdot \right\|_{ V_2 } ) $ 
we denote by $ C^4_b( V_1 , V_2 ) $ the 
$ \R $-vector space of all four times continuously Fr\'{e}chet differentiable
functions from $V_1$ to $V_2$ with globally bounded derivatives
and by $HS(V_1,V_2)$ the 
$\R$-Hilbert space of all Hilbert-Schmidt operators from $V_1$ to $V_2$.
We also note that the hypothesis that 
$  
( 
H_r 
,
\langle \cdot, \cdot \rangle_{ H_r }
,
\left\| \cdot \right\|_{ H_r } 
)
$,
$ r \in \R $,
is a family of interpolation spaces associated to $ - A $
ensures for all $r\in[0,\infty)$
that $H_r=D((-A)^r)$
and
$
  H_1 = D(A) \subseteq H = H_0
$.
The above 
assumptions 
imply (cf., e.g.,  Da Prato et al.~\cite[Proposition~3]{DaPratoJentzenRoeckner2012}, 
Brze{\'z}niak~\cite[Theorem~4.3]{b97b}, 
Van Neerven et al.~\cite[Theorem~6.2]{vvw08}) 
the existence of a continuous mild solution 
process 
$ X \colon [0,T] \times \Omega \to H_{ \iota } $
of the SEE
\begin{equation}
\label{eq:SEE_intro}
dX_t= 
\left[ AX_t 
+ 
F(X_t) 
\right] 
\,dt 
+ 
B(X_t) 
\,dW_t, 
\qquad 
t\in[0,T]
,
\qquad 
X_0=\xi
.
\end{equation}
{As an example for~\eqref{eq:SEE_intro},
we think of $ H = U = L^2( (0,1) ; \R ) $
being the $ \R $-Hilbert space of equivalence classes
of Lebesgue-Borel square integrable functions from $ ( 0,1 ) $ to $ \R $
and $ A $ being an appropriate linear differential operator on $ H $.
In particular, in Subsection~\ref{sec:anderson} we formulate the continuous version of 
\emph{the one-dimensional parabolic Anderson model} as an example for~\eqref{eq:SEE_intro}
(in this example the parameter $ \gamma $, which controls the regularity of the operators
$ F $ and $ B $, satisfies $ \gamma = \nicefrac{ 1 }{ 2 } $)
and in Subsection~\ref{sec:cahn} we formulate \emph{a fourth-order stochastic partial
differential equation} as an example for~\eqref{eq:SEE_intro}
(in this second example the parameter $\gamma$ satisfies $ \gamma = \nicefrac{ 1 }{ 4 } $).}
% 
% 
%In the above setting the parameter $\gamma$ controls the regularity of the operators $ F $ and $ B $.
% , whereas $\iota$ represents the regularity of the initial condition.
% 
% 
% 

Strong convergence rates for (temporal, spatial, and noise) numerical approximations
for SEEs of the form~\eqref{eq:SEE_intro} 
are well understood.
% ; see, e.g., the references mentioned in the 
% overview article~\cite{jk09d}.
% and the references mentioned therein.
Weak convergence rates for numerical approximations of SEEs 
of the form~\eqref{eq:SEE_intro}
have been investigated for about two decades; cf., e.g., 
\cite{s03, 
h03b, 
dd06, 
dp09, 
GeissertKovacsLarsson2009, 
h10c, 
Debussche2011, 
kll11, 
Doersek2012, 
Lindgren2012PhDThesis,
LindnerSchilling2013, 
KovacsLarssonLindgren2013BIT,
WangGan2013WeakHeatAdditiveNoise, 
Kruse2014_PhD_Thesis, 
Brehier2014,
Wang2014b, 
AnderssonKruseLarsson2016,
AnderssonLarsson2016,
BrehierKopec2016,
Wang2016481}.
Except for Debussche \& De Bouard~\cite{dd06}, 
Debussche~\cite{Debussche2011},
and Andersson \& Larsson~\cite{AnderssonLarsson2016}, all of the above mentioned references assume, 
beside further assumptions, that the considered SEE is driven by additive noise. 
In Debussche \& De Bouard~\cite{dd06} 
weak convergence rates for the nonlinear Schr\"{o}dinger equation,
{which} dominant linear operator generates a group (see~\cite[Section~2]{dd06}) instead of 
only a semigroup as in the general setting of the SEE~\eqref{eq:SEE_intro}, are analyzed.
% {and the method of proof in Debussche \& De Bouard~\cite{dd06} 
% strongly exploits} [ {. 
The method of proof in Debussche \& De Bouard~\cite{dd06} 
strongly exploits this property of 
% the dominant linear operator of 
the nonlinear Schr\"{o}dinger equation
% the property that the dominant linear operator generates a group
(see~\cite[Section~5.2]{dd06}).
% as it is the case in the case of the nonlinear Schr\"{o}dinger equation
% but not in the general setting of the SEE~\eqref{eq:SEE_intro}.
Therefore, 
% {
the method of proof in \cite{dd06}
% } [ {this approach} ] 
can, in general, not be used to establish weak convergence
rates for the SEE~\eqref{eq:SEE_intro}.
In Debussche's seminal article \cite{Debussche2011} (see also Andersson \& Larsson~\cite{AnderssonLarsson2016}), essentially 
sharp weak convergence rates for SEEs of the form~\eqref{eq:SEE_intro} are established 
under the hypothesis that the second derivative of the
diffusion coefficient $ B $ satisfies the smoothing property
that there exists a real number $ L \in [0,\infty) $ 
such that for all $ x, v, w \in H $ it holds 
that\footnote{Assumption~\eqref{intro_eq:affine} above slightly differs from the original assumption in \cite{Debussche2011} 
as we believe that there is a small typo in equation (2.5) in \cite{Debussche2011};
see inequality (4.3) in the proof of Lemma~4.5 in \cite{Debussche2011} for details.}
\begin{equation}
\label{intro_eq:affine}
  \left\|
    B''( x )( v , w )
  \right\|_{ L(H) 
  }
  \leq 
  L
  \left\| v \right\|_{
    H_{ - \nicefrac{ 1 }{ 4 } } 
  }
  \left\| w 
  \right\|_{
    H_{ - \nicefrac{ 1 }{ 4 } }
  }
  .
\end{equation}
As pointed out in Remark~2.3 in Debussche~\cite{Debussche2011},
assumption~\eqref{intro_eq:affine}
is a serious restriction for SEEs of the form~\eqref{eq:SEE_intro}.
Roughly speaking, assumption~\eqref{intro_eq:affine}
imposes that the second derivative of the diffusion
coefficient function vanishes and thus that the diffusion coefficient function
is affine linear.
Remark~2.3 
in Debussche~\cite{Debussche2011} 
also asserts that
assumption~\eqref{intro_eq:affine} is crucial in the weak convergence proof 
in \cite{Debussche2011}, 
that assumption~\eqref{intro_eq:affine} is used in 
% [ {an} ] 
an essential way
in Lemma~4.5 in \cite{Debussche2011}, and that
Lemma~4.5 in \cite{Debussche2011}, in turn, 
is used at many points in the weak convergence proof in \cite{Debussche2011}.
To the best of our knowledge, 
it remained an open problem to establish essentially sharp weak convergence rates
for any type of temporal, spatial, or noise numerical approximation of the 
SEE~\eqref{eq:SEE_intro} without imposing Debussche's assumption~\eqref{intro_eq:affine}.
In this article we solve this problem in the case of spatial spectral
Galerkin approximations for the SEE~\eqref{eq:SEE_intro}.
This is the subject of the following theorem (Theorem~\ref{intro:theorem}), which follows immediately from Corollary~\ref{cor:sharp.weak.rates} below.

\begin{theorem}
\label{intro:theorem}
Assume the setting in the first paragraph 
of Section~\ref{sec:intro},
let $ \varphi \in C^4_b( H_{ \iota } , \R ) $, 
let
$
  ( P_N )_{ N \in \N } \subseteq L( H_{ - 1 } )
$ 
satisfy for all 
$ N \in \N $, 
$ v \in H $
that
$
  P_N(v) = \sum^N_{n=1} \left< e_n, v \right>_H e_n
$, 
and for every $ N \in \N $ let 
$
  X^N \colon [0,T] \times \Omega \rightarrow P_N( H )
$
be a continuous mild solution of the SEE
\begin{equation}
\label{eq:Galerin_SEEs}
  d X^N_t = 
  \left[ P_N A X^N_t + P_N F( X^N_t ) \right] dt
  +
  P_N B( X^N_t ) \, dW_t
  ,
  \quad 
  t \in [0,T] 
  ,
  \quad 
  X^N_0 = P_N( \xi )
  .
\end{equation}
Then for every $ \varepsilon \in ( 0, \infty ) $ 
there exists a real number 
$ C_{ \varepsilon } \in [0,\infty) $
such that for all $ N \in \N $
it holds that
\begin{equation}
  \left|
    \ES\big[ 
      \varphi( X_T )
    \big]
    -
    \ES\big[ 
      \varphi( X^N_T )
    \big]
  \right|
\leq
    C_{ \varepsilon } \cdot
    \left( \lambda_N \right)^{
      - ( 1 - \gamma - \varepsilon )
    }
  .
\end{equation}
\end{theorem}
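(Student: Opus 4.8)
The plan is to pass to the Kolmogorov backward equation associated with~\eqref{eq:SEE_intro} and to turn the weak error into a deterministic time integral by means of a mild It\^o type formula, so that no Malliavin calculus is needed. For $(t,x)\in[0,T]\times H_\iota$ let $\mathcal{X}^{t,x}$ be the mild solution of~\eqref{eq:SEE_intro} started at time $t$ in $x$ and put $u(t,x)=\ES[\varphi(\mathcal{X}^{t,x}_T)]$. Exploiting the assumed $C^4_b$-regularity of $\varphi$, $F$ and $B$ one first shows that $u(t,\cdot)\in C^2(H_\iota,\R)$, with derivatives controlled through the derivative processes of~\eqref{eq:SEE_intro} with respect to the initial datum, and that $u$ solves $\partial_t u(t,x)+\langle u'(t,x),Ax+F(x)\rangle+\tfrac12\,\tr[B(x)B(x)^\ast\,u''(t,x)]=0$ with $u(T,\cdot)=\varphi$. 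Since $u(0,\xi)=\ES[\varphi(X_T)]$ and $u(T,X^N_T)=\varphi(X^N_T)$, the weak error splits into the initial-value contribution $u(0,\xi)-u(0,P_N\xi)$ and the increment $\ES[u(0,X^N_0)]-\ES[u(T,X^N_T)]$.

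Next I would apply the mild It\^o formula to $s\mapsto u(s,X^N_s)$. The decisive algebraic simplification is that $X^N_s$ takes values in $P_N(H)$ and that $A$ commutes with $P_N$, so $P_NAX^N_s=AX^N_s$; hence, after inserting the Kolmogorov equation, the unbounded $A$-terms cancel exactly and the martingale part vanishes in expectation. What survives are precisely the contributions produced by the projection error $I-P_N$ in the lower order coefficients, which yields
\begin{equation*}
\ES[\varphi(X_T)]-\ES[\varphi(X^N_T)]=\big(u(0,\xi)-u(0,P_N\xi)\big)+\ES\!\int_0^T\!\Big(\langle u'(s,X^N_s),(I-P_N)F(X^N_s)\rangle+\tfrac12\,\tr\big[\Delta_N(s)\,u''(s,X^N_s)\big]\Big)\dt[s],
\end{equation*}
where $\Delta_N(s)=\big(BB^\ast-(P_NB)(P_NB)^\ast\big)(X^N_s)$. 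Using $\|I-P_N\|_{L(H_\sigma,H_\rho)}\le(\lambda_N)^{-(\sigma-\rho)}$ for $\rho\le\sigma$ together with the one-sided smoothing bound $|u'(t,x)h|\le C\,(T-t)^{-(\iota-\rho)}\|h\|_{H_\rho}$ (inherited from the regularization $e^{(T-t)A}h$ of the first derivative process), the property $F(x)\in H_r$ for every $r<\iota-\gamma$ gives, on choosing $\iota-\rho=1-\varepsilon$ and letting $r\uparrow\iota-\gamma$, the bound $C\,(T-s)^{-(1-\varepsilon)}(\lambda_N)^{-(1-\gamma-\varepsilon)}$ for the first-order term, whose time integral is finite; the initial-value term is of the harmless order $(\lambda_N)^{-(1-\varepsilon)}$.

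The heart of the matter, and the point at which Debussche's hypothesis~\eqref{intro_eq:affine} was previously needed, is the trace term. Writing $v_k=B(X^N_s)\epsilon_k$ for an orthonormal basis $(\epsilon_k)$ of $U$, the decisive observation is the telescoping identity
\begin{equation*}
\tr\big[\Delta_N(s)\,u''(s,X^N_s)\big]=\sum_k\Big(u''(s,X^N_s)\big((I-P_N)v_k,\,v_k\big)+u''(s,X^N_s)\big(P_Nv_k,\,(I-P_N)v_k\big)\Big),
\end{equation*}
in which the factor $I-P_N$ sits on \emph{exactly one} slot of the symmetric bilinear form $u''(s,X^N_s)$, the other slot being filled at the full spatial regularity $H_r$, $r<\iota-\nicefrac{\gamma}{2}$. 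Hence it suffices to control $u''$ with smoothing in a \emph{single} argument, i.e.\ to prove an estimate $|u''(t,x)(g,h)|\le C\,(T-t)^{-\vartheta}\|g\|_{H_{-\beta}}\|h\|_{H_r}$ with $\vartheta<1$ and $\beta$ as large as the analysis permits. Granting this, $\|(I-P_N)B\|_{HS(U,H_{-\beta})}\le(\lambda_N)^{-(r+\beta)}\|B\|_{HS(U,H_r)}$ and Cauchy--Schwarz in $k$ bound the trace term by $C\,(T-s)^{-\vartheta}(\lambda_N)^{-(r+\beta)}$; optimizing over $r\uparrow\iota-\nicefrac{\gamma}{2}$ and the admissible $\beta\uparrow1-\iota-\nicefrac{\gamma}{2}$ again produces the order $(\lambda_N)^{-(1-\gamma-\varepsilon)}$ with $\vartheta\uparrow1$ staying below $1$, hence an integrable temporal singularity.

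The main obstacle is therefore the proof of this one-sided smoothing estimate for $u''$. Writing $u''(t,x)(g,h)=\ES[\varphi''(\mathcal{X}^{t,x}_T)(Y^g_T,Y^h_T)]+\ES[\varphi'(\mathcal{X}^{t,x}_T)Z^{g,h}_T]$ with $Y^\cdot$ and $Z^{\cdot,\cdot}$ the first and second derivative processes of~\eqref{eq:SEE_intro}, the $\varphi''$-term is handled by distributing the singularity between the two semigroup factors regularizing $Y^g_T$ and $Y^h_T$. The genuinely delicate term is $\ES[\varphi'(\mathcal{X}_T)Z^{g,h}_T]$: since $Z^{g,h}$ contains the forcing $\int e^{(T-\tau)A}B''(\mathcal{X}_\tau)(Y^g_\tau,Y^h_\tau)\dt[\tau]$, this is a correlation between a terminal functional and an It\^o integral, exactly the quantity that~\eqref{intro_eq:affine} linearizes away and that is classically treated by Malliavin integration by parts. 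I would avoid Malliavin calculus by applying the mild It\^o formula to the suitably modified Galerkin processes introduced for this purpose, which re-expresses this correlation through $u'$ and $u''$ evaluated along the flow and lets one close the resulting system of a priori estimates by a Gronwall argument, the $C^4_b$-regularity supplying all the bounds on $\varphi$, $F$, $B$ and their derivatives. Establishing these regularity estimates for $u$ and for the modified processes, with temporal singularities of integrable order, is the technical core; once they are available, assembling the three contributions and sending $\varepsilon\downarrow0$ completes the proof.
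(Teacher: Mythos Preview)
Your decomposition via the Kolmogorov backward equation and the telescoping of the trace term are along the right lines, but the proposal misses the two ideas that actually make the argument close, and as written it has a genuine gap at the very first step.

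First, you assume that $u(t,x)=\ES[\varphi(\mathcal{X}^{t,x}_T)]$ is a classical solution of the Kolmogorov backward equation associated with the \emph{unmollified} SEE~\eqref{eq:SEE_intro}. As the paper emphasizes at the start of Section~\ref{sec:sketch_proof}, for infinite-dimensional SEEs with genuinely nonlinear diffusion coefficients this link between the transition semigroup and the Kolmogorov PDE is not available in general (see the references to Andersson et al.\ and Da~Prato there). The paper therefore never works with $u$ itself but with $u_\kappa$ associated to the \emph{mollified} coefficients $F_\kappa=e^{\kappa A}F$, $B_\kappa=e^{\kappa A}B$, for which the Kolmogorov equation and the required derivative bounds $c^{(\kappa)}_{\delta_1,\dots,\delta_k}$ are known to hold. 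Your argument needs exactly these bounds (your ``one-sided smoothing estimate'' for $u''$), and you acknowledge that establishing them is ``the technical core'', but the sketch you give---mild It\^o formula plus Gronwall on the derivative processes---does not explain how to control the term containing $B''$ without either Malliavin calculus or the mollification; this is precisely the obstruction behind Debussche's assumption~\eqref{intro_eq:affine}.

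Second, and more structurally, you are missing the paper's central trick for removing the mollification. Even after mollifying, the weak-error estimate the paper obtains blows up like $\kappa^{-\varepsilon}$ as $\kappa\to 0$ (see~\eqref{eq:intro_estimate2} and~\eqref{eq:estimate_no_mollifying}); one cannot simply let $\kappa\to 0$. The paper instead combines this with an elementary \emph{strong} convergence bound of order $\kappa^{(1-\gamma)/2-\varepsilon}$ between the mollified and unmollified dynamics (Proposition~\ref{strong}), and then \emph{optimizes} over $\kappa$, cf.~\eqref{eq:intro_estimate4}--\eqref{irreg_6b}. A further ingredient you only vaguely allude to is the use of the modified processes $Y^{N,\kappa}$ in~\eqref{eq:modifiedSEE}, where $P_N$ sits \emph{inside} the arguments of $F_\kappa$ and $B_\kappa$ rather than in front of them; the It\^o formula is applied to $u_\kappa(t,Y^{N,\kappa}_t)$, not to $u(t,X^N_t)$, which produces differences of the form $F_\kappa(P_N Y)-F_\kappa(Y)$ instead of your $(I-P_N)F(X^N)$. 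Without the mollification, the modified process, and the $\kappa$-optimization, the programme you outline does not close.
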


Let us add a few comments regarding 
Theorem~\ref{intro:theorem}.
First, we would like to emphasize that 
in the general setting of Theorem~\ref{intro:theorem},
the weak convergence rate established
in Theorem~\ref{intro:theorem}
can \emph{essentially not be improved}.
More specifically, in 
Corollary~\ref{cor:lower_bound}
in Section~\ref{sec:lower_bound} below
we give for every 
$ \iota \in [ 0, \nicefrac{ 1 }{ 4 } ] $
and every
$ \gamma \in [ 0 , \nicefrac{ 1 }{ 2 } ] $
examples of
$ A \colon D(A) \subseteq H \to H $,
$ \xi \in H_\iota $, 
$ 
  F \in 
  \cap_{ r < \iota - \gamma }
  C^4_b( H_{ \iota } , H_{ r } ) 
$,
$ 
  ( U, \left< \cdot, \cdot \right>_U , \left\| \cdot \right\|_U ) 
$, 
$
  B \in 
  \cap_{ r < \iota - \nicefrac{ \gamma }{ 2 } }
  C^4_b( H_{ \iota } , HS( U, H_r ) )
$,
and
$ \varphi \in C^4_b( H_{ \iota } , \R ) $
such that 
there exists a real number $ C \in (0,\infty) $
such that
for all $ N \in \N $
it holds that
\begin{equation}
  \left|
    \ES\big[ 
      \varphi( X_T )
    \big]
    -
    \ES\big[ 
      \varphi( X^N_T )
    \big]
  \right|
\geq
    C \cdot
    \left( \lambda_N \right)^{
      - ( 1 - \gamma )
    }
    .
\end{equation}
In addition, we emphasize that in the setting 
of Theorem~\ref{intro:theorem} it is well known 
(cf., e.g., Cox et al.~\cite[Corollary~3.3]{CoxHutzenthalerJentzenWelti2017})
that for every $ \varepsilon \in ( 0 , \infty ) $
there exists a real number $ C_{ \varepsilon } \in [0,\infty) $
such that
for all $ N \in \N $ it holds that
\begin{equation}
\label{eq:strong_rate}
  \big(
    \ES\big[ 
      \| X_T - X^N_T \|^2_{ H_{ \iota } }
    \big]
  \big)^{ \nicefrac{ 1 }{ 2 } }
\leq
  C_{ \varepsilon }
  \cdot
  \left( 
    \lambda_N
  \right)^{
    - \left( \frac{ 1 - \gamma }{ 2 } - \varepsilon \right)
  }
  .
\end{equation}
The weak convergence rate 
$ 1 - \gamma - \varepsilon $
established
in Theorem~\ref{intro:theorem}
is thus \emph{twice} the well-known strong convergence
rate $ \frac{ 1 - \gamma - \varepsilon }{ 2 } $ in~\eqref{eq:strong_rate}.
{Moreover, Theorem~\ref{intro:theorem} is -- to the best of our knowledge -- the first result in the scientific literature
which establishes an essentially sharp weak convergence rate for numerical
approximations of the continuous version of the one-dimensional parabolic Anderson model (see Subsection~\ref{sec:anderson} for details).}
% 
% 
% 
%
%We also remark that Theorem~\ref{intro:theorem}
%uses the assumption that the first four derivatives of 
%$ \varphi $, $ F $, and $ B $ 
%are globally bounded. 
%{It assumes boundedness on one more derivative of $ \varphi $, $ F $, and $ B $ than what Debussche's article assumes, 
%but without having to impose the serious restriction~\eqref{intro_eq:affine}. 
We also would like to point out that the weak convergence result in Theorem~2.2 in Debussche~\cite{Debussche2011} assumes that~\eqref{intro_eq:affine} holds (see~(2.5) in~\cite{Debussche2011}),
that $B$ maps from $H$ to $L(H)$
(instead of from $H$ to $HS(U,H_r)$ for $r\in(-\infty,-\nicefrac{1}{2})$),
and that $\varphi$, $F$, and $B$ are three times continuously Fr\'{e}chet differentiable with globally bounded derivatives 
(instead of four times continuously Fr\'{e}chet differentiable as in Theorem~\ref{intro:theorem} above) but restricts to the irregular case $\gamma=\nicefrac{1}{2}$ in the above framework.
The weak convergence result in Theorem~\ref{intro:theorem} above does not assume~\eqref{intro_eq:affine} and does assume that $\varphi$, $F$, and $B$ are four times continuously Fr\'{e}chet differentiable but also establishes essentially sharp weak convergence rates in the more regular cases $\gamma\in[0,\nicefrac{1}{2})$ such as in several cases of trace class noise. 
In the very regular case of finite dimensional SEEs it is typically assumed that $F$ and $B$ (and $\varphi$) are four times continuously differentiable (cf., e.g., Kloeden \& Platen~\cite[Theorem~9.7.4]{kp92}). 
Next we add that the proof of Theorem~\ref{intro:theorem} can in a straightforward way be extended
to the case where $ \varphi $ has at most polynomially growing derivatives.
It is, however, not clear to us how to treat the case where $ F $ and $ B $ are globally 
Lipschitz continuous but with the first four derivatives growing polynomially.
Furthermore, we emphasize that Theorem~\ref{intro:theorem}
solves the weak convergence problem emerged from 
Debussche's article (see (2.5) and Remark~2.3 in Debussche~\cite{Debussche2011})
merely in the case of spatial spectral Galerkin approximations
instead of other more complicated numerical approximations
for the SEE~\eqref{eq:SEE_intro}.
The method of proof of our weak convergence results, however, 
\emph{can be extended
to a number of other kind of spatial and temporal numerical approximations}
for SEEs of the form~\eqref{eq:SEE_intro}.
In particular, in our proceeding article~\cite{JentzenKurniawan2015arXiv} we extend the method of proof developed here to establish essentially sharp weak convergence rates for different types of temporal numerical approximations (such as exponential Euler \{see~\cite[Subsection~1.5.1]{JentzenKurniawan2015arXiv}\} and linear-implicit Euler \{see~\cite[Subsection~1.5.2]{JentzenKurniawan2015arXiv}\} approximations for SPDEs) for SPDEs with possibly non-constant diffusion coefficients without neither assuming~\eqref{intro_eq:affine} nor that $B$ maps from $H$ to $L(H)$.
%{For example, we refer the readers to the article~\cite{JentzenKurniawan2015arXiv} in which the authors prove weak convergence rates for Euler-type temporal numerical approximations 
%of SEEs of the form \eqref{eq:SEE_intro}, albeit with slightly more regularity assumptions on $F$ and $B$. }
Next we point out that the proof in Debussche's article~\cite{Debussche2011}
as well as many other proofs in the above mentioned weak convergence articles
use \emph{Malliavin calculus}.
Our method of proof does not use Malliavin calculus but uses -- in some sense -- merely
elementary arguments as well as the mild It\^{o} formula in 
Da Prato et al.~\cite{DaPratoJentzenRoeckner2012}. 

The paper is organized as follows. In Section~\ref{sec:sketch_proof} below we give a rough sketch of the proof of Theorem~\ref{intro:theorem} without technical details. However, the main ideas that we use to obtain an 
essentially sharp rate of convergence are highlighted in Section~\ref{sec:sketch_proof} below. 
In Section~\ref{sec:examples} we illustrate Theorem~\ref{intro:theorem} and Corollary~\ref{cor:sharp.weak.rates}, respectively, 
by two simple examples.
Sections~\ref{sec:notation} and~\ref{sec:general_setting} present 
the notation and the framework used in this paper.
Section \ref{sec:galerkin_project} studies weak convergence rates for the spectral Galerkin projections $P_N(X_T)$, $N\in\N$, associated to the solution process $X_t$, $t\in[0,T]$, of the SEE~\eqref{eq:SEE_intro}. 
The result of this section is then used in Section~\ref{sec:weak_reg} to obtain the weak convergence 
of the Galerkin approximation~\eqref{eq:Galerin_SEEs} to the solution of~\eqref{eq:SEE_intro} 
in the case where the drift operator $F$, the diffusion operator $B$, and the initial 
condition are mollified in an appropriate sense. This provides a less general version of Theorem \ref{intro:theorem}. 
Section~\ref{sec:strong_convergence} is devoted to the proof of an elementary strong convergence result. 
In Section~\ref{sec:weak_irregular} the weak convergence result from Section~\ref{sec:weak_reg} and the elementary strong convergence result from Section~\ref{sec:strong_convergence} are used to establish weak convergence 
(see Corollary~\ref{weak_irreg_B}) for general drift and diffusion operators. 
Section~\ref{sec:sharp.weak.rates} specializes the weak convergence result from Section~\ref{sec:weak_irregular} to the framework of this introductory section.
Finally, 
in Section \ref{sec:lower_bound} we consider the case $ F = 0 $ and provide examples of 
constant (additive noise) functions $ B $ which show that the weak convergence rate established 
in Theorem \ref{intro:theorem} can essentially not be improved.

\subsection{Sketch of the proof of the main weak convergence result} 
\label{sec:sketch_proof}

In the following we give a brief sketch of our method of proof of Theorem~\ref{intro:theorem} and Corollary~\ref{cor:sharp.weak.rates}, respectively, in the case where $ \xi \in H_{ \iota + 2 } $ (the case where $ \xi \in H_{ \iota } $ then follows
from a standard mollification procedure; see~\eqref{irregeq:mollified} in the proof 
of Proposition~\ref{weak_irreg} in Section~\ref{sec:weak_irregular} for details).
In our weak convergence proof we intend to work 
(as it is often the case in the case of weak convergence for S(P)DEs;
see, e.g., R\"{o}\ss ler~\cite{r03} and Debussche~\cite{Debussche2011})
with the Kolmogorov backward equation
associated to~\eqref{eq:SEE_intro}.
In the case of an SEE with a general nonlinear diffusion coefficient
it is, however, not clear whether the solutions of the SEE~\eqref{eq:SEE_intro} also provide strong solutions of
the Kolmogorov backward equation associated
to~\eqref{eq:SEE_intro};
cf.\ \cite[item~(iv) of Theorem~1.1]{AnderssonHefterJentzenKurniawan2016}, \cite[Corollary~1.2]{HefterJentzenKurniawan2017}, and~\cite[pages~249--251]{DaPrato2007}.
We therefore work with suitable mollified versions of~\eqref{eq:SEE_intro}
and~\eqref{eq:Galerin_SEEs}.
More formally,
for every $ \kappa \in ( 0, \infty ) $
let $ F_{ \kappa } \colon H_{ \iota } \to H_{\iota+2} $
and
$ B_{ \kappa } \colon H_{ \iota } \to HS( U, H_{\iota+2} ) $
be the functions which satisfy for all $ x \in H_{ \iota } $ that 
$
  F_{ \kappa }( x ) = e^{ \kappa A } F( x )
$
and
$
  B_{ \kappa }( x ) = e^{ \kappa A } B( x )
$.
For every $ \kappa \in ( 0, \infty) $, $ x \in H_{ \iota } $
let
{$
  \hat{X}^{ x, \kappa } \colon [0,T] \times \Omega \to H_{ \iota }
$}
be a continuous mild solution of the SEE
\begin{equation}
\label{eq:SEE_intro_mol}
  d \hat{X}^{ x, \kappa }_t 
  =
  \big[ 
    A \hat{X}^{ x, \kappa }_t
    +
    F_{ \kappa }( \hat{X}^{ x, \kappa }_t )
  \big] 
  \,
  dt
  +
  B_{ \kappa }( \hat{X}^{ x, \kappa }_t ) \, dW_t
  ,
  \qquad 
  t \in [0,T] 
  ,
  \qquad 
  \hat{X}_0^{ x, \kappa } = x.
\end{equation}
For every $ \kappa \in ( 0, \infty ) $
let 
$ u_{ \kappa } \colon [0,T] \times H_{ \iota } \to \R $
be the function which satisfies 
for all $ (t,x) \in [0,T] \times H_{ \iota } $ that
$
  u_{ \kappa }( t, x ) 
  =
  \ES\big[ 
    \varphi( \hat{X}^{ x, \kappa }_{ T - t } )
  \big]
$.
In particular, notice that for all $ \kappa \in (0,\infty) $ and all nonrandom $x \in H_{\iota}$
it holds that $u_{\kappa}(T,x) = \varphi(x)$.
Then, for every $ \kappa \in ( 0, \infty ) $,
$ N \in \N $
let
$ 
  X^{ N, \kappa } \colon [0,T] \times \Omega \to H_{ \iota } 
$
be a continuous mild solution of the SEE
\begin{equation}
\label{eq:Galerin_SEEs_mol}
  d X^{ N, \kappa }_t = 
  \big[ 
    P_N A X^{ N, \kappa }_t + P_N F_{ \kappa }( X^{ N, \kappa }_t ) 
  \big]
  \, dt
  +
  P_N B_{ \kappa }( X^{ N, \kappa }_t ) \, dW_t
  ,
  \;\;
  t \in [0,T] 
  ,
  \;\;
  X^{ N, \kappa }_0 = P_N( \xi )
  .
\end{equation}
The \emph{first key idea} in our proof is then to bring 
\emph{certain modified versions} of the SEEs~\eqref{eq:Galerin_SEEs}
and~\eqref{eq:Galerin_SEEs_mol} respectively into play
to analyze the weak approximation errors 
$
  \big|
    \ES\big[ 
      \varphi( \hat{X}_T^{ \xi, \kappa } )
    \big]
    -
    \ES\big[ 
      \varphi( X^{ N, \kappa }_T )
    \big]
  \big|
$
for $ N \in \N $, $ \kappa \in (0,\infty) $. 
More specifically, for every $ \kappa \in ( 0,\infty) $, $ N \in \N $ 
let $ Y^{ N, \kappa } \colon [0,T] \times \Omega \to H_{ \iota + 2 } $
be a continuous mild solution of the SEE
\begin{equation}
\label{eq:modifiedSEE}
  d Y^{ N, \kappa }_t =
  \big[ 
    A Y^{ N, \kappa }_t
    +
    F_{ \kappa }\big( 
      P_N( Y^{ N, \kappa }_t ) 
    \big)
  \big]
  \,
  dt
  +
  B_{ \kappa }\big( 
    P_N( Y^{ N, \kappa }_t ) 
  \big)
  \, dW_t
  ,
  \quad 
  t \in [0,T] 
  ,
  \quad
  Y^{N,\kappa}_0 = \xi
  .
\end{equation}
It is crucial in~\eqref{eq:modifiedSEE}
that $ P_N( \cdot ) $ appears inside the arguments of $ F_{ \kappa } $
and $ B_{ \kappa } $ instead of in front of $ F_{ \kappa } $ and $ B_{ \kappa } $ as in~\eqref{eq:Galerin_SEEs_mol} (and~\eqref{eq:Galerin_SEEs}). 
Moreover, notice the projection 
$ P_N(Y^{N,\kappa}_t) = X^{N,\kappa}_t $
$ \P $-a.s.\ for all $N \in \N$, $\kappa \in (0,\infty)$, $t \in [0,T]$.
To estimate the weak approximation errors
$
  \big|
    \ES\big[ 
      \varphi( \hat{X}_T^{ \xi, \kappa } )
    \big]
    -
    \ES\big[ 
      \varphi( X^{ N, \kappa }_T )
    \big]
  \big|
$
for $ N \in \N $, $ \kappa \in ( 0, \infty ) $
we then apply the triangle inequality to obtain 
that for all $ \kappa \in ( 0,\infty) $, $ N \in \N $ it holds that
\begin{equation}
\label{eq:triangle_intro}
\begin{split}
&
  \big|
    \ES\big[ 
      \varphi( \hat{X}_T^{ \xi, \kappa } )
    \big]
    -
    \ES\big[ 
      \varphi( X^{ N, \kappa }_T )
    \big]
  \big|
\\ & \quad 
  \leq 
  \big|
    \ES\big[ 
      \varphi( \hat{X}_T^{ \xi, \kappa } )
    \big]
    -
    \ES\big[ 
      \varphi( Y^{ N, \kappa }_T )
    \big]
  \big|
  +
  \big|
    \ES\big[ 
      \varphi( Y^{ N, \kappa }_T )
    \big]
    -
    \ES\big[ 
      \varphi( X_T^{ N , \kappa } )
    \big]
  \big|
\\ & \quad 
  =
  \big|
    u_{ \kappa }( 0, \xi )
    -
    \ES\big[
      u_{ \kappa }( T, Y^{ N, \kappa }_T )
    \big]
  \big|
  +
  \big|
    \ES\big[ 
      \varphi( Y^{ N, \kappa }_T )
    \big]
    -
    \ES\big[ 
      \varphi( P_N( Y_T^{ N , \kappa } ) )
    \big]
  \big|
\\ & \quad 
  =
  \big|
    \ES\big[
      u_{ \kappa }( T, Y^{ N, \kappa }_T )
      -
      u_{ \kappa }( 0, Y^{ N, \kappa }_0 )
    \big]
  \big|
  +
  \big|
    \ES\big[ 
      \varphi( Y^{ N, \kappa }_T )
    \big]
    -
    \ES\big[ 
      \varphi( P_N( Y_T^{ N , \kappa } ) )
    \big]
  \big|
  .
\end{split}
\end{equation}
Roughly speaking, 
the processes $ Y^{ N, \kappa } $, $ N \in \N $,
$ \kappa \in ( 0, \infty ) $,
are chosen in such a way so that 
it is not so difficult anymore to estimate
$
  \big|
    \ES\big[
      u_{ \kappa }( T, Y^{ N, \kappa }_T )
      -
      u_{ \kappa }( 0, Y^{ N, \kappa }_0 )
    \big]
  \big|
$
and
$
  \big|
    \ES\big[ 
      \varphi( Y^{ N, \kappa }_T )
    \big]
    -
    \ES\big[ 
      \varphi( P_N( Y_T^{ N , \kappa } ) )
    \big]
  \big|
$
on the right hand side of~\eqref{eq:triangle_intro}.
% The next key idea in our proof 
% is to bring the mild It\^{o} formula in 
% Da Prato et al.~\cite{DaPratoJentzenRoeckner2012}
% into play to estimate the term
% $
%   \big|
%     \ES\big[ 
%       \varphi( Y^{ N, \kappa }_T )
%     \big]
%     -
%     \ES\big[ 
%       \varphi( X_T^{ N , \kappa } )
%     \big]
%   \big|
% $
% on the right hand 
% side of \eqref{eq:triangle_intro}.
More formally,
to estimate the term
$
  \big|
    \ES\big[ 
      \varphi( Y^{ N, \kappa }_T )
    \big]
    -
    \ES\big[ 
      \varphi( P_N( Y_T^{ N , \kappa } ) )
    \big]
  \big|
$
on the right hand side of~\eqref{eq:triangle_intro}
(see Section~\ref{sec:galerkin_project} 
and Lemma~\ref{weak_regle0} in Section~\ref{sec:weak_reg})
we apply 
% for every $ N \in \N $
the mild It\^{o} formula in 
Corollary~2 in 
Da Prato et al.~\cite{DaPratoJentzenRoeckner2012}
to 
$
    \ES\big[ 
      \varphi( Y^{ N, \kappa }_t )
    \big]
$,
$ t \in [0,T] $,
and to
$
    \ES\big[ 
      \varphi( P_N( Y_T^{ N , \kappa } ) )
    \big]
$,
$ t \in [0,T] $,
and then estimate the difference of the resulting 
terms in a straightforward way (see the proof 
of Proposition~\ref{prop:weak_regle0} 
in Section~\ref{sec:galerkin_project} below for details).
This allows us to prove 
(see Proposition~\ref{prop:weak_regle0} below) 
that there exist
real numbers $ C^{ (1) }_{ \varepsilon } \in [0,\infty) $, $ \varepsilon \in (0,\infty) $,
such that for all 
$ \varepsilon, \kappa \in (0,\infty) $, $ N \in \N $
it holds that
\begin{equation}
\label{eq:intro_estimate1}
  \big|
    \ES\big[ 
      \varphi( Y^{ N, \kappa }_T )
    \big]
    -
    \ES\big[ 
      \varphi( X_T^{ N , \kappa } )
    \big]
  \big|
  =
  \big|
    \ES\big[ 
      \varphi( Y^{ N, \kappa }_T )
    \big]
    -
    \ES\big[ 
      \varphi( P_N( Y_T^{ N , \kappa } ) )
    \big]
  \big|
\leq
%   \tfrac{
    C_{ \varepsilon }^{ (1) }
%   }{
    \left( 
      \lambda_N
    \right)^{
      - ( 1 - \gamma - \varepsilon )
    }
%   }
  .
\end{equation}
To estimate the term
$
  \big|
    \ES\big[
      u_{ \kappa }( T, Y^{ N, \kappa }_T )
      -
      u_{ \kappa }( 0, Y^{ N, \kappa }_0 )
    \big]
  \big|
$
on the right hand side of~\eqref{eq:triangle_intro}
we apply the standard It\^{o} formula
to the stochastic processes
$
  \big(
    u_{ \kappa }( t, Y^{ N, \kappa }_t )
  \big)_{ 
    t \in [0,T] 
  }
$,
$ \kappa \in (0,\infty) $,
and use the fact that the functions
$ u_{ \kappa } $,
$ \kappa \in (0,\infty) $,
solve the Kolmogorov backward equation
associated to~\eqref{eq:SEE_intro_mol}
to obtain that
for all 
$ \kappa \in ( 0, \infty) $, $ N \in \N $
it holds that
\begin{align}
\label{eq:intro_Kolmogorov}
&
  \big|
    \ES\big[
      u_{ \kappa }( T, Y^{ N, \kappa }_T )
      -
      u_{ \kappa }( 0, Y^{ N, \kappa }_0 )
    \big]
  \big|
\leq
  \textstyle
  \int\limits_0^T
  \displaystyle
  \big|
  \ES\big[
    \big( 
      \tfrac{ \partial }{ \partial x } u_{ \kappa }
    \big)( s, Y^{ N, \kappa }_s )
    \big(
      F_{ \kappa }(
        P_N( Y^{ N, \kappa }_s )
      )
      -
      F_{ \kappa }(
        Y^{ N, \kappa }_s
      )
    \big)
  \big]
  \big|
  \,
  ds
\nonumber
\\ & 
  +
  \textstyle
  \sum\limits_{ b \in \mathbb{U} }
  \int\limits_0^T
  \displaystyle
  \tfrac{
  \left|
  \ES\left[
  \big( 
    \frac{ \partial^2 }{ \partial x^2 } u_{ \kappa } 
  \big)( s, Y^{ N, \kappa }_s 
  )\left(
    \big[
      B_{ \kappa }(
        P_N( Y^{ N, \kappa }_s )
      )
      +
      B_{ \kappa }(
        Y^{ N, \kappa }_s 
      )
    \big]
    b
    ,
    \big[
      B_{ \kappa }(
        P_N( Y^{ N, \kappa }_s )
      )
      -
      B_{ \kappa }(
        Y^{ N, \kappa }_s 
      )
    \big]
    b
  \right)
  \right]
  \right|
  }{ 2 }
  \,
  ds
\end{align}
where $ \mathbb{U} \subseteq U $
is an arbitrary orthonormal basis of $ U $;
cf.\ \eqref{eq:Kolmogorov} in Section~\ref{sec:weak_reg} below.
The next key idea in our weak convergence proof 
is then to again apply the mild It\^{o} formula
(see Da Prato et al.~\cite{DaPratoJentzenRoeckner2012})
to the terms appearing on the right hand side of~\eqref{eq:intro_Kolmogorov}.
After applying the mild It\^{o} formula, 
the resulting terms can
be estimated in a straightforward way 
by using the estimates 
for the functions
$
  u_{ \kappa }
$,
$ \kappa \in (0,T] $,
from Andersson et al.~\cite{AnderssonHefterJentzenKurniawan2016}.
This allows us 
(cf.\ \eqref{eq:weak_regle} in Lemma~\ref{weak_regle} and~\eqref{irreg_2}--\eqref{irreg_5} in the proof of Proposition~\ref{weak_irreg})
to prove that for all 
$ \varepsilon \in ( 0, \infty ) $
there exists a real number 
$ C^{ (2) }_{ \varepsilon } \in [0,\infty) $
such that for all 
$ \kappa \in (0,T] $, 
$ N \in \N $
it holds that
\begin{equation}
\label{eq:intro_estimate2}
  \big|
    \ES\big[ 
      \varphi( \hat{X}_T^{ \xi, \kappa } )
    \big]
    -
    \ES\big[ 
      \varphi( Y^{ N, \kappa }_T )
    \big]
  \big|
=
  \big|
    \ES\big[
      u_{ \kappa }( T, Y^{ N, \kappa }_T )
      -
      u_{ \kappa }( 0, Y^{ N, \kappa }_0 )
    \big]
  \big|
\leq
  \tfrac{
    C^{ (2) }_{ \varepsilon }
  }{
    \kappa^{ \varepsilon }
    \,
    \left( \lambda_N \right)^{ ( 1 - \gamma - \varepsilon ) }
  }
  .
\end{equation}
Putting~\eqref{eq:intro_estimate2}
and~\eqref{eq:intro_estimate1}
into~\eqref{eq:triangle_intro}
then proves that for all 
$ \varepsilon \in (0,\infty) $, 
$ \kappa \in (0,T] $, $ N \in \N $
it holds that
\begin{equation}
\label{eq:estimate_no_mollifying}
  \big|
    \ES\big[ 
      \varphi( \hat{X}_T^{ \xi, \kappa } )
    \big]
    -
    \ES\big[ 
      \varphi( X^{ N, \kappa }_T )
    \big]
  \big|
\leq  
  C^{ (2) }_{ \varepsilon }
  \,
  \kappa^{ - \varepsilon }
  \left( \lambda_N \right)^{ - ( 1 - \gamma - \varepsilon ) }
  +
  C_{ \varepsilon }^{ (1) }
  \left( 
    \lambda_N
  \right)^{
    - ( 1 - \gamma - \varepsilon )
  }
  .
\end{equation}
Estimates~\eqref{eq:intro_estimate2} and~\eqref{eq:estimate_no_mollifying}
illustrate that we cannot simply let the 
mollifying parameter $ \kappa $ tend 
to $ 0 $ because the right hand side 
of~\eqref{eq:estimate_no_mollifying}
diverges as
$ \kappa $ tends to $ 0 $.
The last key idea in our proof is then to make use of the following 
\emph{-- somehow nonstandard -- mollification procedure} to overcome this problem.
% { the problem that the right hand side of \eqref{eq:estimate_no_mollifying}
% tends to infinity as the mollifying parameter $ \kappa $ tends to $ 0 $} [ {this problem} ].
For this mollification procedure we first 
use well-known \emph{strong convergence analysis}
to prove 
(cf.\ Proposition~\ref{strong} and Corollary~\ref{cor:strong.convergence.simplified} in Section~\ref{sec:strong_convergence})
that for all $ \varepsilon \in ( 0, \infty ) $
there exists 
a real number 
$ C^{ (3) }_{ \varepsilon } \in [0,\infty) $
such that
for all 
$ \kappa \in ( 0, T] $, $ N \in \N $
it holds that
\begin{equation}
\label{eq:intro_estimate3}
  \big|
    \ES\big[ 
      \varphi( X_T )
    \big]
    -
    \ES\big[ 
      \varphi( \hat{X}_T^{ \xi, \kappa } )
    \big]
  \big|
  +
  \big|
    \ES\big[ 
      \varphi( X_T^N )
    \big]
    -
    \ES\big[ 
      \varphi( X_T^{ N, \kappa } )
    \big]
  \big|
\leq
  C^{ (3) }_{ \varepsilon }
  \,
  \kappa^{ 
    \left(
      \frac{ 1 - \gamma }{ 2 } - \varepsilon 
    \right)
  }
  .
\end{equation}
Combining~\eqref{eq:intro_estimate3}
with~\eqref{eq:estimate_no_mollifying} 
then shows that
for all
$ \varepsilon \in (0,\infty) $,
$ \kappa \in (0,T] $,
$ N \in \N $
it holds that
\begin{equation}
\label{eq:intro_estimate4}
\begin{split}
  \big|
    \ES\big[ 
      \varphi( X_T )
    \big]
    -
    \ES\big[ 
      \varphi( X^N_T )
    \big]
  \big|
& \leq
  \frac{
    C_{ \varepsilon }^{ (1) }
  }{
    \left( 
      \lambda_N
    \right)^{
      ( 1 - \gamma - \varepsilon )
    }
  }
  +
  \frac{
    C^{ (2) }_{ \varepsilon }
  }{
    \kappa^{ \varepsilon }
    \left( \lambda_N 
    \right)^{ ( 1 - \gamma - \varepsilon ) }
  }
  +
  C^{ (3) }_{ \varepsilon }
  \,
  \kappa^{ 
    \left(
      \frac{ 1 - \gamma }{ 2 } - \varepsilon 
    \right)
  }
  .
\end{split}
\end{equation}
As the left hand side of~\eqref{eq:intro_estimate4}
is independent of $ \kappa \in (0,T] $,
we can minimize the right hand side of~\eqref{eq:intro_estimate4} over 
$ \kappa \in (0,T] $
(instead of letting $ \kappa $ tend to $ 0 $)
and this will allow us to complete the proof
of Theorem~\ref{intro:theorem}; see~\eqref{irreg_6}
and~\eqref{irreg_6b} in the proof 
of Proposition~\ref{weak_irreg}
in Section~\ref{sec:weak_irregular}
below for details.

\subsection{Examples}
\label{sec:examples}

In this section we illustrate Theorem~\ref{intro:theorem} and Corollary~\ref{cor:sharp.weak.rates}, respectively, 
by two simple examples. In Subsection~\ref{sec:anderson}
we apply Theorem~\ref{intro:theorem} to
the continuous version of the one-dimensional parabolic Anderson model
and in Subsection~\ref{sec:cahn} we apply Theorem~\ref{intro:theorem}
to a Cahn-Hilliard-Cook type equation.

\subsubsection{Parabolic Anderson model and nonlinear heat-type SPDEs}
\label{sec:anderson}

Let $ H = L^2( (0,1) ; \R ) $
be the $ \R $-Hilbert space of equivalence classes
of Lebesgue-Borel square integrable functions from $ ( 0,1 ) $ to $ \R $, 
let 
$ T, \kappa, \delta, \nu \in (0,\infty) $, 
$ \xi \in H $, 
let 
$
  ( \Omega, \mathcal{F}, \P , 
$
$  
  ( \mathcal{F}_t )_{ t \in [0,T] } )
$
be a stochastic basis, 
let 
$
  ( W_t )_{ t \in [0,T] }
$
be an $ \operatorname{Id}_H $-cylindrical $
( \Omega, \mathcal{F}, \P , ( \mathcal{F}_t )_{ t \in [0,T] } )
$-Wiener process, 
let 
$
  e_n \in H
$, 
$ n \in \N $, 
be the orthonormal basis of $ H $ 
which satisfies for all $ n \in \N $ that 
$
  e_n
  =
  \sqrt{2} \, \sin( n \pi (\cdot) )
$, 
let $ A \colon D(A) \subseteq H \rightarrow H $ 
be the linear operator which satisfies
$
  D(A)
  =
  \{
    v \in H
    \colon
    \sum^\infty_{ n=1 }
    n^4
    |\langle
    e_n, v
    \rangle_H|^2
    < \infty
  \}
$ 
and 
$ 
  \forall \, v \in D(A) \colon
  Av
  =
  \sum^\infty_{ n=1 }
  -\nu n^2 \pi^2
  \langle
  e_n, v
  \rangle_H
  \,
  e_n
$, 
let
$  
  ( 
    H_r 
    ,
    \left< \cdot, \cdot \right>_{ H_r }
    ,$ $
    \left\| \cdot \right\|_{ H_r } 
  )
$, 
$ r \in \R $, 
be a family of interpolation spaces associated to $ - A $, 
let
$
  ( P_N )_{ N \in \N } \subseteq L( H_{ - 1 } )
$ 
satisfy for all 
$ N \in \N $, $ v \in H $ 
that 
$
  P_N(v) = \sum^N_{n=1} \left< e_n, v \right>_H e_n
$, 
let 
$\psi\colon H\to H$
be a four times continuously Fr\'{e}chet differentiable function with globally bounded derivatives,
and let 
$
  B \colon H \to HS( H, H_{ - 1 / 4 - \delta } )
$
be the function which satisfies for all 
$ v \in H $
and all uniformly continuous functions  
$ u \colon (0,1) \to \R $
that 
$
  B(v)u
  =
  \psi(v) \cdot u
$. 
The above 
assumptions 
ensure the existence of 
$
  ( \mathcal{F}_t )_{ t \in [0,T] }
$-adapted continuous stochastic processes
$ X \colon [0,T] \times \Omega \to H $ 
and
$
X^N \colon [0,T] \times \Omega \rightarrow P_N( H )
$,
$ N \in \N $,  
which satisfy that for all 
$N\in\N$, $ t \in [0,T] $ 
it holds $\P$-a.s.\ that
\begin{equation}
\label{eq:SEE.heat}
  X_t = e^{ At } \xi + \int_0^t e^{ A( t - s ) } B( X_s ) \, dW_s
\end{equation}
and
$
X^N_t = e^{ At } P_N(\xi) + \int_0^t e^{ A( t - s ) } P_N B( X^N_s ) \, dW_s
$.
In the case where 
$
  \forall \, v\in H \colon 
  \psi(v)=
  \big[1+\|v\|^2_H\big]^{-1} \, v
$
the stochastic process $ X $ is a mild solution process of 
\begin{align}
\label{eq:nonlinear.heat}
&
  dX_t(x)
  = 
  \nu
  \,
  \tfrac{ \partial^2 }{ \partial x^2 } 
  X_t(x)
  \, dt 
  + 
  \frac{X_t(x)}{
  1
  +
  \int^1_0
  |X_t(y)|^2
  \, dy
  }
  \, dW_t(x)
\end{align}
with 
$  X_t(0) = X_t(1) = 0$
and
$ X_0( x ) = \xi( x ) $
for $ x \in ( 0, 1 ) $, $ t \in [ 0, T ] $
and the stochastic processes $X^N$, $N\in\N$, 
are spatial spectral Galerkin approximations of~\eqref{eq:nonlinear.heat}.
In the case where 
$
  \forall \, v\in H \colon 
  \psi(v)=\kappa\cdot v
$
the stochastic process $ X $ is a mild solution process of the 
continuous version of the one-dimensional parabolic Anderson model
\begin{align}
\label{eq:parabolic_Anderson}
&
  dX_t(x)
  = 
  \nu
  \,
  \tfrac{ \partial^2 }{ \partial x^2 } 
  X_t(x)
  \, dt 
  + 
  \kappa \,
  X_t(x)  
  \, dW_t(x)
\end{align}
with 
$  X_t(0) = X_t(1) = 0$
and
$ X_0( x ) = \xi( x ) $
for $ x \in ( 0, 1 ) $, $ t \in [ 0, T ] $
(cf., e.g., Carmona \& Molchanov~\cite{CarmonaMolchanov1994})
and the stochastic processes $X^N$, $N\in\N$, 
are spatial spectral Galerkin approximations of~\eqref{eq:parabolic_Anderson}.
Theorem~\ref{intro:theorem} and Corollary~\ref{cor:sharp.weak.rates}, respectively, apply here with $ \gamma = \nicefrac{ 1 }{ 2 } $, 
that is, Theorem~\ref{intro:theorem} and Corollary~\ref{cor:sharp.weak.rates}, respectively, ensure that
for all 
$
  \varphi \in C^4_b( H, \R )
$,
$ \varepsilon \in ( 0, \infty ) $ 
it holds that there exists a real number
$ C \in \R $
such that for all $ N \in \N $
it holds that
\begin{equation}
  \left|
    \ES\big[ 
      \varphi( X_T )
    \big]
    -
    \ES\big[ 
      \varphi( X^N_T )
    \big]
  \right|
\leq
    C
    \cdot
    N^{
      - ( 1 - \varepsilon )
    }
  .
\end{equation}
Theorem~\ref{intro:theorem} and Corollary~\ref{cor:sharp.weak.rates}, respectively, thus demonstrate that the spatial spectral Galerkin approximations
$X^N$, $N\in\N$, 
of~\eqref{eq:SEE.heat}, \eqref{eq:nonlinear.heat}, and~\eqref{eq:parabolic_Anderson}, respectively, converge with rate
$1-\varepsilon$ 
to the stochastic process $X$ of~\eqref{eq:SEE.heat}, \eqref{eq:nonlinear.heat}, and~\eqref{eq:parabolic_Anderson}.
To the best of our knowledge, Theorem~\ref{intro:theorem} and Corollary~\ref{cor:sharp.weak.rates}, respectively, are the first results in the scientific literature which establish essentially sharp weak convergence rates for numerical approximations of~\eqref{eq:nonlinear.heat} and~\eqref{eq:parabolic_Anderson}, respectively.

\subsubsection{A Cahn-Hilliard-Cook type equation}
\label{sec:cahn}

Let $ H = L^2( (0,1) ; \R ) $
be the $ \R $-Hilbert space of equivalence classes
of Lebesgue-Borel square integrable functions from $ ( 0,1 ) $ to $ \R $, 
let 
$ T, \kappa, \delta \in (0,\infty) $, 
$ \xi \in H $, 
let 
$
  ( \Omega, \mathcal{F}, \P , 
$
$
  ( \mathcal{F}_t )_{ t \in [0,T] } )
$
be a stochastic basis, 
let 
$
( W_t )_{ t \in [0,T] }
$
be an $ \operatorname{Id}_H $-cylindrical $
( \Omega, \mathcal{F}, \P , ( \mathcal{F}_t )_{ t \in [0,T] } )
$-Wiener process, 
let 
$
  e_n \in H
$, 
$ n \in \N_0 $, 
be the orthonormal basis of $ H $ which satisfies 
for all $ n \in \N $ that 
$
  e_0
  =
  1
$
and 
$
  e_n
  =
  \sqrt{2} \, \cos( n \pi (\cdot) )
$, 
let $ A \colon D(A) \subseteq H \rightarrow H $ 
be the linear operator which satisfies
$
  D(A)
  =
  \{
    v \in H
    \colon
    \sum_{ n \in \N }
    n^8
    |\langle
    e_n, v
    \rangle_H|^2
    < \infty
  \}
$ 
and 
$ 
  \forall \, v \in D(A) \colon
  Av
  =
  \sum^\infty_{ n = 0 }
  ( n^2 \pi^2 - n^4 \pi^4 - 1 )
  \langle
  e_n, v
  \rangle_H
  \,
  e_n
$, 
let
$  
  ( 
    H_r 
    ,
    \left< \cdot, \cdot \right>_{ H_r }
    ,$ $
    \left\| \cdot \right\|_{ H_r } 
  )
$, 
$ r \in \R $, 
be a family of interpolation spaces associated to $ - A $, 
let
$
  ( P_N )_{ N \in \N } \subseteq L( H_{ - 1 } )
$ 
satisfy for all 
$ N \in \N $, $ v \in H $ 
that 
$
  P_N(v) = \sum^N_{n=0} \left< e_n, v \right>_H e_n
$, 
and let 
$
  F \colon H \to H_{ -1/4 - 2\delta }
$
and 
$
  B \colon H \to HS( H, H_{ -1/8 - \delta } ) 
$
satisfy for all 
$ v \in H $ 
and all uniformly continuous functions
$ u \colon (0,1) \to \R $
that 
$F(v)=v$ 
and 
$
  B(v)u
  =
  \kappa \cdot v \cdot u
$. 

The above assumptions 
ensure the existence of
$
  ( \mathcal{F}_t )_{ t \in [0,T] }
$-adapted continuous stochastic processes
$ X \colon [0,T] \times \Omega \to H $ 
and
$
X^N \colon [0,T] \times \Omega \rightarrow P_N( H )
$,
$ N \in \N $,  
which satisfy that for all 
$N\in\N$, $ t \in [0,T] $ it holds $\P$-a.s.\ that
\begin{equation}
X_t = e^{ At } \xi
+
\int_0^t e^{ A( t - s ) } F(X_s) \, ds
+
\int_0^t e^{ A( t - s ) } B( X_s ) \, dW_s
\end{equation}
and 
$
X^N_t = e^{ At } P_N(\xi) 
+
\int_0^t e^{ A( t - s ) } P_N F(X^N_s) \, ds
+
\int_0^t e^{ A( t - s ) } P_N B( X^N_s ) \, dW_s
$.
The stochastic process $ X $ is thus a solution process of the 
\textit{Cahn-Hilliard-Cook type equation}
\begin{equation}
\label{eq:linear_Cahn-Hilliard-Cook}
\begin{split}
&
dX_t(x)= 
\big[
-\tfrac{\partial^4}{\partial x^4} \,
X_t(x)
-
\tfrac{\partial^2}{\partial x^2} \,
X_t(x)
\big]
\,dt 
+ 
\kappa \,
X_t(x) 
\,dW_t(x) 
\end{split}
\end{equation}
with 
$
  X'_t(0) = X'_t(1) = X^{(3)}_t(0) = X^{(3)}_t(1) = 0
$
and 
$
  X_0(x)=\xi(x)
$
for $ x \in ( 0, 1 ) $, $ t \in [ 0, T ] $
and the stochastic processes $X^N$, $N\in\N$, 
are spatial spectral Galerkin approximations of~\eqref{eq:linear_Cahn-Hilliard-Cook}.
Theorem~\ref{intro:theorem} and Corollary~\ref{cor:sharp.weak.rates}, respectively, apply here with $ \gamma = \nicefrac{ 1 }{ 4 } $, that is,
Theorem~\ref{intro:theorem} and Corollary~\ref{cor:sharp.weak.rates}, respectively, ensure that 
for all 
$
  \varphi \in C^4_b( H, \R )
$,
$ \varepsilon \in ( 0, \infty ) $ 
it holds that there exists a real number
$ C \in \R $
such that for all $ N \in \N $
it holds that
\begin{equation}
  \left|
    \ES\big[ 
      \varphi( X_T )
    \big]
    -
    \ES\big[ 
      \varphi( X^N_T )
    \big]
  \right|
\leq
    C
    \cdot
    N^{
      - ( 3 - \varepsilon )
    }
  .
\end{equation}
Theorem~\ref{intro:theorem} and Corollary~\ref{cor:sharp.weak.rates}, respectively, thus demonstrate that the spatial spectral Galerkin approximations
$X^N$, $N\in\N$, 
of~\eqref{eq:linear_Cahn-Hilliard-Cook} converge with rate
$3-\varepsilon$ 
to the solution process $X$ of~\eqref{eq:linear_Cahn-Hilliard-Cook}.
To the best of our knowledge, Theorem~\ref{intro:theorem} and Corollary~\ref{cor:sharp.weak.rates}, respectively, are the first results in the scientific literature which establish essentially sharp weak convergence rates for numerical approximations of~\eqref{eq:linear_Cahn-Hilliard-Cook}.

\subsection{Notation}
\label{sec:notation}

Throughout this article the following notation is used. 
For every set $ S $ we denote by 
$
  \operatorname{Id}_S \colon S \rightarrow S
$ 
the identity mapping on $ S $. 
For every set $ S $ 
we denote by 
$ \mathcal{P}(S) $ the power set of $ S $.
We denote by
$
  \mathcal{E}_r \colon [0,\infty) \rightarrow [0,\infty)
$,
$
  r \in (0,\infty)
$,
the functions which satisfy for all 
$ r \in (0,\infty) $, 
$ x \in [0,\infty) $
that 
$
  \mathcal{E}_r(x)
  =
  \big[
    \sum^{ \infty }_{ n = 0 }
    \frac{ 
        x^{ 2 n }
        \,
        \Gamma(r)^n
    }{
      \Gamma( n r + 1 ) 
    }
  \big]^{
    \nicefrac{ 1 }{ 2 }
  }
$
(generalized exponential function; cf., e.g., Exercise~3 in Chapter~7 in Henry~\cite{h81},
(1.0.1) in Chapter~1 in Gorenflo et al.~\cite{Gorenfloetal2014}, 
and~(16) in Andersson et al.~\cite{AnderssonJentzenKurniawan2016arXiv}).
For all
normed $ \R $-vector spaces 
$
  ( E_1, \left\|\cdot\right\|_{ E_1 } )
$
and
$
  ( E_2, \left\|\cdot\right\|_{ E_2 } )
$
and every nonnegative integer $ k \in \N_0 $
we denote by
$ 
  \left| \cdot \right|_{ 
    \operatorname{Lip}^k( E_1, E_2 ) 
  }
  ,
  \left\| \cdot \right\|_{ 
    \operatorname{Lip}^k( E_1, E_2 ) 
  }
  \colon
$
$
  C^k( E_1, 
  E_2 )
  \to [0,\infty]
$
the functions which satisfy 
for all $ f \in C^k( E_1, E_2 ) $ that 
\begin{equation}
  \left| f \right|_{
    \operatorname{Lip}^k( E_1, E_2 ) 
  }
  =
  \begin{cases}
    \sup_{ 
      \substack{
        x, y \in E_1 ,
      \\
        x \neq y
      }
    }
    \tfrac{
      \| f( x ) - f( y ) \|_{ E_2 }
    }{
      \left\| x - y \right\|_{ E_1 }
    }
  &
    \colon
    k = 0
  \\
    \sup_{ 
      \substack{
        x, y \in E_1 ,
      \\
        x \neq y
      }
    }
    \tfrac{
      \| f^{ (k) }( x ) - f^{ (k) }( y ) \|_{ L^{ (k) }( E_1, E_2 ) }
    }{
      \left\| x - y \right\|_{ E_1 }
    }
  &
    \colon
    k \in \N
  \end{cases}
\end{equation}
and 
$
  \left\| f \right\|_{
    \operatorname{Lip}^k( E_1, E_2 )
  }
  =
  \left\| f( 0 ) \right\|_{ E_2 }
  +
  \sum_{ l = 0 }^k
  \left|
    f
  \right|_{
    \operatorname{Lip}^l( E_1, E_2 )
  }
$
and we denote by 
$
  \operatorname{Lip}^k( E_1, 
  E_2 ) 
$
the set 
given by
$
  \operatorname{Lip}^k( E_1, E_2 ) 
  = 
  \{ 
    f \in C^k( E_1, E_2 ) \colon 
    \left\| f \right\|_{
      \operatorname{Lip}^k( E_1, E_2 )
    }
    < \infty
  \} 
$.
For all normed $ \R $-vector spaces 
$
  ( E_1, \left\|\cdot\right\|_{ E_1 } )
$
and
$
  ( E_2, \left\|\cdot\right\|_{ E_2 } )
$
and every natural number $ k \in \N $
we denote by 
$
  \left|
    \cdot
  \right|_{
    C^k_b(E_1,E_2)
  },
  \left\|
    \cdot
  \right\|_{ C^k_b(E_1,E_2) }
  \colon 
  C^k(E_1,E_2)\rightarrow[0,\infty]
$
the functions which satisfy 
for all 
$
  f \in C^k(E_1,E_2)
$
that 
$
  | f |_{ 
    C^k_b( E_1, E_2)
  }
=
  \sup_{ x \in E_1 }
  \|
    f^{ ( k ) }( x ) 
  \|_{
    L^{ (k) }( E_1, E_2 ) 
  }
$
and
$
  \| f \|_{
    C^k_b(E_1,E_2)
  }
=
  \| f(0) \|_{ E_2 } +
  \sum^k_{ l = 1 }
  | f |_{ C^l_b(E_1,E_2) }
$
and we denote by 
$
  C^k_b( E_1, E_2 ) 
$
the set given by
$
  C^k_b( E_1, E_2 ) 
  = 
  \{ 
    f \in C^k( E_1, E_2 ) 
    \colon 
    \| f \|_{
      C^k_b( E_1, E_2 )
    }
    < \infty
  \} 
$.

% % % % % % % % % % % % % % % % % % % % % % % % % % % % % % % % %
% % % % % % % % % % % % % % % % % % % % % % % % % % % % % % % % %
%				Section 2										%

\subsection{Setting}
\label{sec:general_setting}

Throughout this article the following setting 
is frequently used.
Consider the notation in Section~\ref{sec:notation}, 
let 
$ ( H, \langle \cdot, \cdot \rangle_H, \left\| \cdot \right\|_H ) $
and
$ ( U, \langle \cdot, \cdot \rangle_U, \left\| \cdot \right\|_U ) $
be separable $ \R $-Hilbert spaces,
let $ T \in (0,\infty) $, 
let 
$ ( \Omega, \mathcal{F}, \P, ( \mathcal{F}_t )_{ t \in [0,T] } ) $
be a stochastic basis,
let $ ( W_t )_{ t \in [0,T] } $ be an $ \operatorname{Id}_U $-cylindrical
$ ( \Omega, \mathcal{F}, \P, ( \mathcal{F}_t )_{ t \in [0,T] } ) $-Wiener process,
% 
% 
% \begin{assumption}
let $ \set\subseteq H $ be a nonempty orthonormal basis,
let $ \lambda\colon\set\rightarrow\R $ be a function
satisfying
$ 
  \sup_{ b \in \set } \lambda_b < 0
$,
let 
$ A \colon D(A) \subseteq H \to H $ be a linear operator 
which satisfies
$
  D( A ) 
  = 
  \{ 
    v \in H 
  \colon 
    \sum_{ b \in \set }
    \left| 
      \lambda_b
      \langle b, v \rangle_H 
    \right|^2
  < \infty
  \}
$
and 
$ 
  \forall \, v \in D(A) \colon
  A v = 
  \sum_{ b \in \set }
  \lambda_b
  \langle b, v \rangle_H
  b
$,
let
$  
  ( 
    H_r 
    ,
    \langle \cdot, \cdot \rangle_{ H_r }
    ,$ $
    \left\| \cdot \right\|_{ H_r } 
  )
$,
$ r \in \R $,
be a family of interpolation spaces associated to $ - A $,
and let
$ 
  ( P_I )_{ I \in \mathcal{P}( \set ) } \subseteq L( H_{ - 1 } ) 
$
satisfy for all
$ v \in H $,
$ I \in \mathcal{P}( \set ) $
that
$ 
  P_I( v ) 
  = \sum_{ b \in I } \left< b, v \right>_H b
$.

\subsection{Auxiliary lemmas}
Throughout this article we frequently use the following well-known lemmas.

\begin{lemma}
\label{lem:aux}
Assume the setting in Section~\ref{sec:general_setting}. 
Then it holds for all $ r \in [0, 1] $ that
$
\sup_{ t \in [0,\infty) } 
\left\|
( - t A )^r
e^{ A t }
\right\|_{ L(H) }
\leq 
\sup_{ x \in (0,\infty) 
}
\left[
\frac{
	x^r
}{
e^x
}
\right]
\leq
\left[ 
\frac{ r }{ e }
\right]^r
\leq
1
$.
\end{lemma}

\begin{lemma}[See, e.g., Lemma~2.2 in Andersson et al.~\cite{AnderssonJentzenKurniawan2016arXiv}]
\label{lem:Kuratowski}
Let 
$ (V_k, \left\|\cdot\right\|_{V_k}) $, $ k \in \{0,1\} $, 
be separable $\R$-Banach spaces with 
$ V_1 \subseteq V_0 $ continuously. 
Then 
\begin{equation}
\label{eq:Kuratowski}
  \mathcal{B}(V_1) =
  \{
    B \in \mathcal{P}(V_1) \colon
    (
      \exists \, A \in \mathcal{B}(V_0) 
      \colon
      B = A \cap V_1
    )
  \} 
  \subseteq \mathcal{B}(V_0)
  .
\end{equation}
\end{lemma}

\section{Weak convergence for Galerkin projections of SEEs}
\label{sec:galerkin_project}

In this section we establish weak convergence rates for \emph{Galerkin projections} 
of SEEs (see Proposition~\ref{prop:weak_regle0} below). 
More specifically, in the framework of Section~\ref{sec:general_setting} we establish in Proposition~\ref{prop:weak_regle0} below an explicit upper bound for the weak approximation error
\begin{equation}
  \left|
    \ES\big[ 
      \varphi( X_T )
    \big]
    -
    \ES\big[ 
      \varphi\big( 
        P_I( X_T ) 
      \big)
    \big]
  \right|,
\end{equation}
where 
$
  I \subseteq \mathbb{H}
$ 
is a set, 
where $\varphi\colon H\to\R$ is a twice continuously Fr\'{e}chet differentiable function with globally bounded and globally Lipschitz continuous derivatives, and where 
$
  X \colon [0,T] \times \Omega \to H
$
is a suitable mild solution process of the SEE~\eqref{eq:SEE.lip}.
In this section the nonlinearities in the SEE~\eqref{eq:SEE.lip} are not mollified and may take values in appropriate negative interpolation spaces.
Proposition~\ref{prop:weak_regle0}, in particular, proves inequality~\eqref{eq:intro_estimate1} in 
Section~\ref{sec:sketch_proof}.
In Corollary~\ref{cor:weak_reg} in Section~\ref{sec:weak_reg} below we will use Proposition~\ref{prop:weak_regle0}
to establish weak convergence rates for \emph{Galerkin approximations} of SEEs with mollified nonlinearities.
In particular, in Section~\ref{sec:weak_reg} we establish upper error bounds for the first summand on the right hand side of~\eqref{eq:triangle_intro} (see Lemma~\ref{weak_regle} in Subsection~\ref{sec:weak_regular} below) and we use these upper error bounds together with Proposition~\ref{prop:weak_regle0} in this section to obtain upper error bounds for the left hand side of~\eqref{eq:triangle_intro}.
Proposition~\ref{prop:weak_regle0} is a slightly modified version of Corollary~8 in 
Da Prato et al.~\cite{DaPratoJentzenRoeckner2012}.

\subsection{Setting}
\label{sec:setting_galerkin_project}

Assume the setting in Section~\ref{sec:general_setting}
and let 
$ \vartheta \in [0,1) $,
$
  F \in 
  \operatorname{Lip}^0( H , H_{ - \vartheta } ) 
$, 
$
  B \in 
  \operatorname{Lip}^0( 
    H, 
    HS( 
      U, 
      $
      $
      H_{ 
        - \nicefrac{ \vartheta }{ 2 } 
      } 
    ) 
  ) 
$, 
$
  \varphi \in \operatorname{Lip}^2( H, \R)
$, 
$
  \xi \in \lpn{3}{\P|_{ \mathcal{F}_0 } }{ H }
$.

The above assumptions 
ensure that there exists an up-to-modifications unique 
$
  ( \mathcal{F}_t )_{ t \in [0,T] }
$-predictable stochastic process
$ 
  X \colon [0,T] \times \Omega \to H
$
which satisfies 
$
  \sup_{ t \in [0,T] }
  \| X_t \|_{ \lpn{3}{\P}{H} } 
  < \infty
$ 
and which satisfies that
for all $ t \in [0,T] $
it holds $ \P $-a.s.\ that
\begin{equation}
\label{eq:SEE.lip}
  X_t
  = 
    e^{ A t } \xi 
  + 
    \int_0^t e^{ A ( t - s ) } F( X_s ) \, ds
  + 
    \int_0^t e^{ A ( t - s ) } B( X_s ) \, dW_s 
  .
\end{equation}

\subsection{A weak convergence result}

\begin{proposition}
\label{prop:weak_regle0}
Assume the setting in Section~\ref{sec:setting_galerkin_project} and let $\rho\in[0,1-\vartheta)$, $I \in \grid(\set)$. Then
\begin{equation}
\begin{split}
&
  \left|
    \ES\big[ 
      \varphi( X_T )
    \big]
    -
    \ES\big[ 
      \varphi\big( 
        P_I( X_T ) 
      \big)
    \big]
  \right|
\leq
  \| \varphi \|_{ 
    \operatorname{Lip}^2( H, \R )
  }
  \,
    \max\!\left\{ 
      1 ,
      \sup\nolimits_{ t \in [0,T] }
      \ES\big[
        \| X_t \|^3_H
      \big]
    \right\}
\\ & 
  \cdot
  \left[ 
    \frac{ 1 }{ T^{ \rho } }
    +
    \frac{
      T^{ ( 1 - \rho - \vartheta ) }
      \big[ 
        \| F \|_{
          \operatorname{Lip}^0( H, H_{ - \vartheta } )
        }
        +
        \| B \|^{2}_{
          \operatorname{Lip}^0( H,
            HS( U, H_{ - \vartheta / 2 } ) 
          )
        }
      \big]
    }{
      \left( 1 - \rho - \vartheta \right)
    }
  \right]
  \|P_{\set\backslash I}\|_{L(H,H_{-\rho})}
  .
\end{split}
\end{equation}
\end{proposition}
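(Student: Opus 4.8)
The plan is to apply the mild It\^o formula (Corollary~2 in Da Prato et al.~\cite{DaPratoJentzenRoeckner2012}) to both $\ES[\varphi(X_T)]$ and $\ES[\varphi(P_I(X_T))]$ and then to estimate the difference of the two resulting expansions term by term. The starting observation is that $P_I$ commutes with $e^{tA}$ and with every fractional power $(-A)^\alpha$, so that $P_I(X_t) = e^{tA}P_I(\xi) + \int_0^t e^{(t-s)A}P_I F(X_s)\,ds + \int_0^t e^{(t-s)A}P_I B(X_s)\,dW_s$ is itself a mild It\^o process with the same generator $A$ but with coefficients $P_I F(X_s)$, $P_I B(X_s)$ and initial value $P_I(\xi)$. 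A second, crucial observation is the identity $e^{(T-s)A}X_s = e^{TA}\xi + \int_0^s e^{(T-u)A}F(X_u)\,du + \int_0^s e^{(T-u)A}B(X_u)\,dW_u$, i.e.\ the \emph{shifted} process at which the derivatives of $\varphi$ are evaluated in the mild It\^o formula is exactly the semigroup-smoothed solution $e^{(T-s)A}X_s$, and the corresponding shifted process for $P_I(X)$ is then $P_I\,e^{(T-s)A}X_s$.

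Feeding these into the mild It\^o formula and taking expectations (so that the stochastic integrals drop out) yields, for both $X_T$ and $P_I(X_T)$, a representation consisting of a boundary term, a drift integral weighted by $\varphi'$, and a diffusion integral weighted by $\varphi''$. Subtracting the two representations I would group the result into three contributions: (i) the boundary difference $\ES[\varphi(e^{TA}\xi) - \varphi(e^{TA}P_I\xi)]$; (ii) the drift difference $\int_0^T \ES[\varphi'(e^{(T-s)A}X_s)\,e^{(T-s)A}F(X_s) - \varphi'(P_I e^{(T-s)A}X_s)\,e^{(T-s)A}P_I F(X_s)]\,ds$; and (iii) the analogous diffusion difference involving $\varphi''$ and the Hilbert--Schmidt sums $\sum_j$. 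For (i) I use the mean value theorem to extract $\varphi'$ and bound by $\|e^{TA}P_{\set\backslash I}\xi\|_H \le T^{-\rho}\,\|P_{\set\backslash I}\|_{L(H,H_{-\rho})}\,\|\xi\|_H$, writing $e^{TA}P_{\set\backslash I} = [(-TA)^\rho e^{TA}]\,T^{-\rho}(-A)^{-\rho}P_{\set\backslash I}$ and invoking $\|(-TA)^\rho e^{TA}\|_{L(H)}\le1$ together with $\|P_{\set\backslash I}\|_{L(H,H_{-\rho})} = [\inf_{b\notin I}(-\lambda_b)]^{-\rho}$; this produces the $T^{-\rho}$ summand.

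For (ii) and (iii) I would first split each difference into a \emph{coefficient part} (where the extra $P_{\set\backslash I}$ sits inside $F$, respectively $B$) and an \emph{argument part} (where the arguments $e^{(T-s)A}X_s$ and $P_I e^{(T-s)A}X_s$ of $\varphi'$, respectively $\varphi''$, differ by $P_{\set\backslash I}e^{(T-s)A}X_s$). Both parts are then controlled by combining three ingredients: the smoothing estimate $\|(-tA)^\alpha e^{tA}\|_{L(H)}\le1$ for $\alpha\in[0,1]$, the projection-norm identity on interpolation spaces (which for the diagonal projection gives $\|P_{\set\backslash I}\|_{L(H_a,H_{a-\rho})} = \|P_{\set\backslash I}\|_{L(H,H_{-\rho})}$), and the Lipschitz bounds on $F$, $B$ and on $\varphi'$, $\varphi''$. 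The point that makes everything work with only the $L^3$-moment bound on $X$ is precisely the identity $e^{(T-s)A}X_s$ for the shifted process: every occurrence of $P_{\set\backslash I}e^{(T-s)A}X_s$ carries a smoothing factor, so $\|P_{\set\backslash I}e^{(T-s)A}X_s\|_H \le (T-s)^{-\rho}\,\|P_{\set\backslash I}\|_{L(H,H_{-\rho})}\,\|X_s\|_H$ needs no spatial regularity of $X$ beyond $H$. Each resulting term then carries a time singularity $(T-s)^{-(\rho+\vartheta)}$ whose integral equals $\frac{T^{1-\rho-\vartheta}}{1-\rho-\vartheta}$ and is finite exactly because $\rho<1-\vartheta$.

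I expect the diffusion term (iii) to be the main obstacle. Here the second derivative $\varphi''$ is unavoidable, which is exactly why $\varphi\in\operatorname{Lip}^2(H,\R)$ is assumed: for an affine $\varphi$ this contribution vanishes. Writing $b_j = e^{(T-s)A}B(X_s)g_j$ and $\hat b_j = P_I b_j$, the symmetry of $\varphi''$ lets me write the coefficient part as $\varphi''(e^{(T-s)A}X_s)(b_j+\hat b_j,\,P_{\set\backslash I}b_j)$, and after summing over $j$ and a Cauchy--Schwarz step this becomes a product of two Hilbert--Schmidt norms of $B$-expressions, one of which carries the $P_{\set\backslash I}$-smoothing. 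Because the diffusion contribution is quadratic in $B$, the regularity index $\nicefrac{\vartheta}{2}$ of $B$ is effectively doubled to $\vartheta$, which is what makes its time singularity $(T-s)^{-(\rho+\vartheta)}$ match that of the drift term; the argument part, estimated via Lipschitz continuity of $\varphi''$ and the smoothed bound for $\|P_{\set\backslash I}e^{(T-s)A}X_s\|_H$, produces a factor of the type $\|X_s\|_H\cdot\|B(X_s)\|^2_{HS}$ and is the reason the third moment $\sup_{t\in[0,T]}\ES[\|X_t\|_H^3]$ enters. Collecting the three contributions, factoring out $\|\varphi\|_{\operatorname{Lip}^2(H,\R)}$, the factor $\max\{1,\sup_{t\in[0,T]}\ES[\|X_t\|_H^3]\}$, and $\|P_{\set\backslash I}\|_{L(H,H_{-\rho})}$, and performing the elementary time integrations then yields the asserted bound.
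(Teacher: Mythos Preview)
Your proposal is correct and follows essentially the same route as the paper: apply the mild It\^{o} formula to both $\varphi(X_T)$ and $\varphi(P_I(X_T))$, obtain the three-term decomposition (boundary, drift, diffusion), split the drift and diffusion differences into an ``argument part'' and a ``coefficient part'', and estimate each using the semigroup smoothing $\|(-tA)^\alpha e^{tA}\|_{L(H)}\le 1$ together with the projection identity for $\|P_{\set\backslash I}\|_{L(H,H_{-\rho})}$. In particular, your handling of the diffusion coefficient part via the symmetric rewriting $\varphi''(\cdot)(b_j+\hat b_j,\,b_j-\hat b_j)$ and your identification of the $\|X_s\|_H\cdot\|B(X_s)\|_{HS}^2$ term as the source of the third moment match the paper's argument exactly.
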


\begin{proof}
Throughout this proof 
let $ \mathbb{U} \subseteq U $ be an orthonormal basis of $ U $
and let 
$ B^b \in C( H, H_{ - \vartheta / 2 } ) $, $ b \in \mathbb{U} $,
be the functions which satisfy for all 
$ b \in \mathbb{U} $, 
$ v \in H $
that
$
  B^b( v ) 
  =
  B( v ) \, b
$.
Next observe that for all $ t \in [0,T] $
it holds $ \P $-a.s.\ that
$
  P_I( X_t )
  = 
    e^{ A t } P_I( \xi )
  + 
    \int_0^t e^{ A ( t - s ) } P_I F( X_s ) \, ds
  + 
    \int_0^t e^{ A ( t - s ) } P_I B( X_s ) \, dW_s 
$.
The mild It\^{o} formula 
in Corollary~2 in Da Prato et al.~\cite{DaPratoJentzenRoeckner2012} 
hence yields that 
% for all $ I \in \grid(\set) $ that
%
\begin{equation}
\begin{split}
&
  \ES\big[\varphi( X_T )\big] - \ES\big[\varphi( P_I( X_T ) )\big] 
% \\ & 
  = \ES\big[\varphi( e^{AT}\xi )\big] - \ES\big[\varphi( e^{ A T } P_I( \xi ) )\big]
\\
&
  + \int^T_0 \ES\!\left[ \varphi'( e^{ A(T-t) } X_t ) \, e^{ A (T-t) } F(X_t) \right] 
%   dt 
% \\
% &
  - 
%   \int^T_0 
  \ES\!\left[ \varphi'( e^{ A(T-t) } P_I( X_t ) ) \, e^{ A (T-t) } P_I F( X_t ) \right] dt
\\
&
  + 
  \frac{ 1 }{ 2 }
  \sum\limits_{ b \in \mathbb{U} }
  \int^T_0 
    \ES\!\left[\varphi''( e^{ A ( T - t ) } X_t ) ( e^{ A (T - t) } B^b( X_t ) , e^{ A (T - t) } B^b( X_t ) ) \right] 
  dt
\\
&
  - 
  \frac{ 1 }{ 2 }
  \sum\limits_{ b \in \mathbb{U} }
  \int^T_0 
    \ES\!\left[
      \varphi''( e^{ A (T - t) } P_I( X_t ) ) 
      (
        e^{ A (T - t) } P_I B^b( X_t ) , 
        e^{ A ( T - t ) } P_I B^b( X_t ) 
      )
    \right]
  dt
  .
\end{split}\label{weakreg_1}
\end{equation}
Next observe that Lemma~\ref{lem:aux} implies that
%the fact 
%that for all $ r \in [0, 1] $ it holds that
%$
%  \sup_{ t \in (0,\infty) } 
%  \left\|
%    ( - t A )^r
%    e^{ A t }
%  \right\|_{ L(H) }
%\leq 
%  \sup_{ x \in (0,\infty) 
%  }
%  \left[
%  \frac{
%    x^r
%  }{
%    e^x
%  }
%  \right]
%\leq
%  \left[ 
%    \frac{ r }{ e }
%  \right]^r
%\leq
%  1
%$
%implies that
%
\begin{equation}
\label{weakreg_2}
  \left|
    \ES\big[\varphi( e^{ A T } \xi )\big] 
    - 
    \ES\big[\varphi( e^{ A T } P_I( \xi ) )\big]
  \right|
\leq
  \frac{
    | \varphi |_{ \operatorname{Lip}^0( H, \R ) }
    \,
    \ES\big[\| \xi \|_H\big]
    \,
    \|
      P_{ \set \backslash I } 
    \|_{ L( H, H_{ - \rho } ) } 
  }{
    T^\rho
  }
  .
\end{equation}
Inequality~\eqref{weakreg_2}
provides us a bound for the first difference on the right hand 
side of~\eqref{weakreg_1}.
In the next step we bound
the second difference on the right hand side of~\eqref{weakreg_1}.
For this observe that 
for all $ x \in H $, $ t \in [0,T) $ 
it holds that
\begin{equation}
\label{eq:F_term1}
\begin{split}
&
  \left|
    \left[ 
      \varphi'( e^{ A (T - t) } x ) - 
      \varphi'( e^{ A (T - t) } P_I( x ) ) 
    \right] 
    e^{ A (T - t ) } 
    F( x )
  \right|
\\ & \leq
  \frac{
    | \varphi |_{ \operatorname{Lip}^1( H, \R ) }
    \,
    \| 
      P_{ \set \backslash I }
    \|_{
      L( H, H_{ - \rho } ) 
    }
    \,
    \| x \|_H
    \,
    \| F( x ) \|_{ H_{ - \vartheta } }
  }{
    ( T - t )^{ ( \rho + \vartheta ) }
  }
\end{split}
\end{equation}
and
\begin{equation}
\label{eq:F_term2}
\begin{split}
&
  \left|\varphi'( e^{ A (T - t) } P_I( x ) )\big( 
    [ \operatorname{Id}_H - P_I ] \, e^{ A ( T - t ) } F(x)
  \big)\right|
% \\ & 
\leq
  \frac{
    \left| \varphi \right|_{
      \operatorname{Lip}^0( H, \R ) 
    }
    \|
      P_{ \set \backslash I }
    \|_{
      L( H, H_{ - \rho } ) 
    }
    \,
    \| F( x ) \|_{ H_{ - \vartheta } }
  }{
    ( T - t )^{ ( \rho + \vartheta ) }
  }
  .
\end{split}
\end{equation}
Combining~\eqref{eq:F_term1} and~\eqref{eq:F_term2}
proves that
\begin{equation}
\begin{split}
&
  \left|
    \int^T_0 
      \ES\!\left[
        \varphi'( e^{A ( T - t ) } X_t ) \,
        e^{ A ( T - t ) } 
        F( X_t )
      \right] 
    dt 
    - 
    \int^T_0 
      \ES\!\left[
        \varphi'( e^{ A ( T - t ) } P_I( X_t ) ) \,
        e^{ A ( T - t ) } 
        P_I F( X_t )
      \right] 
    dt 
  \right|
\\ & \leq 
  \tfrac{
    T^{ (1 - \rho - \vartheta ) }
    \,
    \| P_{ \set \backslash I } \|_{
      L( H , H_{ - \rho } )
    }
    \,
    \sup_{ t \in [0,T] }
    \E\big[
      \| X_t \|_H \,
      \| F( X_t ) \|_{ H_{ - \vartheta } }
      \left|
        \varphi
      \right|_{
        \operatorname{Lip}^1( H, \R ) 
      }
      +
      \| F( X_t ) \|_{ H_{ - \vartheta } }
      \left|
        \varphi
      \right|_{
        \operatorname{Lip}^0( H, \R ) 
      }
    \big]
  }{
    \left( 1 - \rho - \vartheta \right)
  }
\\ & \leq 
  \tfrac{
    T^{ (1 - \rho - \vartheta ) }
    \,
    \| P_{ \set \backslash I } \|_{
      L( H , H_{ - \rho } )
    } \,
      \left\|
        \varphi
      \right\|_{
        \operatorname{Lip}^1( H, \R ) 
      }
    \,
    \sup_{ t \in [0,T] }
    \max\left\{
      \E\,\left[
        \| X_t \|_H \,
        \| F( X_t ) \|_{ H_{ - \vartheta } }
      \right]
      ,
      \E\,\left[
        \| F( X_t ) \|_{ H_{ - \vartheta } }
      \right]
    \right\}
  }{
    \left( 1 - \rho - \vartheta \right)
  }
\\ & \leq 
  \frac{
    T^{ (1 - \rho - \vartheta ) }
    \,
    \| P_{ \set \backslash I } \|_{
      L( H , H_{ - \rho } )
    }
    \,
    \|
      \varphi
    \|_{
      \operatorname{Lip}^1( H, \R ) 
    }
    \,
    \| F \|_{ 
      \operatorname{Lip}^0( H, H_{ - \vartheta } ) 
    }
    \,
    \max\{ 
      1 ,
      \sup_{ t \in [0,T] }
      \E\,[ \| X_t \|^2_H ]
    \}
  }{
    \left( 1 - \rho - \vartheta \right)
  }
  .
\end{split}\label{weakreg_3}
\end{equation}
Inequality~\eqref{weakreg_3}
provides us a bound for the second difference on the right hand 
side of~\eqref{weakreg_1}.
Next we bound the third difference on the right hand side 
of~\eqref{weakreg_1}.
To this end note that
for all $ x \in H $, $ t \in [0,T) $ 
it holds that
\begin{equation}
\label{eq:B_term1}
\begin{split}
&
  \left|
    \smallsum\limits_{ b \in \mathbb{U} }
    \left[ 
      \varphi''( e^{ A ( T - t ) } x ) - 
      \varphi''( e^{ A ( T - t ) } P_I( x ) ) 
    \right] 
    \!
    (
      e^{ A ( T - t ) } 
      B^b( x ) , 
      e^{ A ( T - t ) }
      B^b(x)
    )
  \right|
\\ & \leq
  \frac{
    \left| \varphi \right|_{ 
      \operatorname{Lip}^2( H, \R ) 
    }
    \| B( x ) \|^2_{ HS( U, H_{ - \nicefrac{ \vartheta }{ 2 } } ) }
    \,
    \| x \|_H
    \,
    \|
      P_{ \set \backslash I } 
    \|_{
      L( H, H_{ - \rho } ) 
    }
  }{
    ( T - t )^{ ( \rho + \vartheta ) } 
  }
\end{split}
\end{equation}
and
\begin{equation}
\label{eq:B_term2}
\begin{split}
&
  \left|\smallsum\limits_{ b\in\mathbb{U} }\varphi''( e^{A(T-t)}P_I( x ) )
    (
      [ \operatorname{Id}_H + P_I ] e^{ A (T-t) } B^b(x)
      ,
      [ \operatorname{Id}_H - P_I ] e^{ A (T-t) } B^b(x)
    )
  \right|
\\ & \leq 
  \frac{
    2 \, 
    | \varphi |_{ 
      \operatorname{Lip}^1( H, \R ) 
    }
    \,
    \| B( x ) \|^2_{ HS( U, H_{ - \nicefrac{ \vartheta }{ 2 } } ) }
    \,
    \|
      P_{ \set \backslash I } 
    \|_{
      L( 
        H, H_{ - \rho } 
      ) 
    }
  }{
    ( T - t )^{ ( \rho + \vartheta ) }
  }
  .
\end{split}
\end{equation}
Combining~\eqref{eq:B_term1} and~\eqref{eq:B_term2} proves that
\begin{equation}
\begin{split}
& 
  \Bigg|
  \frac{ 1 }{ 2 }
  \sum\limits_{ b \in \mathbb{U} }
  \int^T_0 
    \ES\!\left[
      \varphi''( e^{ A (T-t) } X_t ) 
      (
        e^{ A (T-t) } B^b( X_t ) ,
        e^{ A (T-t) } B^b( X_t ) 
      )
    \right]
  dt
\\ &
  - \frac{ 1 }{ 2 }
  \sum\limits_{ b \in \mathbb{U} }
  \int^T_0 
  \ES\!\left[
    \varphi''( e^{ A (T-t) } P_I( X_t ) ) 
    (
      e^{ A (T-t) } 
      P_I B^b(X_t) , 
      e^{ A (T-t) } P_I B^b(X_t)
    )
  \right] dt
  \Bigg|
\\ & \leq   
  \tfrac{   
    T^{ (1 - \rho - \vartheta) }
  \,
  \|
    P_{ \set \backslash I } 
  \|_{
    L( H, H_{ - \rho } ) 
  }
  \,
  \| \varphi\|_{
    \operatorname{Lip}^2( H, \R ) 
  }
    \,
  \sup_{ t \in [0,T] }
  \max\!
  \big\{
    \E\big[
      \| X_t \|_H
      \| B( X_t ) \|_{ 
        HS( 
          U, 
          H_{ 
            - \vartheta / 2 
          } 
        ) 
      }^2
    \big]
    ,\,
    \E\big[
      \| B( X_t ) \|_{ 
        HS( 
          U, 
          H_{ 
            - \vartheta / 2 
          } 
        ) 
      }^2
    \big]
  \big\}
  }{
    ( 1 - \rho - \vartheta) 
  }
\\ & \leq   
  \frac{   
    T^{ (1 - \rho - \vartheta) }
  \,
  \|
    P_{ \set \backslash I } 
  \|_{
    L( H, H_{ - \rho } ) 
  }
  \,
  \| \varphi\|_{
    \operatorname{Lip}^2( H, \R ) 
  }
    \,
    \| B \|^{2}_{
      \operatorname{Lip}^0( H,
        HS( U, H_{ - \vartheta / 2 } ) 
      )
    }
    \,
    \max\{ 
      1 ,
      \sup_{ t \in [0,T] }
      \E\big[
        \| X_t \|^3_H
      \big]
    \}
  }{
    ( 1 - \rho - \vartheta) 
  }
  .
\end{split}\label{weakreg_4}
\end{equation}
Combining~\eqref{weakreg_1}, \eqref{weakreg_2}, \eqref{weakreg_3}, and~\eqref{weakreg_4} 
finally proves that 
\begin{equation}
\begin{split}
&
  \left|
    \ES\big[ 
      \varphi( X_T )
    \big]
    -
    \ES\big[ 
      \varphi( P_I( X_T ) )
    \big]
  \right|
\leq
  \| \varphi \|_{ 
    \operatorname{Lip}^2(H,\R) 
  }
  \,
    \max\!\left\{ 
      1 ,
      \sup\nolimits_{ t \in [0,T] }
      \E\,\big[
        \| X_t \|^3_H
      \big]
    \right\}
\\ & 
  \cdot
  \left[ 
    \frac{ 1 }{ T^{ \rho } }
    +
    \frac{
      T^{ ( 1 - \rho - \vartheta ) }
      \big[ 
        \| F \|_{
          \operatorname{Lip}^0( H, H_{ - \vartheta } )
        }
        +
        \| B \|^{{2}}_{
          \operatorname{Lip}^0( H,
            HS( U, H_{ - \vartheta / 2 } ) 
          )
        }
      \big]
    }{
      \left( 1 - \rho - \vartheta \right)
    }
  \right]
  \|P_{\set\backslash I}\|_{L(H,H_{-\rho})}
  .
\end{split}
\end{equation}
This finishes the proof of Proposition \ref{prop:weak_regle0}.
\end{proof}

\section{Weak convergence for Galerkin approximations of SEEs with mollified nonlinearities}
\label{sec:weak_reg}

In this section we establish weak convergence rates for \emph{Galerkin approximations of SEEs
with mollified nonlinearities}; see Corollary~\ref{cor:weak_reg}, Corollary~\ref{cor:weak_reg2}, and Corollary~\ref{cor:mollified.weak.rate.simplified} below.
Roughly speaking, in the framework of Section~\ref{sec:general_setting} we establish in Corollary~\ref{cor:weak_reg} below explicit upper bounds for the weak approximation error 
\begin{equation}
\label{eq:mollified.weak.error}
  \left|
    \ES\big[ 
      \varphi( X^\set_T )
    \big]
    -
    \ES\big[ 
      \varphi( X_T^I )
    \big]
  \right|,
\end{equation}
where $I\subseteq\mathbb{H}$ is a set,
where $\varphi\colon H\to\R$ is a four times continuously Fr\'{e}chet differentiable function with globally bounded derivatives, 
and where 
$X^{\mathbb{H}}\colon[0,T]\times\Omega\to H$ 
and
$X^I\colon[0,T]\times\Omega\to P_I(H)$
are appropriate mild solution processes of the SEEs in~\eqref{eq:Galerkin}.
Here, $X^I\colon[0,T]\times\Omega\to P_I(H)$ is a spectral Galerkin approximation of $X^{\mathbb{H}}\colon[0,T]\times\Omega\to H$.
We prove Corollary~\ref{cor:weak_reg} by using a decomposition of the weak approximation error as in~\eqref{eq:triangle_intro} in Section~\ref{sec:sketch_proof} above. 
Corollary~\ref{cor:weak_reg} is then an immediate consequence of the triangle inequality, of Lemma~\ref{weak_regle0} below, and of Lemma~\ref{weak_regle} below.
In the proof of Corollary~\ref{cor:weak_reg2} we further estimate the right hand side of inequality~\eqref{eq:weak_reg_estimate} in Corollary~\ref{cor:weak_reg} to obtain a more explicit upper bound for~\eqref{eq:mollified.weak.error} and the right hand side of~\eqref{eq:weak_reg_estimate} in Corollary~\ref{cor:weak_reg}, respectively.
Corollary~\ref{cor:weak_reg2}, in particular, enables us to prove 
inequality~\eqref{eq:estimate_no_mollifying} in the introduction.
In Section~\ref{sec:weak_irregular} below we will use Corollary~\ref{cor:weak_reg2} to establish  
weak convergence rates for \emph{Galerkin approximations of SEEs with ``non-mollified'' nonlinearities}.

\subsection{Regularity properties for solutions of infinite dimensional Kolmogorov equations in Hilbert spaces}

\begin{lemma}
	\label{lem:Kolmogorov}
	Assume the setting in Section~\ref{sec:general_setting},
	let 
	$
	\varphi \in C^4_b( H, \R)
	$, 
	$
	F \in C^4_b(H,H)
	$, 
	$ 
	B \in C^4_b(H,HS(U,H))
	$, 
	let 
	$ X^x \colon [0,T] \times \Omega \to H $, 
	$ x \in H $,
	be 
	$
	( \mathcal{F}_t )_{ t \in [0,T] }
	$-predictable stochastic processes
	which satisfy for all 
	$ x \in H $
	that
	$
	\sup_{ t \in [0,T] }
	\ES\big[\| X^x_t \|^4_H\big] 
	$
	$
	< \infty
	$ 
	and which satisfy that
	for all 
	$ x \in H $, 
	$ t \in [0,T] $
	it holds $ \P $-a.s.\ that
	\begin{equation}
	\label{eq:SEE.kolmogorov}
	X^x_t =
	e^{ A t } x
	+
	\int_0^t
	e^{ A (t - s) }
	F( 
	X^x_s 
	)
	\,
	ds
	+
	\int_0^t
	e^{ A (t - s) }
	B( 
	X^x_s 
	)
	\,
	dW_s
	,
	\end{equation}
	and let 
	$\phi\colon[0,T]\times H \to \R$ 
	be the function which satisfies for all 
	$t\in[0,T]$, 
	$x\in H$
	that
	$
	\phi(t,x)
	=
	\ES[\varphi(X^x_t)]
	$.
	Then
	\begin{enumerate}[(i)]
		\item
		\label{item:kolmogorov.diff}
		it holds for all $t\in[0,T]$ that 
		$
		(H \ni x \mapsto \phi(t,x) \in \R)
		\in C^4_b(H,\R)
		$
		and
		\item
		\label{item:c.delta}
		it holds for all 
		$ k \in \{ 1, 2, 3, 4 \} $, 
		$ \delta_1, \dots, \delta_k \in (-\nicefrac{1}{2},0] $
		with
		$
		\sum^k_{i=1} \delta_i
		> -\nicefrac{1}{2}
		$ 
		that 
		\label{item:kolmogorov.c.delta}
		\begin{equation}
		\label{eq:c.delta.constant}
		\begin{split} 
		&
		\sup_{
			t \in (0,T]
		}
		\sup_{ 
			x \in H
		}
		\sup_{ 
			v_1, \dots, v_k \in H \backslash \{ 0 \}
		}
		\left[
		\frac{
			\big|
			( 
			\frac{ 
				\partial^k
			}{
				\partial x^k
			}
			\phi
			)( t, x )( v_1, \dots, v_k )
			\big|
		}{
			t^{ 
				(
				\delta_1 + \ldots + \delta_k
				) 
			}
			\left\| v_1 \right\|_{ H_{ \delta_1 } }
			\cdot
			\ldots
			\cdot
			\left\| v_k \right\|_{ H_{ \delta_k } }
		}
		\right]
		< \infty
		.
		\end{split}
		\end{equation}
	\end{enumerate}
\end{lemma}
\begin{proof}
	Observe that~\eqref{eq:SEE.kolmogorov} together with items~(iii) \& (vii) of Theorem~3.3 in Andersson et al.~\cite{AnderssonHefterJentzenKurniawan2016} 
	(with
	$T=T$,
	$\eta=0$,
	$H=H$,
	$U=U$,
	$V=\R$,
	$W=W$,
	$A=A$,
	$n=4$,
	$\varphi=\varphi$,
	$F=F$,
	$B=B$,
	$k=k$,
	$\delta_1=-\delta_1,\ldots,\delta_k=-\delta_k$,
	$\alpha=0$,
	$\beta=0$
	for 
	$ 
	(\delta_1, \dots, \delta_k) \in 
	\{(x_1,\ldots,x_k)\in(-\nicefrac{1}{2},0]^k\colon
	\sum^k_{i=1} x_i > -\nicefrac{1}{2}
	\} 
	$, 
	$ k \in \{ 1, 2, 3, 4 \} $
	in the notation of Theorem~3.3 in~\cite{AnderssonHefterJentzenKurniawan2016})
	establishes items~\eqref{item:kolmogorov.diff}--\eqref{item:kolmogorov.c.delta} above.
	The proof of Lemma~\ref{lem:Kolmogorov} is thus completed.
\end{proof}

\noindent In the following we add some comments to Lemma~\ref{lem:Kolmogorov}.
Lemma~\ref{lem:Kolmogorov} is used in the proof of Lemma~\ref{weak_regle} below to establish essentially sharp weak convergence rates.
As demonstrated above in the proof of Lemma~\ref{lem:Kolmogorov}, Lemma~\ref{lem:Kolmogorov} is an immediate consequence of Theorem~3.3 in Andersson et al.~\cite{AnderssonHefterJentzenKurniawan2016}.
Theorem~3.3 in Andersson et al.~\cite{AnderssonHefterJentzenKurniawan2016}, in particular, establishes a similar result as Lemma~\ref{lem:Kolmogorov} but under the more general hypothesis that there exists a natural number $n\in\N$ such that $F$ and $B$ are $n$-times continuously Fr\'{e}chet differentiable with globally bounded derivatives.
However, in the proof of Lemma~\ref{weak_regle} below we merely employ estimates of the form~\eqref{eq:c.delta.constant} for the first four derivatives of the generalized solution
$
  \phi(t,x)
  =
  \ES[\varphi(X^x_t)]
$, 
$(t,x)\in[0,T]\times H$, 
of the Kolmogorov equation associated to~\eqref{eq:SEE.kolmogorov} and, therefore, we restrict ourselves in Lemma~\ref{lem:Kolmogorov} above to the case $n=4$.
Results related to~\eqref{eq:c.delta.constant} can, e.g., be found in Debussche~\cite[Lemmas~4.4--4.6]{Debussche2011} and in Wang \& Gan~\cite[Lemma~3.3]{WangGan2013WeakHeatAdditiveNoise}.
In particular, very roughly speaking, Lemmas~4.4--4.5 in~\cite{Debussche2011} establish~\eqref{eq:c.delta.constant} for all 
$\delta_1,\delta_k\in(-\nicefrac{1}{2},0]$, $k\in\{1,2\}$ 
without the constraint that $\delta_1+\delta_2 > - \nicefrac{1}{2}$
but under the additional assumption~\eqref{intro_eq:affine}.
Moreover, very roughly speaking, Lemma~3.3 in~\cite{WangGan2013WeakHeatAdditiveNoise} establishes~\eqref{eq:c.delta.constant} for all 
$\delta_1,\delta_k\in(-1,0]$, $k\in\{1,2\}$ 
with the constraint that $\delta_1+\delta_2 > - 1$ in the case of additive noise.
Note that condition~\eqref{intro_eq:affine} is obviously satisfied in the case of additive noise.
Next we briefly present the idea of the proof of Lemma~\ref{lem:Kolmogorov} above and of items~(iii) \& (vii) of Theorem~3.3 in Andersson et al.~\cite{AnderssonHefterJentzenKurniawan2016}, respectively. We first combine Vitali's convergence theorem with repeated applications of the chain rule from calculus (cf.\ Andersson et al.~\cite[Lemma~2.1, (77), and~(100)]{AnderssonHefterJentzenKurniawan2016}) to obtain explicit formulas for the higher order space derivatives of $\phi$ (cf.~Andersson et al.~\cite[Item~(v) of Theorem~3.3]{AnderssonHefterJentzenKurniawan2016})
in terms of higher order derivatives of the test function $\varphi$ and in terms of higher order derivative processes associated to~\eqref{eq:SEE.kolmogorov}. 
Thereafter, we employ H\"{o}lder's inequality and suitable estimates for the higher order derivative processes associated to~\eqref{eq:SEE.kolmogorov} from Andersson et al.~\cite[Item~(ii) of Theorem~2.1]{AnderssonJentzenKurniawan2016a} (cf.\ Andersson et al.~\cite[(60), (101), and~(103)]{AnderssonHefterJentzenKurniawan2016}).
The next result, Lemma~\ref{lem:gen.H1.solution} below, is an elementary lemma which provides sufficient conditions for mild solutions of SEEs to be strong solutions.

\begin{lemma}
	\label{lem:gen.H1.solution}
	Consider the notation in Section~\ref{sec:notation}, 
	let 
	$ ( H, \langle \cdot, \cdot \rangle_H, \left\| \cdot \right\|_H ) $
	and
	$ ( U, \langle \cdot, \cdot \rangle_U, \left\| \cdot \right\|_U ) $
	be separable $ \R $-Hilbert spaces,
	let $ T \in (0,\infty) $, $ p\in[2,\infty) $, 
	let 
	$ ( \Omega, \mathcal{F}, \P, ( \mathcal{F}_t )_{ t \in [0,T] } ) $
	be a stochastic basis,
	let $ ( W_t )_{ t \in [0,T] } $ be an $ \operatorname{Id}_U $-cylindrical
	$ ( \Omega, \mathcal{F}, \P, ( \mathcal{F}_t )_{ t \in [0,T] } ) $-Wiener process,
	let 
	$ A \colon D(A) \subseteq H \to H $ be a generator of a strongly continuous analytic semigroup with $\operatorname{spectrum}(A)\subseteq\{z\in\mathbb{C}\colon\operatorname{Re}(z)<0\}$,
	let
	$  
	  ( 
	    H_r 
	    ,
	    \langle \cdot, \cdot \rangle_{ H_r }
	    ,$ $
	    \left\| \cdot \right\|_{ H_r } 
	  )
	$,
	$ r \in \R $,
	be a family of interpolation spaces associated to $ - A $,
	let 
	$\xi\in\lpn{p}{\P}{H_1}$, 
	let
	$ X \colon [0,T] \times \Omega \to H $, 
	$ Y \colon [0,T] \times \Omega \to H_1 $, 
	and 
	$ Z \colon [0,T] \times \Omega \to HS(U,H_1) $
	be 
	$
	( \mathcal{F}_t )_{ t \in [0,T] }
	$-predictable stochastic processes
	which satisfy that
	$
	\int^T_0
	\ES\big[
	  \|Y_s\|^p_{H_1}
	  +
	  \|Z_s\|^p_{HS(U,H_1)}
	\big]
	\, ds
	< \infty
	$
	and which satisfy that
	for all $ t \in [0,T] $
	it holds $ \P $-a.s.\ that
	$
	  \int^t_0
	  \|e^{A(t-s)}Y_s\|_H
	  +
	  \|e^{A(t-s)}Z_s\|^2_{HS(U,H)}
	  \, ds
	  < \infty
	$
	and
	\begin{equation}
	\label{eq:mild.strong.SEE}
	X_t =
	e^{ A t } \xi
	+
	\int_0^t
	e^{ A (t - s) }
	Y_s
	\,
	ds
	+
	\int_0^t
	e^{ A (t - s) }
	Z_s
	\,
	dW_s
	.
	\end{equation}
	Then
	\begin{enumerate}[(i)]
		\item
		\label{item:general.H1.mild.integral}
		it holds that
		\begin{equation}
		\label{eq:general.H1.mild.integral}
		\sup_{t\in[0,T]}
		\int^t_0
		\ES\big[
		\|
		e^{A(t-s)}Y_s
		\|^p_{H_1}
		+
		\|e^{A(t-s)}Z_s\|^p_{HS(U,H_1)}
		\big]
		\, ds
		< \infty,
		\end{equation}
		\item
		\label{item:general.H1.reg}
		it holds for all 
		$t\in[0,T]$ 
		that 
		\begin{equation}
		\P\big(X_t\in H_1\big)=1
		,
		\end{equation}
		\item
		\label{item:gen.H1.solution.apriori}
		it holds that 
		\begin{equation}
		\sup_{t\in[0,T]}
		\ES\big[\|X_t \, \mathbbm{1}_{H_1}(X_t)\|^p_{H_1}\big]
		< \infty
		,
		\end{equation}
		\item
		\label{item:gen.H1.space.time.continuous}
		it holds for all $t\in[0,T]$ that 
		\begin{equation}
		  \limsup_{[0,T]\ni s \to t}
		  \|X_s \, \mathbbm{1}_{H_1}(X_s)-X_t \, \mathbbm{1}_{H_1}(X_t)\|_{\lpn{p}{\P}{H_1}}
		  =0,
		\end{equation}
		\item
		\label{item:integral.makes.sense}
		it holds that 
		\begin{equation}
		\!\!\!
		\P\!\left(
		\int^T_0
		\|AX_s\|_{H_{-1}}
		+
		\|A(X_s \mathbbm{1}_{H_1}(X_s))\|_H
		+
		\|Y_s\|_{H_1}
		+
		\|Z_s\|^2_{HS(U,H_1)}
		\, ds
		< \infty
		\right)
		=1,
		\end{equation}
		and
		\item
		\label{item:general.H1.solution}
		for all $t\in[0,T]$ it holds $\P$-a.s.\ that 
		\begin{equation}
		\begin{split}
		  X_t
		  &=\xi
		  +
		  \int^t_0
		  A\big(X_s \mathbbm{1}_{H_1}(X_s)\big) + Y_s
		  \, ds
		  +
		  \int^t_0
		  Z_s
		  \, dW_s
		  \\&=\xi
		  +
		  \int^t_0
		  AX_s + Y_s
		  \, ds
		  +
		  \int^t_0
		  Z_s
		  \, dW_s
		  .
		\end{split}
		\end{equation}
	\end{enumerate}
\end{lemma}
	\begin{proof}
	Throughout this proof 
	let $\tstep{N}{t}\in[0,T]$, $t\in[0,T]$, $N\in\N$, 
	be the real numbers which satisfy for all $N\in\N$, $t\in[0,T]$ that 
	$\tstep{N}{t}=\frac{t}{N}$, 
	let 
	$\floor{\cdot}{N}{t}\colon\R\to\R$, $t\in[0,T]$, $N\in\N$, 
	be the functions which satisfy for all $N\in\N$, $t\in[0,T]$, $s\in\R$
	that
	\begin{equation}
	  \floor{s}{N}{t}=
	  \max((-\infty,s]\cap\{0,-\tstep{N}{t},\tstep{N}{t},-2\tstep{N}{t},2\tstep{N}{t},\ldots\}),
	\end{equation}
	let $\Co\in[0,\infty)$ and $\Cr{r}\in[0,\infty)$, $r\in[0,1]$, be the real numbers which satisfy for all $r\in[0,1]$ that 
	\begin{equation}
	\Co=\sup_{t\in[0,T]}\|e^{At}\|_{L(H)}
	\qquad\text{and}\qquad
	\Cr{r}=\sup_{t\in[0,T]}\|(-tA)^{-r}(e^{At}-\operatorname{Id}_H)\|_{L(H)}
	\end{equation}
	(cf., e.g., \cite[Lemma~11.36]{rr93}), 
	and let 
	$\mathcal{X}\colon[0,T]\times\Omega\to H_1$ be the $
	( \mathcal{F}_t )_{ t \in [0,T] }
	$-predictable stochastic process which satisfies for all $t\in[0,T]$ that
	\begin{equation}
	\label{eq:X.H1.version}
	\mathcal{X}_t=X_t \mathbbm{1}_{H_1}(X_t).
	\end{equation}
%	Note that, e.g., Lemma~2.2 in Andersson et al.~\cite{AnderssonJentzenKurniawan2016arXiv} 
%	(with 
%	$V_0=H$,
%	$V_1=H_1$
%	in the notation of Lemma~2.2 in~\cite{AnderssonJentzenKurniawan2016arXiv})
%	ensures that $H_1\in\mathcal{B}(H)$.
%	This and the assumption that 
%	$ X \colon [0,T] \times \Omega \to H $
%	is $(\mathcal{B}([0,T])\otimes\mathcal{F})$/$\mathcal{B}(H)$-measurable assure that $\mathcal{X}\colon[0,T]\times\Omega\to H_1$ is
%	$(\mathcal{B}([0,T])\otimes\mathcal{F})$/$\mathcal{B}(H_1)$-measurable.
	Observe that for all $t\in[0,T]$ it holds that 
		\begin{equation}
		\int^t_0
		\ES\big[
		\|
		e^{A(t-s)}Y_s
		\|^p_{H_1}
		+
		\|e^{A(t-s)}Z_s\|^p_{HS(U,H_1)}
		\big]
		\, ds
		\leq
		|\Co|^p
		\int^t_0
		\ES\big[
		\|
		Y_s
		\|^p_{H_1}
		+
		\|Z_s\|^p_{HS(U,H_1)}
		\big]
		\, ds.
		\end{equation}
	Therefore, we obtain that 
		\begin{equation}
		\begin{split}
		&
		\sup_{t\in[0,T]}
		\int^t_0
		\ES\big[
		\|
		e^{A(t-s)}Y_s
		\|^p_{H_1}
		+
		\|e^{A(t-s)}Z_s\|^p_{HS(U,H_1)}
		\big]
		\, ds
		\\&\leq
		|\Co|^p
		\int^T_0
		\ES\big[
		\|
		Y_s
		\|^p_{H_1}
		+
		\|Z_s\|^p_{HS(U,H_1)}
		\big]
		\, ds
		< \infty.
		\end{split}
		\end{equation}
	This establishes item~\eqref{item:general.H1.mild.integral}.
	Moreover, Jensen's inequality and the assumption that $p\geq 2$ 
	ensure that 
	\begin{equation}
	\begin{split}
	&
	  \int^T_0
	  \ES\big[
	  \|
	  Y_t
	  \|_{H_1}
	  \big]
	  \, dt
	=
	  T \, \bigg[
	  \frac{1}{T}
	  \int^T_0
	  \ES\Big[
	  \big(
	  \|
	  Y_t
	  \|^p_{H_1}
	  \big)^{\nicefrac{1}{p}}
	  \Big]
	  \, dt
	  \bigg]
	\\&\leq
	  T \, 
	  \bigg[
	  \frac{1}{T}
	  \int^T_0
	  \ES\big[
	  \|
	  Y_t
	  \|^p_{H_1}
	  \big]
	  \, dt
	  \bigg]^{\nicefrac{1}{p}}
	=
	T^{(1-\nicefrac{1}{p})}
	  \bigg[
	  \int^T_0
	  \ES\big[
	  \|
	  Y_t
	  \|^p_{H_1}
	  \big]
	  \, dt
	  \bigg]^{\nicefrac{1}{p}}
	\end{split}
	\end{equation}
	and
	\begin{equation}
	\begin{split}
	&
	  \int^T_0
	  \ES\big[
	  \|
	  Z_t
	  \|^2_{HS(U,H_1)}
	  \big]
	  \, dt
	=
	  T \, \bigg[
	  \frac{1}{T}
	  \int^T_0
	  \ES\Big[
	  \big(
	  \|
	  Z_t
	  \|^p_{HS(U,H_1)}
	  \big)^{\nicefrac{2}{p}}
	  \Big]
	  \, dt
	  \bigg]
	\\&\leq
	  T \, 
	  \bigg[
	  \frac{1}{T}
	  \int^T_0
	  \ES\big[
	  \|
	  Z_t
	  \|^p_{HS(U,H_1)}
	  \big]
	  \, dt
	  \bigg]^{\nicefrac{2}{p}}
	=
	T^{(1-\nicefrac{2}{p})}
	  \bigg[
	  \int^T_0
	  \ES\big[
	  \|
	  Z_t
	  \|^p_{HS(U,H_1)}
	  \big]
	  \, dt
	  \bigg]^{\nicefrac{2}{p}}
	  .
	\end{split}
	\end{equation}
	Hence, we obtain that 
	\begin{equation}
	\label{eq:integral.without.exponent}
	  \int^T_0
	  \ES\big[
	  \|Y_t\|_{H_1}
	  +
	  \|
	  Z_t
	  \|^2_{HS(U,H_1)}
	  \big]
	  \, dt
	  < \infty.
	\end{equation}
	This ensures that for all $t\in[0,T]$ it holds that 
	\begin{equation}
\begin{split}
		&
		\int^t_0
		\ES\big[
		\|
		e^{A(t-s)}Y_s
		\|_{H_1}
		+
		\|e^{A(t-s)}Z_s\|^2_{HS(U,H_1)}
		\big]
		\, ds
		\\&\leq
		\int^t_0
		\Co \,
		\ES\big[
		\|
		Y_s
		\|_{H_1}
		\big]
		+
		|\Co|^2\,
		\ES\big[
		\|Z_s\|^2_{HS(U,H_1)}
		\big]
		\, ds
		< \infty.
\end{split}
	\end{equation}
	This implies that for all $t\in[0,T]$ it holds that 
	\begin{equation}
	  \P\!\left(
		\int^t_0
		\|
		e^{A(t-s)}Y_s
		\|_{H_1}
		+
		\|e^{A(t-s)}Z_s\|^2_{HS(U,H_1)}
		\, ds
	  < \infty
	  \right)
	  =1.
	\end{equation}
	 This, \eqref{eq:mild.strong.SEE}, and the assumption that 
	 $\xi\in\lpn{p}{\P}{H_1}$
	 prove item~\eqref{item:general.H1.reg}.
	Item~\eqref{item:general.H1.reg} and~\eqref{eq:mild.strong.SEE} show that for all $t\in[0,T]$ it holds $\P$-a.s.\ that 
	\begin{equation}
	\label{eq:version.mild.strong.SEE}
	\mathcal{X}_t = X_t =
	e^{ A t } \xi
	+
	\int_0^t
	e^{ A (t - s) }
	Y_s
	\,
	ds
	+
	\int_0^t
	e^{ A (t - s) }
	Z_s
	\,
	dW_s
	.
	\end{equation}
	This and the Burkholder-Davis-Gundy type inequality 
	in Lemma~7.7 in Da Prato \& Zabczyk~\cite{dz92} imply that for all 
	$t\in[0,T]$ it holds that 
	\begin{equation}
	\begin{split}
	\|\mathcal{X}_t\|_{\lpn{p}{\P}{H_1}}
	&\leq
	\|e^{At}\xi\|_{\lpn{p}{\P}{H_1}}
	+
	\int^t_0
	\|e^{A(t-s)}Y_s\|_{\lpn{p}{\P}{H_1}}
	\, ds
	\\&\quad+
	\bigg[\frac{p \, (p-1)}{2}
	\int^t_0
	\|e^{A(t-s)}Z_s\|^2_{\lpn{p}{\P}{HS(U,H_1)}}
	\, ds
	\bigg]^{\nicefrac{1}{2}}
	.
	\end{split}
	\end{equation}
	H\"{o}lder's inequality hence shows that for all $t\in[0,T]$ it holds that 
	\begin{equation}
	\begin{split}
	\|\mathcal{X}_t\|_{\lpn{p}{\P}{H_1}}
	&\leq
	\Co \,
	\|\xi\|_{\lpn{p}{\P}{H_1}}
	+
	t^{(1-\nicefrac{1}{p})}
	\bigg[
	\int^t_0
	\ES\big[
	\|e^{A(t-s)}Y_s\|^p_{H_1}
	\big]
	\, ds
	\bigg]^{\nicefrac{1}{p}}
	\\&\quad+
	\bigg\{\frac{p \, (p-1) \, t^{(1-\nicefrac{2}{p})}}{2}
	\bigg[
	\int^t_0
	\ES\big[
	\|e^{A(t-s)}Z_s\|^p_{HS(U,H_1)}
	\big]
	\, ds
	\bigg]^{\nicefrac{2}{p}}
	\bigg\}^{\nicefrac{1}{2}}
	.
	\end{split}
	\end{equation}
	This and item~\eqref{item:general.H1.mild.integral} assure that 
	\begin{equation}
	\begin{split}
	&
	\sup_{t\in[0,T]}
	\|\mathcal{X}_t\|_{\lpn{p}{\P}{H_1}}
	\\&\leq
	\Co \,
	\|\xi\|_{\lpn{p}{\P}{H_1}}
	+
	T^{(1-\nicefrac{1}{p})}
	\bigg[
	\sup_{t\in[0,T]}
	\int^t_0
	\ES\big[
	\|e^{A(t-s)}Y_s\|^p_{H_1}
	\big]
	\, ds
	\bigg]^{\nicefrac{1}{p}}
	\\&\quad+
	\bigg\{\frac{p \, (p-1) \, T^{(1-\nicefrac{2}{p})}}{2}
	\bigg[
	\sup_{t\in[0,T]}
	\int^t_0
	\ES\big[
	\|e^{A(t-s)}Z_s\|^p_{HS(U,H_1)}
	\big]
	\, ds
	\bigg]^{\nicefrac{2}{p}}
	\bigg\}^{\nicefrac{1}{2}}
	< \infty
	.
	\end{split}
	\end{equation}
	This establishes item~\eqref{item:gen.H1.solution.apriori}.
	Next note that for all $s,t\in[0,T]$ with $s\leq t$ it holds $\P$-a.s.\ that 
	\begin{equation}
	\begin{split}
	&
	\int^t_0
	e^{A(t-r)} Y_r
	\, dr
	-
	\int^s_0
	e^{A(s-r)} Y_r
	\, dr
	=
	\int^t_0
	\big(
	e^{A(t-r)}-\mathbbm{1}_{[0,s]}(r) \, e^{A\max\{s-r,0\}}
	\big) \, Y_r
	\, dr.
	\end{split}
	\end{equation}
	This and H\"{o}lder's inequality show that for all
	$t,\tau\in[0,T]$ it holds that 
	\begin{equation}
	\label{eq:drift.int.diff}
	\begin{split}
	&
	\bigg\|
	\int^t_0
	e^{A(t-s)} Y_s
	\, ds
	-
	\int^\tau_0
	e^{A(\tau-s)} Y_s
	\, ds
	\bigg\|_{\lpn{p}{\P}{H_1}}
	\\&\leq
	\int^{\max\{t,\tau\}}_0
	\big\|
	\big(
	e^{A(\max\{t,\tau\}-s)}-\mathbbm{1}_{[0,\min\{t,\tau\}]}(s)\,e^{A\max\{\min\{t,\tau\}-s,0\}}
	\big) \, Y_s
	\big\|_{\lpn{p}{\P}{H_1}}
	\, ds
	\\&\leq
	T^{(1-\nicefrac{1}{p})}
	\bigg[
	\int^{\max\{t,\tau\}}_0
	\ES\Big[
	\big\|
	\big(
	e^{A(\max\{t,\tau\}-s)}
	\\&\quad
	-\mathbbm{1}_{[0,\min\{t,\tau\}]}(s)\,e^{A\max\{\min\{t,\tau\}-s,0\}}
	\big) \, Y_s
	\big\|^p_{H_1}
	\Big]
	\, ds
	\bigg]^{\nicefrac{1}{p}}
	\\&=
	T^{(1-\nicefrac{1}{p})}
	\bigg[
	\int^T_0
	\ES\Big[
	\mathbbm{1}_{[0,\max\{t,\tau\}]}(s) \,
	\big\|
	\big(
	e^{A\max\{\max\{t,\tau\}-s,0\}}
	\\&\quad
	-\mathbbm{1}_{[0,\min\{t,\tau\}]}(s)\,e^{A\max\{\min\{t,\tau\}-s,0\}}
	\big) \, Y_s
	\big\|^p_{H_1}
	\Big]
	\, ds
	\bigg]^{\nicefrac{1}{p}}
	.
	\end{split}
	\end{equation}
	Moreover, observe that for all 
	$s,t,\tau\in[0,T]$ it holds that 
	\begin{equation}
	\label{eq:drift.int.dominated}
	\begin{split}
	&
	\mathbbm{1}_{[0,\max\{t,\tau\}]}(s) \,
	\big\|
	\big(
	e^{A\max\{\max\{t,\tau\}-s,0\}}-\mathbbm{1}_{[0,\min\{t,\tau\}]}(s)\,e^{A\max\{\min\{t,\tau\}-s,0\}}
	\big) \, Y_s
	\big\|_{H_1}
	\\&\leq
	2 \Co \, \|Y_s\|_{H_1}.
	\end{split}
	\end{equation}
	Next note that for all $t\in[0,\infty)$, $v\in H_1$ it holds that 
	\begin{equation}
	\label{eq:semigroup.strong.continuity}
	\limsup_{[0,\infty)\ni s\to t} \|(e^{At}-e^{As})v\|_{H_1}=0.
	\end{equation}
	Combining~\eqref{eq:drift.int.diff}--\eqref{eq:semigroup.strong.continuity} with Lebesgue's theorem of dominated convergence
	and the assumption that 
	$
	\int^T_0
	\ES\big[
	  \|Y_s\|^p_{H_1}
	\big]
	\, ds
	< \infty
	$
	yields that for all $t\in[0,T]$ it holds that 
	\begin{equation}
	\label{eq:SEE.int.cont}
	\begin{split}
	&
	\limsup_{[0,T]\ni \tau \to t}
	\bigg\|
	\int^t_0
	e^{A(t-s)} Y_s
	\, ds
	-
	\int^\tau_0
	e^{A(\tau-s)} Y_s
	\, ds
	\bigg\|_{\lpn{p}{\P}{H_1}}
	=0.
	\end{split}
	\end{equation}
	In the next step note that for all $s,t\in[0,T]$ with $s\leq t$
	it holds $\P$-a.s.\ that 
	\begin{equation}
	\begin{split}
	&
	\int^t_0
	e^{A(t-r)} Z_r
	\, dW_r
	-
	\int^s_0
	e^{A(s-r)} Z_r
	\, dW_r
	\\&=
	\int^t_0
	\big(
	e^{A(t-r)}-\mathbbm{1}_{[0,s]}(r) \, e^{A\max\{s-r,0\}}
	\big) \, Z_r
	\, dW_r.
	\end{split}
	\end{equation}
	This, H\"{o}lder's inequality, and the Burkholder-Davis-Gundy type inequality 
	in Lemma~7.7 in Da Prato \& Zabczyk~\cite{dz92} show that for all
	$t,\tau\in[0,T]$ it holds that 
	\begin{equation}
	\label{eq:diff.int.diff}
	\begin{split}
	&
	\bigg\|
	\int^t_0
	e^{A(t-s)} Z_s
	\, dW_s
	-
	\int^\tau_0
	e^{A(\tau-s)} Z_s
	\, dW_s
	\bigg\|_{\lpn{p}{\P}{H_1}}
	\\&\leq
	\bigg[
	\frac{p \, (p-1)}{2}
	\bigg]^{\nicefrac{1}{2}} \,
	\bigg[
	\int^{\max\{t,\tau\}}_0
	\big\|
	\big(
	e^{A(\max\{t,\tau\}-s)}
	\\&\quad
	-\mathbbm{1}_{[0,\min\{t,\tau\}]}(s)\,e^{A\max\{\min\{t,\tau\}-s,0\}}
	\big) \, Z_s
	\big\|^2_{\lpn{p}{\P}{HS(U,H_1)}}
	\, ds
	\bigg]^{\nicefrac{1}{2}}
	\\&\leq
	\bigg[
	\frac{p \, (p-1) \, T^{(1-\nicefrac{2}{p})}}{2}
	\bigg]^{\nicefrac{1}{2}} \,
	\bigg[
	\int^{\max\{t,\tau\}}_0
	\ES\Big[\big\|
	\big(
	e^{A(\max\{t,\tau\}-s)}
	\\&\quad
	-\mathbbm{1}_{[0,\min\{t,\tau\}]}(s)\,e^{A\max\{\min\{t,\tau\}-s,0\}}
	\big) \, Z_s
	\big\|^p_{HS(U,H_1)}\Big]
	\, ds
	\bigg]^{\nicefrac{1}{p}}
	\\&=
	\bigg[
	\frac{p \, (p-1) \, T^{(1-\nicefrac{2}{p})}}{2}
	\bigg]^{\nicefrac{1}{2}} \,
	\bigg[
	\int^T_0
	\ES\Big[\mathbbm{1}_{[0,\max\{t,\tau\}]}(s)\,\big\|
	\big(
	e^{A\max\{\max\{t,\tau\}-s,0\}}
	\\&\quad
	-\mathbbm{1}_{[0,\min\{t,\tau\}]}(s)\,e^{A\max\{\min\{t,\tau\}-s,0\}}
	\big) \, Z_s
	\big\|^p_{HS(U,H_1)}\Big]
	\, ds
	\bigg]^{\nicefrac{1}{p}}.
	\end{split}
	\end{equation}
	Moreover, observe that for all 
	$s,t,\tau\in[0,T]$ it holds that 
	\begin{equation}
	\label{eq:diff.int.dominated}
	\begin{split}
	&
	\mathbbm{1}_{[0,\max\{t,\tau\}]}(s)\,
	\big\|
	\big(
	e^{A\max\{\max\{t,\tau\}-s,0\}}-\mathbbm{1}_{[0,\min\{t,\tau\}]}(s)\,e^{A\max\{\min\{t,\tau\}-s,0\}}
	\big) \, Z_s
	\big\|_{HS(U,H_1)}
	\\&\leq
	2\Co \, \|Z_s\|_{HS(U,H_1)}.
	\end{split}
	\end{equation}
	In addition, note that~\eqref{eq:semigroup.strong.continuity} and Lebesgue's theorem of dominated convergence ensure that for all 
	$t\in[0,\infty)$, 
	$B\in HS(U,H_1)$
	and all orthonormal bases $\mathbb{U}\subseteq U$ of $U$ it holds that 
	\begin{equation}
	  \limsup_{[0,\infty)\ni s \to t}
	  \|(e^{At}-e^{As})B\|^2_{HS(U,H_1)}
	  =
	  \limsup_{[0,\infty)\ni s \to t}
	  \Bigg[
	  \sum_{u\in\mathbb{U}}
	  \|(e^{At}-e^{As})Bu\|^2_{H_1}
	  \Bigg]
	  =0.
	\end{equation}
	Therefore, we obtain for all $t\in[0,T]$, $B\in HS(U,H_1)$ that 
	\begin{equation}
	  \limsup_{[0,T]\ni s \to t}
	  \|(e^{At}-e^{As})B\|^p_{HS(U,H_1)}
	  =0.
	\end{equation}
	Combining~\eqref{eq:diff.int.diff} with~\eqref{eq:diff.int.dominated}, Lebesgue's theorem of dominated convergence, 
	and the assumption that 
	$
	\int^T_0
	\ES\big[
	  \|Z_s\|^p_{HS(U,H_1)}
	\big]
	\, ds
	< \infty
	$
	hence yields that for all $t\in[0,T]$ it holds that 
	\begin{equation}
	\label{eq:SEE.int.cont.II}
	\begin{split}
	\limsup_{[0,T]\ni \tau \to t}
	\bigg\|
	\int^t_0
	e^{A(t-s)} Z_s
	\, dW_s
	-
	\int^\tau_0
	e^{A(\tau-s)} Z_s
	\, dW_s
	\bigg\|_{\lpn{p}{\P}{H_1}}
	=0.
	\end{split}
	\end{equation}
	In addition, note that~\eqref{eq:semigroup.strong.continuity} and Lebesgue's theorem of dominated convergence ensure that for all $t\in[0,T]$ it holds that 
	\begin{equation}
	\limsup_{[0,T]\ni s \to t}
	\|(e^{At}-e^{As})\xi\|_{\lpn{p}{\P}{H_1}}
	=0.
	\end{equation}
	Combining this, \eqref{eq:SEE.int.cont}, and~\eqref{eq:SEE.int.cont.II} with~\eqref{eq:version.mild.strong.SEE} establishes item~\eqref{item:gen.H1.space.time.continuous}.
	Next note that items~\eqref{item:general.H1.reg}--\eqref{item:gen.H1.solution.apriori} imply that 
	\begin{equation}
	\begin{split}
	  &
	  \int^T_0
	  \ES\big[\|A\mathcal{X}_s\|_H\big]
	  +
	  \ES\big[\|AX_s\|_{H_{-1}}\big]
	  \, ds
	  =
	  \int^T_0
	  \ES\big[\|\mathcal{X}_s\|_{H_1}\big]
	  +
	  \ES\big[\|X_s\|_H\big]
	  \, ds
	  \\&=
	  \int^T_0
	  \ES\big[\|\mathcal{X}_s\|_{H_1}\big]
	  +
	  \ES\big[\|\mathcal{X}_s\|_H\big]
	  \, ds
	  \leq
	  (1+\|A^{-1}\|_{L(H)})
	  \int^T_0
	  \ES\big[\|\mathcal{X}_s\|_{H_1}\big]
	  \, ds
	  \\&\leq
	  T \, (1+\|A^{-1}\|_{L(H)})
	  \sup_{t\in[0,T]}
	  \ES\big[\|\mathcal{X}_t\|_{H_1}\big]
	  < \infty.
	\end{split}
	\end{equation}
	Combining this with~\eqref{eq:integral.without.exponent} yields that 
		\begin{equation}
		\P\!\left(
		\int^T_0
		\|AX_s\|_{H_{-1}} +\|A\mathcal{X}_s\|_H 
		+
		 \|Y_s\|_{H_1}
		+
		\|Z_s\|^2_{HS(U,H_1)}
		\, ds
		< \infty
		\right)
		=1.
		\end{equation}
		This proves item~\eqref{item:integral.makes.sense}.
	It thus remains to establish item~\eqref{item:general.H1.solution}.
	For this 
	let $\mathbb{X}\colon[0,T]\times\Omega\to H$ be a stochastic process which satisfies that for all $t\in[0,T]$ it holds $\P$-a.s.\ that 
	\begin{equation}
	\label{eq:tildeX}
	  \mathbb{X}_t
	  =
	  X_t
	  -\xi
	  -\int^t_0 Y_s \, ds
	  -\int^t_0 Z_s \, dW_s.
	\end{equation}
	Observe that for all $N\in\N$, $t\in(0,T]$ it holds $\P$-a.s.\ that 
	\begin{equation}
	\begin{split}
	\mathbb{X}_t
	&=
	\sum^{N-1}_{n=0}
	\big(\mathbb{X}_{(n+1)\tstep{N}{t}}-\mathbb{X}_{n\tstep{N}{t}}\big)
	=
	\sum^{N-1}_{n=0}
	\int^{(n+1)\tstep{N}{t}}_{n\tstep{N}{t}}
	\bigg(
	\frac{\mathbb{X}_{(n+1)\tstep{N}{t}}-\mathbb{X}_{n\tstep{N}{t}}}{\tstep{N}{t}}
	\bigg)
	\, ds
	\\&=
	\sum^{N-1}_{n=0}
	\int^{(n+1)\tstep{N}{t}}_{n\tstep{N}{t}}
	\bigg(
	\frac{\mathbb{X}_{\floor{s}{N}{t}+\tstep{N}{t}}-\mathbb{X}_{\floor{s}{N}{t}}}{\tstep{N}{t}}
	\bigg)
	\, ds
	\\&=
	\int^t_0
	\bigg(
	\frac{\mathbb{X}_{\floor{s}{N}{t}+\tstep{N}{t}}-\mathbb{X}_{\floor{s}{N}{t}}}{\tstep{N}{t}}
	\bigg)
	\, ds.
	\end{split}
	\end{equation}
	This implies that for all $t\in(0,T]$ it holds that 
	\begin{equation}
	\label{eq:strong.diff}
	\begin{split}
	&
	\bigg\|
	  \mathbb{X}_t-\int^t_0 A\mathcal{X}_s \, ds
	\bigg\|_{\lpn{p}{\P}{H}}
	\\&\leq
	\liminf_{\N\ni N\to\infty}
	\bigg(
	\int^t_0
	\bigg\|
	\frac{\mathbb{X}_{\floor{s}{N}{t}+\tstep{N}{t}}-\mathbb{X}_{\floor{s}{N}{t}}}{\tstep{N}{t}}
	-
	A\mathcal{X}_s
	\bigg\|_{\lpn{p}{\P}{H}}
	\, ds
	\bigg)
	\\&\leq
	\liminf_{\N\ni N\to\infty}
	\bigg(
	\int^t_0
	\bigg\|
	\frac{\mathbb{X}_{\floor{s}{N}{t}+\tstep{N}{t}}-\mathbb{X}_{\floor{s}{N}{t}}}{\tstep{N}{t}}
	-
	A\mathcal{X}_{\floor{s}{N}{t}}
	\bigg\|_{\lpn{p}{\P}{H}}
	\, ds
	\bigg)
	\\&\quad+
	\liminf_{\N\ni N\to\infty}
	\bigg(
	\int^t_0
	\big\|
	A(\mathcal{X}_{\floor{s}{N}{t}}-\mathcal{X}_s)
	\big\|_{\lpn{p}{\P}{H}}
	\, ds
	\bigg)
	.
	\end{split}
	\end{equation}
	Next note that items~\eqref{item:gen.H1.solution.apriori}--\eqref{item:gen.H1.space.time.continuous} and Lebesgue's theorem of dominated convergence ensure that for all $t\in(0,T]$ it holds that
	\begin{equation}
	\label{eq:dom.time.cont}
	\limsup_{\N\ni N\to\infty}
	\bigg(
	\int^t_0
	\big\|
	A(\mathcal{X}_{\floor{s}{N}{t}}-\mathcal{X}_s)
	\big\|_{\lpn{p}{\P}{H}}
	\, ds
	\bigg)
	=0.
	\end{equation}
	Moreover, observe that item~\eqref{item:general.H1.reg} and~\eqref{eq:mild.strong.SEE} imply that for all $s,t\in[0,T]$ with $s\leq t$ it holds $\P$-a.s.\ that 
	\begin{equation}
	\begin{split}
	  X_t
	  &=e^{A(t-s)} X_s
	  +
	  \int^t_s
	  e^{A(t-r)} Y_r
	  \, dr
	  +
	  \int^t_s
	  e^{A(t-r)} Z_r
	  \, dW_r
	  \\&=e^{A(t-s)} \mathcal{X}_s
	  +
	  \int^t_s
	  e^{A(t-r)} Y_r
	  \, dr
	  +
	  \int^t_s
	  e^{A(t-r)} Z_r
	  \, dW_r.
	\end{split}
	\end{equation}
	This and item~\eqref{item:general.H1.reg} show that for all $N\in\N$, 
	$s,t\in(0,T]$ with $s < t$
	it holds $\P$-a.s.\ that 
	\begin{equation}
	\begin{split}
	&
	  \frac{\mathbb{X}_{\floor{s}{N}{t}+\tstep{N}{t}}-\mathbb{X}_{\floor{s}{N}{t}} - \tstep{N}{t} \, A\mathcal{X}_{\floor{s}{N}{t}}}{\tstep{N}{t}}
	\\&=
	  \frac{
	    (e^{A\tstep{N}{t}}-\operatorname{Id}_H-\tstep{N}{t}A) \mathcal{X}_{\floor{s}{N}{t}}
	  }{\tstep{N}{t}}
	  +
	  \int^{\floor{s}{N}{t}+\tstep{N}{t}}_{\floor{s}{N}{t}}
	  \frac{(e^{A(\floor{s}{N}{t}+\tstep{N}{t}-r)}-\operatorname{Id}_H)}{\tstep{N}{t}} \, Y_r
	  \, dr
	\\&\quad+
	  \int^{\floor{s}{N}{t}+\tstep{N}{t}}_{\floor{s}{N}{t}}
	  \frac{(e^{A(\floor{s}{N}{t}+\tstep{N}{t}-r)}-\operatorname{Id}_H)}{\tstep{N}{t}} \, Z_r
	  \, dW_r.
	\end{split}
	\end{equation}
	This yields that for all $N\in\N$, $t\in(0,T]$ it holds that 
	\begin{equation}
	\label{eq:strong.decompose}
	\begin{split}
	&
	\int^t_0
	\bigg\|
	\frac{\mathbb{X}_{\floor{s}{N}{t}+\tstep{N}{t}}-\mathbb{X}_{\floor{s}{N}{t}}}{\tstep{N}{t}}
	-
	A\mathcal{X}_{\floor{s}{N}{t}}
	\bigg\|_{\lpn{p}{\P}{H}}
	\, ds
	\\&\leq
	\int^t_0
	\bigg\|\frac{(e^{A\tstep{N}{t}}-\operatorname{Id}_H-\tstep{N}{t}A) \mathcal{X}_{\floor{s}{N}{t}}}{\tstep{N}{t}}\bigg\|_{\lpn{p}{\P}{H}}
	\, ds
	\\&\quad+
	\int^t_0
	  \int^{\floor{s}{N}{t}+\tstep{N}{t}}_{\floor{s}{N}{t}}
	  \frac{\|(e^{A(\floor{s}{N}{t}+\tstep{N}{t}-r)}-\operatorname{Id}_H)Y_r\|_{\lpn{p}{\P}{H}}}{\tstep{N}{t}}
	\, dr
	\, ds
	\\&\quad+
	\int^t_0
	\bigg\|
	  \int^{\floor{s}{N}{t}+\tstep{N}{t}}_{\floor{s}{N}{t}}
	  \frac{(e^{A(\floor{s}{N}{t}+\tstep{N}{t}-r)}-\operatorname{Id}_H)}{\tstep{N}{t}} \, Z_r
	  \, dW_r
	\bigg\|_{\lpn{p}{\P}{H}}
	\, ds
	.
	\end{split}
	\end{equation}
	Next note that for all $N\in\N$, $s,t\in(0,T]$ with $s\leq t$ it holds that
	\begin{equation}
	\label{eq:fixed.tie.decomposition}
	\begin{split}
	&
	\bigg\|
	\frac{(e^{A\tstep{N}{t}}-\operatorname{Id}_H-\tstep{N}{t}A) \mathcal{X}_{\floor{s}{N}{t}}}{\tstep{N}{t}}
	\bigg\|_{\lpn{p}{\P}{H}}
	\\&\leq
	\bigg\|
	\frac{(e^{A\tstep{N}{t}}-\operatorname{Id}_H-\tstep{N}{t}A) (\mathcal{X}_{\floor{s}{N}{t}}-\mathcal{X}_s)}{\tstep{N}{t}}
	\bigg\|_{\lpn{p}{\P}{H}}
	\\&\quad+
	\bigg\|
	\frac{(e^{A\tstep{N}{t}}-\operatorname{Id}_H-\tstep{N}{t}A) \mathcal{X}_s}{\tstep{N}{t}}
	\bigg\|_{\lpn{p}{\P}{H}}
	\\&\leq
	\bigg\|
	\frac{(e^{A\tstep{N}{t}}-\operatorname{Id}_H) (\mathcal{X}_{\floor{s}{N}{t}}-\mathcal{X}_s)}{\tstep{N}{t}}
	\bigg\|_{\lpn{p}{\P}{H}}
	+
	\|
	\mathcal{X}_{\floor{s}{N}{t}}-\mathcal{X}_s
	\|_{\lpn{p}{\P}{H_1}}
	\\&\quad+
	\bigg\|
	\frac{(e^{A\tstep{N}{t}}-\operatorname{Id}_H-\tstep{N}{t}A) \mathcal{X}_s}{\tstep{N}{t}}
	\bigg\|_{\lpn{p}{\P}{H}}
	\\&\leq
	(\Cr{1}+1) \, 
	\|
	\mathcal{X}_{\floor{s}{N}{t}}-\mathcal{X}_s
	\|_{\lpn{p}{\P}{H_1}}
	+
	\bigg\|
	\frac{(e^{A\tstep{N}{t}}-\operatorname{Id}_H-\tstep{N}{t}A) \mathcal{X}_s}{\tstep{N}{t}}
	\bigg\|_{\lpn{p}{\P}{H}}.
	\end{split}
	\end{equation}
	In addition, observe that the fact that 
	$
	  \forall \, v \in H_1 \colon
	  \limsup_{(0,\infty)\ni h \to 0}
	  \big\|\frac{(e^{Ah}-\operatorname{Id}_H-hA)v}{h}\|_H
	  =0
	$
	assures that for all 
	$s,t\in(0,T]$ with $s\leq t$ it holds that
	\begin{equation}
	\label{eq:initial.term.convergence}
	  \limsup_{\N\ni N\to\infty}
	  \bigg\|
	  \frac{(e^{A\tstep{N}{t}}-\operatorname{Id}_H-\tstep{N}{t}A) \mathcal{X}_s}{\tstep{N}{t}}
	  \bigg\|_H
	  =0.
	\end{equation}
	Next observe that for all $N\in\N$, $s,t\in(0,T]$ with $s\leq t$ it holds that
	\begin{equation}
	\label{eq:initial.term.dominated}
	\begin{split}
	&
		\bigg\|\frac{(e^{A\tstep{N}{t}}-\operatorname{Id}_H-\tstep{N}{t}A) \mathcal{X}_s}{\tstep{N}{t}}\bigg\|_H
	\leq
	(\Cr{1}+1) \, \|\mathcal{X}_s\|_{H_1}.
	\end{split}
	\end{equation}
	This, \eqref{eq:initial.term.convergence}, item~\eqref{item:gen.H1.solution.apriori}, and Lebesgue's theorem of dominated convergence ensure that for all $s,t\in(0,T]$ with $s\leq t$ it holds that 
	\begin{equation}
	\label{eq:convergence.fixed.time}
	\limsup_{\N\ni N\to\infty}
	\bigg\|
	\frac{(e^{A\tstep{N}{t}}-\operatorname{Id}_H-\tstep{N}{t}A) \mathcal{X}_s}{\tstep{N}{t}}
	\bigg\|_{\lpn{p}{\P}{H}}
	=0.
	\end{equation}
	Combining~\eqref{eq:fixed.tie.decomposition} with~\eqref{eq:convergence.fixed.time} and item~\eqref{item:gen.H1.space.time.continuous}
	shows that for all $s,t\in(0,T]$ with $s\leq t$ it holds that 
	\begin{equation}
	\limsup_{\N\ni N\to\infty}
	\bigg\|
	\frac{(e^{A\tstep{N}{t}}-\operatorname{Id}_H-\tstep{N}{t}A) \mathcal{X}_{\floor{s}{N}{t}}}{\tstep{N}{t}}
	\bigg\|_{\lpn{p}{\P}{H}}
	=0.
	\end{equation}
	This, \eqref{eq:initial.term.dominated}, item~\eqref{item:gen.H1.solution.apriori}, and Lebesgue's theorem of dominated convergence yield that for all $t\in(0,T]$ it holds that 
	\begin{equation}
	\label{eq:strong.decompose.initial}
	\begin{split}
	&
	\limsup_{\N\ni N\to\infty}
	\bigg(
	\int^t_0
	  \bigg\|\frac{(e^{A\tstep{N}{t}}-\operatorname{Id}_H-\tstep{N}{t}A) \mathcal{X}_{\floor{s}{N}{t}}}{\tstep{N}{t}}\bigg\|_{\lpn{p}{\P}{H}}
	\, ds
	\bigg)
	=0.
	\end{split}
	\end{equation}
	Furthermore, observe that for all 
	$N\in\N$, $s,t\in(0,T]$ with $s < t$ it holds that 
	\begin{equation}
	\label{eq:drift.diff.int.dominated}
	\begin{split}
	&
	  \int^{\floor{s}{N}{t}+\tstep{N}{t}}_{\floor{s}{N}{t}}
	  \frac{\|(e^{A(\floor{s}{N}{t}+\tstep{N}{t}-r)}-\operatorname{Id}_H)Y_r\|_{\lpn{p}{\P}{H}}}{\tstep{N}{t}}
	\, dr
	\\&\leq
	  \Cr{1}
	  \int^{\floor{s}{N}{t}+\tstep{N}{t}}_{\floor{s}{N}{t}}
	  \frac{(\floor{s}{N}{t}+\tstep{N}{t}-r) \,
	  \|Y_r\|_{\lpn{p}{\P}{H_1}}}{\tstep{N}{t}}
	\, dr
	\\&\leq
	\Cr{1}
	  \int^{\floor{s}{N}{t}+\tstep{N}{t}}_{\floor{s}{N}{t}}
	  \|Y_r\|_{\lpn{p}{\P}{H_1}}
	\, dr
	=
	\Cr{1}
	\int^t_0
	\mathbbm{1}_{(\floor{s}{N}{t},\floor{s}{N}{t}+\tstep{N}{t})}(r) \,
	\|Y_r\|_{\lpn{p}{\P}{H_1}}
	\, dr
	.
	\end{split}
	\end{equation}
	In addition, note that H\"{o}lder's inequality and the assumption that 
	$
	\int^T_0
	\ES\big[
	  \|Y_t\|^p_{H_1}
	  +
	  \|Z_t\|^p_{HS(U,H_1)}
	\big]
	\, dt
	$
	$
	< \infty
	$
	assure that 
	\begin{equation}
	\label{eq:Lp.integral}
	  \int^T_0
	  \|Y_t\|_{\lpn{p}{\P}{H_1}}
	  +
	  \|Z_t\|^2_{\lpn{p}{\P}{HS(U,H_1)}}
	  \, dt
	  < \infty.
	\end{equation}
	This, Lebesgue's theorem of dominated convergence, and~\eqref{eq:drift.diff.int.dominated} imply that for all $t\in(0,T]$ it holds that 
	\begin{equation}
	\label{eq:strong.decompose.drift}
	\begin{split}
	&
	  \limsup_{\N\ni N\to\infty}
	  \bigg(
	  \int^t_0
	  \int^{\floor{s}{N}{t}+\tstep{N}{t}}_{\floor{s}{N}{t}}
	  \frac{\|(e^{A(\floor{s}{N}{t}+\tstep{N}{t}-r)}-\operatorname{Id}_H)Y_r\|_{\lpn{p}{\P}{H}}}{\tstep{N}{t}}
	  \, dr
	  \, ds
	  \bigg)
	\\&\leq
	\Cr{1} \,
	\limsup_{\N\ni N\to\infty}
	\int^t_0
	\int^t_0
	\mathbbm{1}_{(\floor{s}{N}{t},\floor{s}{N}{t}+\tstep{N}{t})}(r) \,
	\|Y_r\|_{\lpn{p}{\P}{H_1}}
	\, dr
	\, ds
	=0.
	\end{split}
	\end{equation}
	Next observe that the Burkholder-Davis-Gundy type inequality 
	in Lemma~7.7 in Da Prato \& Zabczyk~\cite{dz92} shows that for all 
	$N\in\N$, $s,t\in(0,T]$ with $s < t$
	it holds that 
	\begin{equation}
	\begin{split}
	&
	\bigg\|
	  \int^{\floor{s}{N}{t}+\tstep{N}{t}}_{\floor{s}{N}{t}}
	  \frac{(e^{A(\floor{s}{N}{t}+\tstep{N}{t}-r)}-\operatorname{Id}_H)}{\tstep{N}{t}} \, Z_r
	  \, dW_r
	\bigg\|_{\lpn{p}{\P}{H}}
	\\&\leq
	\bigg[
	\frac{p \, (p-1)}{2}
	  \int^{\floor{s}{N}{t}+\tstep{N}{t}}_{\floor{s}{N}{t}}
	  \frac{\|(e^{A(\floor{s}{N}{t}+\tstep{N}{t}-r)}-\operatorname{Id}_H)Z_r\|^2_{\lpn{p}{\P}{HS(U,H)}}}{|\tstep{N}{t}|^2}
	  \, dr
	\bigg]^{\nicefrac{1}{2}}
	\\&\leq
	\bigg[
	\frac{p \, (p-1) \, |\Cr{1}|^2}{2}
	  \int^{\floor{s}{N}{t}+\tstep{N}{t}}_{\floor{s}{N}{t}}
	  \frac{(\floor{s}{N}{t}+\tstep{N}{t}-r)^2 \, \|Z_r\|^2_{\lpn{p}{\P}{HS(U,H_1)}}}{|\tstep{N}{t}|^2}
	  \, dr
	\bigg]^{\nicefrac{1}{2}}
	\\&\leq
	\bigg[
	\frac{p \, (p-1) \, |\Cr{1}|^2}{2}
	  \int^{\floor{s}{N}{t}+\tstep{N}{t}}_{\floor{s}{N}{t}}
	  \|Z_r\|^2_{\lpn{p}{\P}{HS(U,H_1)}}
	  \, dr
	\bigg]^{\nicefrac{1}{2}}
	\\&=
	\bigg[
	\frac{p \, (p-1) \, |\Cr{1}|^2}{2}
	\int^t_0
	\mathbbm{1}_{(\floor{s}{N}{t},\floor{s}{N}{t}+\tstep{N}{t})}(r) \,
	\|Z_r\|^2_{\lpn{p}{\P}{HS(U,H_1)}}
	\, dr
	\bigg]^{\nicefrac{1}{2}}
	.
	\end{split}
	\end{equation}
	H\"{o}lder's inequality hence implies that for all 
	$N\in\N$, $t\in(0,T]$
	it holds that 
	\begin{equation}
	\label{eq:diff.diff.int.dominated}
	\begin{split}
	&
	\int^t_0
	\bigg\|
	\int^{\floor{s}{N}{t}+\tstep{N}{t}}_{\floor{s}{N}{t}}
	\frac{(e^{A(\floor{s}{N}{t}+\tstep{N}{t}-r)}-\operatorname{Id}_H)}{\tstep{N}{t}} \, Z_r
	\, dW_r
	\bigg\|_{\lpn{p}{\P}{H}}
	\, ds
	\\&\leq
	\bigg[\frac{p \, (p-1) \, |\Cr{1}|^2}{2}\bigg]^{\nicefrac{1}{2}}
	\int^t_0
	\bigg[
	\int^t_0
	\mathbbm{1}_{(\floor{s}{N}{t},\floor{s}{N}{t}+\tstep{N}{t})}(r) \,
	\|Z_r\|^2_{\lpn{p}{\P}{HS(U,H_1)}}
	\, dr
	\bigg]^{\nicefrac{1}{2}}
	\, ds
	\\&\leq
	\bigg[\frac{p \, (p-1) \, |\Cr{1}|^2 \, t}{2}
	\int^t_0
	\int^t_0
	\mathbbm{1}_{(\floor{s}{N}{t},\floor{s}{N}{t}+\tstep{N}{t})}(r) \,
	\|Z_r\|^2_{\lpn{p}{\P}{HS(U,H_1)}}
	\, dr
	\, ds
	\bigg]^{\nicefrac{1}{2}}
	.
	\end{split}
	\end{equation}
	This, Lebesgue's theorem of dominated convergence, and~\eqref{eq:Lp.integral} ensure that for all $t\in(0,T]$ it holds that 
	\begin{equation}
	\label{eq:strong.decompose.diff}
	\begin{split}
	&
	  \limsup_{\N\ni N\to\infty}
	\bigg(
	\int^t_0
	\bigg\|
	  \int^{\floor{s}{N}{t}+\tstep{N}{t}}_{\floor{s}{N}{t}}
	  \frac{(e^{A(\floor{s}{N}{t}+\tstep{N}{t}-r)}-\operatorname{Id}_H)}{\tstep{N}{t}} \, Z_r
	  \, dW_r
	\bigg\|_{\lpn{p}{\P}{H}}
	\, ds
	\bigg)
	\\&\leq
	\bigg[\frac{p \, (p-1) \, |\Cr{1}|^2 \, t}{2} \,
	\limsup_{\N\ni N\to\infty}
	\bigg(
	\int^t_0
	\int^t_0
	\mathbbm{1}_{(\floor{s}{N}{t},\floor{s}{N}{t}+\tstep{N}{t})}(r) \,
	\|Z_r\|^2_{\lpn{p}{\P}{HS(U,H_1)}}
	\, dr
	\, ds
	\bigg)
	\bigg]^{\nicefrac{1}{2}}
	\\&=0.
	\end{split}
	\end{equation}
	Putting~\eqref{eq:strong.decompose}, \eqref{eq:strong.decompose.initial}, \eqref{eq:strong.decompose.drift}, and~\eqref{eq:strong.decompose.diff} together yields that for all 
	$t\in(0,T]$ it holds that 
	\begin{equation}
	\limsup_{\N\ni N\to\infty}
	\bigg(
	\int^t_0
	\bigg\|
	\frac{\mathbb{X}_{\floor{s}{N}{t}+\tstep{N}{t}}-\mathbb{X}_{\floor{s}{N}{t}}}{\tstep{N}{t}}
	-
	A\mathcal{X}_{\floor{s}{N}{t}}
	\bigg\|_{\lpn{p}{\P}{H}}
	\, ds
	\bigg)
	=0.
	\end{equation}
	Combining this and~\eqref{eq:dom.time.cont} with~\eqref{eq:strong.diff} shows that for all $t\in(0,T]$ it holds that 
	\begin{equation}
	\bigg\|
	  \mathbb{X}_t-\int^t_0 A\mathcal{X}_s \, ds
	\bigg\|_{\lpn{p}{\P}{H}}
	=0.
	\end{equation}
	This, \eqref{eq:mild.strong.SEE}, and~\eqref{eq:tildeX} imply that for all $t\in[0,T]$ it holds $\P$-a.s.\ that 
		\begin{equation}
		\label{eq:strong.solution.1st.version}
		\begin{split}
		  X_t
		  &=\xi
		  +
		  \int^t_0
		  A\mathcal{X}_s + Y_s
		  \, ds
		  +
		  \int^t_0
		  Z_s
		  \, dW_s
		  .
		\end{split}
		\end{equation}
	Moreover, 	items~\eqref{item:general.H1.reg} \& \eqref{item:integral.makes.sense} imply that for all $t\in[0,T]$
	it holds $\P$-a.s.\ that 
		\begin{equation}
		\begin{split}
		  \xi
		  +
		  \int^t_0
		  A\mathcal{X}_s + Y_s
		  \, ds
		  +
		  \int^t_0
		  Z_s
		  \, dW_s
		  =
		  \xi
		  +
		  \int^t_0
		  AX_s + Y_s
		  \, ds
		  +
		  \int^t_0
		  Z_s
		  \, dW_s
		  .
		\end{split}
		\end{equation}
	This, \eqref{eq:X.H1.version}, and~\eqref{eq:strong.solution.1st.version} establish item~\eqref{item:general.H1.solution}.
	The proof of Lemma~\ref{lem:gen.H1.solution} is thus completed.
	\end{proof}

\noindent The following result, Lemma~\ref{lem:time.derivative} below, can be shown by employing Lemma~\ref{lem:gen.H1.solution} above together with the standard It\^{o} formula in infinite dimensions (cf., e.g., Brze{\'z}niak et al.~\cite[Theorem~2.4]{bvvw08}).

\begin{lemma}
	\label{lem:time.derivative}
	Consider the notation in Section~\ref{sec:notation}, 
	let 
	$ ( H, \langle \cdot, \cdot \rangle_H, \left\| \cdot \right\|_H ) $, 
	$ ( U, \langle \cdot, \cdot \rangle_U, \left\| \cdot \right\|_U ) $, 
	and 
	$ ( V, \langle \cdot, \cdot \rangle_V, \left\| \cdot \right\|_V ) $
	be separable $ \R $-Hilbert spaces,
	let $ T \in (0,\infty) $,
	let 
	$ ( \Omega, \mathcal{F}, \P, 
	$
	$
	( \mathcal{F}_t )_{ t \in [0,T] } ) $
	be a stochastic basis,
	let $ ( W_t )_{ t \in [0,T] } $ be an $ \operatorname{Id}_U $-cylindrical
	$ ( \Omega, \mathcal{F}, \P, ( \mathcal{F}_t )_{ t \in [0,T] } ) $-Wiener process,
	let 
	$ A \colon D(A) \subseteq H \to H $ be a generator of a strongly continuous analytic semigroup with $\operatorname{spectrum}(A)\subseteq\{z\in\mathbb{C}\colon\operatorname{Re}(z)<0\}$,
	let
	$  
	( 
	H_r 
	,
	\langle \cdot, \cdot \rangle_{ H_r }
	,$ $
	\left\| \cdot \right\|_{ H_r } 
	)
	$,
	$ r \in \R $,
	be a family of interpolation spaces associated to $ - A $,
	let 
	$
	F \in \operatorname{Lip}^0(H,H_1)
	$, 
	$ 
	B \in \operatorname{Lip}^0(H,HS(U,H_1))
	$, 
	$
	\varphi \in C^2_b( H, V)
	$, 
	let
	$ X^x \colon 
	[0,T] \times \Omega \to H $, 
	$ x \in H $,
	be 
	$
	( \mathcal{F}_t )_{ t \in [0,T] }
	$-predictable stochastic processes
	which satisfy for all 
	$ x \in H $
	that
	$
	\sup_{ t \in [0,T] }
	\ES\big[\| X^x_t \|^2_H\big] 
	< \infty
	$ 
	and which satisfy that
	for all $ t \in [0,T] $, $ x \in H $
	it holds $ \P $-a.s.\ that
	\begin{equation}
	\label{eq:temporal.SEE}
	X^x_t =
	e^{ A t } x
	+
	\int_0^t
	e^{ A (t - s) }
	F( 
	X^x_s 
	)
	\,
	ds
	+
	\int_0^t
	e^{ A (t - s) }
	B( 
	X^x_s 
	)
	\,
	dW_s
	,
	\end{equation}
	and let 
	$
	u \colon [0,T] \times H \to V  
	$
	be the function which satisfies for all 
	$ t \in [0,T] $, 
	$ x \in H $
	that 
	$
	u( t, x ) = 
	\ES\big[ 
	\varphi( X^x_{ T - t } )
	\big]
	$.
	Then
	\begin{enumerate}[(i)]
		\item 
		\label{item:SEE.H1.regular}
		it holds for all $x\in H_1$, $t\in[0,T]$ that 
		$\P(X^x_t\in H_1)=1$, 
		\item
		\label{item:H1.solution.apriori}
		it holds for all $p\in[2,\infty)$ that 
		\begin{equation}
		\sup_{x\in H_1}
		\sup_{t\in[0,T]}
		\frac{
			\big(\ES\big[\|X^x_t \, \mathbbm{1}_{H_1}(X^x_t)\|^p_{H_1}\big]\big)^{\nicefrac{1}{p}}
		}{\max\{1,\|x\|_{H_1}\}}
		< \infty,
		\end{equation}
		\item
		\label{item:H1.space.time.continuous}
		it holds for all $x\in H_1$, $t\in[0,T]$ that 
		\begin{equation}
		\limsup_{[0,T]\times H_1\ni(s,y)\to(t,x)}
		\ES\big[
		\|X^x_t \, \mathbbm{1}_{H_1}(X^x_t)-X^y_s \, \mathbbm{1}_{H_1}(X^y_s)\|_{H_1}
		\big]
		=0
		,
		\end{equation}
	\item 
	\label{item:time.derivative.exist}
	it holds for all 
	$x\in H_1$ 
	that 
	$
	([0,T] \ni t \mapsto u(t,x) \in V)
	\in C^1([0,T],V)
	$, and
	\item
	\label{item:time.derivative.continuity}
	it holds that
	$
	\big([0,T]\times H_1 \ni(t,x)\mapsto (\tfrac{ \partial }{ \partial t } u)( t, x ) \in V\big)
	\in C([0,T]\times H_1,V)
	$.
	\end{enumerate}
\end{lemma}
\begin{proof}
	Throughout this proof 
	assume w.l.o.g.\ that $H\neq\{0\}$,
	let $ \mathbb{U} \subseteq U $ be an orthonormal basis of $ U $,
	let $\Co\in[0,\infty)$ be the real number given by 
	\begin{equation}
	\Co=\sup_{t\in[0,T]}\|e^{At}\|_{L(H)},
	\end{equation}
	let 
	$
	\mathcal{X}^x \colon [0,T] \times \Omega \to H_1
	$, 
	$ x \in H_1 $, 
	be the 
	$
	( \mathcal{F}_t )_{ t \in [0,T] }
	$-predictable
	stochastic processes which satisfy for all 
	$x\in H_1$,
	$t\in[0,T]$
	that 
	\begin{equation}
	\mathcal{X}^x_t
	=
	X^x_t \, \mathbbm{1}_{H_1}(X^x_t),
	\end{equation}
	and let 
	$\Phi\colon H_1 \to V$
	be the function which satisfies for all 
	$x\in H_1$ that 
	\begin{equation}
	\Phi(x)=
	\varphi'(x)(Ax+F(x))
	+
	\frac{1}{2}\,
	{\smallsum\limits_{b\in\mathbb{U}}}
	\varphi''(x)(B(x)b,B(x)b)
	.
	\end{equation}
	Observe that item~(i) of Corollary~2.10 in Andersson et al.~\cite{AnderssonJentzenKurniawan2016arXiv} 
		(with 
		$H=H$,
		$U=U$,
		$T=T$,
		$\eta=0$,
		$\alpha=0$,
		$\beta=0$,
		$W=W$,
		$A=A$,
		$F=(H \ni x \mapsto F(x)\in H)$, 
		$B=(H \ni x \mapsto (U\ni u \mapsto B(x)u \in H)\in HS(U,H))$ 
		in the notation of Corollary~2.10 in~\cite{AnderssonJentzenKurniawan2016arXiv})
	implies that there exist $(\mathcal{F}_t)_{t\in[0,T]}$-predictable stochastic processes 
	$\mathbb{X}^x\colon[0,T]\times\Omega\to H$, $x\in H$, which satisfy for all $p\in[2,\infty)$, $x\in H$ that 
	$\sup_{t\in[0,T]}\ES\big[\|\mathbb{X}^x_t\|^p_H\big]<\infty$
	and which satisfy that for all $t\in[0,T]$, $x\in H$ it holds $\P$-a.s.\ that 
	\begin{equation}
	\label{eq:higher.moment.SEE}
	\mathbb{X}^x_t =
	e^{ A t } x
	+
	\int_0^t
	e^{ A (t - s) }
	F( 
	\mathbb{X}^x_s 
	)
	\,
	ds
	+
	\int_0^t
	e^{ A (t - s) }
	B( 
	\mathbb{X}^x_s 
	)
	\,
	dW_s
	.
	\end{equation}
	In particular, this implies that for all $x\in H$ it holds that 
	\begin{equation}
	\label{eq:higher.moment.square.integrable}
	\sup_{t\in[0,T]}\ES\big[\|\mathbb{X}^x_t\|^2_H\big]<\infty
	\end{equation}
	Combining this and~\eqref{eq:higher.moment.SEE} with~\eqref{eq:temporal.SEE} shows that for all 
	$x\in H$, $t\in[0,T]$ it holds $\P$-a.s.\ that 
	\begin{equation}
	X^x_t=\mathbb{X}^x_t.
	\end{equation}
	(cf., e.g., Da Prato \& Zabczyk~\cite[Item~(i) of Theorem~7.2]{dz14},
		Da Prato et al.~\cite[Proposition~3]{DaPratoJentzenRoeckner2012}, 
		or Andersson et al.~\cite[Item~(i) of Theorem~2.9]{AnderssonJentzenKurniawan2016arXiv}).
	Therefore, we obtain that for all $p\in[2,\infty)$, $x\in H$, $t\in[0,T]$
	it holds that 
	\begin{equation}
	  \ES\big[\|X^x_t\|^p_H\big]
	  =
	  \ES\big[\|\mathbb{X}^x_t\|^p_H\big]
	  .
	\end{equation}
	This 
%	and the fact that
%	$
%	\forall \, p\in[2,\infty), \, x\in H \colon
%	\sup_{t\in[0,T]}\ES\big[\|\mathbb{X}^x_t\|^p_H\big]<\infty
%	$
	ensures that
	for all 
	$p\in[2,\infty)$, 
	$x\in H$
	it holds that 
	\begin{equation}
	\label{eq:SEE.moments}
	  \sup_{t\in[0,T]}
	  \ES\big[\|X^x_t\|^p_H\big]
	  < \infty.
	\end{equation}
	This, in turn, demonstrates that for all 
	$p\in[2,\infty)$, 
	$x\in H$
	it holds that 
	\begin{equation}
	  \begin{split}
	  &
	  \sup_{t\in[0,T]}
	  \big[
	  \|F(X^x_t)\|_{\lpn{p}{\P}{H_1}}
	  +
	  \|B(X^x_t)\|_{\lpn{p}{\P}{HS(U,H_1)}}
	  \big]
	  \\&\leq
	  \big(
	    \|F\|_{\operatorname{Lip}^0(H,H_1)}
	    +
	    \|B\|_{\operatorname{Lip}^0(H,H_1)}
	  \big)
	  \sup_{t\in[0,T]}
	  \max\big\{
	  1,
	  \|X^x_t\|_{\lpn{p}{\P}{H}}
	  \big\}
	  < \infty.
	  \end{split}
	\end{equation}
	Hence, we obtain that for all 
	$p\in[2,\infty)$, 
	$x\in H$ it holds that 
	\begin{equation}
	\label{eq:strong.integrability}
	  \int^T_0
	  \ES\big[
	  \|F(X^x_t)\|^p_{H_1}+\|B(X^x_t)\|^p_{HS(U,H_1)}
	  \big]
	  \, dt
	  < \infty.
	\end{equation}
	This, \eqref{eq:temporal.SEE}, and items~\eqref{item:general.H1.mild.integral}, \eqref{item:general.H1.reg}, \eqref{item:integral.makes.sense}, and~\eqref{item:general.H1.solution} of Lemma~\ref{lem:gen.H1.solution} imply that 
	\begin{enumerate}[(I)]
		\item 
		it holds for all 
		$p\in[2,\infty)$, 
		$x\in H_1$
		that
		\begin{equation}
		\label{eq:H1.mild.integral.II}
		\begin{split}
		&
		\sup_{t\in[0,T]}
		\int^t_0
		\ES\big[
		\|
		e^{A(t-s)}F(X^x_s)
		\|^p_{H_1}
		+
		\|e^{A(t-s)}B(X^x_s)\|^p_{HS(U,H_1)}
		\big]
		\, ds
		< \infty
		,
		\end{split}
		\end{equation}
		\item\label{item:shrek} it holds for all 
		$x\in H_1$, 
		$t\in[0,T]$
		that 
		\begin{equation}
		\label{eq:H1.modification}
		\P\big(
		X^x_t \in H_1
		\big)
		=
		\P\big(
		\mathcal{X}^x_t = X^x_t
		\big)
		=1,
		\end{equation}
		\item
		it holds for all $x\in H_1$ that 
		\begin{equation}
		\P\!\left(
		\int^T_0
		\|AX^x_s\|_{H_{-1}}
		+
		\|F(X^x_s)\|_{H_1}
		+
		\|B(X^x_s)\|^2_{HS(U,H_1)}
		\, ds
		< \infty
		\right)
		=1,
		\end{equation}
		and
		\item 
		for all 
		$x\in H_1$, $t\in[0,T]$ it holds $\P$-a.s.\ that 
		\begin{equation}
		\label{eq:analytic.strong.solution}
		X^x_t
		=
		x
		+
		\int^t_0
		AX^x_s + F(X^x_s) 
		\, ds
		+
		\int^t_0
		B(X^x_s) 
		\, dW_s
		.
		\end{equation}
	\end{enumerate}
	Observe that item~\eqref{item:shrek} proves item~\eqref{item:SEE.H1.regular}.
	In the next step we combine~\eqref{eq:H1.mild.integral.II}--\eqref{eq:H1.modification} with~\eqref{eq:temporal.SEE} to obtain that for all 
	$x\in H_1$, 
	$t\in[0,T]$
	it holds $\P$-a.s.\ that 
	\begin{equation}
	\label{eq:H1.modification.SEE.I}
	A\mathcal{X}^x_t
	=
	e^{At}Ax
	+
	\int^t_0
	e^{A(t-s)}A F(X^x_s) \, ds
	+
	\int^t_0
	e^{A(t-s)}A B(X^x_s) \, dW_s
	.
	\end{equation}
	The Burkholder-Davis-Gundy type inequality 
	in Lemma~7.7 in Da Prato \& Zabczyk~\cite{dz92} therefore shows
	that for all 
	$p\in[2,\infty)$, 
	$x\in H_1$, $t\in[0,T]$ it holds that 
	\begin{equation}
	\begin{split}
	&
	\|A\mathcal{X}^x_t\|_{\lpn{p}{\P}{H}}
	\\&\leq
	\|e^{At}Ax\|_H
	+
	\int^t_0
	\|e^{A(t-s)}A F(X^x_s)\|_{\lpn{p}{\P}{H}}
	\, ds
	\\&\quad+
	\left[
	\frac{p \, (p-1)}{2}
	\int^t_0
	\|e^{A(t-s)}A B(X^x_s)\|^2_{\lpn{p}{\P}{HS(U,H)}}
	\, ds
	\right]^{\nicefrac{1}{2}}
	\\&\leq
	\Co\,
	\|Ax\|_H
	+
	\int^t_0
	\|e^{A(t-s)}F(X^x_s)\|_{\lpn{p}{\P}{H_1}}
	\, ds
	\\&\quad+
	\left[
	\frac{p \, (p-1)}{2}
	\int^t_0
	\|e^{A(t-s)}B(X^x_s)\|^2_{\lpn{p}{\P}{HS(U,H_1)}}
	\, ds
	\right]^{\nicefrac{1}{2}}
	.
	\end{split}
	\end{equation}
	This, H\"{o}lder's inequality, and~\eqref{eq:H1.mild.integral.II} imply that for all 
	$p\in[2,\infty)$, 
	$x\in H_1$ it holds that 
	\begin{equation}
%	\label{eq:H1.modification.apriori}
	\sup_{t\in[0,T]}
	\|A\mathcal{X}^x_t\|_{\lpn{p}{\P}{H}}
	< \infty.
	\end{equation}
	Therefore, we obtain for all $p\in[2,\infty)$, $x\in H$ that 
	\begin{equation}
	\label{eq:H1.modification.apriori}
	\sup_{t\in[0,T]}
	\|A\mathcal{X}^{A^{-1}x}_t\|_{\lpn{p}{\P}{H}}
	< \infty.
	\end{equation}
	Furthermore, observe that~\eqref{eq:H1.mild.integral.II} and~\eqref{eq:H1.modification} yield that for all 
	$x\in H_1$, 
	$t\in[0,T]$ 
	it holds $\P$-a.s.\ that 
	\begin{equation}
	\int^t_0
	\|e^{A(t-s)}AF(\mathcal{X}^x_s)\|_H
	+
	\|e^{A(t-s)}AB(\mathcal{X}^x_s)\|^2_{HS(U,H)}
	\, ds
	< \infty
	\end{equation}
	and
	\begin{multline}
	\int^t_0
	e^{A(t-s)}AF(\mathcal{X}^x_s)
	\, ds
	+
	\int^t_0
	e^{A(t-s)}AB(\mathcal{X}^x_s)
	\, dW_s
	\\=
	\int^t_0
	e^{A(t-s)}AF(X^x_s)
	\, ds
	+
	\int^t_0
	e^{A(t-s)}AB(X^x_s)
	\, dW_s
	.
	\end{multline}
	This and~\eqref{eq:H1.modification.SEE.I} yield that 
	for all 
	$x\in H_1$, 
	$t\in[0,T]$
	it holds $\P$-a.s.\ that 
	\begin{equation}
%	\label{eq:H1.modification.SEE.II}
	\begin{split}
	A\mathcal{X}^x_t
	&=
	e^{At}Ax
	+
	\int^t_0
	e^{A(t-s)}A F(\mathcal{X}^x_s) \, ds
	+
	\int^t_0
	e^{A(t-s)}A B(\mathcal{X}^x_s) \, dW_s
	\\&=
	e^{At}Ax
	+
	\int^t_0
	e^{A(t-s)}A F(A^{-1}(A\mathcal{X}^x_s)) \, ds
	+
	\int^t_0
	e^{A(t-s)}A B(A^{-1}(A\mathcal{X}^x_s)) \, dW_s
	.
	\end{split}
	\end{equation}
	Hence, we obtain that for all $x\in H$, $t\in[0,T]$ it holds 
	$\P$-a.s.\ that 
	\begin{equation}
	\label{eq:H1.modification.SEE.II}
	\begin{split}
	A\mathcal{X}^{A^{-1}x}_t
	&=
	e^{At}x
	+
	\int^t_0
	e^{A(t-s)}A F(A^{-1}(A\mathcal{X}^{A^{-1}x}_s)) \, ds
	\\&\quad+
	\int^t_0
	e^{A(t-s)}A B(A^{-1}(A\mathcal{X}^{A^{-1}x}_s)) \, dW_s
	.
	\end{split}
	\end{equation}
	Moreover, note that for all $x,y\in H$ it holds that 
	\begin{equation}
	\label{eq:AFA.lip}
	\|AF(A^{-1}x)-AF(A^{-1}y)\|_H
	\leq 
	|F|_{\operatorname{Lip}^0(H,H_1)} \, \|A^{-1}\|_{L(H)} \, \|x-y\|_H
	\end{equation}
	and
	\begin{equation}
	\label{eq:ABA.lip}
	\|AB(A^{-1}x)-AB(A^{-1}y)\|_{HS(U,H)}
	\leq 
	|B|_{\operatorname{Lip}^0(H,HS(U,H_1))} \, \|A^{-1}\|_{L(H)} \, \|x-y\|_H
	.
	\end{equation}
	Combining~\eqref{eq:H1.modification.apriori} and~\eqref{eq:H1.modification.SEE.II}--\eqref{eq:ABA.lip} with items~(i)--(iii) of Corollary~2.10 in Andersson et al.~\cite{AnderssonJentzenKurniawan2016arXiv} 
		(with 
		$H=H$,
		$U=U$,
		$T=T$,
		$\eta=0$,
		$\alpha=0$,
		$\beta=0$,
		$W=W$,
		$A=A$,
		$F=(H \ni x \mapsto AF(A^{-1}x)\in H)$, 
		$B=(H \ni x \mapsto (U\ni u \mapsto A[B(A^{-1}x)u] \in H)\in HS(U,H))$
		in the notation of Corollary~2.10 in~\cite{AnderssonJentzenKurniawan2016arXiv})
	shows that for all $p\in[2,\infty)$ it holds that 
	\begin{equation}
	\label{eq:H1.modification.apriori.bound.0}
	\begin{split}
	\sup_{x\in H}
	\sup_{t\in(0,T]}
	\left[
	\frac{\|A\mathcal{X}^{A^{-1}x}_t\|_{\lpn{p}{\P}{H}}}{
		\max\{1,\|x\|_H\}
	}
	\right]
	< \infty
	\end{split}
	\end{equation}
	and 
	\begin{equation}
	\label{eq:H1.space.continuous.0}
	\sup_{\substack{x,y\in H,\\x\neq y}}
	\sup_{t\in(0,T]}
	\left[
	\frac{\|A\mathcal{X}^{A^{-1}x}_t-A\mathcal{X}^{A^{-1}y}_t\|_{\lpn{p}{\P}{H}}}{\|x-y\|_H}
	\right]
	< \infty
	.
	\end{equation}
	Combining~\eqref{eq:H1.modification.apriori.bound.0} and~\eqref{eq:H1.space.continuous.0} demonstrates that for all 
	$p\in[2,\infty)$ it holds that 
	\begin{equation}
	\label{eq:H1.modification.apriori.bound}
	\begin{split}
	\sup_{x\in H_1}
	\sup_{t\in[0,T]}
	\left[
	\frac{\|\mathcal{X}^x_t\|_{\lpn{p}{\P}{H_1}}}{
		\max\{1,\|x\|_{H_1}\}
	}
	\right]
	\leq
	\sup_{x\in H}
	\sup_{t\in[0,T]}
	\left[
	\frac{\|\mathcal{X}^{A^{-1}x}_t\|_{\lpn{p}{\P}{H_1}}}{
		\max\{1,\|x\|_H\}
	}
	\right]
	< \infty
	\end{split}
	\end{equation}
	and 
	\begin{equation}
	\label{eq:H1.space.continuous}
	\!\!\!
	\sup_{\substack{x,y\in H_1,\\x\neq y}}
	\sup_{t\in[0,T]}
	\left[
	\frac{\|\mathcal{X}^{x}_t-\mathcal{X}^{y}_t\|_{\lpn{p}{\P}{H_1}}}{\|x-y\|_{H_1}}
	\right]
	\leq
	\sup_{\substack{x,y\in H,\\x\neq y}}
	\sup_{t\in[0,T]}
	\left[
	\frac{\|\mathcal{X}^{A^{-1}x}_t-\mathcal{X}^{A^{-1}y}_t\|_{\lpn{p}{\P}{H_1}}}{\|x-y\|_H}
	\right]
	< \infty
	.
	\end{equation}
	Note that item~\eqref{item:H1.solution.apriori} follows from~\eqref{eq:H1.modification.apriori.bound}.
	Moreover, observe that~\eqref{eq:H1.modification.apriori}, \eqref{eq:H1.modification.SEE.II}--\eqref{eq:ABA.lip}, and, e.g., 
	Proposition~3 in Da Prato et al.~\cite{DaPratoJentzenRoeckner2012}
	(with 
	$T=T$, 
	$H=H$, 
	$U=U$, 
	$Q=\operatorname{Id}_U$, 
	$W=W$, 
	$A=A$, 
	$\eta=0$, 
	$\alpha=0$, 
	$\gamma=0$, 
	$F=(H\ni v \mapsto AF(A^{-1}v)\in H)$, 
	$\beta=0$, 
	$B=(H\ni v \mapsto (U\ni u \mapsto A[B(A^{-1}v)u] \in H) \in HS(U,H))$, 
	$p=p$, 
	$\xi=(\Omega\ni\omega\mapsto Ax \in H)$
	for
	$x\in H_1$, $p\in[2,\infty)$
	in the notation of Proposition~3 in~\cite{DaPratoJentzenRoeckner2012}) 
	show that for all $p\in[2,\infty)$, $x\in H_1$ it holds that the function
	\begin{equation}
	\label{eq:Z.time.continuous}
	[0,T]\ni t \mapsto A\mathcal{X}^x_t \in \lpn{p}{\P}{H}
	\end{equation}
	is continuous.
	Next note that H\"{o}lder's inequality shows that for all 
	$s,t\in[0,T]$, 
	$x,y\in H_1$
	it holds that 
	\begin{equation}
	\label{eq:H1.modification.triangle}
	\begin{split}
	&
	\ES\big[\|\mathcal{X}^x_s-\mathcal{X}^y_t\|_{H_1}\big]
	\\&\leq
	\ES\big[\|\mathcal{X}^x_s-\mathcal{X}^y_s\|_{H_1}\big]
	+
	\ES\big[\|\mathcal{X}^y_s-\mathcal{X}^y_t\|_{H_1}\big]
	\\&\leq
	\|\mathcal{X}^x_s-\mathcal{X}^y_s\|_{\lpn{2}{\P}{H_1}}
	+
	\|\mathcal{X}^y_s-\mathcal{X}^y_t\|_{\lpn{2}{\P}{H_1}}
	\\&\leq
	\left[
	\sup_{\substack{u,v\in H_1,\\ u\neq v}}
	\sup_{w\in[0,T]}
	\frac{
		\|\mathcal{X}^u_w-\mathcal{X}^v_w\|_{\lpn{2}{\P}{H_1}}
	}{
		\|u-v\|_{H_1}
	}
	\right]
	\|x-y\|_{H_1}
	+\|\mathcal{X}^y_s-\mathcal{X}^y_t\|_{\lpn{2}{\P}{H_1}}
	.
	\end{split}
	\end{equation}
	Combining~\eqref{eq:H1.space.continuous}--\eqref{eq:H1.modification.triangle} yields that for all 
	$t\in[0,T]$, 
	$x\in H_1$
	it holds that 
	\begin{equation}
	\limsup_{[0,T]\times H_1\ni(s,y)\to(t,x)}
	\ES\big[\|\mathcal{X}^x_t-\mathcal{X}^y_s\|_{H_1}\big]
	=
	0.
	\end{equation}
	This establishes item~\eqref{item:H1.space.time.continuous}.
	In the next step we observe that~\eqref{eq:H1.modification.apriori.bound} ensures that for all 
	$p\in[2,\infty)$, 
	$x\in H_1$
	it holds that 
	\begin{equation}
	\label{eq:H1.Z.integrability}
	\begin{split}
	\int^T_0
	\ES\big[
	\|\mathcal{X}^x_t\|^p_{H_1}
	\big]
	\, dt
	< \infty
	.
	\end{split}
	\end{equation}
	Combining~\eqref{eq:strong.integrability}, \eqref{eq:H1.modification}, \eqref{eq:analytic.strong.solution}, and~\eqref{eq:H1.Z.integrability} with
	the standard It\^{o} formula in infinite dimensions (cf., e.g., Brze{\'z}niak et al.~\cite[Theorem~2.4]{bvvw08}) implies that for all 
	$x\in H_1$, $t\in[0,T]$ it holds $\P$-a.s.\ that 
	\begin{equation}
	\label{eq:ito.formula}
	\varphi(\mathcal{X}^x_t)-\varphi(x)
	=
	\int^t_0
	\Phi(\mathcal{X}^x_s)
	\, ds
	+
	\int^t_0
	\varphi'(\mathcal{X}^x_s) B(\mathcal{X}^x_s)
	\, dW_s
	.
	\end{equation}
	In addition, observe that for all $x\in H_1$ it holds that 
	\begin{equation}
	\label{eq:psi.growth}
	\begin{split}
	\|\Phi(x)\|_V
	&\leq
	|\varphi|_{C^1_b(H,V)} \,
	\big(\|Ax\|_H+\|F(x)\|_H\big)
	+
	\frac{1}{2} \, |\varphi|_{C^2_b(H,V)} \,
	\|B(x)\|^2_{HS(U,H)}
	\\&\leq
	|\varphi|_{C^1_b(H,V)} \,
	\big(\|x\|_{H_1}+\|F\|_{\operatorname{Lip}^0(H,H)}  \max\{1,\|x\|_H\}\big)
	\\&\quad+
	\frac{1}{2} \,
	|\varphi|_{C^2_b(H,V)} \,
	\|B\|^2_{\operatorname{Lip}^0(H,HS(U,H))} 
	\max\{1,\|x\|^2_H\}
	.
	\end{split}
	\end{equation}
	This, \eqref{eq:H1.modification}, \eqref{eq:H1.Z.integrability}, and~\eqref{eq:ito.formula} show that for all 
	$x\in H_1$, 
	$t\in[0,T]$
	it holds that 
	$
	\ES\big[\|\varphi(X^x_t)\|_V+\|\varphi(\mathcal{X}^x_t)\|_V
	+
	\int^t_0
	\|\Phi(\mathcal{X}^x_s)\|_V
	+
	\|\varphi'(\mathcal{X}^x_s) B(\mathcal{X}^x_s)\|^2_{HS(U,V)}
	\, ds
	\big]
	< \infty
	$
	and 
	\begin{equation}
	\label{eq:ito.time.derivative}
	\begin{split}
	\ES\big[\varphi(X^x_t)\big]=
	\ES\big[\varphi(\mathcal{X}^x_t)\big]&=
	\varphi(x)
	+
	\int^t_0
	\ES\big[
	\Phi(\mathcal{X}^x_s)
	\big]
	\, ds
	.
	\end{split}
	\end{equation}
	Moreover, item~\eqref{item:H1.space.time.continuous}, the fact that 
	$\Phi\in C(H_1,V)$, and, e.g., Lemma~2.4 in Cox et al.~\cite{CoxJentzenKurniawanPusnik2016} 
	(with
	$(\Omega,\mathcal{F},\nu)=(\Omega,\mathcal{F},\P)$,
	$E=H_1$,
	$\mathcal{E}=V$,
	$\phi=\Phi$,
	$f_n=(\Omega\ni\omega\mapsto \mathcal{X}^{x_n}_{t_n}(\omega)\in H_1)$ 
	for 
	$n\in\N_0$, 
	$((t_m,x_m))_{m\in\N_0}\in\{((s,y)=((s_m,y_m))_{m\in\N_0}\colon\N_0\to[0,T]\times H_1)\colon \limsup_{\N\ni m\to\infty}|s_m-s_0|+\|y_m-y_0\|_{H_1}=0\}$
	in the notation of Lemma~2.4 in~\cite{CoxJentzenKurniawanPusnik2016}) ensure that for all 
	$((t_n,x_n))_{n\in\N_0}\subseteq [0,T]\times H_1$
	with 
	$
	\limsup_{\N\ni n\to\infty}
	|t_n-t_0|+\|x_n-x_0\|_{H_1}=0
	$
	it holds that 
	\begin{equation}
	\label{eq:psi.prob.continuous}
	\limsup_{\N\ni n\to\infty}
	\ES\big[\!\min\{1,\|
	\Phi(\mathcal{X}^{x_n}_{t_n})
	-
	\Phi(\mathcal{X}^{x_0}_{t_0})
	\|_V\}\big]
	=0
	.
	\end{equation}
	Next note that~\eqref{eq:psi.growth} implies that for all 
	$p\in[2,\infty)$, 
	$x\in H_1$, 
	$t\in[0,T]$
	it holds that
	\begin{equation}
	\begin{split}
	&
	\|\Phi(\mathcal{X}^x_t)\|_{\lpn{p}{\P}{V}}
	\\&\leq
	|\varphi|_{C^1_b(H,V)} \,
	\Big(\|\mathcal{X}^x_t\|_{\lpn{p}{\P}{H_1}}+\|F\|_{\operatorname{Lip}^0(H,H)} \, \|\!\max\{1,\|\mathcal{X}^x_t\|_H\}\|_{\lpn{p}{\P}{\R}}\Big)
	\\&\quad+
	\frac{1}{2} \,
	|\varphi|_{C^2_b(H,V)} \,
	\|B\|^2_{\operatorname{Lip}^0(H,HS(U,H))} \,
	\|\!\max\{1,\|\mathcal{X}^x_t\|^2_H\}\|_{\lpn{p}{\P}{\R}}
	.
	\end{split}
	\end{equation}
	Item~\eqref{item:H1.solution.apriori} hence ensures that for all 
	$p\in[2,\infty)$, 
	$x\in H_1$
	it holds that 
	\begin{equation}
	\begin{split}
	&
	\sup_{\substack{y\in H_1,\\ \|x-y\|_{H_1}\leq 1}}
	\sup_{t\in[0,T]}
	\|\Phi(\mathcal{X}^y_t)\|_{\lpn{p}{\P}{V}}
	\leq
	|\varphi|_{C^1_b(H,V)} \,
	\left[
	\sup_{\substack{y\in H_1,\\ \|x-y\|_{H_1}\leq 1}}
	\max\{1,\|y\|_{H_1}\}
	\right]
	\\&\cdot
	\Bigg(
	\bigg[
	\sup_{y\in H_1}
	\sup_{t\in[0,T]}
	\frac{
		\|\mathcal{X}^y_t\|_{\lpn{p}{\P}{H_1}}
	}{
	\max\{1,\|y\|_{H_1}\}
	}
	\bigg]
	+
	\|F\|_{\operatorname{Lip}^0(H,H)} \,
	\bigg[
	\sup_{y\in H_1}
	\sup_{t\in[0,T]}
	\frac{
		\|\!\max\{1,\|\mathcal{X}^y_t\|_H\}\|_{\lpn{p}{\P}{\R}}
	}{
	\max\{1,\|y\|_{H_1}\}
	}
	\bigg]
	\Bigg)
	\\&+
	\frac{1}{2} \,
	|\varphi|_{C^2_b(H,V)} \,
	\|B\|^2_{\operatorname{Lip}^0(H,HS(U,H))} \,
%	\\&\quad\cdot
	\left[
	\sup_{\substack{y\in H_1,\\ \|x-y\|_{H_1}\leq 1}}
	\max\{1,\|y\|_{H_1}\}
	\right]^2
	\\&\cdot
	\bigg[
	\sup_{y\in H_1}
	\sup_{t\in[0,T]}
	\frac{
		\|\!\max\{1,\|\mathcal{X}^y_t\|_H\}\|_{\lpn{2p}{\P}{\R}}
	}{
	\max\{1,\|y\|_{H_1}\}
	}
	\bigg]^2
	< \infty.
	\end{split}
	\end{equation}
	Combining this, the fact that $\Phi\in C(H_1,V)$, and~\eqref{eq:psi.prob.continuous} with, e.g., Proposition~4.5 in Hutzenthaler et al.~\cite{HutzenthalerJentzenSalimova2016arXiv} 
	(with
	$I=\{\emptyset\}$, 
	$p=p$, 
	$V=V$, 
	$X^n=\big(\{\emptyset\}\times\Omega\ni(\emptyset,\omega)\mapsto
	\Phi(\mathcal{X}^{x_n}_{s_n}(\omega))
	\in V\big)$, 
	$q=1$
	for 
	$n\in\N_0$, 
	$((s_m,x_m))_{m\in\N_0}\in\{((t,y)=((t_m,y_m))_{m\in\N_0}\colon\N_0\to[0,T]\times H_1)\colon \limsup_{\N\ni m\to\infty}|t_m-t_0|+\|y_m-y_0\|_{H_1}=0\}$, 
	$p\in[2,\infty)$
	in the notation of Proposition~4.5 in~\cite{HutzenthalerJentzenSalimova2016arXiv}) 
	yields that for all 
	$p\in[2,\infty)$, 
	$((s_n,x_n))_{n\in\N_0}\subseteq [0,T]\times H_1$
	with 
	$
	\limsup_{\N\ni n\to\infty}
	|s_n-s_0|+\|x_n-x_0\|_{H_1}=0
	$
	it holds that 
	\begin{equation}
	\limsup_{\N\ni n\to\infty}
	\ES\big[
	\|\Phi(\mathcal{X}^{x_n}_{s_n})
	-
	\Phi(\mathcal{X}^{x_0}_{s_0})
	\|_V\big]
	=0.
	\end{equation}
	This assures that the function
	\begin{equation}
	\label{eq:time.derivative.continuous}
	[0,T]\times H_1\ni (s,x) \mapsto 
	\ES\big[
	\Phi(\mathcal{X}^x_s)
	\big]
	\in V
	\end{equation}
	is continuous.
	In particular, it holds for all $x\in H_1$ that the function
	\begin{equation}
	[0,T]\ni s \mapsto 
	\ES\big[
	\Phi(\mathcal{X}^x_s)
	\big]
	\in V
	\end{equation}
	is continuous.
	This, \eqref{eq:ito.time.derivative}, the fundamental theorem of calculus, and the fact that
	\begin{equation}
	\forall \, t\in[0,T], \, x\in H \colon
	\quad
	\ES\big[\varphi(X^x_t)\big]
	=
	u(T-t,x)
	\end{equation}
	show that for all $x\in H_1$, $t\in[0,T]$ it holds that 
	$
	([0,T] \ni s \mapsto u(s,x) \in V)
	\in C^1([0,T],V)
	$
	and
	\begin{equation}
	  (\tfrac{\partial}{\partial t}u)(t,x)
	  =
	  -\ES\big[
	  \Phi(\mathcal{X}^x_t)
	  \big].
	\end{equation}
	This and~\eqref{eq:time.derivative.continuous} establish items~\eqref{item:time.derivative.exist}--\eqref{item:time.derivative.continuity}. 
	The proof of Lemma~\ref{lem:time.derivative} is thus completed.
\end{proof}

\subsection{Setting}
\label{sec:setting}

Assume the setting in Section~\ref{sec:general_setting},
let $ \mathbb{U} \subseteq U $ be an orthonormal basis of $ U $,
let 
$ \vartheta \in [ 0 , \nicefrac{ 1 }{ 2 } ) $,
$
  F \in C^4_b(H,H_2)
$, 
$ 
  B \in C^4_b(H,HS(U,H_2))
$, 
$
  \varphi \in C^4_b( H, \R)
$, 
$
  \xi\in\lpn{4}{ \P|_{ \mathcal{F}_0 } }{H_2}
$, 
let
$ \varsigma_{ F, B } \in \R $
be given by
$  
  \varsigma_{ F, B } 
  =
    \max\!\big\{
    1 ,
    \|
      F
    \|_{ 
      C_b^3( H, H_{-\vartheta} )
    }^2
    ,
    \|
      B
    \|_{ 
      C_b^3( H, HS( U, H_{-\nicefrac{\vartheta}{2}} ) )
    }^4
    \big\}
$,
let
$ ( F_{ I } )_{ I \in \grid(\set) } \subseteq C( H, H ) $,
$ ( B_I )_{ I \in \grid(\set) } \subseteq C( H, HS(U,H) ) $,
$ ( B^b )_{ b \in \mathbb{U} } \subseteq C( H, H ) $,
and
% and
$ ( B_{ I }^b )_{ I \in \grid(\set), b \in \mathbb{U} } \subseteq C( H, H ) $
satisfy for all 
$ I \in \grid(\set) $, 
$ b \in \mathbb{U} $, 
$ u \in U $, 
$ v \in H $ 
that 
\begin{equation}
  F_{ I }( v ) 
  = 
    F\big( 
      P_I(v) 
    \big) 
  ,
  \quad
  B_I( v ) \, u
  = 
    B\big( 
      P_I(v) 
    \big)
    \,
    u
  ,
    \quad
     B^b( v ) 
    = 
      B( 
        v 
      )
      \,
      b
    ,
  \quad
   B_{ I }^b( v ) 
  = 
    B\big( 
      P_I(v) 
    \big)
    \,
    b 
    ,
\end{equation}
let
$ ( g_r )_{ r \in [0,\infty) } \subseteq C( H , \R ) $
satisfy for all 
$ r \in [0,\infty) $,
$ x \in H $
that 
$
  g_r( x ) = \max\{ 1, \| x \|_{ H }^r \}
$,
let
$ 
  X^I \colon [0,T] \times \Omega \to P_I( H )
$,
$ I \in \mathcal{P}(\set) $,
$
  Y^I \colon [0,T] \times \Omega \rightarrow  H_2
$, 
$ I \in \grid( \set ) $, 
and
$ X^{ \set, x } \colon [0,T] \times \Omega \to H $, 
$ x \in H $,
be 
$
  ( \mathcal{F}_t )_{ t \in [0,T] }
$-predictable stochastic processes
which satisfy for all 
$ I \in \grid(\set) $, $ x \in H $
that
$
  \sup_{ t \in [0,T] }
  \ES\big[
  \| X^I_t \|^4_H 
  +
%   \sup_{ s \in [0,T] }
  \| Y^I_t \|^4_{ H_2 } 
  +
%   \sup_{ s \in [0,T] } 
  \| X^{ \set, x }_t \|^4_H 
  \big] < \infty
$ 
and which satisfy that
for all $ t \in [0,T] $, $ I \in \grid(\set) $, $ x \in H $
it holds $ \P $-a.s.\ that
\begin{equation}
\label{eq:Galerkin}
  X_t^I 
  = 
    e^{ A t } P_{ I }( \xi )
  + 
    \int_0^t e^{ A ( t - s ) } P_I F( X^I_s ) \, ds
  + 
    \int_0^t e^{ A ( t - s ) } P_I B( X^I_s ) \, dW_s 
  ,
\end{equation}
\begin{equation}\label{eq:mod_SPDE}
  Y^I_t
  =
  e^{At}
  \xi
  +
  \int^t_0
  e^{ A (t-s) }
  F_I( 
    Y^I_s  
  )
  \, ds
  +
  \int^t_0
  e^{ A (t - s ) }
  B_I( 
    Y^I_s
  ) 
  \, dW_s
  ,
\end{equation}
\begin{equation}
\label{eq:unique.SEE}
  X^{\set, x }_t =
  e^{ A t } x
  +
  \int_0^t
  e^{ A (t - s) }
  F( 
    X^{\set, x }_s 
  )
  \,
  ds
  +
  \int_0^t
  e^{ A (t - s) }
  B( 
    X^{\set, x }_s 
  )
  \,
  dW_s
  ,
\end{equation}
let 
$
  u \colon [0,T] \times H \to \R  
$
be the function which satisfies for all 
$ t \in [0,T] $, 
$ x \in H $
that 
$
  u( t, x ) = 
  \ES\big[ 
    \varphi( X^{\set, x }_{ T - t } )
  \big]
$,
let 
$
  c_{ \delta_1, \dots, \delta_k }
  \in [0,\infty]	
$,
$ \delta_1, \dots, \delta_k \in (-\infty,0] $,
$ k \in \{ 1, 2, 3, 4 \} $, 
be the extended real numbers which satisfy for all 
$ k \in \{ 1, 2, 3, 4 \} $, 
$ \delta_1, \dots, \delta_k \in (-\infty,0] $
that
\begin{equation}
\begin{split} 
&
  c_{ \delta_1, \dots, \delta_k }
  =
  \sup_{
    t \in [0,T)
  }
  \sup_{ 
    x \in H
  }
  \sup_{ 
    v_1, \dots, v_k \in H \backslash \{ 0 \}
  }
  \left[
  \frac{
    \big|
      ( 
        \frac{ 
          \partial^k
        }{
          \partial x^k
        }
        u
      )( t, x )( v_1, \dots, v_k )
    \big|
  }{
    ( T - t )^{ 
      (
        \delta_1 + \ldots + \delta_k
      ) 
    }
    \left\| v_1 \right\|_{ H_{ \delta_1 } }
    \cdot
    \ldots
    \cdot
    \left\| v_k \right\|_{ H_{ \delta_k } }
  }
  \right]
\end{split}
\end{equation}
(cf., e.g., item~\eqref{item:kolmogorov.diff} of Lemma~\ref{lem:Kolmogorov}), 
and let
$ 
  ( K^I_r )_{ 
    r \in (0,4] ,\, I \in \grid(\set) 
  } 
  \subseteq [0,\infty) 
$
satisfy for all 
$
  I \in \grid(\set) 
$, 
$
  r \in (0,4] 
$ 
that 
$
  K^I_r = 
  \sup_{ t \in [0,T] }
  \ES\big[
    g_r( Y_t^I ) 
  \big]
$.

\subsection{Weak convergence results}
\label{sec:weak_regular}
%\noindent With these added notations, we present the following weak convergence result and its proofs.

\begin{lemma}
	\label{lem:c.delta.finite}
	Assume the setting in Section~\ref{sec:setting}. Then it holds for all 
	$ k \in \{ 1, 2, 3, 4 \} $, 
	$ \delta_1, \dots, \delta_k \in (-\nicefrac{1}{2},0] $
	with
	$
	\sum^k_{i=1} \delta_i
	> -\nicefrac{1}{2}
	$ 
	that 
	$
	c_{\delta_1,\ldots,\delta_k}
	< \infty
	$.
\end{lemma}
\begin{proof}
	Throughout this proof let 
	$\phi\colon[0,T]\times H\to\R$ 
	be the function which satisfies for all 
	$t\in[0,T]$,
	$x\in H$ 
	that 
	$
	\phi(t,x)
	=\ES[\varphi(X^{\set,x}_t)]
	$.
	Note that for all 
	$t\in[0,T]$,
	$x\in H$ 
	it holds that 
	$
	u(t,x)=\phi(T-t,x)
	$.
	This and item~\eqref{item:kolmogorov.c.delta} of Lemma~\ref{lem:Kolmogorov}
	(with
	$\varphi=\varphi$,
	$F=(H\ni v \mapsto F(v) \in H )$,
	$B=(H\ni v \mapsto (U \ni u \mapsto B(v)u \in H) \in HS(U,H) )$,
	$X^x=X^{\set,x}$, 
	$\phi=\phi$,
	$k=k$,
	$\delta_1=\delta_1,\ldots,\delta_k=\delta_k$
	for 
	$ 
	(\delta_1, \dots, \delta_k) \in 
	\{(x_1,\ldots,x_k)\in(-\nicefrac{1}{2},0]^k\colon
	\sum^k_{i=1} x_i > -\nicefrac{1}{2}
	\} 
	$, 
	$ k \in \{ 1, 2, 3, 4 \} $,
	$x\in H$
	in the notation of Lemma~\ref{lem:Kolmogorov})
	imply that for all 
	$ k \in \{ 1, 2, 3, 4 \} $, 
	$ \delta_1, \dots, \delta_k \in (-\nicefrac{1}{2},0] $
	with
	$
	\sum^k_{i=1} \delta_i
	> -\nicefrac{1}{2}
	$ 
	it holds that 
	\begin{equation}
	\begin{split}
	c_{\delta_1,\ldots,\delta_k}
	&=
	\sup_{
		t \in [0,T)
	}
	\sup_{ 
		x \in H
	}
	\sup_{ 
		v_1, \dots, v_k \in H \backslash \{ 0 \}
	}
	\left[
	\frac{
		\big|
		( 
		\frac{ 
			\partial^k
		}{
			\partial x^k
		}
		\phi
		)( T-t, x )( v_1, \dots, v_k )
		\big|}{
		( T - t )^{ 
			(
			\delta_1 + \ldots + \delta_k
			) 
		}
		\left\| v_1 \right\|_{ H_{ \delta_1 } }
		\cdot
		\ldots
		\cdot
		\left\| v_k \right\|_{ H_{ \delta_k } }}\right]
	\\&=
	\sup_{
		t \in (0,T]
	}
	\sup_{ 
		x \in H
	}
	\sup_{ 
		v_1, \dots, v_k \in H \backslash \{ 0 \}
	}
	\left[
	\frac{
		\big|
		( 
		\frac{ 
			\partial^k
		}{
			\partial x^k
		}
		\phi
		)( t, x )( v_1, \dots, v_k )
		\big|}{
		t^{ 
			(
			\delta_1 + \ldots + \delta_k
			) 
		}
		\left\| v_1 \right\|_{ H_{ \delta_1 } }
		\cdot
		\ldots
		\cdot
		\left\| v_k \right\|_{ H_{ \delta_k } }}\right]
	< \infty.
	\end{split} 
	\end{equation}
	The proof of Lemma~\ref{lem:c.delta.finite} is thus completed.
\end{proof}

\begin{lemma}\label{weak_regle0}
Assume the setting in Section~\ref{sec:setting} and let 
$
  \rho \in [ 0 , 1 - \vartheta ) 
$, 
$
  I \in \grid(\set) 
$. 
Then
\begin{equation}
  \left|
    \ES\big[ 
      \varphi( Y^I_T )
    \big]
    -
    \ES\big[ 
      \varphi( X_T^I )
    \big]
  \right|
\leq
  \left[ 
    \frac{ 1 }{ T^{ \rho } } 
    + 
    \frac{ 
      2 \, T^{ ( 1 - \rho - \vartheta ) } 
    }{ 
      ( 1 - \rho - \vartheta ) 
    }
  \right]
  \| \varphi \|_{ C_b^3( H, \R ) }
  \,
  \varsigma_{ F, B }
  \,
  K^I_3
  \,
  \| P_{ \set \backslash I } \|_{
    L( H, H_{ - \rho } )
  }
  .
\end{equation}
\end{lemma}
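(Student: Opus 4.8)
\noindent The plan is to reduce the claim to Proposition~\ref{prop:weak_regle0}, which already estimates a quantity of exactly the form $|\ES[\varphi(Z_T)]-\ES[\varphi(P_I(Z_T))]|$ for a mild solution $Z$ of an SEE. The crucial observation is the projection identity $P_I(Y^I_T)=X^I_T$ $\P$-a.s., which turns the left-hand side into $|\ES[\varphi(Y^I_T)]-\ES[\varphi(P_I(Y^I_T))]|$. First I would establish this identity: applying $P_I$ to the mild equation \eqref{eq:mod_SPDE} and using that $P_I$ commutes with $e^{A(t-s)}$ together with $P_I F_I(v)=P_I F(P_I v)$ and $P_I B_I(v)=P_I B(P_I v)$, one sees that $(P_I(Y^I_t))_{t\in[0,T]}$ satisfies the same mild equation \eqref{eq:Galerkin} as $X^I$; the up-to-modifications uniqueness recalled in Section~\ref{sec:setting} then gives $P_I(Y^I_t)=X^I_t$ $\P$-a.s.\ for all $t\in[0,T]$.

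\noindent Next I would apply Proposition~\ref{prop:weak_regle0} with the process $Y^I$ in the role of $X$ and with the coefficient functions $F_I$ and $B_I$ in the roles of $F$ and $B$. This requires verifying the hypotheses of Section~\ref{sec:setting_galerkin_project}: since $\vartheta\in[0,\nicefrac{1}{2})\subseteq[0,1)$, $\varphi\in C^4_b(H,\R)\subseteq\operatorname{Lip}^2(H,\R)$, $\xi\in\lpn{4}{\P|_{\mathcal{F}_0}}{H_1}\subseteq\lpn{3}{\P|_{\mathcal{F}_0}}{H}$, and since $\|P_I\|_{L(H)}\leq 1$ yields $F_I\in\operatorname{Lip}^0(H,H_{-\vartheta})$ and $B_I\in\operatorname{Lip}^0(H,HS(U,H_{-\nicefrac{\vartheta}{2}}))$ with $|F_I|_{\operatorname{Lip}^0(H,H_{-\vartheta})}\leq|F|_{\operatorname{Lip}^0(H,H_{-\vartheta})}$ and $|B_I|_{\operatorname{Lip}^0(H,HS(U,H_{-\nicefrac{\vartheta}{2}}))}\leq|B|_{\operatorname{Lip}^0(H,HS(U,H_{-\nicefrac{\vartheta}{2}}))}$, all assumptions hold (the required integrability of $Y^I$ being part of Section~\ref{sec:setting}).

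\noindent The remaining work is purely bookkeeping of the constants produced by Proposition~\ref{prop:weak_regle0} into the stated form. I would use the mean value theorem to bound each Lipschitz seminorm by the next-order supremum norm, giving $\|\varphi\|_{\operatorname{Lip}^2(H,\R)}\leq\|\varphi\|_{C^3_b(H,\R)}$, and, since $F_I(0)=F(0)$ and $B_I(0)=B(0)$, the bounds $\|F_I\|_{\operatorname{Lip}^0(H,H_{-\vartheta})}\leq\|F\|_{C^3_b(H,H_{-\vartheta})}\leq\varsigma_{F,B}$ and $\|B_I\|_{\operatorname{Lip}^0(H,HS(U,H_{-\nicefrac{\vartheta}{2}}))}\leq\|B\|_{C^3_b(H,HS(U,H_{-\nicefrac{\vartheta}{2}}))}\leq\varsigma_{F,B}$ (using $\varsigma_{F,B}\geq\max\{1,\|F\|^2_{C^3_b},\|B\|^4_{C^3_b}\}\geq 1$). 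I would also use $\max\{1,\sup_{t\in[0,T]}\ES[\|Y^I_t\|^3_H]\}\leq\sup_{t\in[0,T]}\ES[g_3(Y^I_t)]=K^I_3$, and exploit $\varsigma_{F,B}\geq 1$ to factor $\varsigma_{F,B}$ out of the bracket so that the summand $\nicefrac{1}{T^\rho}$ also acquires this factor. Substituting these estimates and bounding $\|F_I\|_{\operatorname{Lip}^0(H,H_{-\vartheta})}+\|B_I\|_{\operatorname{Lip}^0(H,HS(U,H_{-\nicefrac{\vartheta}{2}}))}\leq 2\,\varsigma_{F,B}$ then yields precisely the asserted inequality.

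\noindent There is no genuine analytic obstacle here: the entire substance is already contained in Proposition~\ref{prop:weak_regle0}, and the proof amounts to transporting its conclusion into the notation of Section~\ref{sec:setting} via the projection identity $P_I(Y^I_T)=X^I_T$. The only points demanding care are the justification of that identity through uniqueness of mild solutions and the routine, but slightly tedious, translation between the $\operatorname{Lip}^k$-norms appearing in Proposition~\ref{prop:weak_regle0} and the $C^k_b$-norms packaged into $\varsigma_{F,B}$ and $K^I_3$.
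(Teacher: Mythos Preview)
Your proposal is correct and follows essentially the same route as the paper: establish $P_I(Y^I_t)=X^I_t$ $\P$-a.s.\ via uniqueness of mild solutions, then apply Proposition~\ref{prop:weak_regle0} to $Y^I$ (with coefficients $F_I,B_I$) and translate the resulting $\operatorname{Lip}^k$-constants into the $C^k_b$-norms packaged in $\varsigma_{F,B}$ and $K^I_3$. Your bookkeeping of the constants is in fact more explicit than the paper's own, which jumps directly to the $C^1_b$-bounds on $F,B$ after invoking the proposition.
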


\begin{proof}
First of all, note that for all $ t \in [0,T] $ 
it holds $ \P $-a.s.\ that
\begin{equation}
  P_I( Y^I_t ) 
=
  e^{ A t } P_I( \xi ) 
  +
  \int^t_0 
  e^{ A ( t - s ) } 
  P_I 
  F\big( P_I( Y^I_s ) \big)
  \, ds 
  + 
  \int^t_0
  e^{ A (t - s) } 
  P_I
  B\big( P_I( Y^I_s ) \big)
  \, dW_s 
  .
\end{equation}
The fact that mild solutions of~\eqref{eq:Galerkin}
are within a suitable class of solutions unique up to modifications 
(see, e.g., item~(i) of Theorem~7.2 in Da Prato \& Zabczyk~\cite{dz14}
for details)
hence ensures that for all 
$ t \in [0,T] $
%$ I \in \grid(\set) $
it holds $ \P $-a.s.\ that
$ P_I( Y^I_t ) = X^I_t $.
An application of Proposition~\ref{prop:weak_regle0}
hence proves that
\begin{equation}
\begin{split}
&
  \left|
    \ES\big[ 
      \varphi( Y^I_T )
    \big]
    -
    \ES\big[ 
      \varphi\big( 
        X^I_T 
      \big)
    \big]
  \right|
\leq
  \| \varphi \|_{ C_b^3(H,\R) }
  \,
    \max\!\left\{ 
      1 ,
      \sup\nolimits_{ t \in [0,T] }
      \ES\big[
        \| Y^I_t \|^3_H
      \big]
    \right\}
\\ & 
  \cdot
  \left[ 
    \frac{ 1 }{ T^{ \rho } }
    +
    \frac{
      T^{ ( 1 - \rho - \vartheta ) }
      \big[ 
        \| F \|_{
          C^1_b( H, H_{ - \vartheta } )
        }
        +
        \| B \|^{2}_{
          C^1_b( H,
            HS( U, H_{ - \vartheta / 2 } ) 
          )
        }
      \big]
    }{
      \left( 1 - \rho - \vartheta \right)
    }
  \right]
  \|P_{\set\backslash I}\|_{L(H,H_{-\rho})}
  .
\end{split}
\end{equation}
This completes the proof
of Lemma~\ref{weak_regle0}.
\end{proof}

\begin{corollary}
	\label{lem:integrated.version}
	Assume the setting in Section~\ref{sec:setting} and let $I\in\grid(\set)$.
	Then
	\begin{enumerate}[(i)]
		\item
		\label{item:H2.integrated.integrability}
		it holds that
		\begin{equation}
		\int^T_0
		\ES\big[
		\|AY^I_t\|_{H_1} + \|F_I(Y^I_t)\|_{H_1}
		+
		\|B_I(Y^I_t)\|^2_{HS(U,H_1)}\big]
		\, dt
		< \infty
		\end{equation}
		and
		\item
		\label{item:H2.strong.solution}
		for all $t\in[0,T]$ it holds $\P$-a.s.\ that
		\begin{equation}
		Y^I_t=\xi
		+
		\int^t_0
		AY^I_s + F_I(Y^I_s)
		\, ds
		+
		\int^t_0
		B_I(Y^I_s)
		\, dW_s
		.
		\end{equation}
	\end{enumerate}
\end{corollary}
\begin{proof}
	Observe that for all $p\in[1,4]$ it holds that 
	\begin{equation}
	\begin{split}
	&
	\sup_{t\in[0,T]}
	\big[
	\|F_I(Y^I_t)\|_{\lpn{p}{\P}{H_2}}
	+
	\|B_I(Y^I_t)\|_{\lpn{p}{\P}{HS(U,H_2)}}
	\big]
	\\&\leq
	  \sup_{t\in[0,T]}
	  \big[
	    \|F(0)\|_{H_2}
	    +
	    \|F(P_I(Y^I_t))-F(0)\|_{\lpn{p}{\P}{H_2}}
	  \big]
	  \\&\quad+
	  \sup_{t\in[0,T]}
	  \big[
	    \|B(0)\|_{HS(U,H_2)}
	    +
	    \|B(P_I(Y^I_t))-B(0)\|_{\lpn{p}{\P}{HS(U,H_2)}}
	  \big]
	\\&\leq
	  \sup_{t\in[0,T]}
	  \big[
	    \|F(0)\|_{H_2}
	    +
	    |F|_{\operatorname{Lip}^0(H,H_2)} \,
	    \|P_I(Y^I_t)\|_{\lpn{p}{\P}{H}}
	  \big]
	  \\&\quad+
	  \sup_{t\in[0,T]}
	  \big[
	    \|B(0)\|_{HS(U,H_2)}
	    +
	    |B|_{\operatorname{Lip}^0(H,HS(U,H_2))} \,
	    \|P_I(Y^I_t)\|_{\lpn{p}{\P}{H}}
	  \big]
	\\&\leq
	  \sup_{t\in[0,T]}
	  \big[
	    \|F(0)\|_{H_2}
	    +
	    |F|_{\operatorname{Lip}^0(H,H_2)}
	\max\!\big\{
	1,
	\|P_I(Y^I_t)\|_{\lpn{p}{\P}{H}}
	\big\}
	  \big]
	  \\&\quad+
	  \sup_{t\in[0,T]}
	  \big[
	    \|B(0)\|_{HS(U,H_2)}
	    +
	    |B|_{\operatorname{Lip}^0(H,HS(U,H_2))}
	\max\!\big\{
	1,
	\|P_I(Y^I_t)\|_{\lpn{p}{\P}{H}}
	\big\}
	  \big]
	.
	\end{split}
	\end{equation}
	Hence, we obtain that for all $p\in[1,4]$ it holds that
	\begin{equation}
	\label{eq:H2.integrability}
	\begin{split}
	&
	\sup_{t\in[0,T]}
	\big[
	\|F_I(Y^I_t)\|_{\lpn{p}{\P}{H_2}}
	+
	\|B_I(Y^I_t)\|_{\lpn{p}{\P}{HS(U,H_2)}}
	\big]
	\\&\leq
	\sup_{t\in[0,T]}
	\big[
	\|F(0)\|_{H_2}
	+
	|F|_{\operatorname{Lip}^0(H,H_2)}
	\max\!\big\{
	1,
	\|Y^I_t\|_{\lpn{p}{\P}{H}}
	\big\}
	\big]
	\\&\quad+
	\sup_{t\in[0,T]}
	\big[
	\|B(0)\|_{HS(U,H_2)}
	+
	|B|_{\operatorname{Lip}^0(H,HS(U,H_2))}
	\max\!\big\{
	1,
	\|Y^I_t\|_{\lpn{p}{\P}{H}}
	\big\}
	\big]
	\\&\leq
	\sup_{t\in[0,T]}
	\Big(
	\big[
	\|F(0)\|_{H_2}
	+
	|F|_{\operatorname{Lip}^0(H,H_2)}
	\big]
	\max\!\big\{
	1,
	\|Y^I_t\|_{\lpn{p}{\P}{H}}
	\big\}
	\Big)
	\\&\quad+
	\sup_{t\in[0,T]}
	\Big(
	\big[
	\|B(0)\|_{HS(U,H_2)}
	+
	|B|_{\operatorname{Lip}^0(H,HS(U,H_2))}
	\big]
	\max\!\big\{
	1,
	\|Y^I_t\|_{\lpn{p}{\P}{H}}
	\big\}
	\Big)
	\\&=
	\big(
	\|F\|_{\operatorname{Lip}^0(H,H_2)}
	+
	\|B\|_{\operatorname{Lip}^0(H,HS(U,H_2))}
	\big)
	\sup_{t\in[0,T]}
	\max\!\big\{
	1,
	\|Y^I_t\|_{\lpn{p}{\P}{H}}
	\big\}
	< \infty.
	\end{split}
	\end{equation}
	This ensures that for all $p\in[1,4]$ it holds that 
	\begin{equation}
	\label{eq:int.H2}
	\int^T_0
	\ES\big[
	\|F_I(Y^I_t)\|^p_{H_2}
	+
	\|B_I(Y^I_t)\|^p_{HS(U,H_2)}\big]
	\, dt
	< \infty
	.
	\end{equation}
	In addition, note that H\"{o}lder's inequality and the assumption that 
	$
	  \sup_{t\in[0,T]}
	  \ES\big[\|Y^I_t\|^4_{H_2}\big]
	  < \infty
	$
	imply that 
	\begin{equation}
	  \sup_{t\in[0,T]}
	  \ES\big[\|Y^I_t\|_{H_2}\big]
	  < \infty.
	\end{equation}
	Therefore, we obtain that 
	\begin{equation}
	\begin{split}
	\int^T_0
	\ES\big[\|Y^I_t\|_{H_2}\big]
	\, dt
	< \infty
	.
	\end{split}
	\end{equation}
	This and~\eqref{eq:int.H2} prove item~\eqref{item:H2.integrated.integrability}.
	In the next step we combine~\eqref{eq:int.H2} with~\eqref{eq:mod_SPDE} and items~\eqref{item:integral.makes.sense}--\eqref{item:general.H1.solution} of Lemma~\ref{lem:gen.H1.solution} to obtain
	that for all 
	$t\in[0,T]$ it holds $\P$-a.s.\ that 
	\begin{equation}
	\label{eq:integrated.analytic.strong.solution}
	Y^I_t
	=
	\xi
	+
	\int^t_0
	AY^I_s + F_I(Y^I_s) 
	\, ds
	+
	\int^t_0
	B_I(Y^I_s) 
	\, dW_s
	.
	\end{equation}
	This establishes item~\eqref{item:H2.strong.solution}. 
	The proof of Corollary~\ref{lem:integrated.version} is thus completed.
\end{proof}

\begin{lemma}
\label{weak_regle}
Assume the setting in Section~\ref{sec:setting} and let 
$ \rho \in [ 0, 1 - \vartheta ) $, $ I \in \grid( \set ) $.
Then
\begin{multline}
\label{eq:weak_regle}
  \left|
    \ES\big[ 
      \varphi( X^\set_T )
    \big]
    -
    \ES\big[ 
      \varphi( Y_T^I )
    \big]
  \right|
\leq
  \frac{ 
    T^{ (1 - \vartheta - \rho) } 
    \,
    \varsigma_{ F, B }
    \, 
    K^I_4 
  }{
    \left( 1 - \vartheta - \rho \right) 
  }
  \left[ 
    1
    + 
    \frac{ 
      9 \, T^{ (1 - \vartheta) } 
    }{
      2 {\left( 1 - \vartheta \right)}
    } 
  \right]
  \left\|
      P_{ \set \backslash I }
      \right\|_{ L(H,H_{-\rho}) }
\\ 
\cdot
  \big[ c_{-\vartheta} + c_{-\vartheta,0} + c_{-\vartheta,0,0} + c_{-\nicefrac{\vartheta}{2},-\nicefrac{\vartheta}{2}} + c_{-\nicefrac{\vartheta}{2},-\nicefrac{\vartheta}{2},0} + c_{-\nicefrac{\vartheta}{2},-\nicefrac{\vartheta}{2},0,0} \big]
  < \infty
  .
\end{multline}
\end{lemma}

\begin{proof}
Throughout this proof let
$ u_{1,0} \colon [0,T] \times H_1 \to \R $
and
$
  u_{ 0,k } \colon [0,T] \times H \to L^{ (k) }( H, \R )
$,
$k\in\{1,2,3,4\}$, 
be the functions which satisfy 
for all 
$k\in\{1,2,3,4\}$, 
$ t \in [0,T] $, 
$w\in H_1$, 
$ x, v_1, \ldots, v_k \in H $ 
that 
\begin{equation}
  u_{1,0}( t, w )
  =
  (\tfrac{ \partial }{ \partial t } u)( t, w )
  \qquad\text{and}\qquad
  u_{ 0,k }( t, x )( v_1, \ldots, v_k )
  =
  \big( (\tfrac{ \partial^k }{ \partial x^k } u)( t, x ) \big)( v_1, \ldots, v_k )
\end{equation}
(cf., e.g., item~\eqref{item:time.derivative.exist} of Lemma~\ref{lem:time.derivative} and item~\eqref{item:kolmogorov.diff} of Lemma~\ref{lem:Kolmogorov}).
Note that items~\eqref{item:SEE.H1.regular} \& \eqref{item:H1.space.time.continuous} of Lemma~\ref{lem:time.derivative} and the fact that $|\varphi|_{\operatorname{Lip}^0(H_1,\R)}<\infty$ establish that 
\begin{equation}
\label{eq:TPS.continuous}
([0,T]\times H_1 \ni (t,x)\mapsto u(t,x)\in\R)
\in C([0,T]\times H_1,\R)
.
\end{equation}
Moreover, item~\eqref{item:time.derivative.continuity} of Lemma~\ref{lem:time.derivative} proves that
\begin{equation}
\label{eq:TPS.temporal.regularity}
  u_{1,0}\in C([0,T]\times H_1,\R)
  .
\end{equation}
It is well known that $u$ is a strong solution of the Kolmogorov equation associated to
$
  X^{\set,x}\colon[0,T]\times\Omega\rightarrow H
$, 
$ x \in H $.
More precisely, note that for all $t\in(0,T)$, $x\in H_1$ it holds that 
\begin{equation}
\label{eq:kolmogorov.solution}
  u_{1,0}(t,x)=
  -u_{0,1}(t,x)(Ax+F(x))
  -\frac{1}{2}
  \sum_{b\in\mathbb{U}}
  u_{0,2}(t,x)(B^b(x),B^b(x))
\end{equation}
(cf., e.g., Da Prato \& Zabczyk~\cite[Theorem~9.25]{dz14}).
In addition, observe that for all $k \in \{1,2\}$ it holds that the function 
\begin{multline}
\label{eq:TPS.regularity}
  \big(
  [0,T] \times H_1 \ni (t,x) \mapsto
  \big(
    (H_1)^k \ni (h_1,\ldots,h_k) \mapsto u_{0,k}(t,x)(h_1,\ldots,h_k) \in \R
  \big)
\\
  \in L^{(k)}(H_1,\R)
  \big)
\end{multline}
is continuous (cf., e.g., Da Prato \& Zabczyk~\cite[Theorem~9.25]{dz14} and Andersson et al.~\cite[Theorem~3.3]{AnderssonHefterJentzenKurniawan2016}).
Combining~\eqref{eq:TPS.continuous}, \eqref{eq:TPS.temporal.regularity}, and Corollary~\ref{lem:integrated.version} with the standard It\^{o} formula in infinite dimensions (cf., e.g., Brze{\'z}niak et al.~\cite[Theorem~2.4]{bvvw08})
hence proves that it holds $\P$-a.s.\ that 
\begin{equation}
\begin{split}
&
      u( T, Y^I_{T} )
    -
      u( 0, Y^I_0 )
\\&=
  \int_0^{T}
    u_{1,0}( t, Y^I_t )
    +
    u_{0,1}( t, Y^I_t )
    \!\left(
      A
      Y^I_t
      +
      F_I(
        Y^I_t 
      )
    \right)
  dt
+
  \int^T_0
  u_{0,1}(t,Y^I_t) B_I(Y^I_t)
  \, dW_t
\\&\quad+
  \frac{ 1 }{ 2 }
  \int_0^{T}
  \sum_{ b\in\mathbb{U} }
    u_{0,2}( t, Y^I_t )\!\left(
      B^b_I(
        Y^I_t 
      )
      ,
      B^b_I(
        Y^I_t 
      )
    \right)
  dt
  .
\end{split}
\end{equation}
This and~\eqref{eq:kolmogorov.solution} ensure that it holds $\P$-a.s.\ that 
\begin{equation}
\label{eq:ito}
\begin{split}
&
      u( T, Y^I_{T} )
    -
      u( 0, Y^I_0 )
\\ & =
  \int_0^{T}
    u_{0,1}( t, Y^I_t )
    \!
    \left(
      F_I\big(
        Y^I_t 
      \big)
      -
      F\big(
        Y^I_t 
      \big)
    \right)
  dt
+
  \int^T_0
  u_{0,1}(t,Y^I_t) B_I(Y^I_t)
  \, dW_t
\\&\quad+
  \frac{ 1 }{ 2 }
  \int_0^{T}
  \sum_{ b\in\mathbb{U} }
    u_{0,2}( t, Y^I_t )\!\left(
      B_I^b\big(
        Y^I_t
      \big)
      +
      B^b\big(
        Y^I_t 
      \big)
      ,
      B^b_I\big(
        Y^I_t 
      \big)
      -
      B^b\big(
        Y^I_t 
      \big)
    \right)
  dt
  .
\end{split}
\end{equation}
Next note that Lemma~\ref{lem:c.delta.finite} shows that
\begin{equation}
\label{eq:c.0.finite}
  c_0 + c_{0,0} + c_{0,0,0}
  < \infty
  .
\end{equation}
This guarantees that 
\begin{equation}
\begin{split}
  \sup_{t\in[0,T)}
  \sup_{x\in H}
  \frac{\|u_{0,1}(t,x)B_I(x)\|_{HS(U,\R)}}{g_1(x)}
  &\leq
  \sup_{t\in[0,T)}
  \sup_{x\in H}
  \bigg[
  \frac{\|u_{0,1}(t,x)\|_{L(H,\R)} \, \|B_I(x)\|_{HS(U,H)}}{g_1(x)}
  \bigg]
  \\&\leq
  c_0 \,
  \bigg[
  \sup_{t\in[0,T)}
  \sup_{x\in H}
  \frac{\|B_I(x)\|_{HS(U,H)}}{g_1(x)}
  \bigg]
  < \infty.
\end{split}
\end{equation}
This and the fact that $K^I_2<\infty$ assure that 
\begin{equation}
\label{eq:1st.stochastic.makes.sense}
  \ES\bigg[
  \int^T_0
    \|u_{0,1}(t,Y^I_t) B_I(Y^I_t)\|^2_{HS(U,\R)}
  \, dt
  \bigg]
  < \infty.
\end{equation}
Furthermore, note that Markov's property associated to 
$
  X^{\set,x}\colon[0,T]\times\Omega\rightarrow H
$, 
$ x \in H $, 
implies that 
\begin{equation}
\label{eq:markov}
    \ES\big[ 
      \varphi( Y_T^I )
    -
      \varphi( X^\set_T )
    \big]
    =
   \ES\big[ 
      u( T, Y^I_T )
    -
      u( 0, Y^I_0 )
    \big]
\end{equation}
(cf., e.g., Theorem~9.14 in Da Prato \& Zabczyk~\cite{dz14}).
Combining~\eqref{eq:ito} with~\eqref{eq:1st.stochastic.makes.sense} therefore shows that 
$
  \ES\big[\!
  \int^T_0
  u_{0,1}(t,Y^I_t) B_I(Y^I_t)
  \, dW_t
  \big]
  =0
$
and 
\begin{equation}
\label{eq:Kolmogorov}
\begin{split}
&
    \ES\big[ 
      \varphi( Y_T^I )
    -
      \varphi( X^\set_T )
    \big]
    =
    \ES\big[ 
      u( T, Y^I_{T} )
    -
      u( 0, Y^I_0 )
    \big]
\\&=
  \int_0^{T}
  \E\left[
    u_{0,1}( t, Y^I_t )
    \!
    \left(
      F_I\big(
        Y^I_t 
      \big)
      -
      F\big(
        Y^I_t 
      \big)
    \right)
  \right]
  dt
\\ & \quad +
  \frac{ 1 }{ 2 }
  \sum_{ b\in\mathbb{U} }
  \int_0^{T}
  \E\left[
    u_{0,2}( t, Y^I_t )\!\left(
      B_I^b\big(
        Y^I_t
      \big)
      +
      B^b\big(
        Y^I_t 
      \big)
      ,
      B^b_I\big(
        Y^I_t 
      \big)
      -
      B^b\big(
        Y^I_t 
      \big)
    \right)
  \right]
  dt
  .
\end{split}
\end{equation}
The identities in~\eqref{eq:Kolmogorov} provide us an explicit representation for the weak approximation error 
$
    \ES\big[ 
      \varphi( Y^I_T )
    \big]
    -
    \ES\big[ 
      \varphi( X_T^\set )
    \big]
$.
In the following we employ the mild It\^{o} formula 
in Corollary~2 in 
Da Prato et al.~\cite{DaPratoJentzenRoeckner2012}
to estimate the two summands appearing on the right hand side of~\eqref{eq:Kolmogorov}.
We first concentrate on the first summand on the right hand side of~\eqref{eq:Kolmogorov}.
For the application of the mild It\^{o} formula
in~\cite[Corollary~2]{DaPratoJentzenRoeckner2012}
to the first summand on the right hand side of~\eqref{eq:Kolmogorov} (see~\eqref{eq:ito.F} below) it is convenient to introduce suitable auxiliary functions.
More formally, let
$ \tilde{F}_{ t, s } \colon H \to \R $,
$ t \in (s,T] $, $ s \in [0,T) $,
be the functions which satisfy for all 
$ s \in [0,T) $,
$ t \in (s,T] $,
$ x \in H $
that 
\begin{equation}
\label{eq:tildeF_def}
\begin{split}
&
  \tilde{F}_{ t, s }( x ) 
\\ & = 
    u_{0,2}( t, e^{ A ( t - s ) } x )
    \!\left(
      F_I(
        e^{ A ( t - s ) }x
      )
      -
      F(
        e^{ A ( t - s ) } x
      )
      ,
      e^{ A (t - s) }
      F_I( x )
    \right)
\\ & +
    u_{0,1}( t, e^{ A ( t - s ) } x )
    \!\left(
      \left[
        F_I'(
          e^{ A ( t - s ) }x
        )
        -
        F'(
          e^{ A ( t - s ) } x
        )
      \right]
      e^{ A (t - s) }
      F_I( x )
    \right)
\\ & +
    \tfrac{ 1 }{ 2 }
    \smallsum\limits_{ b\in\mathbb{U} }
    u_{0,3}( t, e^{ A ( t - s ) } x )
    \!\left(
      F_I(
        e^{ A ( t - s ) }x
      )
      -
      F(
        e^{ A ( t - s ) } x
      )
      ,
      e^{ A (t - s) }
      B^b_I( x ) 
      ,
      e^{ A (t - s) }
      B^b_I( x )
    \right)
\\ & +
    \smallsum\limits_{ b\in\mathbb{U} }
    u_{0,2}( t, e^{ A ( t - s ) } x )
    \!\left(
      \left[
        F_I'(
          e^{ A ( t - s ) }x
        )
        -
        F'(
          e^{ A ( t - s ) } x
        )
      \right]
      e^{ A (t - s) }
      B^b_I( x )
      ,
      e^{ A (t - s) }
      B^b_I( x ) 
    \right)
\\ & +\tfrac{ 1 }{ 2 }
    \smallsum\limits_{ b\in\mathbb{U} }
    u_{0,1}( t, e^{ A ( t - s ) } x )
    \!\left(
      \left[
        F_I''(
          e^{ A ( t - s ) }x
        )
        -
        F''(
          e^{ A ( t - s ) } x
        )
      \right]
      \!
      \left(
        e^{ A (t - s) }
        B^b_I( x ) 
        ,
        e^{ A (t - s) }
        B^b_I( x )
      \right)
    \right)
\end{split}
\end{equation}
and let 
$ \hat{F}_{ t, s } \colon H \to HS(U,\R) $,
$ t \in (s,T] $, $ s \in [0,T) $,
be the functions which satisfy for all 
$ s \in [0,T) $,
$ t \in (s,T] $,
$ x \in H $
that 
\begin{equation}
\label{eq:hatF_def}
\begin{split}
&
  \hat{F}_{ t, s }( x ) 
\\ & = 
    u_{0,2}( t, e^{ A ( t - s ) } x )
    \!\left(
      F_I(
        e^{ A ( t - s ) }x
      )
      -
      F(
        e^{ A ( t - s ) } x
      )
      ,
      e^{ A (t - s) }
      B_I( x )
    \right)
\\ & +
    u_{0,1}( t, e^{ A ( t - s ) } x )
    \!\left(
      \left[
        F_I'(
          e^{ A ( t - s ) }x
        )
        -
        F'(
          e^{ A ( t - s ) } x
        )
      \right]
      e^{ A (t - s) }
      B_I( x )
    \right)
    .
\end{split}
\end{equation}
An application of 
the mild It\^{o} formula 
in Corollary~2 in 
Da Prato et al.~\cite{DaPratoJentzenRoeckner2012} 
then proves that
for all $ t \in (0,T] $
it holds $\P$-a.s.\ that
\begin{equation}
\label{eq:ito.F}
\begin{split}
&
    u_{0,1}( t, Y^I_t )
    \!\left(
      F_I\big(
        Y^I_t 
      \big)
      -
      F\big(
        Y^I_t 
      \big)
    \right)
\\ & =
    u_{0,1}( t, e^{ A t } \xi )
    \!\left(
      F_I\big(
        e^{ A t } \xi 
      \big)
      -
      F\big(
        e^{ A t } \xi
      \big)
    \right)
  +
  \int_0^t
    \tilde{F}_{ t, s }( 
      Y^I_s
    )
  \, ds
  +
  \int_0^t
    \hat{F}_{ t, s }( 
      Y^I_s
    )
  \, dW_s
  .
\end{split}
\end{equation}
Moreover, observe that~\eqref{eq:c.0.finite}, \eqref{eq:hatF_def}, and Lemma~\ref{lem:aux} ensure that 
\begin{equation}
\label{eq:hatF.growth}
  \sup_{s\in[0,T)}
  \sup_{t\in(s,T)}
  \sup_{x\in H}
  \frac{
    \|\hat{F}_{t,s}(x)\|_{HS(U,\R)}
  }{g_2(x)}
  < \infty.
\end{equation}
This and the fact that $K^4_I<\infty$ assure that for all $t\in(0,T)$ it holds that 
\begin{equation}
  \ES\bigg[
    \int^t_0
    \|
    \hat{F}_{ t, s }( 
      Y^I_s
    )
    \|^2_{HS(U,\R)}
  \, ds
  \bigg]
  < \infty
  .
\end{equation}
This and~\eqref{eq:ito.F} yield that for all $t\in(0,T)$ it holds that 
$
  \ES\big[|\!
  \int^t_0
    \hat{F}_{ t, s }( 
      Y^I_s
    )
  \, dW_s
  |
  \big]
  < \infty
$, 
$
  \ES\big[\!
  \int^t_0
    \hat{F}_{ t, s }( 
      Y^I_s
    )
  \, dW_s
  \big]
  =0
$,
and 
\begin{equation}
\label{eq:mild_Ito_F}
\begin{split}
&
  \E\left[
    u_{0,1}( t, Y^I_t )
    \!\left(
      F_I\big(
        Y^I_t 
      \big)
      -
      F\big(
        Y^I_t 
      \big)
    \right)
  \right]
\\ & =
  \E\left[
    u_{0,1}( t, e^{ A t } \xi )
    \!\left(
      F_I\big(
        e^{ A t } \xi 
      \big)
      -
      F\big(
        e^{ A t } \xi
      \big)
    \right)
  \right]
  +
  \int_0^t
  \E\left[ 
    \tilde{F}_{ t, s }( 
      Y^I_s
    )
  \right]
  ds
  .
\end{split}
\end{equation}
Next we apply the mild It\^{o} formula
in~\cite[Corollary~2]{DaPratoJentzenRoeckner2012} to the second summand on the right hand side of~\eqref{eq:Kolmogorov}.
For this application it is again convenient to introduce suitable functions.
More formally, let
$ \tilde{B}_{ t, s } \colon H \to \R $,
$ t \in (s,T] $, $ s \in [0,T) $,
be the functions which satisfy for all 
$ s \in [0,T) $, $ t \in (s,T] $, $ x \in H $
that
{\allowdisplaybreaks  
\begin{align}
\label{eq:tildeB_def}
&
  \tilde{B}_{ t, s }( x ) \nonumber
\\ & 
  =
  \smallsum\limits_{
    b\in\mathbb{U}
  }
  u_{0,2}( t, e^{ A ( t - s ) } x )
  \Big(
    \left[
      ( B^b_I )'( e^{ A ( t - s ) } x ) 
      + 
      ( B^b )'( e^{ A ( t - s ) } x )
    \right]
    e^{ A ( t - s ) } F_I( x ) 
    ,\nonumber
\\ & 
\qquad\qquad\qquad\qquad\qquad
    B^b_I( e^{ A ( t - s ) } x ) - B^b( e^{ A ( t - s ) } x )
  \Big)\nonumber
\\ & +
  \smallsum\limits_{ b\in\mathbb{U} }
  u_{0,2}( t, e^{ A ( t - s ) } x )
  \Big(
      B^b_I( e^{ A ( t - s ) } x ) 
      + 
      B^b( e^{ A ( t - s ) } x )
    ,\nonumber
\\ & 
\qquad\qquad\qquad\qquad\qquad
    \left[
      ( B^b_I )'( e^{ A ( t - s ) } x ) - ( B^b )'( e^{ A ( t - s ) } x )
    \right]
    e^{ A ( t - s ) } F_I( x ) 
  \Big)\nonumber
\\ & +
  \smallsum\limits_{ b\in\mathbb{U} }
  u_{0,3}( t, e^{ A ( t - s ) } x )
  \Big(
      B^b_I( e^{ A ( t - s ) } x ) 
      + 
      B^b( e^{ A ( t - s ) } x )
    ,\nonumber
\\ & 
\qquad\qquad\qquad\qquad\qquad
      B^b_I( e^{ A ( t - s ) } x ) - B^b( e^{ A ( t - s ) } x )
    ,
      e^{ A ( t - s ) } F_I( x ) 
  \Big)\nonumber
\\ & +
  \smallsum\limits_{ b_1, b_2 \in \mathbb{U} }
  u_{0,2}( t, e^{ A ( t - s ) } x )
  \Big(
    \left[
      ( B_I^{b_2} )'( e^{ A ( t - s ) } x ) 
      + 
      ( B^{b_2} )'( e^{ A ( t - s ) } x )
    \right]
    e^{ A ( t - s ) } B^{b_1}_I( x )
    ,\nonumber
\\ & 
\qquad\qquad
\qquad\qquad
\quad\qquad
    \left[
      ( B^{b_2}_I )'( e^{ A ( t - s ) } x ) - ( B^{b_2} )'( e^{ A ( t - s ) } x )
    \right]
    e^{ A ( t - s ) } B^{b_1}_I( x ) 
  \Big)\nonumber
\\ & +
  \frac{ 1 }{ 2 }
  \smallsum\limits_{ b_1, b_2 \in \mathbb{U} }
  u_{0,2}( t, e^{ A ( t - s ) } x )
  \Big(
    B^{b_2}_I( e^{ A ( t - s ) } x ) - B^{b_2}( e^{ A ( t - s ) } x )
    ,
\\ & 
\qquad\qquad
\quad\qquad
    \left[
      ( B_I^{b_2} )''( e^{ A ( t - s ) } x ) 
      + 
      ( B^{b_2} )''( e^{ A ( t - s ) } x )
    \right]
    \!
    \left(
      e^{ A ( t - s ) } B^{b_1}_I( x )
      ,
      e^{ A ( t - s ) } B^{b_1}_I( x )
    \right)
  \Big)\nonumber
\\ & +
  \frac{ 1 }{ 2 }
  \smallsum\limits_{ b_1,b_2\in\mathbb{U} }
  u_{0,2}( t, e^{ A ( t - s ) } x )
  \Big(
      B_I^{b_2}( e^{ A ( t - s ) } x ) 
      + 
      B^{b_2}( e^{ A ( t - s ) } x )
    ,\nonumber
\\ & 
\qquad\qquad
\quad\qquad
    \left[
      ( B^{b_2}_I )''( e^{ A ( t - s ) } x ) - ( B^{b_2} )''( e^{ A ( t - s ) } x )
    \right]
    \!
    \left(
      e^{ A ( t - s ) } B^{b_1}_I( x ) 
      ,
      e^{ A ( t - s ) } B^{b_1}_I( x ) 
    \right)
  \Big)\nonumber
\\ & +
  \smallsum\limits_{ b_1,b_2\in\mathbb{U} }
  u_{0,3}( t, e^{ A ( t - s ) } x )
  \Big(
    \left[
      ( B^{b_2}_I )'( e^{ A ( t - s ) } x ) 
      + 
      ( B^{b_2} )'( e^{ A ( t - s ) } x )
    \right]
      e^{ A ( t - s ) } B^{b_1}_I( x ) 
    ,\nonumber
\\ & 
\qquad\qquad
\qquad\qquad
\quad\qquad
      B_I^{b_2}( e^{ A ( t - s ) } x ) - B^{b_2}( e^{ A ( t - s ) } x )
    ,
      e^{ A ( t - s ) } B^{b_1}_I( x )
  \Big)\nonumber
\\ & +
  \smallsum\limits_{ b_1,b_2\in\mathbb{U} }
  u_{0,3}( t, e^{ A ( t - s ) } x )
  \Big(
      B_I^{b_2}( e^{ A ( t - s ) } x ) 
      + 
      B^{b_2}( e^{ A ( t - s ) } x )
    ,\nonumber
\\ & 
\qquad\qquad
\qquad\qquad
\qquad
    \left[
      ( B_I^{b_2} )'( e^{ A ( t - s ) } x ) - ( B^{b_2} )'( e^{ A ( t - s ) } x )
    \right]
      e^{ A ( t - s ) } B^{b_1}_I( x ) 
    ,
      e^{ A ( t - s ) } B^{b_1}_I( x ) 
  \Big)\nonumber
\\ & +
  \frac{ 1 }{ 2 }
  \smallsum\limits_{ b_1,b_2\in\mathbb{U} }
  u_{0,4}( t, e^{ A ( t - s ) } x )
  \Big(
      B_I^{b_2}( e^{ A ( t - s ) } x ) 
      + 
      B^{b_2}( e^{ A ( t - s ) } x )
    ,
      B^{b_2}_I( e^{ A ( t - s ) } x ) - B^{b_2}( e^{ A ( t - s ) } x )
    ,\nonumber
\\ & 
\qquad\qquad
\qquad\qquad
\quad\qquad
      e^{ A ( t - s ) } B^{b_1}_I( x ) 
    ,
      e^{ A ( t - s ) } B^{b_1}_I( x ) 
  \Big)\nonumber
\end{align}}and let
$ \hat{B}_{ t, s } \colon H \to HS(U,\R) $,
$ t \in (s,T] $, $ s \in [0,T) $,
be the functions which satisfy for all 
$ s \in [0,T) $, $ t \in (s,T] $, $ x \in H $
that
\begin{equation}
\label{eq:hatB_def}
\begin{split}
&
  \hat{B}_{ t, s }( x ) 
\\ & 
  =
  \smallsum\limits_{
    b\in\mathbb{U}
  }
  u_{0,2}( t, e^{ A ( t - s ) } x )
  \Big(
    \left[
      ( B^b_I )'( e^{ A ( t - s ) } x ) 
      + 
      ( B^b )'( e^{ A ( t - s ) } x )
    \right]
    e^{ A ( t - s ) } B_I( x ) 
    ,
\\ & 
\qquad\qquad\qquad\qquad\qquad
    B^b_I( e^{ A ( t - s ) } x ) - B^b( e^{ A ( t - s ) } x )
  \Big)
\\ & +
  \smallsum\limits_{ b\in\mathbb{U} }
  u_{0,2}( t, e^{ A ( t - s ) } x )
  \Big(
      B^b_I( e^{ A ( t - s ) } x ) 
      + 
      B^b( e^{ A ( t - s ) } x )
    ,
\\ & 
\qquad\qquad\qquad\qquad\qquad
    \left[
      ( B^b_I )'( e^{ A ( t - s ) } x ) - ( B^b )'( e^{ A ( t - s ) } x )
    \right]
    e^{ A ( t - s ) } B_I( x ) 
  \Big)
\\ & +
  \smallsum\limits_{ b\in\mathbb{U} }
  u_{0,3}( t, e^{ A ( t - s ) } x )
  \Big(
      B^b_I( e^{ A ( t - s ) } x ) 
      + 
      B^b( e^{ A ( t - s ) } x )
    ,
\\ & 
\qquad\qquad\qquad\qquad\qquad
      B^b_I( e^{ A ( t - s ) } x ) - B^b( e^{ A ( t - s ) } x )
    ,
      e^{ A ( t - s ) } B_I( x ) 
  \Big)
  .
\end{split}
\end{equation}
An application of the mild It\^{o} formula 
in Corollary~2 in 
Da Prato et al.~\cite{DaPratoJentzenRoeckner2012} 
then proves that
for all $ t \in (0,T] $ %, 
%$ I \in \grid(\set) $
it holds $\P$-a.s.\ that
\begin{equation}
\label{eq:B.mild}
\begin{split}
&
  \sum_{ b\in\mathbb{U} }
    u_{0,2}( t, Y^I_t )\!\left(
      B^b_I\big(
        Y^I_t
      \big)
      +
      B^b\big(
        Y^I_t 
      \big)
      ,
      B^b_I\big(
        Y^I_t 
      \big)
      -
      B^b\big(
        Y^I_t 
      \big)
    \right)
\\ & =
  \sum_{ b\in\mathbb{U} }
    u_{0,2}( t, e^{ A t } \xi )\!\left(
      B_I^b\big(
        e^{ A t } \xi
      \big)
      +
      B^b\big(
        e^{ A t } \xi 
      \big)
      ,
      B_I^b\big(
        e^{ A t } \xi 
      \big)
      -
      B^b\big(
        e^{ A t } \xi 
      \big)
    \right)
\\&\quad+
  \int_0^t
    \tilde{B}_{ t, s }( 
      Y^I_s
    )
  \, ds
  +
  \int_0^t
    \hat{B}_{ t, s }( 
      Y^I_s
    )
  \, dW_s
  .
\end{split}
\end{equation}
Next observe that there exists a continuous $(\mathcal{F}_t)_{t\in[0,T]}$-adapted stochastic process 
$\mathcal{Y}\colon[0,T]\times\Omega\to H$ 
which satisfies for all $t\in[0,T]$ 
that 
$
  \P(\mathcal{Y}_t=Y^I_t)=1
$
and
\begin{equation}
\label{eq:running.inside}
  \ES\bigg[
  \sup_{s\in[0,T]}
  \|\mathcal{Y}_s\|^4_H
  \bigg]
  < \infty
\end{equation}
(cf., e.g., Da Prato \& Zabczyk~\cite[Theorem~7.2]{dz14}).
This and, e.g., Lemma~3.1 in Jentzen \& Pu\v{s}nik~\cite{JentzenPusnik2016} 
ensure that for all 
$t\in(0,T]$ it holds $\P$-a.s.\ that 
\begin{equation}
    \int^t_0
    \|    \hat{B}_{ t, s }( 
          Y^I_s
        )\|^2_{HS(U,\R)} \,
  ds
  =
    \int^t_0
    \|    \hat{B}_{ t, s }( 
          \mathcal{Y}_s
        )\|^2_{HS(U,\R)} \,
  ds
  .
\end{equation}
Moreover, note that~\eqref{eq:c.0.finite}, \eqref{eq:hatB_def}, and Lemma~\ref{lem:aux} assure that 
\begin{equation}
\label{eq:hatB.growth}
  \sup_{s\in[0,T)}
  \sup_{t\in(s,T)}
  \sup_{x\in H}
  \frac{
    \|\hat{B}_{t,s}(x)\|_{HS(U,\R)}
  }{g_3(x)}
  < \infty.
\end{equation}
Combining~\eqref{eq:running.inside}--\eqref{eq:hatB.growth} yields that for all $t\in(0,T)$ it holds that 
\begin{equation}
\begin{split}
&
  \ES\Bigg[
    \bigg(
    \int^t_0
    \|    \hat{B}_{ t, s }( 
          Y^I_s
        )\|^2_{HS(U,\R)} \,
  ds
  \bigg)^{\nicefrac{1}{2}}
  \Bigg]
  \\&=
  \ES\Bigg[
    \bigg(
    \int^t_0
    \|    \hat{B}_{ t, s }( 
          \mathcal{Y}_s
        )\|^2_{HS(U,\R)} \,
  ds
  \bigg)^{\nicefrac{1}{2}}
  \Bigg]
  \\&\leq
  \bigg[
  \sup_{s\in[0,t)}
  \sup_{x\in H}
  \frac{
    \|\hat{B}_{t,s}(x)\|_{HS(U,\R)}
  }{g_3(x)}
  \bigg]
  \ES\Bigg[
    \bigg(
    \int^t_0
    g_6(\mathcal{Y}_s) \,
  ds
  \bigg)^{\nicefrac{1}{2}}
  \Bigg]
  \\&\leq
  \sqrt{t} \,
  \bigg[
  \sup_{s\in[0,t)}
  \sup_{x\in H}
  \frac{
    \|\hat{B}_{t,s}(x)\|_{HS(U,\R)}
  }{g_3(x)}
  \bigg]
  \ES\bigg[
  \sup_{s\in[0,t]}
    \max\{1,\|\mathcal{Y}_s\|^3_H\}
  \bigg]
  < \infty.
\end{split}
\end{equation}
This shows that for all $t\in(0,T)$ it holds that 
$
  \ES\big[|\!
  \int^t_0
  \hat{B}_{ t, s }( 
            Y^I_s
          )
  \, dW_s
  |\big]
  < \infty
$ and
\begin{equation}
  \ES\bigg[\!
  \int^t_0
  \hat{B}_{ t, s }( 
            Y^I_s
          )
  \, dW_s
  \bigg]
  =0
\end{equation}
(cf., e.g., van Neerven et al.~\cite[Theorem~4.7]{NeervenVeraarWeis2015}).
This and~\eqref{eq:B.mild} ensure that for all $t\in(0,T)$ it holds that 
\begin{equation}
\begin{split}
\label{eq:mild_Ito_B}
&
\sum_{ b\in\mathbb{U} }
\E\left[
u_{0,2}( t, Y^I_t )\!\left(
B^b_I\big(
Y^I_t
\big)
+
B^b\big(
Y^I_t 
\big)
,
B^b_I\big(
Y^I_t 
\big)
-
B^b\big(
Y^I_t 
\big)
\right)
\right]
\\ & =
\sum_{ b\in\mathbb{U} }
\E\left[
u_{0,2}( t, e^{ A t } \xi )\!\left(
B_I^b\big(
e^{ A t } \xi
\big)
+
B^b\big(
e^{ A t } \xi 
\big)
,
B_I^b\big(
e^{ A t } \xi 
\big)
-
B^b\big(
e^{ A t } \xi 
\big)
\right)
\right]
% \\ & \quad
\\&\quad+
\int_0^t
\E\left[ 
\tilde{B}_{ t, s }( 
Y^I_s
)
\right]
ds
.
\end{split}
\end{equation}
Putting~\eqref{eq:mild_Ito_F} and~\eqref{eq:mild_Ito_B}
into~\eqref{eq:Kolmogorov}
proves that
%for all $ I \in \grid(\set) $
%it holds that
\begin{equation}
\label{eq:Kolmogorov2}
\begin{split}
&
    \ES\big[ 
      \varphi( Y^I_T )
    \big]
    -
    \ES\big[ 
      \varphi( X^\set_T )
    \big]
\\ & =
  \int_0^T
  \E\left[
    u_{0,1}( t, e^{ A t } \xi )
    \!\left(
      F_I\big(
        e^{ A t } \xi 
      \big)
      -
      F\big(
        e^{ A t } \xi
      \big)
    \right)
  \right]
  dt
\\ & +
  \frac{1}{2}
  \smallsum\limits_{ b\in\mathbb{U} }
  \int\limits_0^T
  \E\left[
    u_{0,2}( t, e^{ A t } \xi )\!\left(
      B^b_I\big(
        e^{ A t } \xi
      \big)
      +
      B^b\big(
        e^{ A t } \xi 
      \big)
      ,
      B^b_I\big(
        e^{ A t } \xi 
      \big)
      -
      B^b\big(
        e^{ A t } \xi 
      \big)
    \right)
  \right]
  dt
\\ & +
  \int_0^T
  \int_0^t
  \E\left[ 
    \tilde{F}_{ t, s }( 
      Y^I_s
    )
  \right]
  +
  \frac{ 1 }{ 2 }
  \,
  \E\left[ 
    \tilde{B}_{ t, s }( 
      Y^I_s
    )
  \right]
  ds
  \,
  dt
  .
\end{split}
\end{equation}
In the following we estimate the absolute values of the summands on
the right hand side of~\eqref{eq:Kolmogorov2}.
To this end we observe that Lemma~\ref{lem:c.delta.finite} and the hypothesis that 
$
  \vartheta < \nicefrac{1}{2}
$
demonstrate that $c_{-\vartheta} < \infty$.
This and Lemma~\ref{lem:aux} ensure that
for all 
$ t \in (0,T) $
it holds that
\begin{equation}
\label{eq:initial.value.drift}
\begin{split}
&
  \left|
  u_{0,1}( t, e^{ A t } \xi )
  \!\left(
    F_I( e^{ A t } \xi ) 
    -
    F( e^{ A t } \xi )
  \right)
  \right|
\leq
  \frac{  
    c_{ -\vartheta }
  }{
    {( T - t )^{ \vartheta }}
  }
  \left\|
    F_I( e^{ A t } \xi ) 
    -
    F( e^{ A t } \xi )
  \right\|_{ H_{ -\vartheta } }
\\ & \leq
  \frac{  
    c_{ -\vartheta }
    \left| F \right|_{
      C^1_b( H , H_{ -\vartheta } )
    }
    \left\| 
      P_{ \set\backslash I } 
    \right\|_{ L( H, H_{ - \rho } ) }
    \left\| 
      \xi
    \right\|_{ H }
  }{
    {( T - t )^{ \vartheta }}
    \,
    t^\rho
  }
  .
\end{split}
\end{equation}
Next note that H\"{o}lder's inequality implies that for all 
$x,y\in(0,\infty)$ 
with 
$
  (x-1)(y-1)
  \geq 0
$
and 
$
  x+y > 1
$
it holds that 
\begin{equation}
\label{eq:Beta.ub}
  \int^1_0
  (1-t)^{(x-1)} \,
  t^{(y-1)}
  \, dt
  \leq
  \frac{1}{(x+y-1)}
  .
\end{equation}
This, the fact that 
$
  \ES[
  \| \xi \|_H
  ]\leq
  K^I_1
$, 
and~\eqref{eq:initial.value.drift}
imply that
%for all 
%$ r \in [ 0, 1 - \vartheta ) $,
%%$ \delta \in ( r - 1, 0 ] $,
%$ I \in \grid(\set) $
%it holds that
\begin{equation}
\label{eq:estimate_first_term}
\begin{split}
&
  \left|
  \int_0^T
  \E\left[
    u_{0,1}( t, e^{ A t } \xi )
    \!\left(
      F_I\big(
        e^{ A t } \xi 
      \big)
      -
      F\big(
        e^{ A t } \xi
      \big)
    \right)
  \right]
  dt
  \right|
\\ & \leq
  \frac{ 
    K^I_1
    \,
    c_{ -\vartheta }
    \,
    T^{ ( 1 - \vartheta - \rho ) }
    \left| F \right|_{
      C^1_b( H , H_{ -\vartheta } )
    }
    \left\| 
      P_{ \set\backslash I } 
    \right\|_{ L( H, H_{ - \rho } ) }
  }{
    \left( 1 - \vartheta - \rho \right)
  }
  .
\end{split}
\end{equation}
Inequality~\eqref{eq:estimate_first_term} provides us an estimate for the absolute value of the first summand
on the right hand side of~\eqref{eq:Kolmogorov2}.
In the next step we bound the absolute value of the second summand on the right hand
side~\eqref{eq:Kolmogorov2}. For this we observe that the Cauchy-Schwarz inequality ensures that for all
$ t \in [0,T) $ 
it holds that
\begin{equation}
\begin{split}
&
  \smallsum\limits_{ b\in\mathbb{U} }
  \left|
    u_{0,2}( t, e^{ A t } \xi )\!\left(
      B^b_I\big(
        e^{ A t } \xi
      \big)
      +
      B^b\big(
        e^{ A t } \xi
      \big)
      ,
      B^b_I\big(
        e^{ A t } \xi 
      \big)
      -
      B^b\big(
        e^{ A t } \xi 
      \big)
    \right)
  \right|
\\ & \leq
  \frac{
    c_{
      -\nicefrac{\vartheta}{2} , -\nicefrac{\vartheta}{2}
    }
  }{
    {( T - t )^{ \vartheta }}
  }
  \left[
  \smallsum\limits_{ b\in\mathbb{U} }
  \left\|
      B^b_I\big(
        e^{ A t } \xi
      \big)
      +
      B^b\big(
        e^{ A t } \xi
      \big)
  \right\|_{ 
    H_{ -\nicefrac{\vartheta}{2} }
  }
  \left\|
      B^b_I\big(
        e^{ A t } \xi 
      \big)
      -
      B^b\big(
        e^{ A t } \xi 
      \big)
  \right\|_{
    H_{ -\nicefrac{\vartheta}{2} }
  }
  \right]
\\&\leq
  \frac{
  	c_{
  		-\nicefrac{\vartheta}{2} , -\nicefrac{\vartheta}{2}
  	}
  }{
  {( T - t )^{ \vartheta }}
}
\left[
\smallsum\limits_{ b\in\mathbb{U} }
\left\|
B^b_I\big(
e^{ A t } \xi
\big)
+
B^b\big(
e^{ A t } \xi
\big)
\right\|^2_{ 
	H_{ -\nicefrac{\vartheta}{2} }
}
\right]^{\nicefrac{1}{2}}
\\&\quad\cdot
\left[
\smallsum\limits_{ b\in\mathbb{U} }
\left\|
B^b_I\big(
e^{ A t } \xi 
\big)
-
B^b\big(
e^{ A t } \xi 
\big)
\right\|^2_{
	H_{ -\nicefrac{\vartheta}{2} }
}
\right]^{\nicefrac{1}{2}}
\\ & 
{
=
  \frac{
    c_{
      -\nicefrac{\vartheta}{2} , -\nicefrac{\vartheta}{2}
    }
  }{
    {( T - t )^{ \vartheta }}
  }
  \left\|
      B_I\big(
        e^{ A t } \xi
      \big)
      +
      B\big(
        e^{ A t } \xi
      \big)
  \right\|_{ 
    HS(U,H_{ -\nicefrac{\vartheta}{2} })
  }
  \left\|
      B_I\big(
        e^{ A t } \xi 
      \big)
      -
      B\big(
        e^{ A t } \xi 
      \big)
  \right\|_{
    HS(U,H_{ -\nicefrac{\vartheta}{2} })
  }
  }
  .
\end{split} 
\end{equation}
The fact that 
$
g_1( \xi )
\| \xi \|_H
\leq
g_2( \xi )
$ 
hence shows that for all
$ t \in (0,T) $ 
it holds that
\begin{equation}
\begin{split}
&
\smallsum\limits_{ b\in\mathbb{U} }
\left|
u_{0,2}( t, e^{ A t } \xi )\!\left(
B^b_I\big(
e^{ A t } \xi
\big)
+
B^b\big(
e^{ A t } \xi
\big)
,
B^b_I\big(
e^{ A t } \xi 
\big)
-
B^b\big(
e^{ A t } \xi 
\big)
\right)
\right|
\\ & 
	\leq
	\frac{
		c_{
			-\nicefrac{\vartheta}{2} , -\nicefrac{\vartheta}{2}
		}
	}{
	{( T - t )^{ \vartheta }}
} \,
\big[
  \|B(P_I e^{At} \xi)\|_{HS(U,H_{ -\nicefrac{\vartheta}{2} })}
  +
  \|B(e^{At} \xi)\|_{HS(U,H_{ -\nicefrac{\vartheta}{2} })}
\big]
\\&\quad\cdot
\left\|
B
\right\|_{
	C_b^1( H , HS( U, H_{ -\nicefrac{\vartheta}{2} } ) )
}
\left\|
P_{\mathbb{H}\backslash I}e^{At}\xi
\right\|_H
\\ & 
{
	\leq
	\frac{
		c_{
			-\nicefrac{\vartheta}{2} , -\nicefrac{\vartheta}{2}
		}
	}{
	{( T - t )^{ \vartheta }}
}
\left\|
B
\right\|_{
	C_b^1( H , HS( U, H_{ -\nicefrac{\vartheta}{2} } ) )
}^2
\big(
g_1( P_I e^{At} \xi )
+
g_1( e^{At} \xi )
\big)
\left\|
P_{\mathbb{H}\backslash I}e^{At}\xi
\right\|_H
}
\\ & \leq
\frac{
	2 \,
	c_{
		-\nicefrac{\vartheta}{2} , -\nicefrac{\vartheta}{2}
	}
	\,
	g_2( \xi )
}{
{( T - t )^{ \vartheta }}
\,
t^\rho
}
\left\|
B
\right\|_{
	C_b^1( H , HS( U, H_{ -\nicefrac{\vartheta}{2} } ) )
}^2
\left\|
P_{ \set \backslash I }  
\right\|_{ 
	L( H , H_{ - \rho } ) 
}
.
\end{split} 
\end{equation}
This, the fact that 
$
  \ES[
  g_2( \xi )
  ]\leq
  K^I_2
$, and~\eqref{eq:Beta.ub}
imply that
\begin{equation}
\label{eq:inequality_second_term}
\begin{split}
& 
  \left|
  \smallsum\limits_{ b\in\mathbb{U} }
  \int_0^T
  \E\left[
    u_{0,2}( t, e^{ A t } \xi )\!\left(
      B^b_I\big(
        e^{ A t } \xi
      \big)
      +
      B^b\big(
        e^{ A t } \xi 
      \big)
      ,
      B^b_I\big(
        e^{ A t } \xi 
      \big)
      -
      B^b\big(
        e^{ A t } \xi 
      \big)
    \right)
  \right]
  dt
  \right|
\\ & \leq 
  \frac{
    2 \,
    K^I_2
    \,
      c_{
        -\nicefrac{\vartheta}{2} , -\nicefrac{\vartheta}{2}
      }
    \,
    T^{ 
      ( 1 -\vartheta - \rho ) 
    }
  }{
    \left( 1 -\vartheta - \rho 
    \right) 
  }
  \left\|
    B
  \right\|_{
    C_b^1( H , HS( U, H_{-\nicefrac{\vartheta}{2}} ) )
  }^2
  \left\|
    P_{ \set \backslash I }  
  \right\|_{ 
    L( H , H_{ - \rho } ) 
  }
  .
\end{split}
\end{equation}
Inequality~\eqref{eq:inequality_second_term}
provides us an estimate for the second term
on the right hand side of~\eqref{eq:Kolmogorov2}.
In the next step we bound the absolute value of the term
$
  \int_0^T
  \int_0^t
  \ES\big[ 
    \tilde{F}_{ t, s }( Y^I_s )
  \big] 
  \, ds \, dt
$
on the right hand side of ~\eqref{eq:Kolmogorov2}.
For this we note that~\eqref{eq:tildeF_def} 
shows that for all 
$ s, t \in [0,T) $,
$ x \in H $
with $ t > s $
it holds that
\begin{equation}
\label{eq:F_third_summand_bound}
\begin{split}
&
  \big|
    \tilde{F}_{ t, s }( x ) 
  \big|
\\ & \leq
  \frac{ c_{ -\vartheta , 0 } }{
    {( T - t )^{ \vartheta }}
  }
    \left\|
      F_I(
        e^{ A ( t - s ) }x
      )
      -
      F(
        e^{ A ( t - s ) } x
      )
    \right\|_{ H_{ -\vartheta } }
    \left\|
      e^{ A (t - s) }
      F_I( x )
    \right\|_{ H }
\\ & +
  \frac{
    c_{ -\vartheta }
  }{
    {( T - t )^{ \vartheta }}
  }
    \left\|
      \left[
        F_I'(
          e^{ A ( t - s ) }x
        )
        -
        F'(
          e^{ A ( t - s ) } x
        )
      \right]
      e^{ A (t - s) }
      F_I( x )
    \right\|_{ 
      H_{ -\vartheta }
    }
\\ & +
    \frac{
      c_{ -\vartheta,0,0 }
    }{
      2 \, {( T - t )^{ \vartheta }}
    }
    \left\|
      F_I(
        e^{ A ( t - s ) }x
      )
      -
      F(
        e^{ A ( t - s ) } x
      )
    \right\|_{ H_{ -\vartheta } }
    \left\|
      e^{ A (t - s) }
      B_I( x ) 
    \right\|^2_{ HS( U, H ) }
\\ & +
  \frac{ c_{ -\vartheta, 0 } }{
    {( T - t )^{ \vartheta }}
  }
    \smallsum\limits_{ b\in\mathbb{U} }
    \left[
    \left\|
      \left[
        F_I'(
          e^{ A ( t - s ) }x
        )
        -
        F'(
          e^{ A ( t - s ) } x
        )
      \right]
      e^{ A (t - s) }
      B^b_I( x ) 
    \right\|_{ H_{ -\vartheta } }
    \left\|
      e^{ A (t - s) }
      B^b_I( x ) 
    \right\|_{ H }
    \right]
\\ & +
  \frac{ c_{ -\vartheta } }{
    2 \, {( T - t )^{ \vartheta }}
  }
    \smallsum\limits_{ b\in\mathbb{U} }
    \left\|
      \left[
        F_I''(
          e^{ A ( t - s ) }x
        )
        -
        F''(
          e^{ A ( t - s ) } x
        )
      \right]
      \!
      \left(
        e^{ A (t - s) }
        B^b_I( x ) 
        ,
        e^{ A (t - s) }
        B^b_I( x ) 
      \right)
    \right\|_{
      H_{ -\vartheta }
    }
  .
\end{split}
\end{equation}
Next observe that for all 
$ x, v \in H $,
$ r \in [0, \vartheta] $,
$ s, t \in [0,T] $ %,
with
$
  s < t
$
it holds that
\begin{equation}
\begin{split}
&
  \left\|
      \left[
        F_I'(
          e^{ A ( t - s ) }x
        )
        -
        F'(
          e^{ A ( t - s ) } x
        )
      \right]
      e^{ A ( t - s ) } \, v
    \right\|_{ H_{ -\vartheta } }
\\ & =
  \left\|
      \left[
        F'(
          e^{ A ( t - s ) } P_I( x )
        )
        P_I
        -
        F'(
          e^{ A ( t - s ) } x
        )
      \right]
      e^{ A ( t - s ) } \, v
  \right\|_{ H_{ -\vartheta } }
\\ & \leq
  \left\|
      \left[
        F'(
          e^{ A ( t - s ) } P_I( x )
        )
        -
        F'(
          e^{ A ( t - s ) } x
        )
      \right]
      P_I \,
      e^{ A ( t - s ) } \, v
  \right\|_{ H_{ -\vartheta } }
\\ & \quad 
  +
  \left\|
        F'(
          e^{ A ( t - s ) } x
        )
        \,
      P_{ \set \backslash I } \, 
      e^{ A ( t - s ) } \, v
  \right\|_{ H_{ -\vartheta } }
\\ & \leq
  \left|
    F
  \right|_{
    C^2_b( H, H_{-\vartheta} )
  }
  \left\|
    e^{ A ( t - s ) } 
    P_{ \set \backslash I } x
  \right\|_{ H }
  \left\|
    P_I e^{ A ( t - s ) } v
  \right\|_{
    H
  }
% \\ & \quad 
  +
  \left\|
    F
  \right\|_{
    C_b^1( H, H_{ -\vartheta } )
  }
  \left\|
    P_{ \set \backslash I } 
    e^{A(t-s)} v
  \right\|_H 
\\ & \leq
  \left[
    g_1( x )
    \left|
      F
    \right|_{
      C^2_b( H, H_{-\vartheta} )
    }
  +
    \left\|
      F
    \right\|_{
      C_b^1( H, H_{ -\vartheta } )
    }
  \right]
  \frac{
    \left\| v \right\|_{ H_{ - r } }
    \left\|
      P_{ \set \backslash I } 
    \right\|_{ L( H, H_{ - \rho } ) }
  }{
    (t-s)^{ \rho + r }
  }
\\ & \leq
  \frac{
    g_1( x )
    \left\|
      F
    \right\|_{
      C^2_b( H, H_{-\vartheta} )
    }
    \left\| v \right\|_{ H_{ - r } }
    \left\|
      P_{ \set \backslash I } 
    \right\|_{ L( H, H_{ - \rho } ) }
  }{
    ( t - s )^{ ( \rho + r ) }
  }
  .
\end{split}
\end{equation}
This and the fact that for all 
$ x \in H $ 
it holds that 
$
  g_1( x )
  \| x \|_H
  \leq
  g_2( x )
$ 
imply that
for all 
$ x \in H $,
$ s, t \in [0,T] $ %,
with
$
  s < t
$
it holds that
\begin{equation}
\label{eq:F_prime_estimate1}
\begin{split}
&
  \left\|
      \left[
        F_I'(
          e^{ A ( t - s ) }x
        )
        -
        F'(
          e^{ A ( t - s ) } x
        )
      \right]
      e^{A(t-s)}F_I(x)
    \right\|_{ H_{ -\vartheta } }
% \\ & 
\leq
  \tfrac{ 
    g_2( x )
    \,
    \left\|
      F
    \right\|_{
      C_b^2( H, H_{-\vartheta} )
    }^2
    \,
    \|
      P_{ \set \backslash I } 
    \|_{ L( H, H_{ - \rho } ) }
%     \left\|
%       F
%     \right\|_{
%       C_b^1( H, H_{-\vartheta} )
%     }
  }{
    \left(
      t - s
    \right)^{ 
      \rho + \vartheta
    }
  }
\end{split}
\end{equation}
\begin{equation}
\label{eq:F_prime_estimate2}
\begin{split}
  \text{and}
  \qquad
&
    \left[
    \smallsum\limits_{ b\in\mathbb{U} }
  \left\|
      \left[
        F_I'(
          e^{ A ( t - s ) }x
        )
        -
        F'(
          e^{ A ( t - s ) } x
        )
      \right]
      e^{A(t-s)}B^b_I(x)
    \right\|_{ H_{ -\vartheta } }^2
    \right]^{\frac{1}{2}}
\\ & \leq
  \frac{ 
    g_2( x )
    \left\|
      F
    \right\|_{
      C_b^2( H, H_{-\vartheta} )
    }
    \left\|
      B
    \right\|_{
      C_b^1( H, HS(U,H_{-\nicefrac{\vartheta}{2}}) )
    }
    \|
      P_{ \set \backslash I } 
    \|_{ L( H, H_{ - \rho } ) }
  }{
    \left(
      t - s
    \right)^{ 
     (\rho + \frac{\vartheta}{2})
    }
  }
  .
\end{split}
\end{equation}
Moreover, note that
for all
$ x \in H $,
$ s, t \in [0,T] $ %,
with
$
  s < t 
$
it holds that
\begin{equation}
\label{eq:F_prime_prime}
\begin{split}
&
    \smallsum\limits_{ b\in\mathbb{U} }
    \left\|
      \left[
        F_I''(
          e^{ A ( t - s ) }x
        )
        -
        F''(
          e^{ A ( t - s ) } x
        )
      \right]
      \!
      \left(
        e^{ A (t - s) }
        B^b_I( x )
        ,
        e^{ A (t - s) }
        B^b_I( x ) 
      \right)
    \right\|_{
      H_{ -\vartheta }
    }
\\ & \leq
    \smallsum\limits_{ b\in\mathbb{U} }
    \left\|
        F''(
          e^{ A ( t - s ) } P_I( x )
        )
      \!
      \left(
        \left[
          \operatorname{Id}_H
          +
          P_I
        \right]
        e^{ A (t - s) }
        B^b_I( x )
        ,
        \left[
          \operatorname{Id}_H
          -
          P_I
        \right]
        e^{ A (t - s) }
        B^b_I( x )
      \right)
    \right\|_{
      H_{ -\vartheta }
    }
\\ & \quad +
    \smallsum\limits_{ b\in\mathbb{U} }
    \left\|
      \left[
        F''(
          e^{ A ( t - s ) } P_I( x )
        )
        -
        F''(
          e^{ A ( t - s ) } x
        )
      \right]
      \!
      \left(
        e^{ A (t - s) }
        B^b_I( x )
        ,
        e^{ A (t - s) }
        B^b_I( x )
      \right)
    \right\|_{
      H_{ -\vartheta }
    }
\\ & \leq
  2 
  \left\|
    F
  \right\|_{
    C_b^2( H , H_{-\vartheta} )
  }
      \left\|
        e^{ A (t - s) }
        B_I( x )
      \right\|_{ HS( U, H ) }
      \left\|
        P_{ \set \backslash I }
        e^{ A (t - s) }
        B_I( x )
      \right\|_{
        HS(U,H)
      }
\\ & \quad +
  \frac{
    \left|
      F
    \right|_{
      C^3_b( H , H_{-\vartheta} )
    }
    g_1( x )
      \left\|
        P_{ \set \backslash I 
        }
      \right\|_{
        L( H, H_{ - \rho } )
      }
      \left\|
        e^{ A (t - s) }
        B_I( x )
      \right\|^2_{ HS( U, H) }
  }{
    \left( t - s \right)^{ 
      \rho 
    }
  }
\\ & \leq
  \frac{
    2 
    \left\|
      F
    \right\|_{
      C_b^2( H , H_{-\vartheta} )
    }
    \left\|
      B
    \right\|^2_{
      C_b^1( H, HS( U, H_{-\nicefrac{\vartheta}{2}} ) )
    }
    \|
      P_{ \set \backslash I }
    \|_{
      L( H, H_{ - \rho } )
    }
    \,
    g_2( x )
  }{
    \left( t - s \right)^{ 
      \rho + \vartheta
    }
  }
\\ & \quad +
  \frac{
    \left|
      F
    \right|_{
      C^3_b( H , H_{-\vartheta} )
    }
    \left\|
      B
    \right\|_{
      C_b^1( H , HS( U, H_{-\nicefrac{\vartheta}{2}} ) )
    }^2
      \|
        P_{ \set \backslash I 
        }
      \|_{
        L( H, H_{ - \rho } )
      }
      \,
    g_3( x )
  }{
    \left( t - s \right)^{ 
      \rho + \vartheta
    }
  }
\\ & \leq
  \frac{
    2 
    \left\|
      F
    \right\|_{
      C_b^3( H , H_{-\vartheta} )
    }
    \left\|
      B
    \right\|^2_{
      C_b^1( H, HS( U, H_{-\nicefrac{\vartheta}{2}} ) )
    }
    \|
      P_{ \set \backslash I }
    \|_{
      L( H, H_{ - \rho } )
    }
    \,
    g_3( x )
  }{
    \left( t - s \right)^{ 
      \rho + \vartheta
    }
  }.
\end{split}
\end{equation}
Putting~\eqref{eq:F_prime_estimate1}, \eqref{eq:F_prime_estimate2}, and~\eqref{eq:F_prime_prime} 
into~\eqref{eq:F_third_summand_bound}
proves that
for all
  $ x \in H $,
$ s, t \in [0,T) $
with 
$ t > s $
it holds that
\begin{equation}
\begin{split}
  \big|
    \tilde{F}_{ t, s }( x ) 
  \big|
& \leq
\Big[ 
  c_{ - \vartheta , 0 } 
  \,
  \| F \|_{ C_b^1( H , H_{ - \vartheta } ) }^2 
  \, g_2(x)
  + c_{ - \vartheta } \,
  \| F \|_{ C_b^2( H , H_{ - \vartheta } ) }^2 \, g_2(x) 
\\
&
  + c_{ - \vartheta , 0 , 0 } 
  \,
  \| F \|_{ C_b^1( H, H_{ - \vartheta } ) 
  } 
  \,
  \| B \|_{ 
    C_b^1( H, HS(U, H_{ - \nicefrac{ \vartheta }{ 2 } } ) ) 
  }^2 
  \,
  g_3(x) 
\\
&
  + 
  c_{ - \vartheta , 0 } 
  \,
  \| 
    B
  \|_{
    C_b^1( H, HS(U, H_{ - \nicefrac{ \vartheta }{ 2 } } ) ) 
  }^2 
  \,
  g_3( x ) 
  \,
  \| F \|_{
    C_b^2( H, H_{ - \vartheta } ) 
  } 
\\
& 
  + c_{-\vartheta} \, 
  \| B \|_{
    C_b^1( H , HS(U, H_{ - \nicefrac{ \vartheta }{ 2 } } ) ) 
  }^2 
  \, g_3(x) 
  \,
  \| F \|_{
    C_b^3( H , H_{ - \vartheta } ) 
  } 
\Big]
  \frac{
    \| P_{ \set \backslash I } \|_{ L( H, H_{ - \rho } ) } 
  }{
    {( T - t )^{ \vartheta }}
    \left( t - s \right)^{ ( \rho + \vartheta ) } 
  }
  .
\end{split}
\end{equation}
This implies that for all 
$ t \in (0,T) $,
$ s \in [0,t) $,
%$ r \in [ 0 , e - \vartheta ] $ %,
%$ I \in \grid(\set) $,
$ x \in H $
it holds that
\begin{equation}
\begin{split}
&
  \big|
    \tilde{F}_{ t, s }(x)
  \big|\leq
  \frac{
    2
    \left[ 
      c_{ -\vartheta }
      +
      c_{ -\vartheta, 0 }
      +
      c_{ -\vartheta, 0, 0 }
    \right]
    \varsigma_{ F, B }
    \,
    g_3(x)
    \,
    \|
      P_{ \set \backslash I }
    \|_{ L(H,H_{-\rho}) }
  }{
    {( T - t )^{ \vartheta }}
    \left( t - s \right)^{ 
      \rho+\vartheta
    }
  }
  .
\end{split}
\end{equation}
This, in turn, proves that
\begin{equation}
\label{eq:F_estimate}
\begin{split}
&
  \left|
  \int_0^T
  \int_0^t
  \E\left[ 
    \tilde{F}_{ t, s }( Y^I_s )
  \right]
  ds
  \,
  dt
  \right|
\\ & \leq
  \frac{
    2 \, 
    \varsigma_{ F, B }
    \,
    \|
      P_{ \set \backslash I }
    \|_{ L( H , H_{ - \rho } ) }
    \,
    K^I_3
    \,
    \big[
      c_{ -\vartheta }
      +
      c_{ -\vartheta, 0 }
      +
      c_{ -\vartheta, 0, 0 }
    \big]
  }{
    \left(
      1 - \rho - \vartheta
    \right)
  }
  \int^T_0
  \frac{
    t^{ ( 1 - \rho - \vartheta ) }
  }{
    ( T - t )^\vartheta
  }
  \, dt
\\ & \leq
  \frac{
    2 \, 
    T^{ ( 1 - \rho - \vartheta ) }
    \,
    \varsigma_{ F, B }
    \,
    \|
      P_{ \set \backslash I }
    \|_{ L( H , H_{ - \rho } ) }
    \,
    K^I_3
    \,
    \big[
      c_{ -\vartheta }
      +
      c_{ -\vartheta, 0 }
      +
      c_{ -\vartheta, 0, 0 }
    \big]
  }{
    \left(
      1 - \rho - \vartheta
    \right)
  }
  \int^T_0
  \frac{1}{
    ( T - t )^\vartheta
  }
  \, dt
\\ & =
  \frac{
    2 \, 
    T^{ ( 2 - \rho - 2 \vartheta ) }
    \,
    \varsigma_{ F, B }
    \,
    \|
      P_{ \set \backslash I }
    \|_{ L( H , H_{ - \rho } ) }
    \,
    K^I_3
    \,
    \big[
      c_{ -\vartheta }
      +
      c_{ -\vartheta, 0 }
      +
      c_{ -\vartheta, 0, 0 }
    \big]
  }{
    \left(
      1 - \rho - \vartheta
    \right)
    {\left(
      1 - \vartheta 
    \right)}
  }
  .
\end{split} 
\end{equation}
It thus remains to bound the term
$
  \frac{ 1 }{ 2 }
  \int_0^T
  \int_0^t
  \ES\big[
    \tilde{B}_{ t, s }( Y^I_s )
  \big]
  \, ds \, dt
$
on the right hand side of~\eqref{eq:Kolmogorov2}.
For this we estimate the terms which appear on the right hand side of~\eqref{eq:tildeB_def} (cf.\ \eqref{eq:tilde.B.many.sum} and~\eqref{eq:tilde.B.many.sum.II} below).
We start by presenting a few auxiliary estimates for these terms.
Note that 
for all
$ t \in (0,T] $,
$ s \in [0,t) $,
$ x, v \in H $
it holds that
\begin{equation}
\begin{split}
&
\left[
\smallsum_{ b\in\mathbb{U} }
\left\|
\left[
( B^b_I )'( e^{ A ( t - s ) } x ) 
- 
( B^b )'( e^{ A ( t - s ) } x )
\right]
e^{ A ( t - s ) } \, v
\right\|^2_{ H_{ -\nicefrac{\vartheta}{2} } }
\right]^{ 1 / 2 }
\\ & =
\left\|
\left[
B'( e^{ A ( t - s ) } P_I( x ) ) P_I
-
B'( e^{ A ( t - s ) } x )
\right]
e^{ A ( t - s ) } \, v
\right\|_{ HS( U, H_{ - \vartheta / 2 } ) }
\\ & \leq
\left\|
\left[
B'( e^{ A ( t - s ) } P_I( x ) )
-
B'( e^{ A ( t - s ) } x )
\right]
P_I \, e^{ A ( t - s ) } \, v
\right\|_{ HS( U, H_{ - \vartheta / 2 } ) }
\\ & \quad +
\left\|
B'( e^{ A ( t - s ) } x )
\, P_{\set\backslash I} \, e^{ A ( t - s ) } \, v
\right\|_{ HS( U, H_{ - \vartheta / 2 } ) }
.
\end{split}
\end{equation}
This shows that for all 
$ r \in [0, \vartheta] $,
$ t \in (0,T] $,
$ s \in [0,t) $,
$ x, v \in H $
it holds that
\begin{equation}\label{eq:tildeBpreestimate_1}
\begin{split}
&
\left[
\smallsum_{ b\in\mathbb{U} }
\left\|
\left[
( B^b_I )'( e^{ A ( t - s ) } x ) 
- 
( B^b )'( e^{ A ( t - s ) } x )
\right]
e^{ A ( t - s ) } \, v
\right\|^2_{ H_{ -\nicefrac{\vartheta}{2} } }
\right]^{ 1 / 2 }
\\ & \leq
\left| B \right|_{
	C^{ 2 }_b( H, HS( U, H_{ -\nicefrac{\vartheta}{2} } ) )
}
\left\|
e^{ A ( t - s ) } 
P_{ \set \backslash I } x
\right\|_{ H }
\left\|
P_I e^{ A ( t - s ) } v
\right\|_{
	H
}
\\ & \quad
+
\left\| B \right\|_{
	C_b^1( H, HS( U, H_{ -\nicefrac{\vartheta}{2} } ) )
}
\left\|
P_{ \set \backslash I } 
e^{A(t-s)} v
\right\|_H 
\\ & \leq
\left[
\left| B \right|_{
	C^{ 2 }_b( H, HS( U, H_{ -\nicefrac{\vartheta}{2} } ) )
}
\,
g_1(x)
+
\left\| B \right\|_{
	C_b^1( H, HS( U, H_{ -\nicefrac{\vartheta}{2} } ) )
}
\right]
\frac{
	\left\| v \right\|_{
		H_{ - r }
	}
	\|
	P_{ \set \backslash I }
	\|_{ L(H,H_{-\rho}) }
}{
	\left( t - s \right)^{ 
		( \rho + r )
	}
}
\\ & \leq
\frac{
	g_1( x )
	\left\| B \right\|_{
		C_b^2( H, HS( U, H_{ -\nicefrac{\vartheta}{2} } ) )
	}
	\left\| v \right\|_{ H_{ - r } }
	\left\|
	P_{ \set \backslash I } 
	\right\|_{ L( H, H_{ - \rho } ) }
}{
	( t - s )^{ ( \rho + r ) }
}
.
\end{split}
\end{equation}
Hence, we obtain that for all 
$ t \in (0,T] $,
$ s \in [0,t) $,
$ x, v \in H $
it holds that
\begin{equation}\label{eq:tildeBpreestimate_1.square}
\begin{split}
&
\smallsum_{ b\in\mathbb{U} }
\left\|
\left[
( B^b_I )'( e^{ A ( t - s ) } x ) 
- 
( B^b )'( e^{ A ( t - s ) } x )
\right]
e^{ A ( t - s ) } \, v
\right\|^2_{ H_{ -\nicefrac{\vartheta}{2} } }
\\ & \leq
\frac{
	|g_1( x )|^2
	\left\| B \right\|^2_{
		C_b^2( H, HS( U, H_{ -\nicefrac{\vartheta}{2} } ) )
	}
	\left\| v \right\|^2_{ H_{ -\nicefrac{\vartheta}{2} } }
	\left\|
	P_{ \set \backslash I } 
	\right\|^2_{ L( H, H_{ - \rho } ) }
}{
	( t - s )^{ ( 2\rho + \vartheta ) }
}
.
\end{split}
\end{equation}
We now estimate the first argument of the bilinear operator appearing in the first summand on the right hand side of~\eqref{eq:tildeB_def}.
Observe that for all 
$ r \in [0, \vartheta] $,
$ t \in (0,T] $,
$ s \in [0,t) $,
$ x, v \in H $
it holds that
\begin{equation}\label{eq:tildeBpreestimate_2}
\begin{split}
&
  \left[
  \smallsum_{ b\in\mathbb{U} }
  \left\|
    \left[
      ( B^b_I )'( e^{ A ( t - s ) } x ) 
      + 
      ( B^b )'( e^{ A ( t - s ) } x )
    \right]
    e^{ A ( t - s ) } \, v
  \right\|^2_{ H_{ -\nicefrac{\vartheta}{2} } }
  \right]^{ 1 / 2 }
\\ & =
  \left\|
    \left[
      B'( e^{ A ( t - s ) } P_I( x ) ) P_I
      +
      B'( e^{ A ( t - s ) } x )
    \right]
    e^{ A ( t - s ) } \, v
  \right\|_{ HS( U, H_{ - \vartheta / 2 } ) }
\\ & \leq
  2\left\| B \right\|_{
    C^{ 1 }_b( H, HS( U, H_{ -\nicefrac{\vartheta}{2} } ) )
  }
  \left\|
    e^{A(t-s)} v
  \right\|_H 
% \\ & 
\leq
  \frac{
  2\left\| B \right\|_{
    C^{ 1 }_b( H, HS( U, H_{ -\nicefrac{\vartheta}{2} } ) )
  }
    \left\| v \right\|_{ H_{ - r } }
  }{
    ( t - s )^{ r }
  }
  .
\end{split}
\end{equation}
This implies that for all 
$ t \in (0,T] $,
$ s \in [0,t) $,
$ x \in H $
it holds that
\begin{equation}
\label{eq:messy.I}
\begin{split}
&
\left[
\smallsum_{ b\in\mathbb{U} }
\left\|
\left[
( B^b_I )'( e^{ A ( t - s ) } x ) 
+ 
( B^b )'( e^{ A ( t - s ) } x )
\right]
e^{ A ( t - s ) } F_I( x ) 
\right\|^2_{ H_{ -\nicefrac{\vartheta}{2} } }
\right]^{ 1 / 2 }
\\ & \leq
\frac{
	2
	\left\| B \right\|_{
		C_b^{1}( H, HS( U, H_{ -\nicefrac{\vartheta}{2} } ) )
	}
	\left\| F_I(x) \right\|_{ H_{ - \vartheta } }
}{
( t - s )^{ \vartheta }
}
%  \\ & 
\leq
\frac{
	2\,
	g_{1}( x )
	\left\| B \right\|_{
		C_b^{1}( H, HS( U, H_{ -\nicefrac{\vartheta}{2} } ) )
	}
	\left\| F \right\|_{
		C_b^{1}( H, H_{ -\vartheta } )
	}
}{
( t - s )^{ \vartheta }
}
.
\end{split}
\end{equation}
{Next we use~\eqref{eq:tildeBpreestimate_2} to estimate the first argument of the bilinear operator appearing in the fourth summand on the right hand side of~\eqref{eq:tildeB_def} and the first argument of the bilinear operator appearing in the seventh summand on the right hand side of~\eqref{eq:tildeB_def}.}
More specifically, note that~\eqref{eq:tildeBpreestimate_2} ensures that for all
$ t \in (0,T] $,
$ s \in [0,t) $,
$ x, v \in H $
it holds that
\begin{equation}\label{eq:tildeBpreestimate_2.square}
\begin{split}
&
\smallsum_{ b\in\mathbb{U} }
\left\|
\left[
( B^b_I )'( e^{ A ( t - s ) } x ) 
+ 
( B^b )'( e^{ A ( t - s ) } x )
\right]
e^{ A ( t - s ) } \, v
\right\|^2_{ H_{ -\nicefrac{\vartheta}{2} } }
\\&\leq
\frac{
	4\left\| B \right\|^2_{
		C^{ 1 }_b( H, HS( U, H_{ -\nicefrac{\vartheta}{2} } ) )
	}
	\left\| v \right\|^2_{ H_{ -\nicefrac{\vartheta}{2} } }
}{
( t - s )^\vartheta
}
.
\end{split}
\end{equation}
This shows that for all 
$ t \in (0,T] $, 
$ s \in [0,t) $, 
$x\in H$ 
it holds that 
\begin{equation}
\label{eq:messy.II}
\begin{split}
&
\left[
\smallsum_{ b_1,b_2\in\mathbb{U} }
\left\|
\left[
( B^{b_2}_I )'( e^{ A ( t - s ) } x ) 
+ 
( B^{b_2} )'( e^{ A ( t - s ) } x )
\right]
e^{ A ( t - s ) } B^{b_1}_I( x ) 
\right\|^2_{ H_{ -\nicefrac{\vartheta}{2} } }
\right]^{ 1 / 2 }
\\&=
\left[
\smallsum_{ b_1\in\mathbb{U} }
\left(
\smallsum_{ b_2\in\mathbb{U} }
\left\|
\left[
( B^{b_2}_I )'( e^{ A ( t - s ) } x ) 
+ 
( B^{b_2} )'( e^{ A ( t - s ) } x )
\right]
e^{ A ( t - s ) } B^{b_1}_I( x ) 
\right\|^2_{ H_{ -\nicefrac{\vartheta}{2} } }
\right)
\right]^{ 1 / 2 }
\\&\leq
\left[
{\sum_{ b_1\in\mathbb{U} }}
\left(
\frac{
	4 \left\|B\right\|_{
		C_b^{1}( H, HS( U, H_{ -\nicefrac{\vartheta}{2} } ) )
	}^2
	\left\|B^{b_1}_I( x )\right\|^2_{H_{ -\nicefrac{\vartheta}{2} }}
	}{
	(t-s)^\vartheta
}
\right)
\right]^{ 1 / 2 }
\\ & =
\frac{
	2
	\left\| B \right\|_{
		C_b^{1}( H, HS( U, H_{ -\nicefrac{\vartheta}{2} } ) )
	}
	\left\| B_I(x) \right\|_{ HS( U, H_{ -\nicefrac{\vartheta}{2} } ) }
}{
( t - s )^{ \frac{\vartheta}{2} }
}
\leq
\frac{
	2 \,
	g_1( x )
	\left\| B \right\|_{
		C_b^{1}( H, HS( U, H_{ -\nicefrac{\vartheta}{2} } ) )
	}^2
}{
( t - s )^{ \frac{\vartheta}{2} }
}
.
\end{split}
\end{equation}
To estimate the second argument of the bilinear operator appearing in the second summand on the right hand side of~\eqref{eq:tildeB_def} we employ~\eqref{eq:tildeBpreestimate_1} to obtain that for all 
$ t \in (0,T] $,
$ s \in [0,t) $,
$ x \in H $
it holds that
\begin{equation}\label{eq:tildeBaux_3}
\begin{split}
&
 \left[
 \smallsum_{ b\in\mathbb{U} }
 \left\|
   \left[
     ( B^b_I )'( e^{ A ( t - s ) } x ) 
     - 
     ( B^b )'( e^{ A ( t - s ) } x )
   \right]
   e^{ A ( t - s ) } F_I( x ) 
 \right\|^2_{ H_{ -\nicefrac{\vartheta}{2} } }
 \right]^{ 1 / 2 }
\\ & \leq
 \frac{
   g_1( x )
   \left\| B \right\|_{
     C_b^{2}( H, HS( U, H_{ -\nicefrac{\vartheta}{2} } ) )
   }
   \left\| F_I(x) \right\|_{ H_{ - \vartheta } }
   \left\|
     P_{ \set \backslash I } 
   \right\|_{ L( H, H_{ - \rho } ) }
 }{
   ( t - s )^{ ( \rho + \vartheta ) }
 }
 \\ & \leq
   \frac{
     g_{2}( x )
     \left\| B \right\|_{
       C_b^{2}( H, HS( U, H_{ -\nicefrac{\vartheta}{2} } ) )
     }
     \left\| F \right\|_{
       C_b^{1}( H, H_{-\vartheta} )
     }
     \left\|
       P_{ \set \backslash I } 
     \right\|_{ L( H, H_{ - \rho } ) }
   }{
     ( t - s )^{ ( \rho + \vartheta ) }
   }
.
\end{split}
\end{equation}
{In addition, we use~\eqref{eq:tildeBpreestimate_1.square} to estimate the second argument of the bilinear operator appearing in the fourth summand on the right hand side of~\eqref{eq:tildeB_def} and the second argument of the trilinear operator appearing in the eighth summand on the right hand side of~\eqref{eq:tildeB_def}.}
More specifically, observe that~\eqref{eq:tildeBpreestimate_1.square}
shows that for all 
$ t \in (0,T] $, 
$ s \in [0,t) $, 
$x\in H$ 
it holds that 
\begin{equation}
\begin{split}
&
 \left[
 \smallsum_{ b_1,b_2\in\mathbb{U} }
 \left\|
   \left[
     ( B^{b_2}_I )'( e^{ A ( t - s ) } x ) 
     - 
     ( B^{b_2} )'( e^{ A ( t - s ) } x )
   \right]
   e^{ A ( t - s ) } B^{b_1}_I( x ) 
 \right\|^2_{ H_{ -\nicefrac{\vartheta}{2} } }
 \right]^{ 1 / 2 }
\\&=
 \left[
 \smallsum_{ b_1\in\mathbb{U} }
 \left(
 \smallsum_{ b_2\in\mathbb{U} }
 \left\|
 \left[
 ( B^{b_2}_I )'( e^{ A ( t - s ) } x ) 
 - 
 ( B^{b_2} )'( e^{ A ( t - s ) } x )
 \right]
 e^{ A ( t - s ) } B^{b_1}_I( x ) 
 \right\|^2_{ H_{ -\nicefrac{\vartheta}{2} } }
 \right)
 \right]^{ 1 / 2 }
\\&\leq
 \left[
 \sum_{ b_1\in\mathbb{U} }
 \left(
 \frac{
   |g_1( x )|^2
   \left\| B \right\|^2_{
   	C_b^{2}( H, HS( U, H_{ -\nicefrac{\vartheta}{2} } ) )
   }
   \left\| B^{b_1}_I(x) \right\|^2_{ H_{ -\nicefrac{\vartheta}{2} } }
   \left\|
   P_{ \set \backslash I } 
   \right\|^2_{ L( H, H_{ - \rho } ) } 	
 	}{
 	(t-s)^{(2\rho+\vartheta)}
 	}
 \right)
 \right]^{ 1 / 2 }
\\ & =
 \frac{
   g_1( x )
   \left\| B \right\|_{
     C_b^{2}( H, HS( U, H_{ -\nicefrac{\vartheta}{2} } ) )
   }
   \left\| B_I(x) \right\|_{ HS( U, H_{ -\nicefrac{\vartheta}{2} } ) }
   \left\|
     P_{ \set \backslash I } 
   \right\|_{ L( H, H_{ - \rho } ) }
 }{
   ( t - s )^{ ( \rho + \frac{\vartheta}{2} ) }
 }
\\ & \leq
 \frac{
   g_{2}( x )
   \left\| B \right\|_{
     C_b^{2}( H, HS( U, H_{ -\nicefrac{\vartheta}{2} } ) )
   }
   \left\| B \right\|_{
     C_b^{1}( H, HS( U, H_{ -\nicefrac{\vartheta}{2} } ) )
   }
   \left\|
     P_{ \set \backslash I } 
   \right\|_{ L( H, H_{ - \rho } ) }
 }{
   ( t - s )^{ ( \rho + \frac{\vartheta}{2} ) }
 }
 .
\end{split}
\end{equation}
We next estimate the second argument of the bilinear operator appearing in the fifth summand on the right hand side of~\eqref{eq:tildeB_def} and the second argument of the bilinear operator appearing in the sixth summand on the right hand side of~\eqref{eq:tildeB_def}.
Observe that for all 
$ x \in H $, $ t \in (0,T] $, $ s \in [0,t) $
it holds that
\begin{equation}
\begin{split}
  &\smallsum\limits_{ b_1 \in \mathbb{U} }
  \left[
  \smallsum\limits_{ b_2 \in \mathbb{U} }
    \left\|
    \left[
      ( B_I^{b_2} )''( e^{ A ( t - s ) } x ) 
      + 
      ( B^{b_2} )''( e^{ A ( t - s ) } x )
    \right]
    \!
    \left(
      e^{ A ( t - s ) } B^{b_1}_I( x )
      ,
      e^{ A ( t - s ) } B^{b_1}_I( x )
    \right)
  \right\|^2_{H_{-\nicefrac{\vartheta}{2}}}
  \right]^{ 1 / 2 }
  \\ & =
  \smallsum\limits_{ b_1 \in \mathbb{U} }
  \left\|
    \left[
      ( B_I )''( e^{ A ( t - s ) } x ) 
      + 
      B''( e^{ A ( t - s ) } x )
    \right]
    \!
    \left(
      e^{ A ( t - s ) } B^{b_1}_I( x )
      ,
      e^{ A ( t - s ) } B^{b_1}_I( x )
    \right)
  \right\|_{HS(U,H_{-\nicefrac{\vartheta}{2}})}
  \\ & \leq
  \smallsum\limits_{ b_1 \in \mathbb{U} }
  \left\|
      B''( e^{ A ( t - s ) } P_I( x ) ) 
    \left(
      P_I e^{ A ( t - s ) } B^{b_1}_I( x )
      ,
      P_I e^{ A ( t - s ) } B^{b_1}_I( x )
    \right)
  \right\|_{HS(U,H_{-\nicefrac{\vartheta}{2}})}
  \\ & \quad +
  \smallsum\limits_{ b_1 \in \mathbb{U} }
  \left\|
      B''( e^{ A ( t - s ) } x )
    \left(
      e^{ A ( t - s ) } B^{b_1}_I( x )
      ,
      e^{ A ( t - s ) } B^{b_1}_I( x )
    \right)
  \right\|_{HS(U,H_{-\nicefrac{\vartheta}{2}})}
\\ & \leq
  \big\|
    B''( e^{ A ( t - s ) } P_I( x ) ) 
  \big\|_{
    L^{(2)}( H, HS( U, H_{ -\nicefrac{\vartheta}{2} } ) )
  }
  \smallsum\limits_{ b_1 \in \mathbb{U} }
  \big\|
    P_I e^{ A ( t - s ) } B^{ b_1 }_I( x )
  \big\|^2_H
\\ & \quad +
  \big\|
    B''( e^{ A ( t - s ) } x ) 
  \big\|_{
    L^{(2)}( H, HS( U, H_{ -\nicefrac{\vartheta}{2} } ) )
  }
  \smallsum\limits_{ b_1 \in \mathbb{U} }
  \big\|
    e^{ A ( t - s ) } 
    B^{ b_1 }_I( x )
  \big\|^2_H
\\ & \leq
  \left\|
    B
  \right\|_{
    C_b^2( H, HS( U, H_{ -\nicefrac{\vartheta}{2} } ) )
  }
  \left[
    \big\|
      P_I e^{ A ( t - s ) } B_I( x )
    \big\|^2_{ HS( U, H ) }
    +
    \big\|
      e^{ A ( t - s ) } B_I( x )
    \big\|^2_{ HS( U, H ) }
  \right]
\\ & \leq
  \frac{
    2
    \left\|
      B
    \right\|_{
      C_b^2( H, HS( U, H_{ -\nicefrac{\vartheta}{2} } ) )
    }
    \left\|
      B
    \right\|_{
      C_b^1( H, HS( U, H_{-\nicefrac{\vartheta}{2}} ) )
    }^2
    g_2(x) 
  }{
    \left( t - s \right)^{ \vartheta }
  }
  ,
\end{split}
\end{equation}
and
\begin{equation}
\label{eq:tildeB_put_end}
\begin{split}
  &\smallsum\limits_{ b_1 \in \mathbb{U} }
  \left[
  \smallsum\limits_{ b_2 \in \mathbb{U} }
    \left\|
    \left[
      ( B_I^{b_2} )''( e^{ A ( t - s ) } x ) 
      - 
      ( B^{b_2} )''( e^{ A ( t - s ) } x )
    \right]
    \!
    \left(
      e^{ A ( t - s ) } B^{b_1}_I( x )
      ,
      e^{ A ( t - s ) } B^{b_1}_I( x )
    \right)
  \right\|^2_{H_{-\nicefrac{\vartheta}{2}}}
  \right]^{ 1 / 2 }
  \\ & =
  \smallsum\limits_{ b_1 \in \mathbb{U} }
  \left\|
    \left[
      ( B_I )''( e^{ A ( t - s ) } x ) 
      - 
      B''( e^{ A ( t - s ) } x )
    \right]
    \!
    \left(
      e^{ A ( t - s ) } B^{b_1}_I( x )
      ,
      e^{ A ( t - s ) } B^{b_1}_I( x )
    \right)
  \right\|_{HS(U,H_{-\nicefrac{\vartheta}{2}})}
  \\ & \leq
  \smallsum\limits_{ b_1 \in \mathbb{U} }
  \left\|
    \left[
      B''( e^{ A ( t - s ) } P_I( x ) ) 
      - 
      B''( e^{ A ( t - s ) } x )
    \right]
    \!
    \left(
      e^{ A ( t - s ) } B^{b_1}_I( x )
      ,
      e^{ A ( t - s ) } B^{b_1}_I( x )
    \right)
  \right\|_{HS(U,H_{-\nicefrac{\vartheta}{2}})}
  \\ & +
  \smallsum\limits_{ b_1 \in \mathbb{U} }
  \left\|
    B''( e^{ A ( t - s ) } P_I(x) )
    \left(
      ( \operatorname{Id}_H - P_I ) 
      e^{ A ( t - s ) } B^{b_1}_I( x ) 
      ,
      ( \operatorname{Id}_H + P_I ) 
      e^{ A ( t - s ) } B^{b_1}_I( x ) 
    \right)
  \right\|_{HS(U,H_{-\nicefrac{\vartheta}{2}})}
  \\ & \leq
  \big\|
      B''( e^{ A ( t - s ) } P_I( x ) ) 
      -
      B''( e^{ A ( t - s ) } x )
  \big\|_{
    L^{ (2) }( 
      H, 
      HS( 
        U, 
        H_{ - \nicefrac{ \vartheta }{ 2 } } 
      ) 
    )
  }
  \smallsum\limits_{ b_1 \in \mathbb{U} }
  \big\|
    e^{ A ( t - s ) } 
    B^{ b_1 }_I( x )
  \big\|^2_H
\\ & +
  \big\|
    B''( e^{ A ( t - s ) } P_I(x) )
  \big\|_{
    L^{ (2) }(
      H,
      HS( 
        U, 
        H_{
          - \nicefrac{ \vartheta }{ 2 } 
        }
      )
    )
  }
\\ & \cdot
  \smallsum\limits_{ b_1 \in \mathbb{U} }
  \Big[
  \big\|
    ( \operatorname{Id}_H - P_I ) 
    \,
    e^{ A ( t - s ) } B^{b_1}_I( x ) 
  \big\|_H
  \,
  \big\|
    ( \operatorname{Id}_H + P_I )
    \,
    e^{ A ( t - s ) } B^{ b_1 }_I( x )
  \big\|_H
  \Big]
  \\ & \leq
  \big\|
      B''( e^{ A ( t - s ) } P_I( x ) ) 
      -
      B''( e^{ A ( t - s ) } x )
  \big\|_{
    L^{ (2) }( 
      H,
      HS( U, H_{ - \nicefrac{ \vartheta }{ 2 } } )
    )
  }
  \,
  \big\|
    e^{ A ( t - s ) } B_I( x )
  \big\|^2_{
    HS( U, H) 
  }
\\ & +
  \big\|
    B''( e^{ A ( t - s ) } P_I(x) )
  \big\|_{
    L^{(2)}(H,HS(U,H_{-\nicefrac{\vartheta}{2}}))
  }
\\ & \cdot
  \big\|
    ( \operatorname{Id}_H - P_I ) 
    \,
    e^{ A ( t - s ) } B_I( x ) 
  \big\|_{HS(U,H)}
  \,
  \big\|
    ( \operatorname{Id}_H + P_I )
    \,
    e^{ A ( t - s ) } B_I( x )
  \big\|_{HS(U,H)}
\\ & \leq
\frac{
    \left|
      B
    \right|_{
      C^3_b( H, HS( U, H_{ -\nicefrac{\vartheta}{2} } ) )
    }
        \left\|
          B
        \right\|_{
          C_b^1( H, HS( U, H_{-\nicefrac{\vartheta}{2}} ) )
        }^2
    \left\|
      P_{\set\backslash I}
    \right\|_{ L( H, H_{-\rho} ) }
    g_3(x)
}{
  \left( t - s \right)^{ ( \rho + \vartheta ) }
}
\\ & +
\frac{
  2
    \left\|
      B
    \right\|_{
      C_b^2( H, HS( U, H_{ -\nicefrac{\vartheta}{2} } ) )
    }
        \left\|
          B
        \right\|_{
          C_b^1( H, HS( U, H_{-\nicefrac{\vartheta}{2}} ) )
        }^2
    \left\|
      P_{\set\backslash I}
    \right\|_{ L( H, H_{-\rho} ) }
  g_2(x)
}{
  \left( t - s \right)^{ ( \rho + \vartheta ) }
}
\\ & \leq
\frac{
  2
    \left\|
      B
    \right\|_{
      C_b^3( H, HS( U, H_{ -\nicefrac{\vartheta}{2} } ) )
    }
        \left\|
          B
        \right\|_{
          C_b^1( H, HS( U, H_{-\nicefrac{\vartheta}{2}} ) )
        }^2
    \left\|
      P_{\set\backslash I}
    \right\|_{ L( H, H_{-\rho} ) }
  g_3(x)
}{
  \left( t - s \right)^{ ( \rho + \vartheta ) }
}
  .
\end{split}
\end{equation}
Putting~\eqref{eq:messy.I} and~\eqref{eq:messy.II}--\eqref{eq:tildeB_put_end}
into~\eqref{eq:tildeB_def}
shows that for all
$ t \in (0,T) $,
$ s \in [0,t) $,
$ x \in H $
it holds that
\begin{equation}
\label{eq:tilde.B.many.sum}
\begin{split}
&
  \big|
    \tilde{B}_{ t, s }(x)
  \big| 
\\&\leq\Big[ 2c_{-\nicefrac{\vartheta}{2},-\nicefrac{\vartheta}{2}} \left\| F \right\|_{C_b^1(H,H_{-\vartheta})} \left| B \right|_{C^1_b(H,HS(U,H_{-\nicefrac{\vartheta}{2}}))} \left\| B \right\|_{C_b^1(H,HS(U,H_{-\nicefrac{\vartheta}{2}}))} g_2(x) \\
&+ 2c_{-\nicefrac{\vartheta}{2},-\nicefrac{\vartheta}{2}} \left\| F \right\|_{C_b^1(H,H_{-\vartheta})} \left\| B \right\|_{C_b^1(H,HS(U,H_{-\nicefrac{\vartheta}{2}}))} \left\| B \right\|_{C_b^2(H,HS(U,H_{-\nicefrac{\vartheta}{2}}))} g_3(x)\\
&+ 2c_{-\nicefrac{\vartheta}{2},-\nicefrac{\vartheta}{2},0} \left\| F \right\|_{C_b^1(H,H_{-\vartheta})} \left| B \right|_{C^1_b(H,HS(U,H_{-\nicefrac{\vartheta}{2}}))} \left\| B \right\|_{C_b^1(H,HS(U,H_{-\nicefrac{\vartheta}{2}}))} g_3(x) \\
&+ 2c_{-\nicefrac{\vartheta}{2},-\nicefrac{\vartheta}{2}} \left\| B \right\|_{C_b^1(H,HS(U,H_{{-\nicefrac{\vartheta}{2}}}))}^3 \left\| B \right\|_{C_b^2(H,HS(U,H_{-\nicefrac{\vartheta}{2}}))} g_3(x)\\
&+ c_{-\nicefrac{\vartheta}{2},-\nicefrac{\vartheta}{2}} \left| B \right|_{C^1_b(H,HS(U,H_{-\nicefrac{\vartheta}{2}}))} \left\| B \right\|_{C_b^1(H,HS(U,H_{{-\nicefrac{\vartheta}{2}}}))}^2 \left\| B \right\|_{C^2_b(H,HS(U,H_{-\nicefrac{\vartheta}{2}}))} g_3(x) \\
&+ 2c_{-\nicefrac{\vartheta}{2},-\nicefrac{\vartheta}{2}} \left\| B \right\|^3_{C_b^1(H,HS(U,H_{-\nicefrac{\vartheta}{2}}))} \left\| B \right\|_{C_b^3(H,HS(U,H_{-\nicefrac{\vartheta}{2}}))} g_4(x)\\
&+ 2c_{-\nicefrac{\vartheta}{2},-\nicefrac{\vartheta}{2},0} \left| B \right|_{C^1_b(H,HS(U,H_{-\nicefrac{\vartheta}{2}}))} \left\| B \right\|^3_{C_b^1(H,HS(U,H_{-\nicefrac{\vartheta}{2}}))} g_3(x) \\
&+ 2c_{-\nicefrac{\vartheta}{2},-\nicefrac{\vartheta}{2},0} \left\| B \right\|_{C_b^1(H,HS(U,H_{{-\nicefrac{\vartheta}{2}}}))}^3 \left\| B \right\|_{C_b^2(H,HS(U,H_{-\nicefrac{\vartheta}{2}}))} g_4(x) \\
&+ c_{-\nicefrac{\vartheta}{2},-\nicefrac{\vartheta}{2},0,0} \left| B \right|_{C^1_b(H,HS(U,H_{-\nicefrac{\vartheta}{2}}))} \left\| B \right\|^3_{C_b^1(H,HS(U,H_{-\nicefrac{\vartheta}{2}}))} g_4(x) \Big]
  \frac{\left\| P_{\set\backslash I} \right\|_{L(H,H_{-\rho})}}{{( T - t )^{ \vartheta }} \, ( t - s )^{ ( \rho + \vartheta ) }}
.
\end{split}
\end{equation}
This implies that for all 
$ t \in (0,T) $,
$ s \in [0,t) $,
%$ r \in [ 0 , e - \vartheta ] $,
%$ I \in \grid(\set) $,
$ x \in H $
it holds that
\begin{equation}
\label{eq:tilde.B.many.sum.II}
\begin{split}
&
  \big|
    \tilde{B}_{ t, s }(x)
  \big|\leq
  \frac{
    9
    \left[ 
      c_{ -\nicefrac{\vartheta}{2}, -\nicefrac{\vartheta}{2} }
      +
      c_{ -\nicefrac{\vartheta}{2}, -\nicefrac{\vartheta}{2}, 0 }
      +
      c_{ -\nicefrac{\vartheta}{2}, -\nicefrac{\vartheta}{2}, 0, 0 }
    \right]
    \varsigma_{ F, B }
    \,
    g_4(x)
    \left\|
      P_{ \set \backslash I }
    \right\|_{ L(H,H_{-\rho}) }
  }{{( T - t )^{ \vartheta }}
    \left( t - s \right)^{ 
      \rho + \vartheta
    }
  }
  .
\end{split}
\end{equation}
This proves that
\begin{equation}
\label{eq:B_estimate}
\begin{split}
&  \frac{ 1 }{ 2 }
  \left|
  \int_0^T
  \int_0^t
  \E\left[ 
    \tilde{B}_{ t, s }( Y^I_s )
  \right]
  ds
  \,
  dt
  \right|
\\& \leq
  \frac{
    9 \,
    T^{ ( 2 - \rho - 2 \vartheta ) }
    \,
    \varsigma_{ F, B }
    \left\|
      P_{ \set \backslash I }
    \right\|_{ L(H,H_{-\rho}) }
  }{
    2\left(
      1 - \rho - \vartheta
    \right)
    {\left(
      1 - \vartheta
    \right)}
  }
    \,
    \big[
      c_{ -\nicefrac{\vartheta}{2}, -\nicefrac{\vartheta}{2} }
      +
      c_{ -\nicefrac{\vartheta}{2}, -\nicefrac{\vartheta}{2}, 0 }
      +
      c_{ -\nicefrac{\vartheta}{2}, -\nicefrac{\vartheta}{2}, 0, 0 }
    \big]
    \,
    K^I_4
  .
\end{split} 
\end{equation}
%\eqref{eq:first_ineq}
Putting~\eqref{eq:estimate_first_term}, \eqref{eq:inequality_second_term}, \eqref{eq:F_estimate}, and~\eqref{eq:B_estimate} 
into~\eqref{eq:Kolmogorov2} finally yields
\begin{equation}
\begin{split}
&
  \left|
    \ES\big[ 
      \varphi( X^\set_T )
    \big]
    -
    \ES\big[ 
      \varphi( Y_T^I )
    \big]
  \right|
\\&\leq
  \big[ 
    c_{-\vartheta} + c_{-\vartheta,0} + c_{-\vartheta,0,0} + c_{-\nicefrac{\vartheta}{2},-\nicefrac{\vartheta}{2}} + c_{-\nicefrac{\vartheta}{2},-\nicefrac{\vartheta}{2},0} + c_{-\nicefrac{\vartheta}{2},-\nicefrac{\vartheta}{2},0,0} 
  \big]
\\ & \quad
  \cdot 
  \frac{ 
    T^{ ( 1 - \vartheta - \rho ) }
  }{
    ( 1 - \vartheta - \rho ) 
  }
  \left[ 
    1 +
    \frac{ 
      9 \, T^{ (1 - \vartheta) } 
    }{
      2
      {(1-\vartheta)}
    } 
  \right]
  \varsigma_{ F, B }\, K^I_4 \left\|
        P_{ \set \backslash I }
      \right\|_{ L(H,H_{-\rho}) }
  .
\end{split}
\end{equation}
This establishes the first inequality of~\eqref{eq:weak_regle}.
The second inequality of~\eqref{eq:weak_regle} follows from Lemma~\ref{lem:c.delta.finite} and the assumption that $\vartheta<\nicefrac{1}{2}$.
The proof of Lemma~\ref{weak_regle} is thus completed.
\end{proof}

\noindent The next result, Corollary~\ref{cor:weak_reg},
is an immediate consequence of Lemma~\ref{weak_regle0}
and Lemma~\ref{weak_regle} above.

\begin{corollary}\label{cor:weak_reg}
Assume the setting in Section~\ref{sec:setting}
and let $ \rho \in [0, 1 - \vartheta ) $, $ I \in \grid(\set) $.
Then 
\begin{equation}
\label{eq:weak_reg_estimate}
\begin{split}
&
  \left|
    \ES\big[ 
      \varphi( X^\set_T )
    \big]
    -
    \ES\big[ 
      \varphi( X_T^I )
    \big]
  \right|
% & 
\leq
  \tfrac{ 9 }{ 2 \, T^{ \rho } }
  \left[
    1
    + 
    \tfrac{ 
      T^{ ( 1 - \vartheta ) } 
    }{
      \left( 1 - \vartheta - \rho \right)
    } 
  \right]^2
  \|
    P_{ \set \backslash I }
  \|_{ L( H, H_{ - \rho } ) }
    \,
    \varsigma_{ F, B }
    \, 
    K^I_4 
\\ &
\cdot
  \big[ 
    \| \varphi \|_{ C_b^3( H, \R ) }
    +
    c_{ - \vartheta } 
    + 
    c_{ - \vartheta, 0 } 
    + 
    c_{ - \vartheta,0,0 } 
    + 
    c_{ - \nicefrac{ \vartheta }{ 2 } , - \nicefrac{ \vartheta }{ 2 } } 
    + 
    c_{ - \nicefrac{ \vartheta }{ 2 } , - \nicefrac{ \vartheta }{ 2 } , 0 } 
    + 
    c_{ - \nicefrac{ \vartheta }{ 2 } , - \nicefrac{ \vartheta }{ 2 } , 0 , 0 } 
  \big]
  < \infty
  .
\end{split}
\end{equation}
\end{corollary}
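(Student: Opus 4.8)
The plan is to apply the triangle inequality with the modified process $Y^I$ as the intermediate term and then to invoke the two preceding lemmas. First I would write
\[
  \big|\ES[\varphi(X^\set_T)] - \ES[\varphi(X^I_T)]\big|
  \leq
  \big|\ES[\varphi(X^\set_T)] - \ES[\varphi(Y^I_T)]\big|
  +
  \big|\ES[\varphi(Y^I_T)] - \ES[\varphi(X^I_T)]\big|
\]
and bound the first summand by Lemma~\ref{weak_regle} and the second by Lemma~\ref{weak_regle0}. Both resulting bounds already carry the common factors $\varsigma_{F,B}$ and $\|P_{\set\backslash I}\|_{L(H,H_{-\rho})}$, so the only genuine work is to merge the two prefactors and the two regularity constants into the single expression on the right-hand side of the corollary.

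To unify the constants $K^I_3$ and $K^I_4$ I would observe that $g_3(x) = \max\{1,\|x\|^3_H\} \leq \max\{1,\|x\|^4_H\} = g_4(x)$ for every $x \in H$, whence $K^I_3 \leq K^I_4$; this lets me replace the $K^I_3$ coming from Lemma~\ref{weak_regle0} by $K^I_4$ and factor $\varsigma_{F,B}\, K^I_4\, \|P_{\set\backslash I}\|_{L(H,H_{-\rho})}$ out of both summands. What remains is to show that each of the two prefactors is dominated by $\tfrac{9}{2T^\rho}\big[1 + \tfrac{T^{1-\vartheta}}{1-\vartheta-\rho}\big]^2$. Since the bracket in the corollary is exactly the sum of $\|\varphi\|_{C^3_b(H,\R)}$ (the functional factor in Lemma~\ref{weak_regle0}) and the six constants $c_{-\vartheta} + \cdots + c_{-\nicefrac{\vartheta}{2},-\nicefrac{\vartheta}{2},0,0}$ (the functional factor in Lemma~\ref{weak_regle}), dominating both prefactors by this common quantity then yields the claimed estimate.

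The one computational step is the prefactor comparison. Writing $b := 1-\vartheta > 0$ and recalling $\rho < b$, I would expand
\[
  \frac{9}{2T^\rho}\Big[1 + \frac{T^{b}}{b-\rho}\Big]^2
  =
  \frac{9}{2T^\rho} + \frac{9\,T^{b-\rho}}{b-\rho} + \frac{9\,T^{2b-\rho}}{2(b-\rho)^2}
\]
and compare this term by term with the Lemma~\ref{weak_regle0} prefactor $T^{-\rho} + \tfrac{2T^{b-\rho}}{b-\rho}$ and with the Lemma~\ref{weak_regle} prefactor $\tfrac{T^{b-\rho}}{b-\rho} + \tfrac{9\,T^{2b-\rho}}{2(b-\rho)(2b-\rho)}$. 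For the former the inequalities $T^{-\rho} \leq \tfrac{9}{2}T^{-\rho}$ and $\tfrac{2}{b-\rho} \leq \tfrac{9}{b-\rho}$ suffice; for the latter one uses $\tfrac{1}{b-\rho} \leq \tfrac{9}{b-\rho}$ together with $2b-\rho \geq b-\rho$ (equivalently $b \geq 0$) to bound the quadratic term. Since all three expressions consist of nonnegative summands only, these comparisons give both prefactors $\leq \tfrac{9}{2T^\rho}\big[1+\tfrac{T^{1-\vartheta}}{1-\vartheta-\rho}\big]^2$, and the corollary follows. I expect no real obstacle here: the result is a bookkeeping consolidation of the two lemmas, and the only care needed is in verifying the elementary prefactor bounds, which rely on the standing hypotheses $1-\vartheta > 0$ and $\rho < 1-\vartheta$.
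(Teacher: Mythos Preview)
Your proposal is correct and is exactly the argument the paper has in mind: the paper merely records that the corollary ``is an immediate consequence of Lemma~\ref{weak_regle0} and Lemma~\ref{weak_regle}'', and your triangle-inequality splitting via $Y^I$, together with $K^I_3 \leq K^I_4$ and the termwise prefactor comparison, supplies precisely the missing bookkeeping.
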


\noindent In the proof of the next result,
Corollary~\ref{cor:weak_reg2} below, 
we employ an upper bound result for the real numbers 
$ K^I_4 $, $ I \in \mathcal{P}( \set ) $, 
to obtain a further upper bound for the real numbers
$
  \left|
    \ES\big[ 
      \varphi( X^\set_T )
    \big]
    -
    \ES\big[ 
      \varphi( X_T^I )
    \big]
  \right|
$, $ I \in \mathcal{P}( \set ) $.
For the formulation of Corollary~\ref{cor:weak_reg2} 
we recall that for all 
$ x \in [ 0, \infty ) $, 
$ \theta \in [ 0, 1 ) $ 
it holds that 
$
  \mathcal{E}_{ (1 - \theta) }( x )
  =
  \big[
    \sum^{ \infty }_{ n = 0 }
    \frac{ 
        x^{ 2 n }
        \,
        \Gamma( 1 - \theta )^n
    }{
      \Gamma( n ( 1 - \theta ) + 1 ) 
    }
  \big]^{
    \nicefrac{ 1 }{ 2 }
  }
$ 
(see Section~\ref{sec:notation}).

\begin{corollary}\label{cor:weak_reg2}
Assume the setting in Section~\ref{sec:setting}.
Then it holds for every 
$ \theta \in [0,1) $, 
$ \rho \in [ 0, 1 - \vartheta ) $, 
$ I \in \grid( \set ) $ 
that
\begin{equation}
\label{eq:weak_reg_estimate_K}
\begin{split}
&  \left|
    \ES\big[ 
      \varphi( X^\set_T )
    \big]
    -
    \ES\big[ 
      \varphi( X_T^I )
    \big]
  \right|
% \\&
\leq
  \frac{ 
    18 
  }{ T^{ \rho } } 
  \left[ 
    1 +
    \tfrac{
      T^{ (1 - \vartheta) } 
    }{
      \left( 1 - \vartheta - \rho \right)
    } 
  \right]^2 
  \E\,\big[\!
    \max\{ 1 , \| \xi \|_H^4 \}
  \big]
  \left\|
    P_{ \set \backslash I }
  \right\|_{ 
    L( H , H_{ - \rho } ) 
  }
\\ & \cdot
  \,
  \varsigma_{ F, B }
  \left|
    \mathcal{E}_{ ( 1 - \theta ) 
    }\!\left[ 
      \tfrac{
        T^{ 1 - \theta }
        \sqrt{ 2 }
        \,
        \| F \|_{ 
          C_b^1( H, H_{ - \theta } ) 
        }
      }{
        \sqrt{
          1 - \theta 
        }
      } 
      + 
      \sqrt{
        12 \,
        T^{ 1 - \theta }
      }
      \,
      \| B \|_{
        C_b^1( H, HS( U, H_{ - \nicefrac{ \theta }{ 2 } } )
        )
      }
    \right]
  \right|^4
\\ & \cdot
  \big[
    \|
      \varphi
    \|_{
      C_b^3( H, \R )
    } 
    + 
    c_{ - \vartheta } 
    + 
    c_{ - \vartheta , 0 } 
    + 
    c_{ - \vartheta , 0 , 0 } 
    + 
    c_{ - \nicefrac{ \vartheta }{ 2 } , - \nicefrac{ \vartheta }{ 2 } } 
    + 
    c_{ - \nicefrac{ \vartheta }{ 2 } , - \nicefrac{ \vartheta }{ 2 } , 0 } 
    + 
    c_{ - \nicefrac{ \vartheta }{ 2 } , - \nicefrac{ \vartheta }{ 2 } , 0 , 0 } 
  \big] 
  < \infty
  .
\end{split}
\end{equation}
\end{corollary}
\begin{proof}
Note that, e.g., Proposition~3.4 in Cox et al.~\cite{CoxHutzenthalerJentzenWelti2017}
(with
$H=H$,
$U=U$,
$\mathbb{H}=\mathbb{H}$,
$\lambda=\lambda$,
$A=A$,
$T=T$,
$p=4$,
$\gamma=0$,
$\eta=\theta$,
$F=(H\ni v\mapsto F_I(v)\in H_{-\theta})$,
$B=(H\ni v \mapsto (U\ni u \mapsto B_I(v)u \in H_{-\nicefrac{\theta}{2}})\in HS(U,H_{-\nicefrac{\theta}{2}}))$,
$W=W$,
$X=([0,T]\times\Omega\ni(t,\omega)\mapsto Y^I_t(\omega) \in H)$ 
for 
$I\in\grid(\set)$,
$\theta\in[0,1)$
in the notation of Proposition~3.4 in~\cite{CoxHutzenthalerJentzenWelti2017})
ensures that for all 
$\theta\in[0,1)$, 
$I\in\grid(\set)$
it holds that 
\begin{equation}
\begin{split}
  |K^I_4|^{\nicefrac{1}{4}}
&=
  \sup_{t\in[0,T]}
  \|\!\max\{1,\|Y^I_t\|_H\}\|_{\lpn{4}{\P}{\R}}
  \leq
  \sqrt{2} \,
  \|\!\max\{1,\|\xi\|_H\}\|_{\lpn{4}{\P}{\R}}
\\&\quad\cdot
    \mathcal{E}_{ ( 1 - \theta ) 
    }\!\left[ 
      \tfrac{
        T^{ 1 - \theta }
        \sqrt{ 2 }
        \,
        \| F \|_{ 
          C_b^1( H, H_{ - \theta } ) 
        }
      }{
        \sqrt{
          1 - \theta 
        }
      } 
      + 
      \sqrt{
        12 \,
        T^{ 1 - \theta }
      }
      \,
      \| B \|_{
        C_b^1( H, HS( U, H_{ - \nicefrac{ \theta }{ 2 } } )
        )
      }
    \right]
    < \infty.
\end{split}
\end{equation}
This and~\eqref{eq:weak_reg_estimate} establish~\eqref{eq:weak_reg_estimate_K}.
The proof of Corollary~\ref{cor:weak_reg2} is thus completed.
\end{proof}

\noindent The next result, Corollary~\ref{cor:mollified.weak.rate.simplified} below, is an immediate consequence of Corollary~\ref{cor:weak_reg2} and the fact that 
$
  \forall \, I \in \grid(\set)\backslash\{\set\}, \, \rho\in[0,\infty)
  \colon
  \|P_{\set\backslash I}\|_{L(H,H_{-\rho})}
  =
    \left[
\inf_{
	b \in \set \backslash I
}
|\lambda_b|
\right]^{ 
	-\rho
}
$.

\begin{corollary}
\label{cor:mollified.weak.rate.simplified}
Assume the setting in Section~\ref{sec:setting}.
Then for every 
$ \varepsilon \in (0,\infty) $
there exists a real number $C\in\R$ 
such that for all 
$ I \in \grid( \set )\backslash\{\set\} $
it holds that
\begin{equation}
\begin{split}
&  \left|
    \ES\big[ 
      \varphi( X^\set_T )
    \big]
    -
    \ES\big[ 
      \varphi( X_T^I )
    \big]
  \right|
% \\&
\leq
  C \cdot
    \left[
\inf_{
	b \in \set \backslash I
}
|\lambda_b|
\right]^{\varepsilon-(1-\vartheta)}
  .
\end{split}
\end{equation}
\end{corollary}

% % % % % % % % % % % % % % % % % % % % % % % % % % % % % % %
% % % % % % % % % % % % % % % % % % % % % % % % % % % % % % %
%				Section 3									%

\section{Strong convergence of mollified solutions for SEEs}
\label{sec:strong_convergence}

In this section an elementary strong convergence result,
see Proposition~\ref{strong} below,
is established.
More specifically, in the framework of Section~\ref{sec:general_setting} we establish in Proposition~\ref{strong} below an explicit upper bound for the strong approximation error
\begin{equation}
\label{eq:strong.error}
  \ES\big[\| 
    X^0_T - X^{ \kappa }_T 
  \|^p_H\big]
\end{equation}
for $\kappa\in[0,\infty)$, $p\in[2,\infty)$ 
where $X^\kappa\colon[0,T]\times\Omega\to H$, $\kappa\in[0,\infty)$, are appropriate mild solution processes of~\eqref{eq:mollified}.
We have for every $\kappa\in(0,\infty)$ that $X^\kappa\colon[0,T]\times\Omega\to H$ is a suitably mollified version of $X^0\colon[0,T]\times\Omega\to H$ with the mollification parameter $\kappa\in(0,\infty)$.
We prove Proposition~\ref{strong} below by applying the strong perturbation estimate in Andersson et al.~\cite[Proposition~2.7]{AnderssonJentzenKurniawan2016arXiv}.
Proposition~2.7 in~\cite{AnderssonJentzenKurniawan2016arXiv}, in turn, is established by using a generalized Gronwall inequality (see Henry~\cite[Exercise~3 in Chapter~7]{h81}).
Proposition~\ref{strong} below, in particular, allows us to prove 
estimate~\eqref{eq:intro_estimate3} in the introduction.
In Section~\ref{sec:weak_irregular} below 
we will use Proposition~\ref{strong} 
in conjunction with Corollary~\ref{cor:weak_reg2} in Section~\ref{sec:weak_reg}
to establish weak convergence rates for 
Galerkin approximations of SEEs.
In Corollary~\ref{cor:strong.convergence.simplified} we further simplify the explicit bound obtained in Proposition~\ref{strong} below.

\subsection{Setting}
\label{secstrong:setting}

Assume the setting in Section~\ref{sec:general_setting}
and let 
$ p \in [2,\infty) $, $ \vartheta \in [0,1) $,
$
  F \in 
  \operatorname{Lip}^0( H , H_{ - \vartheta } ) 
$, 
$
  B \in 
  \operatorname{Lip}^0( H , HS( U , H_{ - \nicefrac{ \vartheta }{ 2 } } ) )
$, 
$
  \xi \in \lpn{p}{ \P|_{ \mathcal{F}_0 } }{H}
$.

The above assumptions ensure 
%(cf., e.g., Proposition~3 in Da Prato et al.~\cite{DaPratoJentzenRoeckner2012}, 
%Theorem~4.3 in Brze{\'z}niak~\cite{b97b}, Theorem~6.2 in Van Neerven et al.~\cite{vvw08}) 
that there exist up-to-modifications unique 
$ ( \mathcal{F}_t )_{ t \in [0,T] }$-predictable stochastic processes
$ 
  X^{ \kappa } \colon [0,T] \times \Omega \to H
$,
$ \kappa \in [0,\infty) $,
which satisfy that
for all $ \kappa \in [0,\infty) $
it holds that
$
  \sup_{ t \in [0,T] }
  \ES\big[
  \| X^{ \kappa }_t \|^p_H
  \big]
  < \infty
$ 
and which satisfy that
for all $ t \in [0,T] $, $ \kappa \in [0,\infty) $
it holds $ \P $-a.s.\ that
\begin{equation}
\label{eq:mollified}
  X_t^{ \kappa }
  = 
    e^{ A t } \xi
  + 
    \int_0^t e^{ A ( \kappa + t - s ) } F( X^{ \kappa }_s ) \, ds
  + 
    \int_0^t e^{ A ( \kappa + t - s ) } B( X^{ \kappa }_s ) \, dW_s 
    .
\end{equation}

\subsection{A strong convergence result}
\label{secstrong:strong}

\begin{proposition}
\label{strong}
Assume the setting in Section~\ref{secstrong:setting} 
and let $ \kappa \in [0,\infty) $, $ \rho \in [ 0, \frac{1 - \vartheta}{2} ) $.
Then
\begin{equation}
\label{eq:strong.bound.mol}
\begin{split}
&  
  \left\| 
    X^0_T - X^{ \kappa }_T 
  \right\|_{
    \lpn{p}{\P}{H}
  }
\leq
  \|\!\max\{1,\|\xi\|_H\}\|_{\lpn{p}{\P}{\R}}
\\ & \quad 
  \cdot 
  2 
  \, 
  \kappa^\rho
  \left|
    \mathcal{E}_{ (1 - \vartheta) }\!\left[
      \tfrac{ 
        T^{ (1 - \vartheta) }  
        \sqrt{2} 
        \,
        \| F \|_{ 
          \operatorname{Lip}^0( H, H_{ - \vartheta } ) 
        }
      }{
        \sqrt{ 1 - \vartheta } 
      }
      +
      \sqrt{
        T^{ 
          (1 - \vartheta) 
        }
        \,
        p 
        \, (p-1)
      }
      \,
      \| B \|_{
        \operatorname{Lip}^0(
          H,
          HS( U, H_{ - \nicefrac{ \vartheta }{ 2 } } )
        )
      }
    \right]\right|^2
\\ & \quad 
  \cdot 
  \left[
    \tfrac{
      T^{ ( 1 - \rho - \vartheta ) } 
    }{
      ( 1 - \rho - \vartheta ) 
    }
    \| F \|_{ 
      \operatorname{Lip}^0( H, H_{ - \vartheta } ) 
    } 
    + 
    \tfrac{
      \sqrt{ p \, ( p - 1 ) \, T^{ ( 1 - 2 \rho - \vartheta ) } } 
    }{
      \sqrt{ 2 - 4 \rho - 2 \vartheta } 
    }
    \| B 
    \|_{
      \operatorname{Lip}^0( 
        H, HS( U, H_{ - \nicefrac{ \vartheta }{ 2 } } )
      )
    }
  \right]
  .
\end{split}
\end{equation}
\end{proposition}

\begin{proof}
%
%Next, let $p\in[2,\infty)$, $ \kappa \in [0,1] $ and let
%$ g_r \colon H \to [0,\infty) $,
%$ r \in [0,\infty) $,
%and
%$ K^I_r \in [0,\infty) $,
%$ r \in [0,\infty) $,
%be given by
%$
%  g_r( x ) = \max\{ 1, \| x \|_{ H }^r \}
%$
%and
%$
%  K^I_r = \sup_{ t \in [0,T] }
%  \left\| 
%    \max\{ 
%      1, \| X_t \|_{ H }
%    \}
%  \right\|_{\lpn{r}{\P}{\R}}
%$
%for all $ r \in [0,\infty) $,
%$ x \in H $.
%
%
First of all, observe that 
Proposition~2.7 in~\cite{AnderssonJentzenKurniawan2016arXiv} 
(with
$
H=H
$, 
$
U=U
$, 
$ T = T $, 
$ \eta = 0 $, 
$ p = p $, 
$ \alpha = \vartheta $, 
$
\hat{\alpha} = 0
$,
$ \beta = \nicefrac{\vartheta}{2} $, 
$
\hat{\beta} 
= 
0
$,
$L_0=|F|_{\operatorname{Lip}^0(H,H_{-\vartheta})}$, 
$
\hat{L}_0
=
\|F(0)\|_{H_{-\vartheta}}
$, 
$L_1=|B|_{\operatorname{Lip}^0(H,HS(U,H_{-\nicefrac{\vartheta}{2}}))}+\varepsilon$, 
$
\hat{L}_1
=
\|B(0)\|_{HS(U,H_{-\nicefrac{\vartheta}{2}})}
$, 
$
W=W
$, 
$ A=A $,
$
\mathbf{F} = 
\big(
[0,T]\times\Omega\times H \ni (t,\omega,v) \mapsto
F(v) \in H_{-\vartheta}
\big)
$, 
$
\mathbf{B} = 
\big(
[0,T]\times\Omega\times H \ni (t,\omega,v) \mapsto
(U\ni u \mapsto B(v)u \in H_{-\nicefrac{\vartheta}{2}}) \in HS(U,H_{-\nicefrac{\vartheta}{2}})
\big)
$,  
$ \delta = 0 $, 
$ Y^1 = X^0 $, $ Y^2 = X^\kappa $,
$ \lambda = 0 $  
for 
$\varepsilon\in(0,\infty)$
in the notation of Proposition~2.7 in~\cite{AnderssonJentzenKurniawan2016arXiv})
shows that for all $\varepsilon\in(0,\infty)$ it holds that
\begin{equation}
\begin{split}
& 
\label{eq:strong_1}
  \left\| 
    X_T^0 - X^{ \kappa }_T
  \right\|_{
    \lpn{p}{\P}{H}
  }
\\ & \leq
  \sqrt{ 2 } \cdot
  \mathcal{E}_{ (1 - \vartheta) 
  }\!\Bigg[
    \tfrac{
      T^{ (1 - \vartheta) }
      \sqrt{ 2 } 
      \, 
      | F |_{
        \operatorname{Lip}^0( H, H_{ - \vartheta } ) 
      }
    }{
      \sqrt{ 1 - \vartheta } 
    }
    +
    \sqrt{
      T^{ (1 - \vartheta) }
      \,
      p \left( p - 1 \right)
    }
    \,
    \big(
    | B |_{
      \operatorname{Lip}^0( 
        H, 
        HS( U, H_{ - \nicefrac{ \vartheta }{ 2 } } ) 
      ) 
    }
	+\varepsilon
	\big)
  \Bigg]
\\ & \cdot 
    \sup_{
      t \in [0,T]
    }
    \left\|
      \int^t_0
      e^{ A (t - s) }
      \left(  
        \operatorname{Id}_H - e^{ A \kappa } 
      \right)
      F(X^{ \kappa }_s)
      \,
      ds
    + 
    \int^t_0e^{A(t-s)}
    \left(
      \operatorname{Id}_H - 
      e^{ A \kappa }
    \right)
    B(X^{ \kappa }_s) \, dW_s
  \right\|_{
    \lpn{p}{\P}{H}
  }
  .
\end{split}
\end{equation}
In the next step
we observe that the Burkholder-Davis-Gundy type inequality 
in Lemma~7.7 in Da Prato \& Zabczyk~\cite{dz92}, Lemma~\ref{lem:aux}, and the fact that for all 
$
  r \in [0,1]
$
it holds that 
$
  \sup_{ t \in (0,\infty) }
  \|
    ( - t A )^{ - r }
    ( 
      \operatorname{Id}_H - e^{ A t } 
    )
  \|_{ L(H) }
  \leq 1
$
imply that for all 
$ 
  t \in [0,T] 
$, 
$
  r \in [ 0, 1 - \vartheta ) 
$ 
it holds that
\begin{equation}
\begin{split}
&
  \left\|
    \int^t_0
    e^{ A (t-s) }
    \left(
      \operatorname{Id}_H - e^{ A \kappa } 
    \right)
    F( X^{ \kappa }_s ) 
    \, ds
  \right\|_{\lpn{p}{\P}{H}}
\\ & \leq 
  \frac{
    T^{ (1 - r - \vartheta) } 
  }{
    \left( 1 - r - \vartheta \right)
  }
  \left[
    \sup_{ s \in [0,T] }
    \left\| 
      F( X^{ \kappa }_s ) 
    \right\|_{ \lpn{p}{\P}{H_{-\vartheta}} }
  \right]
  \kappa^r
\\ & \leq
  \frac{
    T^{ (1 - r - \vartheta) } 
  }{
    \left( 1 - r - \vartheta \right)
  }
  \left\|
    F
  \right\|_{ \operatorname{Lip}^0( H, H_{ -\vartheta } ) }
  \max\!
  \left\{
  1,
  \sup_{ s \in [0,T] }
      \| X^{ \kappa }_s \|_{ \lpn{p}{\P}{H} }
  \right\}
  \kappa^r
\end{split}\label{eq:strong_2}
\end{equation}
and 
that for all 
$ 
  t \in [0,T] 
$, 
$
  r \in [ 0, \frac{ 1 - \vartheta }{ 2 } ) 
$ 
it holds that
\begin{equation}
\begin{split}
&
  \left\|
    \int^t_0
    e^{ A (t-s) }
    \left(
      \operatorname{Id}_H
      -
      e^{ A \kappa } 
    \right)
    B(X^{ \kappa }_s) \, dW_s
  \right\|_{
    \lpn{p}{\P}{H}
  }
\\ & \leq
  \sqrt{
    \frac{ p \, ( p - 1 ) }{ 2 }
  }
  \frac{
    \sqrt{ T^{ (1 - 2 r - \vartheta) } } 
  }{
    \sqrt{ 1 - 2 r - \vartheta } 
  }
  \left[
    \sup_{ s \in [0,T] }
    \left\| 
      B( X^{ \kappa }_s )
    \right\|_{ 
      \lpn{p}{\P}{ 
        HS( 
          U,
          H_{ - \nicefrac{ \vartheta }{ 2 } } 
        ) 
      } 
    }
  \right]
  \kappa^r
\\ & \leq
  \sqrt{
    \frac{ p \, ( p - 1 ) }{ 2 }
  }
  \frac{ 
    \sqrt{ T^{ (1 - 2 r - \vartheta) } }
  }{
    \sqrt{ 1 - 2 r - \vartheta } 
  }
  \left\|
    B
  \right\|_{ 
    \operatorname{Lip}^0( 
      H, 
      HS( 
        U,
        H_{ - \nicefrac{ \vartheta }{ 2 } } 
      ) 
    ) 
  }
  \max\!
  \left\{
  1,
  \sup_{ s \in [0,T] }
      \| X^{ \kappa }_s \|_{ \lpn{p}{\P}{H} }
  \right\}
  \kappa^r
  .
\end{split}\label{eq:strong_3}
\end{equation}
Putting~\eqref{eq:strong_2} and~\eqref{eq:strong_3} 
into~\eqref{eq:strong_1} yields that for all 
$
  r \in [ 0, \frac{ 1 - \vartheta }{ 2 } )
$ 
it holds that
\begin{equation}
\begin{split}
&
  \left\| 
    X_T^0 - X^{ \kappa }_T
  \right\|_{
    \lpn{p}{\P}{H}
  }
\leq 
  \sqrt{2}
  \,
  \kappa^r
  \max\!
  \left\{
    1,
    \sup_{ t \in [0,T] }
    \| X^{ \kappa }_t \|_{ 
      \lpn{p}{\P}{H} 
    }
  \right\}
\\ & \cdot
  \bigg(
  \inf_{\varepsilon\in(0,\infty)}
  \mathcal{E}_{(1-\vartheta)}\!\left[
    \tfrac{ 
      T^{ (1 - \vartheta) }
      \sqrt{2} 
      \,
      | F |_{
        \operatorname{Lip}^0( H, H_{ - \vartheta } ) 
      }
    }{
      \sqrt{ 1 - \vartheta } 
    }
    +
    \sqrt{
      T^{ (1 - \vartheta) } \, p \, ( p - 1 )
    }
    \,
    | B |_{
      \operatorname{Lip}^0(
        H,
        HS( U, H_{ - \nicefrac{ \vartheta }{ 2 } } )
      )
    }
+\varepsilon
  \right]
  \bigg)
\\
&\cdot
  \left[
    \tfrac{
      T^{ ( 1 - r - \vartheta ) } 
    }{
      \left( 1 - r - \vartheta \right)
    }
    \| F \|_{
      \operatorname{Lip}^0( H, H_{ - \vartheta } ) 
    }
    +
    \tfrac{
      \sqrt{ p \, \left( p - 1 \right) \, T^{ (1 - 2 r - \vartheta) } }
    }{
      \sqrt{ 2 - 4 r - 2 \vartheta }
    }
    \| B \|_{
      \operatorname{Lip}^0( 
        H, 
        HS(U, H_{ - \nicefrac{ \vartheta }{ 2 } } 
        )
      )
    }
  \right]
  \\&\leq
  \sqrt{2}
\,
\kappa^r
\bigg[
\sup_{ t \in [0,T] }
\|\! \max\{1,\|X^{ \kappa }_t\|_H\} \|_{ 
	\lpn{p}{\P}{\R} 
}
\bigg]
\\ & \cdot
\mathcal{E}_{(1-\vartheta)}\!\left[
\tfrac{ 
	T^{ (1 - \vartheta) }
	\sqrt{2} 
	\,
	| F |_{
		\operatorname{Lip}^0( H, H_{ - \vartheta } ) 
	}
}{
	\sqrt{ 1 - \vartheta } 
}
+
\sqrt{
	T^{ (1 - \vartheta) } \, p \, ( p - 1 )
}
\,
| B |_{
	\operatorname{Lip}^0(
	H,
	HS( U, H_{ - \nicefrac{ \vartheta }{ 2 } } )
	)
}
\right]
\\
&\cdot
\left[
\tfrac{
	T^{ ( 1 - r - \vartheta ) } 
}{
	\left( 1 - r - \vartheta \right)
}
\| F \|_{
	\operatorname{Lip}^0( H, H_{ - \vartheta } ) 
}
+
\tfrac{
	\sqrt{ p \, \left( p - 1 \right) \, T^{ (1 - 2 r - \vartheta) } }
}{
	\sqrt{ 2 - 4 r - 2 \vartheta }
}
\| B \|_{
	\operatorname{Lip}^0( 
	H, 
	HS(U, H_{ - \nicefrac{ \vartheta }{ 2 } } 
	)
	)
}
\right]  
  .
\end{split}\label{eq:strong_4}
\end{equation}
In addition, e.g., Proposition~3.4 in Cox et al.~\cite{CoxHutzenthalerJentzenWelti2017}
(with
$H=H$,
$U=U$,
$\mathbb{H}=\mathbb{H}$,
$\lambda=\lambda$,
$A=A$,
$T=T$,
$p=p$,
$\gamma=0$,
$\eta=\vartheta$,
$F=(H\ni v \mapsto e^{A\kappa} F(v) \in H_{-\vartheta})$,
$B=(H\ni v \mapsto ( U \ni u \mapsto e^{A\kappa} B(v)u \in H_{-\nicefrac{\vartheta}{2}} ) \in HS(U,H_{-\nicefrac{\vartheta}{2}}))$,
$W=W$,
$X=X^\kappa$ 
in the notation of Proposition~3.4 in~\cite{CoxHutzenthalerJentzenWelti2017})
shows that 
\begin{equation}
\begin{split}
&
  \sup_{ t \in [0,T] }
  \|\! \max\{1,\|X^{ \kappa }_t\|_H\} \|_{ 
	\lpn{p}{\P}{\R} 
  }
  \leq
  \sqrt{2} \,
   \|\!\max\{1,\|\xi\|_H\}\|_{\lpn{p}{\P}{\R}}
  \\ &
  \cdot 
  \mathcal{E}_{ (1 - \vartheta) }\!\left[
  \tfrac{ 
  	T^{ (1 - \vartheta) }  
  	\sqrt{2} 
  	\,
  	\| F \|_{ 
  		\operatorname{Lip}^0( H, H_{ - \vartheta } ) 
  	}
  }{
  	\sqrt{ 1 - \vartheta } 
  }
  +
  \sqrt{
  	T^{ 
  		(1 - \vartheta) 
  	}
  	\,
  	p 
  	\, (p-1)
  }
  \,
  \| B \|_{
  	\operatorname{Lip}^0(
  	H,
  	HS( U, H_{ - \nicefrac{ \vartheta }{ 2 } } )
  	)
  }
  \right]
  .
\end{split}
\end{equation} 
Combining this with~\eqref{eq:strong_4} establishes~\eqref{eq:strong.bound.mol}. The proof of Proposition \ref{strong} is thus completed.
\end{proof}

\noindent The next result, Corollary~\ref{cor:strong.convergence.simplified} below, is an immediate consequence of Proposition~\ref{strong} above.

\begin{corollary}
\label{cor:strong.convergence.simplified}
Assume the setting in Section~\ref{secstrong:setting}. 
Then for every $\varepsilon\in(0,\frac{1 - \vartheta}{2}]$ 
there exists a real number $C\in\R$
such that for all 
$ \kappa \in [0,\infty) $
it holds that 
\begin{equation}
\begin{split}
&  
  \left\| 
    X^0_T - X^{ \kappa }_T 
  \right\|_{
    \lpn{p}{\P}{H}
  }
\leq
  C
  \cdot 
  \kappa^{(\frac{1 - \vartheta}{2}-\varepsilon)}
  .
\end{split}
\end{equation}
\end{corollary}

\section{Weak convergence for Galerkin approximations of SEEs}
\label{sec:weak_irregular}

In this section the weak convergence results in Proposition~\ref{weak_irreg}, Corollary~\ref{weak_irreg_B}, and Corollary~\ref{cor:weak.convergence.rate.simplified} are proved.
Roughly speaking, in the framework of Section~\ref{sec:general_setting} we establish in Proposition~\ref{weak_irreg} below a rate for the weak approximation error
\begin{equation}
\label{eq:weak.error}
  \left|
    \ES\big[ 
      \varphi( X^\set_T )
    \big]
    -
    \ES\big[ 
      \varphi( X_T^I )
    \big]
  \right|
  ,
\end{equation}
where $I\subseteq\mathbb{H}$ is a set,
where $\varphi\colon H\to\R$ is a four times continuously Fr\'{e}chet differentiable function with globally bounded derivatives, 
and where 
$X^{\mathbb{H}}\colon[0,T]\times\Omega\to H$ 
and
$X^I\colon[0,T]\times\Omega\to P_I(H)$
are appropriate mild solution processes of the SEEs in~\eqref{irregeq:Galerkin}.
Here, $X^I\colon[0,T]\times\Omega\to P_I(H)$ is a spectral Galerkin approximation of $X^{\mathbb{H}}\colon[0,T]\times\Omega\to H$.
We note that the drift nonlinearity $F$ and the diffusion nonlinearity $B$ of the SEE~\eqref{irregeq:Galerkin} are not mollified and may take values in negative interpolation spaces.
The proof of Proposition~\ref{weak_irreg} uses both Corollary~\ref{cor:weak_reg2}
in Section~\ref{sec:weak_reg} and Proposition~\ref{strong}
in Section~\ref{sec:strong_convergence}.
Corollary~\ref{weak_irreg_B} follows from an application of Proposition~\ref{weak_irreg}. Moreover, Corollary~\ref{cor:weak.convergence.rate.simplified} is an immediate consequence of Corollary~\ref{weak_irreg_B}.

\subsection{Setting}
\label{irregsec:setting}

Assume the setting in Section~\ref{sec:general_setting},
let 
$
  \varphi\in C^4_b(H,\R)
$, 
$
  \theta \in [ 0 , 1 ) 
$, 
$
  F \in C^4_b( H , H_{ - \theta } )
$, 
$
  B \in C^4_b(H,HS(U,H_{ -\nicefrac{\theta}{2} }))
$, 
$
  \xi\in\lpn{4}{ \P|_{ \mathcal{F}_0 } }{H}
$, 
let
$ \varsigma_{ F, B } \in \R $
be the real number given by
$
  \varsigma_{ F, B } 
  =
    \max\!\big\{
    1 ,
    \|
      F
    \|_{ 
      C_b^3( H, H_{-\theta} )
    }^2
    ,
    \|
      B
    \|_{ 
      C_b^3( H, HS( U, H_{-\nicefrac{\theta}{2}} ) )
    }^4
    \big\}  
$,
let 
$ 
  X^I \colon [0,T] \times \Omega \to P_I( H )
$,
$ I \in \mathcal{P}(\set) $,
and
$
  X^{ \set, \kappa, x } \colon [0,T] \times \Omega \to H 
$, 
$ \kappa \in [0,\infty) $,
$ x \in H $,
be up-to-modifications unique 
$
  ( \mathcal{F}_t )_{ t \in [0,T] }
$-predictable stochastic processes
which satisfy for all 
$ I \in \grid(\set) $, 
$ \kappa \in [0,\infty) $, 
$ x \in H $
that
$
  \sup_{ t \in [0,T] }
  \ES\big[
  \| X^I_t \|^4_H
  +
  \| X^{ \set, \kappa, x }_t \|^4_H
  \big] < \infty
$ 
and which satisfy that 
for all 
$ I \in \grid(\set) $, 
$ \kappa \in [0,\infty) $, 
$ x \in H $, 
$ t \in [0,T] $ 
it holds $ \P $-a.s.\ that
\begin{equation}
\label{irregeq:Galerkin}
  X_t^I 
  = 
    e^{ A t } P_{ I }( \xi )
  + 
    \int_0^t e^{ A ( t - s ) } P_{ I } F( X^I_s ) \, ds
  + 
    \int_0^t e^{ A ( t - s ) } P_{ I } B( X^I_s ) \, dW_s 
    ,
\end{equation}
\begin{equation}
\label{eq:unique.mollified.SEE}
\begin{split}
&
  X^{ \set, \kappa, x }_t =
  e^{ A t } x
+
  \int_0^t
  e^{ A( \kappa + t - s) }
  F( 	
    X^{ \set, \kappa, x }_s 
  )
  \,
  ds
+
  \int_0^t
  e^{ A( \kappa + t - s) }
  B( 
    X^{ \set, \kappa, x }_s 
  )
  \,
  dW_s
  ,
\end{split}
\end{equation}
let 
{$
  u^{ ( \kappa ) } 
  \colon [0,T] \times H \to \R
$}, 
$ 
  \kappa \in (0,\infty)
$,  
be the functions which satisfy 
for all 
$ \kappa \in (0,\infty) $, 
$ t \in [0,T] $, 
$ x \in H $
that 
$
  u^{ ( \kappa ) }( t, x ) 
  = 
  \ES\big[ 
    \varphi( X^{ \set, \kappa, x }_{ T - t } )
  \big]
$, 
and let 
$
  c^{ ( \kappa ) }_{ \delta_1, \dots, \delta_k }
  \in [0,\infty]	
$,
$ \delta_1, \dots, \delta_k \in (-\infty,0] $,
$ k \in \{ 1, 2, 3, 4 \} $, 
$ \kappa \in (0,\infty) $,
be the extended real numbers which satisfy for all 
$ \kappa \in (0,\infty) $,
$ k \in \{ 1, 2, 3, 4 \} $, 
$ \delta_1, \dots, \delta_k \in (-\infty,0] $
that 
\begin{equation}
\label{eq:c.kappa.delta}
\begin{split} 
&
  c^{ ( \kappa ) }_{ \delta_1, \dots, \delta_k }
  =
  \sup_{
    t \in [0,T)
  }
  \sup_{ 
    x \in H
  }
  \sup_{ 
    v_1, \dots, v_k \in H \backslash \{ 0 \}
  }
  \left[
  \frac{
    \big|
      ( 
        \frac{ 
          \partial^k
        }{
          \partial x^k
        }
        u^{ ( \kappa ) }
      )( t, x )( v_1, \dots, v_k )
    \big|
  }{
    ( T - t )^{ 
      (
        \delta_1 + \ldots + \delta_k
      ) 
    }
    \left\| v_1 \right\|_{ H_{ \delta_1 } }
    \cdot
    \ldots
    \cdot
    \left\| v_k \right\|_{ H_{ \delta_k } }
  }
  \right]
\end{split}
\end{equation}
(cf., e.g., item~\eqref{item:kolmogorov.diff} of Lemma~\ref{lem:Kolmogorov}).

\subsection{A weak convergence result}
\label{irregsec:weak_irreg}

The following result, Lemma~\ref{lem:c.kappa.delta.finite}, is a slightly modified version of Lemma~\ref{lem:c.delta.finite}. 
Lemma~\ref{lem:c.kappa.delta.finite} provides an a priori estimate for the quantities in~\eqref{eq:c.kappa.delta} which is uniform in the mollification parameter $\kappa\in(0,T]$.

\begin{lemma}
	\label{lem:c.kappa.delta.finite}
	Assume the setting in Section~\ref{irregsec:setting}. Then it holds for all 
	$ k \in \{ 1, 2, 3, 4 \} $, 
	$ \delta_1, \dots, \delta_k \in (-\nicefrac{1}{2},0] $
	with
	$
	\sum^k_{i=1} \delta_i
	> -\nicefrac{1}{2}
	$ 
	that 
	$
	\sup_{\kappa\in(0,T]}
	c^{(\kappa)}_{\delta_1,\ldots,\delta_k}
	< \infty
	$.
\end{lemma}
\begin{proof}
	Throughout this proof let 
	$\phi_\kappa\colon[0,T]\times H\to\R$, 
	$\kappa\in(0,T]$, 
	be the functions which satisfy for all 
	$\kappa\in(0,T]$, 
	$t\in[0,T]$,
	$x\in H$ 
	that 
	$
	\phi_\kappa(t,x)
	=\ES[\varphi(X^{\set,\kappa,x}_t)]
	$.
	Note that for all 
	$\kappa\in(0,T]$, 
	$t\in[0,T]$,
	$x\in H$ 
	it holds that 
	$
	u^{(\kappa)}(t,x)=\phi_\kappa(T-t,x)
	$.
	This, \eqref{eq:unique.mollified.SEE}, and~\cite[Item~(iv) of Corollary~4.2]{AnderssonHefterJentzenKurniawan2016} 
	(with
	$T=T$,
	$\eta=0$,
	$H=H$,
	$U=U$,
	$V=\R$,
	$W=W$,
	$A=A$,
	$n=4$,
	$\alpha=\theta$,
	$\beta=\nicefrac{\theta}{2}$,	
	$F=F$,
	$B=B$,
	$\varphi=\varphi$,
	$k=k$,
	$\delta_1=-\delta_1,\ldots,\delta_k=-\delta_k$,
	for 
	$ 
	(\delta_1, \dots, \delta_k) \in 
	\{(x_1,\ldots,x_k)\in(-\nicefrac{1}{2},0]^k\colon
	\sum^k_{i=1} x_i > -\nicefrac{1}{2}
	\} 
	$, 
	$ k \in \{ 1, 2, 3, 4 \} $
	in the notation of Corollary~4.2 in~\cite{AnderssonHefterJentzenKurniawan2016})
	imply that for all 
	$ k \in \{ 1, 2, 3, 4 \} $, 
	$ \delta_1, \dots, \delta_k \in (-\nicefrac{1}{2},0] $
	with
	$
	\sum^k_{i=1} \delta_i
	> -\nicefrac{1}{2}
	$ 
	it holds that 
	\begin{equation}
	\begin{split}
	&
	\sup_{\kappa\in(0,T]}
	c^{(\kappa)}_{\delta_1,\ldots,\delta_k}
	\\&=
	\sup_{\kappa\in(0,T]}
	\sup_{
		t \in [0,T)
	}
	\sup_{ 
		x \in H
	}
	\sup_{ 
		v_1, \dots, v_k \in H \backslash \{ 0 \}
	}
	\left[
	\frac{
		\big|
		( 
		\frac{ 
			\partial^k
		}{
			\partial x^k
		}
		u^{(\kappa)}
		)( t, x )( v_1, \dots, v_k )
		\big|}{
		( T - t )^{ 
			(
			\delta_1 + \ldots + \delta_k
			) 
		}
		\left\| v_1 \right\|_{ H_{ \delta_1 } }
		\cdot
		\ldots
		\cdot
		\left\| v_k \right\|_{ H_{ \delta_k } }}\right]
	\\&=
	\sup_{\kappa\in(0,T]}
	\sup_{
		t \in [0,T)
	}
	\sup_{ 
		x \in H
	}
	\sup_{ 
		v_1, \dots, v_k \in H \backslash \{ 0 \}
	}
	\left[
	\frac{
		\big|
		( 
		\frac{ 
			\partial^k
		}{
			\partial x^k
		}
		\phi_\kappa
		)( T-t, x )( v_1, \dots, v_k )
		\big|}{
		( T - t )^{ 
			(
			\delta_1 + \ldots + \delta_k
			) 
		}
		\left\| v_1 \right\|_{ H_{ \delta_1 } }
		\cdot
		\ldots
		\cdot
		\left\| v_k \right\|_{ H_{ \delta_k } }}\right]
	\\&=
	\sup_{\kappa,t\in(0,T]}
	\sup_{ 
		x \in H
	}
	\sup_{ 
		v_1, \dots, v_k \in H \backslash \{ 0 \}
	}
	\left[
	\frac{
		\big|
		( 
		\frac{ 
			\partial^k
		}{
			\partial x^k
		}
		\phi_\kappa
		)( t, x )( v_1, \dots, v_k )
		\big|}{
		t^{ 
			(
			\delta_1 + \ldots + \delta_k
			) 
		}
		\left\| v_1 \right\|_{ H_{ \delta_1 } }
		\cdot
		\ldots
		\cdot
		\left\| v_k \right\|_{ H_{ \delta_k } }}\right]
	< \infty.
	\end{split} 
	\end{equation}
	The proof of Lemma~\ref{lem:c.kappa.delta.finite} is thus completed.
\end{proof}

\begin{proposition}
\label{weak_irreg}
Assume the setting in Section~\ref{irregsec:setting} and let 
$
  I \in \grid(\set)
$,
$ 
  \vartheta \in [ 0, \nicefrac{ 1 }{ 2 } ) \cap [ 0, \theta ]  
$.
Then it holds for all 
$ 
  r \in [ 0 , 1 - \vartheta ) 
$,
$ 
  \rho \in ( 0 , 1 - \theta ) 
$
that
\begin{align}
\label{eq:weak_main_estimate}
&
  \big|
    \ES\big[ 
      \varphi( X^{ \set }_T )
    \big]
    -
    \ES\big[ 
      \varphi( X_T^{ I } )
    \big]
  \big|
\\&\nonumber\leq
{
  \tfrac{
    18
  }{
      |\!\min\{1,T\}|^{
            2 ( \theta - \vartheta ) 
          }
  }
  \,
    \max\left\{
    \|
      \operatorname{Id}_H
    \|_{ 
      L( H,H_{-1}) 
    },
\tfrac{1}{
    \|
      \operatorname{Id}_H
    \|_{ 
      L( H,H_{-1}) 
    }}
    \right\}
}
  \,
  \E\,\big[\!
    \max\{ 1 , \| \xi \|^4_H \}
  \big]
    \,
    \|
      P_{ \set \backslash I }
    \|^{ 
      \frac{ r \rho }{ \rho + 4 ( \theta - \vartheta ) } 
    }_{ 
      L( H, H_{ - 1 } ) 
    }
\\ & \cdot
\nonumber
  \Bigg\{
   \bigg[
     \tfrac{ 
       T^{ 
         (1 - \nicefrac{ \rho }{ 2 } - \theta )
       } 
       \,
       \|
         F
       \|_{
         C_b^1( H , H_{ - \theta } ) 
       } 
     }{ 
       (1 - \nicefrac{ \rho }{ 2 } - \theta) 
     }
   + 
   \tfrac{
     \sqrt{
       T^{ (1 - \rho - \theta) } 
     }
     \,
     \|
       B
     \|_{ 
       C_b^1( H, HS( U, H_{ -\nicefrac{ \theta }{ 2 } } ) ) 
     }
   }{
     \sqrt{ 1 - \rho - \theta }
   }
 \bigg]
   \| \varphi \|_{
     C_b^1( H , \R )
   }
   +
  \tfrac{ \varsigma_{F,B} }{ T^r }
  \!
  \left[ 
    1 + 
    \tfrac{ 
      T^{ (1 - \vartheta) } 
    }{
      (1 - \vartheta - r) 
    } 
  \right]^2
\\ & \cdot
\nonumber
  \bigg[ 
    \| \varphi \|_{ C_b^3( H, \R) }
    +
    \sup_{ \kappa \in (0,{T]} }
    \big[
    c^{ ( \kappa ) }_{ - \vartheta } 
    + 
    c^{ ( \kappa ) }_{ - \vartheta , 0 } 
    + 
    c^{ ( \kappa ) }_{ - \vartheta , 0 , 0 } 
    + 
    c^{ ( \kappa ) }_{ - \nicefrac{ \vartheta }{ 2 } , - \nicefrac{ \vartheta }{ 2 } } 
    + 
    c^{ ( \kappa ) }_{ - \nicefrac{ \vartheta }{ 2 } , - \nicefrac{ \vartheta }{ 2 } , 0 } 
    + 
    c^{ ( \kappa ) }_{ - \nicefrac{ \vartheta }{ 2 } , - \nicefrac{ \vartheta }{ 2 }, 0, 0 } 
    \big]
  \bigg]
  \Bigg\}
\\ & \cdot
\nonumber
  \left|
  \mathcal{E}_{ ( 1 - \theta ) }
  \!\left[ 
      \tfrac{ 
        T^{ (1 - \theta) } \sqrt{2} \,
        \| 
          F
        \|_{
          C_b^1( H , H_{ - \theta } ) 
        }
      }{
        \sqrt{ 1 - \theta } 
      }
      +
      \sqrt{ 12 \, T^{ (1 - \theta) } }
      \,
      \| 
        B
      \|_{
        C_b^1( 
          H , 
          HS( U, H_{ - \nicefrac{ \theta}{ 2 } } )
        )
      }
  \right]
  \right|^4
  < \infty
  .
\end{align}
\end{proposition}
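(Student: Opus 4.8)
The plan is to realise the strategy sketched in Section~\ref{sec:sketch_proof}: interpolate between the true objects and their mollified counterparts, bound the mollified weak error by Corollary~\ref{cor:weak_reg2} and the two mollification errors by Proposition~\ref{strong}, and then optimise over the mollification parameter $\kappa$. First I would reduce to the case $\eta=0$ by the shift $A\rightsquigarrow A-\eta\operatorname{Id}_H$, $F\rightsquigarrow\eta\operatorname{Id}_H+F$; this leaves both \eqref{irregeq:Galerkin} and the SEE for $X^{\set}$ unchanged, turns the generator into one with strictly negative spectrum (so that $\sup_{t}\|(-tA)^re^{At}\|_{L(H)}\le 1$ and the family $\mathcal{E}_{1-\theta}$ are available), and is precisely why the effective drift $\eta\operatorname{Id}_H+F$ appears throughout \eqref{eq:weak_main_estimate}. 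For $\kappa\in(0,\infty)$ I then set $F_\kappa=e^{\kappa A}(\eta\operatorname{Id}_H+F)$ and $B_\kappa=e^{\kappa A}B$; since $e^{\kappa A}\in L(H_{-\theta},H_1)$ and $e^{\kappa A}\in L(HS(U,H_{-\nicefrac{\theta}{2}}),HS(U,H_1))$ for $\kappa>0$, these mollified coefficients fit the setting of Section~\ref{sec:setting}. I write $X^{\set,\kappa}$ for the mild solution of the mollified SEE started at $\xi$ and $X^{I,\kappa}$ for its Galerkin approximation, so that $X^{I,0}=X^I$.

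Next I would insert these objects by the triangle inequality
\begin{equation*}
\begin{split}
  &
  \big| \ES[ \varphi( X^\set_T ) ] - \ES[ \varphi( X^I_T ) ] \big|
  \leq
  \big| \ES[ \varphi( X^\set_T ) ] - \ES[ \varphi( X^{ \set, \kappa }_T ) ] \big|
\\ & \quad
  +
  \big| \ES[ \varphi( X^{ \set, \kappa }_T ) ] - \ES[ \varphi( X^{ I, \kappa }_T ) ] \big|
  +
  \big| \ES[ \varphi( X^{ I, \kappa }_T ) ] - \ES[ \varphi( X^I_T ) ] \big|
  .
\end{split}
\end{equation*}
The outer two terms are \emph{strong} mollification errors: estimating each by $\|\varphi\|_{C^1_b(H,\R)}$ times an $L^2(\P;H)$-distance and invoking Proposition~\ref{strong} with $p=2$, $\vartheta\rightsquigarrow\theta$, and strong rate parameter $\nicefrac{\rho}{2}\in[0,\nicefrac{(1-\theta)}{2})$ (once on $H$ and once on $P_I(H)$, using that $P_I$ commutes with $A$ and contracts the $H_r$-norms) produces exactly the first bracket of \eqref{eq:weak_main_estimate} multiplied by $\|\varphi\|_{C^1_b(H,\R)}\,|\mathcal{E}_{1-\theta}[\cdots]|^{2}$ and by $\kappa^{\nicefrac{\rho}{2}}$. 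The middle term is the mollified \emph{weak} error, to which Corollary~\ref{cor:weak_reg2} applies verbatim with $F\rightsquigarrow F_\kappa$, $B\rightsquigarrow B_\kappa$, $\rho\rightsquigarrow r$ and the given $\vartheta,\theta$; its value functions are the $u^{(\kappa)}$ and its derivative constants are the $c^{(\kappa)}_{\cdots}$. The crucial bookkeeping in $\kappa$ is that $e^{\kappa A}$ is a contraction on $H_{-\theta}$ and $HS(U,H_{-\nicefrac{\theta}{2}})$, whence $\|F_\kappa\|_{C^1_b(H,H_{-\theta})}\le\|\eta\operatorname{Id}_H+F\|_{C^1_b(H,H_{-\theta})}$ and $\|B_\kappa\|_{C^1_b(H,HS(U,H_{-\nicefrac{\theta}{2}}))}\le\|B\|_{C^1_b(H,HS(U,H_{-\nicefrac{\theta}{2}}))}$, so that the $\mathcal{E}_{1-\theta}$-factors and the moment bounds $K^I_4$ are controlled \emph{uniformly} in $\kappa$; by contrast the factor $\varsigma_{F_\kappa,B_\kappa}$, built from $C^3_b$-norms into $H_{-\vartheta}$ and $HS(U,H_{-\nicefrac{\vartheta}{2}})$, loses smoothing, and since $\|e^{\kappa A}\|_{L(H_{-\theta},H_{-\vartheta})}\lesssim\kappa^{-(\theta-\vartheta)}$ one gets $\varsigma_{F_\kappa,B_\kappa}\le\kappa^{-2(\theta-\vartheta)}\varsigma_{F,B}$. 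Thus the middle term is bounded by $\kappa^{-2(\theta-\vartheta)}\,\|P_{\set\backslash I}\|_{L(H,H_{-r})}$ times the second group of constants in \eqref{eq:weak_main_estimate}.

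Finally I would collect the three bounds. Writing $q=\|P_{\set\backslash I}\|_{L(H,H_{-1})}$ and using $\|P_{\set\backslash I}\|_{L(H,H_{-r})}=q^{\,r}$, the total is of the form $\kappa^{\nicefrac{\rho}{2}}\,C_{\mathrm{strong}}+\kappa^{-2(\theta-\vartheta)}\,q^{\,r}\,C_{\mathrm{weak}}$. Since the left-hand side does not depend on $\kappa$, I minimise the right-hand side over $\kappa$: the choice $\kappa=q^{\,2r/(\rho+4(\theta-\vartheta))}$ balances the two contributions and makes both equal to $q^{\,r\rho/(\rho+4(\theta-\vartheta))}$, which is exactly the exponent appearing in \eqref{eq:weak_main_estimate}. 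The groups $C_{\mathrm{strong}}$ and $C_{\mathrm{weak}}$ then assemble into the brace $\{\cdots\}$, while the crude bounds $|\mathcal{E}_{1-\theta}[\cdot]|^{2}\le|\mathcal{E}_{1-\theta}[\cdot]|^{4}$ and $\max\{1,\|\xi\|_{L^2(\P;H)}\}\le\E[|1\vee\|\xi\|_H|^4]$ unify the prefactors; the remaining factor $1\vee\|\operatorname{Id}_H\|_{L(H,H_{-1})}$ absorbs the regime where the optimal $\kappa$ would exceed $1$ (so one truncates and uses $q\le\|\operatorname{Id}_H\|_{L(H,H_{-1})}$), and the numerical constant $22$ collects the remaining absolute constants from Proposition~\ref{strong} and Corollary~\ref{cor:weak_reg2}.

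The step I expect to be the main obstacle is the assertion of finiteness of the right-hand side, which rests on $\sup_{\kappa\in(0,\infty)}\big[c^{(\kappa)}_{-\vartheta}+\cdots+c^{(\kappa)}_{-\nicefrac{\vartheta}{2},-\nicefrac{\vartheta}{2},0,0}\big]<\infty$, that is, on \emph{uniform-in-$\kappa$} regularity estimates for the spatial derivatives of the value functions $u^{(\kappa)}$ of the mollified Kolmogorov equations. These are precisely the bounds imported from Andersson et al.~\cite{Anderssonetal2014}, whose point is that the relevant derivative estimates depend only on the $C^k_b(H,H_{-\theta})$- and $C^k_b(H,HS(U,H_{-\nicefrac{\theta}{2}}))$-norms of the coefficients, which are uniformly bounded under mollification, and not on the blow-up of $\|F_\kappa\|_{C^k_b(H,H_1)}$. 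Establishing this uniformity, together with pinning down the exact power $\kappa^{-2(\theta-\vartheta)}$ of $\varsigma_{F_\kappa,B_\kappa}$ that fixes the denominator $\rho+4(\theta-\vartheta)$, is the delicate part; the triangle inequality and the elementary one-parameter minimisation over $\kappa$ are then routine.
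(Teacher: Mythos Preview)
Your overall strategy matches the paper's proof almost exactly: the shift to $\eta=0$, the triangle inequality with the mollified processes $X^{\cdot,\kappa}$, the use of Proposition~\ref{strong} for the outer two terms, Corollary~\ref{cor:weak_reg2} for the middle term, the estimate $\varsigma_{F_\kappa,B_\kappa}\le\varsigma_{F,B}\max\{1,\kappa^{-2(\theta-\vartheta)}\}$, the optimisation $\kappa=\|P_{\set\backslash I}\|_{L(H,H_{-1})}^{2r/(\rho+4(\theta-\vartheta))}$, and the appeal to \cite{Anderssonetal2014} for finiteness are all exactly what the paper does.

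There is, however, one genuine gap. You assert that ``these mollified coefficients fit the setting of Section~\ref{sec:setting}'', but that setting requires $\xi\in L^4(\P|_{\mathcal{F}_0};H_1)$, whereas Section~\ref{irregsec:setting} only gives $\xi\in L^4(\P|_{\mathcal{F}_0};H)$. This is not cosmetic: the modified process $Y^I$ in \eqref{eq:mod_SPDE} must take values in $H_1$ so that $AY^I_t$ is defined and the standard It\^{o} formula leading to \eqref{eq:Kolmogorov} applies; with $\xi\in H$ only, $Y^I_0=\xi\notin H_1$ in general and Corollary~\ref{cor:weak_reg2} is not applicable. The paper fixes this by introducing a \emph{second} mollification parameter $\delta>0$, replacing $\xi$ by $e^{\tilde A\delta}P_I(\xi)\in H_1$ (see \eqref{irregeq:mollified}), carrying out your entire argument for the processes $\hat X^{J,\kappa,\delta}$, and only at the very end letting $\delta\to 0$ via a strong continuity argument (Corollary~5.1.5 in \cite{Jentzen2014SPDElecturenotes}). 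Since the right-hand side of the bound from Corollary~\ref{cor:weak_reg2} depends on $\xi$ only through $\E[\max\{1,\|\xi\|_H^4\}]$, the $\delta$-limit is painless; but without this device your application of Corollary~\ref{cor:weak_reg2} is not justified. Add this layer and your proof is complete.
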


\begin{proof}
Throughout this proof assume w.l.o.g.\ that 
$I\neq\set$. 
We intend to prove Proposition~\ref{weak_irreg} through an
application of Corollary~\ref{cor:weak_reg2}.
Corollary~\ref{cor:weak_reg2} assumes that the initial random variable 
of the considered SEE takes values in $  H_2 \subseteq H $.
In Section~\ref{irregsec:setting} above we, however, merely assume that
the initial random variable $ \xi $ takes values in $ H $.
To overcome this difficulty,
we mollify the initial random variable in an appropriate sense
so that the assumptions of Corollary~\ref{cor:weak_reg2} are met 
and Corollary~\ref{cor:weak_reg2} can be applied.
More formally, note that there exist
%(cf., e.g., Proposition 3 in Da Prato et 
%al.~\cite{DaPratoJentzenRoeckner2012}, Theorem 4.3 in Brze{\'z}niak \cite{b97b}, 
%Theorem 6.2 in Van Neerven et al.~\cite{vvw08}) 
up-to-modifications unique 
$
  ( \mathcal{F}_t )_{ t \in [0,T] }
$-predictable stochastic processes
$ 
  \hat{X}^{ J , \kappa, \delta } 
  \colon [0,T] \times \Omega \to H
$,
$ \kappa, \delta \in [0,\infty) $,
$ J \in \grid(\set) $, 
such that for all 
$ J \in \grid(\set) $, 
$ 
  \kappa, \delta \in [0,\infty) 
$ 
it holds that 
$
  \sup_{ t \in [0,T] }
  \ES\big[
  \| 
    \hat{X}^{ J, \kappa , \delta }_t
  \|^4_H
  \big]
  < \infty
$ 
and such that for all 
$ J \in \grid(\set) $, 
$ \kappa, \delta \in [0,\infty) $, $ t \in [0,T] $ it holds $ \P $-a.s.\ that
\begin{equation}
\label{irregeq:mollified}
  \hat{X}_t^{ J, \kappa, \delta } 
  = 
    e^{ A( \delta + t ) } P_J(\xi)
  + 
    \int_0^t e^{ A( \kappa + t - s ) } 
  P_J F( \hat{X}_s^{ J, \kappa, \delta } ) \, ds
  +
    \int_0^t e^{ A( \kappa + t - s ) } 
  P_J B( \hat{X}_s^{ J, \kappa, \delta } ) \, dW_s 
    .
\end{equation}
In the next step we 
observe that the triangle inequality 
ensures that for all $ \kappa, \delta \in (0,\infty) $ 
it holds that
\begin{equation}
\label{irreg_1}
\begin{split}
&
  \big|
    \ES\big[ 
      \varphi( \hat{X}^{ \set, 0, \delta }_T )
    \big]
    -
    \ES\big[ 
      \varphi( \hat{X}_T^{ I, 0, \delta } )
    \big]
  \big|
  \leq 
  \big|
    \ES\big[ 
      \varphi( \hat{X}^{ \set, 0, \delta }_T )
    \big]
    -
    \ES\big[ 
      \varphi( \hat{X}_T^{ \set, \kappa, \delta } )
    \big]
  \big|
\\ & +
  \big|
    \ES\big[ 
      \varphi( \hat{X}_T^{ \set, \kappa, \delta } )
    \big]
    -
    \ES\big[ 
      \varphi( \hat{X}_T^{ I, \kappa, \delta } )
    \big]
  \big|
  +
  \big|
    \ES\big[ 
      \varphi( \hat{X}_T^{ I, \kappa, \delta } )
    \big]
    -
    \ES\big[ 
      \varphi( \hat{X}_T^{ I, 0, \delta } )
    \big]
  \big|
  .
\end{split}
\end{equation}
%
% Observe that for all $\delta\in(0,\infty)$ it holds that $e^{A\delta}\xi\in\lpn{4}{\P}{ H_2}$ 
% and that for all $\kappa\in(0,\infty)$ it holds that 
% $e^{\kappa A}F\in C^4_b(H, H_2)$, $e^{\kappa A}B\in C^4_b(H,HS(U, H_2))$. 
%
%In addition, it holds that 
%%
%\begin{equation}
%\begin{split}
%\sup_{\kappa\in[0,1]}(K^I_{4,\kappa})^{\frac{1}{4}}&\leq\mathcal{E}_{(1-\tilde{\vartheta})}\left[ \frac{\sqrt{2}T^{1-\tilde{\vartheta}}\left\| F \right\|_{C_b^1(H,H_{-\tilde{\vartheta}})}}{\sqrt{1-\tilde{\vartheta}}} + \sqrt{12T^{1-\tilde{\vartheta}}}\left\| B \right\|_{C_b^1(H,HS(U,H_{-\nicefrac{\tilde{\vartheta}}{2}}))}\right]\\
%&\cdot\sqrt{2}\left\| \max\{1 , \left\|\xi\right\|_H\} \right\|_{L^4(\P;\R)}
%\end{split}\label{irreg_3}
%\end{equation}
%%
In the following we bound the three summands on the right hand side of~\eqref{irreg_1}.
For the first and third summands on the right hand side of~\eqref{irreg_1}
we observe that Proposition~\ref{strong} shows 
that for all 
$ \kappa, \delta \in (0,\infty) $, 
$ \rho \in [ 0, 1 - \theta ) $ 
it holds that
\begin{equation}
\begin{split}
&
  \big|
    \ES\big[ 
      \varphi( \hat{X}^{ \set, 0, \delta }_T )
    \big]
    -
    \ES\big[ 
      \varphi( \hat{X}_T^{ \set, \kappa, \delta } )
    \big]
  \big|
  +
  \big|
    \ES\big[ 
      \varphi( \hat{X}_T^{I,\kappa,\delta} )
    \big]
    -
    \ES\big[ 
      \varphi( \hat{X}_T^{ I, 0, \delta } )
    \big]
  \big|
\\ & \leq
  4
  \,
  \|
    \varphi
  \|_{
    C_b^1( H, \R ) 
  }
  \left|
    \mathcal{E}_{ ( 1 - \theta ) }\!\left[
      \tfrac{ 
        T^{ (1 - \theta) } \sqrt{2} \,
        \|  F \|_{
          C_b^1( H , H_{ - \theta } ) 
        }
      }{
        \sqrt{ 1 - \theta } 
      }
      +
      \sqrt{ 2 \, T^{ (1 - \theta) } }
      \,
      \| B \|_{
        C_b^1( 
          H , 
          HS( U, H_{ - \nicefrac{ \theta}{ 2 } } )
        )
      }
    \right]
    \right|^2
\\ & \cdot 
  \left[
    \tfrac{ 
      T^{ (1 - \nicefrac{ \rho }{ 2 } - \theta) } 
    }{
      (1 - \nicefrac{ \rho }{ 2 } - \theta) 
    }
    \| 
       F 
    \|_{
      C_b^1( H, H_{ - \theta } ) 
    }
    +
    \tfrac{
      \sqrt{
        T^{ (1 - \rho - \theta) } 
      }
    }{
      \sqrt{ 1 - \rho - \theta } 
    }
    \| B \|_{
      C_b^1( H , HS( U , H_{ - \nicefrac{ \theta }{ 2 } } ) ) 
    }
  \right]
  \|\!\max\{1,\|\xi\|_H\}\|_{\lpn{2}{\P}{\R}} \,
  \kappa^{ \frac{ \rho }{ 2 } }
  .
\end{split}\label{irreg_4}
\end{equation}
Next we bound the second summand on the right hand side 
of~\eqref{irreg_1}. For this we note that 
for all $ \kappa \in (0,\infty) $ it holds that
\begin{equation}
\begin{split}
    \max\!\big\{
    1 ,
    \|
      e^{ \kappa  A }  F( \cdot )
    \|_{ 
      C_b^3( H, H_{-\vartheta} )
    }^2
    ,
    \|
      e^{ \kappa  A } B( \cdot )
    \|_{ 
      C_b^3( H, HS( U, H_{-\nicefrac{\vartheta}{2}} ) )
    }^4
    \big\}
  \leq
  \varsigma_{F,B}
  \,
  \max\!\big\{ 
    1 ,
    \kappa^{ - 2 ( \theta - \vartheta ) }
  \big\}
  .
\end{split}
\label{irreg_2}
\end{equation}
This and Corollary~\ref{cor:weak_reg2}
% and inequality~\eqref{irreg_2} 
show that for all 
$ 
  \kappa, \delta \in (0,\infty) 
$, 
$ 
  r \in [ 0, 1 - \vartheta ) 
$ 
it holds that
\begin{equation}
\label{irreg_5}
\begin{split}
&
  \big|
    \ES\big[ 
      \varphi( \hat{X}_T^{ \set, \kappa, \delta } )
    \big]
    -
    \ES\big[ 
      \varphi( \hat{X}_T^{ I, \kappa, \delta } )
    \big]
  \big|
\\ & \leq
  \left[  
    1 + 
    \tfrac{ 
      T^{ (1 - \vartheta) } 
    }{ (1 - \vartheta - r) } 
  \right]^2
  \left|\mathcal{E}_{(1-\theta)}\!\left[ 
      \tfrac{ 
        T^{ (1 - \theta) } \sqrt{2} \,
        \|  F \|_{
          C_b^1( H , H_{ - \theta } ) 
        }
      }{
        \sqrt{ 1 - \theta } 
      }
      +
      \sqrt{ 12 \, T^{ (1 - \theta) } }
      \,
      \| B \|_{
        C_b^1( 
          H , 
          HS( U, H_{ - \nicefrac{ \theta}{ 2 } } )
        )
      }
  \right]\right|^4\\
  &\quad\cdot\left[ 
    \left\|\varphi\right\|_{C_b^3(H,\R)}
    +
    c^{(\kappa)}_{-\vartheta} + c^{(\kappa)}_{-\vartheta,0} + c^{(\kappa)}_{-\vartheta,0,0} + c^{(\kappa)}_{-\nicefrac{\vartheta}{2},-\nicefrac{\vartheta}{2}} + c^{(\kappa)}_{-\nicefrac{\vartheta}{2},-\nicefrac{\vartheta}{2},0} + c^{(\kappa)}_{-\nicefrac{\vartheta}{2},-\nicefrac{\vartheta}{2},0,0} \right] \\
  &\quad\cdot 
  \frac{ 18 \, \varsigma_{F,B} }{ T^r }
  \,
  \E\,\big[\!
    \max\{ 1 , \| \xi \|_H^4 \}
  \big]
  \max\!\big\{ 
    1 ,
    \kappa^{ - 2 ( \theta - \vartheta ) }
  \big\}
  \left\|
    P_{ \set \backslash I }
  \right\|_{ L( H, H_{ - r } ) 
  }
  .
\end{split}
\end{equation}
%
% Noting the fact that $\|P_{\set\backslash I}\|_{L(H,H_{-r})}=\|P_{\set\backslash I}\|^r_{L(H,H_{-1})}$ for all $r\in[0,\infty)$ and 
In the next step 
we plug~\eqref{irreg_4} 
and~\eqref{irreg_5} into~\eqref{irreg_1}
and we use the fact that 
$ 
  \forall \, r \in (0,\infty) 
  \colon
$
$
  \| P_I \|_{
    L( H, H_{ - r } )
  }
  =
  \| P_I \|_{ L( H, H_{ - 1 } ) }^r
$
to obtain that for all $ \kappa, \delta \in (0,\infty) $, 
$ r \in [ 0, 1 - \vartheta ) $, 
$ \rho \in [ 0, 1 - \theta ) $ 
it holds that
\begin{equation}
\label{irreg_6}
\begin{split}
&
  \big|
    \ES\big[ 
      \varphi( \hat{X}^{ \set, 0, \delta }_T )
    \big]
    -
    \ES\big[ 
      \varphi( \hat{X}_T^{ I, 0, \delta } )
    \big]
  \big|
\leq
  {\max\!\left\{  
    4 \,
    \kappa^{
      \frac{ \rho }{ 2 } 
    }
    ,
    18 
    \max\!\big\{ 
      1 ,
      \kappa^{ - 2 ( \theta - \vartheta ) }
    \big\}
    \|
      P_{ \set \backslash I }
    \|^{ r }_{ 
      L( H, H_{ - 1 } ) 
    }
  \right\}}
\\ & \cdot
  \bigg\{
   \left[
     \tfrac{ 
       T^{ 
         (1 - \nicefrac{ \rho }{ 2 } - \theta) } 
       }{ 
         (1 - \nicefrac{ \rho }{ 2 } - \theta) 
       }
     \|
        F
     \|_{
       C_b^1( H , H_{ - \theta } ) 
     } 
   + 
   \tfrac{
     \sqrt{
       T^{ (1 - \rho - \theta) } 
     }
   }{
     \sqrt{ 1 - \rho - \theta }
   }
   \|
     B
   \|_{ 
     C_b^1( H, HS( U, H_{ -\nicefrac{ \theta }{ 2 } } ) ) 
   }
 \right]
   \| \varphi \|_{C_b^1(H,\R)}
   +
  \tfrac{ \varsigma_{F,B} }{ T^r }
  \left[ 
    1 + 
    \tfrac{ 
      T^{ (1 - \vartheta) } 
    }{
      (1 - \vartheta - r) 
    } 
  \right]^2
  \\ & \cdot
  \left[ 
    \left\| \varphi \right\|_{ C_b^3( H, \R) }
    +
    c^{ ( \kappa ) }_{ - \vartheta } 
    + 
    c^{ ( \kappa ) }_{ - \vartheta , 0 } 
    + 
    c^{ ( \kappa ) }_{ - \vartheta , 0 , 0 } 
    + 
    c^{ ( \kappa ) }_{ - \nicefrac{ \vartheta }{ 2 } , - \nicefrac{ \vartheta }{ 2 } } 
    + 
    c^{ ( \kappa ) }_{ - \nicefrac{ \vartheta }{ 2 } , - \nicefrac{ \vartheta }{ 2 } , 0 } 
    + 
    c^{ ( \kappa ) }_{ - \nicefrac{ \vartheta }{ 2 } , - \nicefrac{ \vartheta }{ 2 }, 0, 0 } 
  \right]
  \bigg\}
\\ & \cdot
  \E\,\big[\!
    \max\{ 1 , \| \xi \|_H^4 \}
  \big]
  \left|
  \mathcal{E}_{ ( 1 - \theta ) }
  \!\left[ 
      \tfrac{ 
        T^{ (1 - \theta) } \sqrt{2} \,
        \|  F \|_{
          C_b^1( H , H_{ - \theta } ) 
        }
      }{
        \sqrt{ 1 - \theta } 
      }
      +
      \sqrt{ 12 \, T^{ (1 - \theta) } }
      \,
      \| B \|_{
        C_b^1( 
          H , 
          HS( U, H_{ - \nicefrac{ \theta}{ 2 } } )
        )
      }
  \right]
  \right|^4
  .
\end{split}
\end{equation}
Next we use the fact that 
$
  \|P_{\set\backslash I}\|_{L(H,H_{-1})}
  \leq
  \|\operatorname{Id}_H\|_{L(H,H_{-1})}
$
to obtain that 
for all 
$ r \in [0,\infty) $,
$
  \rho \in ( 0, 1 - \theta )
$ 
it holds that
{\allowdisplaybreaks

\begin{align}
\label{eq:eps_minimize}
&
  \inf_{ \kappa \in (0,T] }
  \max\left\{  
    4 \,
    \kappa^{
      \frac{ \rho }{ 2 } 
    }
    ,
    18 
    \max\!\big\{ 
      1 ,
      \kappa^{ - 2 ( \theta - \vartheta ) }
    \big\}
    \|
      P_{ \set \backslash I }
    \|^{ r }_{ 
      L( H, H_{ - 1 } ) 
    }
  \right\}
\\ & \leq
\nonumber
    \max\left\{
    4 
  \left[
    \min\{1,T\}
    \left|
    \tfrac{
    \|
      P_{ \set \backslash I }
    \|_{ 
      L( H, H_{ - 1 } ) 
    }}{
    \|
      \operatorname{Id}_H
    \|_{ 
      L( H, H_{ - 1 } ) 
    }
    }
    \right|^{ 
          \frac{ 2 r }{ 
            ( \rho + 4 ( \theta - \vartheta ) )
          }
        }
  \right]^{
      \frac{ \rho }{ 2 } 
    }\right.,
\\&\nonumber\quad\left.
    18 
    \max\!\left\{ 
      1 ,
  \left[
    \min\{1,T\}
    \left|
\tfrac{
      \|
      P_{ \set \backslash I }
    \|_{ 
          L( H, H_{ - 1 } ) 
        }
        }{
    \|\operatorname{Id}_H\|_{L( H, H_{ -1 } )}
    }
    \right|^{ 
      \frac{ 2 r }{ 
        ( \rho + 4 ( \theta - \vartheta ) )
      }
    }
  \right]^{
      - 2 ( \theta - \vartheta ) 
    }  
    \right\}
    \|
      P_{ \set \backslash I }
    \|^{ r }_{ 
      L( H, H_{ - 1 } ) 
    }
    \right\}
\\ & =
\nonumber
    \max\left\{
    4 
  \left[
    \min\{1,T\}
    \left|
    \tfrac{
    \|
      P_{ \set \backslash I }
    \|_{ 
      L( H, H_{ - 1 } ) 
    }}{
    \|
      \operatorname{Id}_H
    \|_{ 
      L( H, H_{ - 1 } ) 
    }
    }
    \right|^{ 
          \frac{ 2 r }{ 
            ( \rho + 4 ( \theta - \vartheta ) )
          }
        }
  \right]^{
      \frac{ \rho }{ 2 } 
    },
    \right.
\\&\nonumber\quad\left.
    18 
    \|
      P_{ \set \backslash I }
    \|^{ r }_{ 
      L( H, H_{ - 1 } ) 
    }
  \left[
    \min\{1,T\}
    \left|
\tfrac{
      \|
      P_{ \set \backslash I }
    \|_{ 
          L( H, H_{ - 1 } ) 
        }
        }{
    \|\operatorname{Id}_H\|_{L( H, H_{ -1 } )}
    }
    \right|^{ 
      \frac{ 2 r }{ 
        ( \rho + 4 ( \theta - \vartheta ) )
      }
    }
  \right]^{
      - 2 ( \theta - \vartheta ) 
    }
    \right\}
\\ & =
\nonumber
    \max\left\{ 
    \tfrac{
    4\,|\!\min\{1,T\}|^{\frac{\rho}{2}}
    }{
    \|
      \operatorname{Id}_H
    \|^{ 
              \frac{ r \rho }{ 
                ( \rho + 4 ( \theta - \vartheta ) )
              }
            }_{ 
      L( H, H_{ - 1 } ) 
    }
    },
\tfrac{
    18\,\|\operatorname{Id}_H\|^{ 
          \frac{ 4 r(\theta-\vartheta) }{ 
            ( \rho + 4 ( \theta - \vartheta ) )
          }
        }_{L( H, H_{ -1 } )}
        }{
      |\!\min\{1,T\}|^{
            2 ( \theta - \vartheta ) 
          }
    }
    \right\}
    \,
    \|
      P_{ \set \backslash I }
    \|^{ 
      \frac{ r \rho }{ \rho + 4 ( \theta - \vartheta ) } 
    }_{ 
      L( H, H_{ - 1 } ) 
    }
\\ & \leq
\nonumber
    18\,\max\left\{ 
    \tfrac{
    1
    }{
    \min\{1,\|\operatorname{Id}_H\|^r_{L( H, H_{ -1 } )}\}
    },
\tfrac{
    \max\{1,\|\operatorname{Id}_H\|^r_{L( H, H_{ -1 } )}\}
        }{
      |\!\min\{1,T\}|^{
            2 ( \theta - \vartheta ) 
          }
    }
    \right\}
    \,
    \|
      P_{ \set \backslash I }
    \|^{ 
      \frac{ r \rho }{ \rho + 4 ( \theta - \vartheta ) } 
    }_{ 
      L( H, H_{ - 1 } ) 
    }
  \\&\nonumber\leq
  \tfrac{
    18
  }{
      |\!\min\{1,T\}|^{
            2 ( \theta - \vartheta ) 
          }
  }
  \,
    \max\left\{
    \|
      \operatorname{Id}_H
    \|^r_{ 
      L( H,H_{-1}) 
    },
\tfrac{1}{
    \|
      \operatorname{Id}_H
    \|^r_{ 
      L( H,H_{-1}) 
    }}
    \right\}
    \|
      P_{ \set \backslash I }
    \|^{ 
      \frac{ r \rho }{ \rho + 4 ( \theta - \vartheta ) } 
    }_{ 
      L( H, H_{ - 1 } ) 
    }
    .
\end{align}}Putting~\eqref{eq:eps_minimize} into~\eqref{irreg_6}
implies that for all $ \delta \in (0,\infty) $, 
$ r \in [ 0, 1 - \vartheta ) $, 
$ \rho \in ( 0, 1 - \theta ) $ 
it holds that
\begin{equation}
\label{irreg_6b}
\begin{split}
&
  \big|
    \ES\big[ 
      \varphi( \hat{X}^{ \set, 0, \delta }_T )
    \big]
    -
    \ES\big[ 
      \varphi( \hat{X}_T^{ I, 0, \delta } )
    \big]
  \big|
\\&\leq
{
  \tfrac{
    18
  }{
      |\!\min\{1,T\}|^{
            2 ( \theta - \vartheta ) 
          }
  }
  \,
    \max\left\{
    \|
      \operatorname{Id}_H
    \|^r_{ 
      L( H,H_{-1}) 
    },
\tfrac{1}{
    \|
      \operatorname{Id}_H
    \|^r_{ 
      L( H,H_{-1}) 
    }}
    \right\}
}
    \|
      P_{ \set \backslash I }
    \|^{ 
      \frac{ r \rho }{ \rho + 4 ( \theta - \vartheta ) } 
    }_{ 
      L( H, H_{ - 1 } ) 
    }
\\ & \cdot
  \Bigg\{
   \left[
     \tfrac{ 
       T^{ 
         (1 - \nicefrac{ \rho }{ 2 } - \theta) } 
       }{ 
         (1 - \nicefrac{ \rho }{ 2 } - \theta) 
       }
     \|
        F
     \|_{
       C_b^1( H , H_{ - \theta } ) 
     } 
   + 
   \tfrac{
     \sqrt{
       T^{ (1 - \rho - \theta) } 
     }
   }{
     \sqrt{ 1 - \rho - \theta }
   }
   \|
     B
   \|_{ 
     C_b^1( H, HS( U, H_{ -\nicefrac{ \theta }{ 2 } } ) ) 
   }
 \right]
   \| \varphi \|_{C_b^1(H,\R)}
   +
  \tfrac{ \varsigma_{F,B} }{ T^r }
  \left[ 
    1 + 
    \tfrac{ 
      T^{ (1 - \vartheta) } 
    }{
      (1 - \vartheta - r) 
    } 
  \right]^2
  \\ & \cdot
  \bigg[ 
    \left\| \varphi \right\|_{ C_b^3( H, \R) }
    +
    \sup_{ \kappa \in (0,{T]} }
    \big[
    c^{ ( \kappa ) }_{ - \vartheta } 
    + 
    c^{ ( \kappa ) }_{ - \vartheta , 0 } 
    + 
    c^{ ( \kappa ) }_{ - \vartheta , 0 , 0 } 
    + 
    c^{ ( \kappa ) }_{ - \nicefrac{ \vartheta }{ 2 } , - \nicefrac{ \vartheta }{ 2 } } 
    + 
    c^{ ( \kappa ) }_{ - \nicefrac{ \vartheta }{ 2 } , - \nicefrac{ \vartheta }{ 2 } , 0 } 
    + 
    c^{ ( \kappa ) }_{ - \nicefrac{ \vartheta }{ 2 } , - \nicefrac{ \vartheta }{ 2 }, 0, 0 } 
    \big]
  \bigg]
  \Bigg\}
\\ & \cdot
  \E\,\big[\!
    \max\{ 1 , \| \xi \|_H^4 \}
  \big]
  \left|
  \mathcal{E}_{ ( 1 - \theta ) }
  \!\left[ 
      \tfrac{ 
        T^{ (1 - \theta) } \sqrt{2} \,
        \|  F \|_{
          C_b^1( H , H_{ - \theta } ) 
        }
      }{
        \sqrt{ 1 - \theta } 
      }
      +
      \sqrt{ 12 \, T^{ (1 - \theta) } }
      \,
      \| B \|_{
        C_b^1( 
          H , 
          HS( U, H_{ - \nicefrac{ \theta}{ 2 } } )
        )
      }
  \right]
  \right|^4
  .
\end{split}
\end{equation}
Moreover, we note that~\eqref{irregeq:Galerkin}, \eqref{irregeq:mollified}, and
Corollary~2.8 in~\cite{AnderssonJentzenKurniawan2016arXiv} 
(with
$
H=H
$, 
$
U=U
$, 
$ T = T $, 
$ \eta = 0 $, 
$ p = 2 $, 
$ \alpha = \theta $, 
$
\hat{\alpha} = 0
$,
$ \beta = \nicefrac{\theta}{2} $, 
$
\hat{\beta} 
= 
0
$,
$L_0=|F|_{\operatorname{Lip}^0(H,H_{-\theta})}$, 
$
\hat{L}_0
=
\|F(0)\|_{H_{-\theta}}
$, 
$L_1=|B|_{\operatorname{Lip}^0(H,HS(U,H_{-\nicefrac{\theta}{2}}))}$, 
$
\hat{L}_1
=
\|B(0)\|_{HS(U,H_{-\nicefrac{\theta}{2}})}
$, 
$
W=W
$, 
$ A=A $,
$
\mathbf{F} = 
\big(
[0,T]\times\Omega\times H \ni (t,\omega,v) \mapsto
P_J F(v) \in H_{-\theta}
\big)
$, 
$
\mathbf{B} = 
\big(
[0,T]\times\Omega\times H \ni (t,\omega,v) \mapsto
(U\ni u \mapsto P_J B(v)u \in H_{-\nicefrac{\theta}{2}}) \in HS(U,H_{-\nicefrac{\vartheta}{2}})
\big)
$,  
$ \delta = 0 $, 
$ X^1 = \hat{X}^{ J, 0, \delta } $, 
$ X^2 = \hat{X}^{ J, 0, 0 } $,
$ \lambda = 0 $  
for 
$ J \in \mathcal{P}( \set ) $
in the notation of Corollary~2.8 in~\cite{AnderssonJentzenKurniawan2016arXiv})
ensure that
for all $ J \in \mathcal{P}( \set ) $
it holds that
$
\lim_{(0,\infty)\ni\delta\rightarrow 0}
    \ES\big[ 
      \varphi( \hat{X}_T^{ J, 0, \delta } )
    \big]
    =
    \ES\big[ 
      \varphi( \hat{X}_T^{ J, 0, 0 } )
    \big]
    =
    \ES\big[ 
      \varphi( X^J_T )
    \big]
$.
Combining this with inequality~\eqref{irreg_6b} 
proves the first inequality in~\eqref{eq:weak_main_estimate}.
The second inequality in~\eqref{eq:weak_main_estimate} 
follows from Lemma~\ref{lem:c.kappa.delta.finite}.
The proof of Proposition~\ref{weak_irreg} is thus completed.
\end{proof}

\noindent In a number of cases the difference $ \theta - \vartheta \geq 0 $
can be chosen to be an arbitrarily small positive real number 
(cf.\ Theorem~\ref{intro:theorem} above).
In the next result, 
Corollary~\ref{weak_irreg_B},
we further estimate the right hand side
of \eqref{eq:weak_main_estimate}.

\begin{corollary}
\label{weak_irreg_B}
Assume the setting in Section~\ref{irregsec:setting}
and let 
$
  I \in \grid(\set) \backslash\{\set\}
$, 
$ 
  \vartheta \in [ 0, \nicefrac{ 1 }{ 2 } ) \cap [ 0, \theta ]  
$.
Then it holds for all 
$ 
  \rho \in ( 0 , 1 - \theta ) 
  \cap 
  \big( 
    4 ( \theta - \vartheta ), \infty
  \big)
$
that
{\allowdisplaybreaks
\begin{align}
\label{eq:weak_main_estimate_B}
&
  \big|
    \ES\big[ 
      \varphi( X^{ \set }_T )
    \big]
    -
    \ES\big[ 
      \varphi( X_T^{ I } )
    \big]
  \big|
\\&\nonumber\leq
  \tfrac{
    18
  }{
      |\!\min\{1,T\}|^{
            2 ( \theta - \vartheta ) 
          }
  }
  \,
    \max\left\{
    \|
      \operatorname{Id}_H
    \|_{ 
      L( H,H_{-2}) 
    },
\tfrac{1}{
    \|
      \operatorname{Id}_H
    \|_{ 
      L( H,H_{-2}) 
    }}
    \right\}
    \left[
      \inf_{
        b \in \set \backslash I
      }
        |\lambda_b|
    \right]^{ 
      - \left(
        \rho - 4 ( \theta - \vartheta )
      \right)
    }
\\ & \cdot
\nonumber
  \Bigg\{
   \bigg[
     \tfrac{ 
       T^{ 
         (1 - \nicefrac{ \rho }{ 2 } - \theta) 
       } 
       \,
       \|
         F
       \|_{
         C_b^1( H , H_{ - \theta } ) 
       } 
     }{ 
       (1 - \nicefrac{ \rho }{ 2 } - \theta) 
     }
   + 
   \tfrac{
     \sqrt{
       T^{ (1 - \rho - \theta) } 
     }
     \,
     \|
       B
     \|_{ 
       C_b^1( H, HS( U, H_{ -\nicefrac{ \theta }{ 2 } } ) ) 
     }
   }{
     \sqrt{ 1 - \rho - \theta }
   }
 \bigg]
   \| \varphi \|_{
     C_b^1( H , \R )
   }
   +
  \tfrac{ \varsigma_{F,B} }{ T^\rho }
  \!
  \left[ 
    1 + 
    \tfrac{ 
      T^{ (1 - \vartheta) } 
    }{
      (1 - \vartheta - \rho) 
    } 
  \right]^2
\\ & \cdot
\nonumber
  \bigg[ 
    \| \varphi \|_{ C_b^3( H, \R) }
    +
    \sup_{ \kappa \in (0,{T]} }
    \big[
    c^{ ( \kappa ) }_{ - \vartheta } 
    + 
    c^{ ( \kappa ) }_{ - \vartheta , 0 } 
    + 
    c^{ ( \kappa ) }_{ - \vartheta , 0 , 0 } 
    + 
    c^{ ( \kappa ) }_{ - \nicefrac{ \vartheta }{ 2 } , - \nicefrac{ \vartheta }{ 2 } } 
    + 
    c^{ ( \kappa ) }_{ - \nicefrac{ \vartheta }{ 2 } , - \nicefrac{ \vartheta }{ 2 } , 0 } 
    + 
    c^{ ( \kappa ) }_{ - \nicefrac{ \vartheta }{ 2 } , - \nicefrac{ \vartheta }{ 2 }, 0, 0 } 
    \big]
  \bigg]
  \Bigg\}
\\ & \cdot
\nonumber
    \ES\big[\!\max\{
      1,\| \xi \|^4_H 
    \}\big]
  \left|
  \mathcal{E}_{ ( 1 - \theta ) }
  \!\left[ 
      \tfrac{ 
        T^{ (1 - \theta) } \sqrt{2} \,
        \| 
          F
        \|_{
          C_b^1( H , H_{ - \theta } ) 
        }
      }{
        \sqrt{ 1 - \theta } 
      }
      +
      \sqrt{ 12 \, T^{ (1 - \theta) } }
      \,
      \| 
        B
      \|_{
        C_b^1( 
          H , 
          HS( U, H_{ - \nicefrac{ \theta}{ 2 } } )
        )
      }
  \right]
  \right|^4
  < \infty
  .
\end{align}
}
\end{corollary}

\begin{proof}
First of all, we choose $ r = \rho $ in~\eqref{eq:weak_main_estimate}
in Proposition~\ref{weak_irreg} above to obtain that for all 
$ 
  \rho \in ( 0 , 1 - \theta ) 
$
it holds that
\begin{align}
\label{eq:consequence_prop_weak}
&
  \big|
    \ES\big[ 
      \varphi( X^{ \set }_T )
    \big]
    -
    \ES\big[ 
      \varphi( X_T^{ I } )
    \big]
  \big|
\\&\nonumber\leq
  \tfrac{
    18
  }{
      |\!\min\{1,T\}|^{
            2 ( \theta - \vartheta ) 
          }
  }
  \,
    \max\left\{
    \|
      \operatorname{Id}_H
    \|_{ 
      L( H,H_{-1}) 
    },
\tfrac{1}{
    \|
      \operatorname{Id}_H
    \|_{ 
      L( H,H_{-1}) 
    }}
    \right\}
    \ES\big[\!\max\{
      1,\| \xi \|^4_H 
    \}\big]
    \,
    \|
      P_{ \set \backslash I }
    \|^{ 
      \frac{ \rho^2 }{ \rho + 4 ( \theta - \vartheta ) } 
    }_{ 
      L( H, H_{ - 1 } ) 
    }
\\ & \cdot
\nonumber
  \Bigg\{
   \bigg[
     \tfrac{ 
       T^{ 
         (1 - \nicefrac{ \rho }{ 2 } - \theta) 
       } 
       \,
       \|
         F
       \|_{
         C_b^1( H , H_{ - \theta } ) 
       } 
     }{ 
       (1 - \nicefrac{ \rho }{ 2 } - \theta) 
     }
   + 
   \tfrac{
     \sqrt{
       T^{ (1 - \rho - \theta) } 
     }
     \,
     \|
       B
     \|_{ 
       C_b^1( H, HS( U, H_{ -\nicefrac{ \theta }{ 2 } } ) ) 
     }
   }{
     \sqrt{ 1 - \rho - \theta }
   }
 \bigg]
   \| \varphi \|_{
     C_b^1( H , \R )
   }
   +
  \tfrac{ \varsigma_{F,B} }{ T^{ \rho } }
  \!
  \left[ 
    1 + 
    \tfrac{ 
      T^{ (1 - \vartheta) } 
    }{
      (1 - \vartheta - \rho) 
    } 
  \right]^2
\\ & \cdot
\nonumber
  \bigg[ 
    \| \varphi \|_{ C_b^3( H, \R) }
    +
    \sup_{ \kappa \in (0,{T]} }
    \big[
    c^{ ( \kappa ) }_{ - \vartheta } 
    + 
    c^{ ( \kappa ) }_{ - \vartheta , 0 } 
    + 
    c^{ ( \kappa ) }_{ - \vartheta , 0 , 0 } 
    + 
    c^{ ( \kappa ) }_{ - \nicefrac{ \vartheta }{ 2 } , - \nicefrac{ \vartheta }{ 2 } } 
    + 
    c^{ ( \kappa ) }_{ - \nicefrac{ \vartheta }{ 2 } , - \nicefrac{ \vartheta }{ 2 } , 0 } 
    + 
    c^{ ( \kappa ) }_{ - \nicefrac{ \vartheta }{ 2 } , - \nicefrac{ \vartheta }{ 2 }, 0, 0 } 
    \big]
  \bigg]
  \Bigg\}
\\ & \cdot
\nonumber
  \left|
  \mathcal{E}_{ ( 1 - \theta ) }
  \!\left[ 
      \tfrac{ 
        T^{ (1 - \theta) } \sqrt{2} \,
        \| 
          F
        \|_{
          C_b^1( H , H_{ - \theta } ) 
        }
      }{
        \sqrt{ 1 - \theta } 
      }
      +
      \sqrt{ 12 \, T^{ (1 - \theta) } }
      \,
      \| 
        B
      \|_{
        C_b^1( 
          H , 
          HS( U, H_{ - \nicefrac{ \theta}{ 2 } } )
        )
      }
  \right]
  \right|^4
  < \infty
  .
\end{align}
Next we note that for all 
$ 
  \rho \in 
  \big( 
    0, 1 - \theta 
  \big) 
  \cap 
  \big( 
    4 ( \theta - \vartheta ) , \infty 
  \big) 
$
it holds that
\begin{align}
&
    \|
      P_{ \set \backslash I }
    \|^{ 
      \frac{ \rho^2 }{ \rho + 4 ( \theta - \vartheta ) } 
    }_{ 
      L( H, H_{ - 1 } ) 
    }
=
    \|
      P_{ \set \backslash I }
    \|^{ 
      \rho 
      \left[
        \frac{ 1 }{ 1 + 4 ( \theta - \vartheta ) / \rho } 
        -
        1 
        + \frac{ 4 ( \theta - \vartheta ) }{ \rho }
      \right]
    }_{ 
      L( H, H_{ - 1 } ) 
    }
    \,
    \|
      P_{ \set \backslash I }
    \|^{ 
      \rho 
      \left[
        1 - \frac{ 4 ( \theta - \vartheta ) }{ \rho }
      \right]
    }_{ 
      L( H, H_{ - 1 } ) 
    }
\\ &\nonumber \leq
  \big|\!\max\{
    1 ,
    \|
      P_{ \set \backslash I }
    \|_{ 
      L( H, H_{ - 1 } ) 
    }
  \}\big|^{
    \rho 
    \left[
      \frac{ 1 }{ 1 + 4 ( \theta - \vartheta ) / \rho } 
      -
      1 
      + \frac{ 4 ( \theta - \vartheta ) }{ \rho }
    \right]
  }
    \,
    \|
      P_{ \set \backslash I }
    \|^{ 
      \left(
        \rho - 4 ( \theta - \vartheta ) 
      \right)
    }_{ 
      L( H, H_{ - 1 } ) 
    }
\\ &\nonumber \leq
\max\{
    1 ,
    \|
      P_{ \set \backslash I }
    \|^\rho_{ 
      L( H, H_{ - 1 } ) 
    }
  \}
    \,
    \|
      P_{ \set \backslash I }
    \|^{ 
      \left(
        \rho - 4 ( \theta - \vartheta ) 
      \right)
    }_{ 
      L( H, H_{ - 1 } ) 
    }
\leq
  \max\!\big\{
    1 
    ,
    \| 
      \operatorname{Id}_H
      \!
    \|_{ L( H, H_{ - 1 } ) 
    }
  \big\}
    \,
    \|
      P_{ \set \backslash I }
    \|^{ 
      \left(
        \rho - 4 ( \theta - \vartheta ) 
      \right)
    }_{ 
      L( H, H_{ - 1 } ) 
    }
\end{align}
Combining this
with~\eqref{eq:consequence_prop_weak},
the fact that 
$
    \| 
      \operatorname{Id}_H
      \!
    \|_{ L( H, H_{ - 1 } ) 
    }^2
  =
    \| 
      \operatorname{Id}_H
      \!
    \|_{ L( H, H_{ - 2 } ) 
    }
$, 
and the fact that
$
    \|
      P_{ \set \backslash I }
    \|^{ 
      \left(
        \rho - 4 ( \theta - \vartheta ) 
      \right)
    }_{ 
      L( H, H_{ - 1 } ) 
    }
  =
    \left[
      \inf_{
        b \in \set \backslash I
      }
      |\lambda_b|
    \right]^{ 
      - \left(
        \rho - 4 ( \theta - \vartheta )
      \right)
    }
$
completes the proof 
of Corollary~\ref{weak_irreg_B}.
\end{proof}
\noindent The next result, Corollary~\ref{cor:weak.convergence.rate.simplified} below, is an immediate consequence of Corollary~\ref{weak_irreg_B} above.
\begin{corollary}
\label{cor:weak.convergence.rate.simplified}
Assume the setting in Section~\ref{irregsec:setting} and assume that $\theta < \nicefrac{3}{5}$.
Then for every 
$ 
  \varepsilon\in(0,\infty)
$
there exists a real number $C\in\R$
such that for all $I\in\grid(\set)\backslash\{\set\}$ it holds that 
\begin{equation}
\label{eq:weak.convergence.rate.simplified}
  \big|
    \ES\big[ 
      \varphi( X^{ \set }_T )
    \big]
    -
    \ES\big[ 
      \varphi( X_T^{ I } )
    \big]
  \big|
  \leq
  C \cdot
    \left[
      \inf_{
        b \in \set \backslash I
      }
        |\lambda_b|
    \right]^{ 
     \varepsilon - \left(
        1-\theta-4\max\{\theta-\nicefrac{1}{2},0\}
      \right)
    }
    .
\end{equation}
\end{corollary}
\begin{proof}
Applying Corollary~\ref{weak_irreg_B}
(with 
$I=I$, 
$\vartheta=\min\{\theta,\frac{1}{2}\}-\frac{\varepsilon}{8}\mathbbm{1}_{[\nicefrac{1}{2},1)}(\theta)$, 
$\rho=1-\theta-\frac{\varepsilon}{2}(2-\mathbbm{1}_{[\nicefrac{1}{2},1)}(\theta))$
for $\varepsilon\in(0,1-\theta-4\max\{\theta-\frac{1}{2},0\})$,
$I\in\grid(\set)\backslash\{\set\}$
in the notation of Corollary~\ref{weak_irreg_B})
yields that for all 
$\varepsilon\in(0,1-\theta-4\max\{\theta-\frac{1}{2},0\})$
there exists a real number $C\in\R$ such that for all 
$I\in\grid(\set)\backslash\{\set\}$
it holds that 
\begin{equation}
\big|
\ES\big[ 
\varphi( X^{ \set }_T )
\big]
-
\ES\big[ 
\varphi( X_T^{ I } )
\big]
\big|
\leq
C \cdot
\left[
\inf_{
	b \in \set \backslash I
}
|\lambda_b|
\right]^{ 
	\varepsilon - \left(
	1-\theta-4\max\{\theta-\nicefrac{1}{2},0\}
	\right)
}
.
\end{equation}
This implies that for all 
$ \varepsilon \in (0,\infty) $, 
$\epsilon\in(0,\min\{1-\theta-4\max\{\theta-\frac{1}{2},0\},\varepsilon\})$
there exists a real number 
$C\in[0,\infty)$
such that for all $I\in\grid(\set)\backslash\{\set\}$ it holds that
\begin{equation}
\begin{split}
\big|
\ES\big[ 
\varphi( X^{ \set }_T )
\big]
-
\ES\big[ 
\varphi( X_T^{ I } )
\big]
\big|
&\leq
C \cdot
\left[
\inf_{
	b \in \set \backslash I
}
|\lambda_b|
\right]^{ 
	\epsilon - \left(
	1-\theta-4\max\{\theta-\nicefrac{1}{2},0\}
	\right)
}
\\&=
C \cdot
\left[
\inf_{
	b \in \set \backslash I
}
|\lambda_b|
\right]^{ 
	(\epsilon-\varepsilon)
}
\cdot
\left[
\inf_{
	b \in \set \backslash I
}
|\lambda_b|
\right]^{ 
	\varepsilon - \left(
	1-\theta-4\max\{\theta-\nicefrac{1}{2},0\}
	\right)
}
\\&=
\frac{C}{\left[
\inf_{
	b \in \set \backslash I
}
|\lambda_b|
\right]^{ 
	(\varepsilon-\epsilon)
}}
\cdot
\left[
\inf_{
	b \in \set \backslash I
}
|\lambda_b|
\right]^{ 
	\varepsilon - \left(
	1-\theta-4\max\{\theta-\nicefrac{1}{2},0\}
	\right)
}
\\&\leq
\left[
\frac{C}{\left[
	\inf_{
		b \in \set
	}
	|\lambda_b|
	\right]^{ 
		(\varepsilon-\epsilon)
}}
\right]
\cdot\left[
\inf_{
	b \in \set \backslash I
}
|\lambda_b|
\right]^{ 
	\varepsilon - \left(
	1-\theta-4\max\{\theta-\nicefrac{1}{2},0\}
	\right)
}
.
\end{split}
\end{equation}
This and the assumption that 
$ 
\sup_{ b \in \set } \lambda_b < 0
$
establish~\eqref{eq:weak.convergence.rate.simplified}.
The proof of Corollary~\ref{cor:weak.convergence.rate.simplified} is thus completed.
\end{proof}
\section{Weak convergence rates for SEEs}
\label{sec:sharp.weak.rates}
In this section our main weak convergence result is established, see Corollary~\ref{cor:sharp.weak.rates} below. Corollary~\ref{cor:sharp.weak.rates} follows from an application of Corollary~\ref{cor:weak.convergence.rate.simplified}.
Theorem~\ref{intro:theorem} in the introductory section is an immediate consequence of Corollary~\ref{cor:sharp.weak.rates}.

\begin{corollary}
\label{cor:sharp.weak.rates}
Let 
$ ( H, \left< \cdot, \cdot \right>_H, \left\| \cdot \right\|_H ) $
and
$ ( U, \left< \cdot, \cdot \right>_U, 
$
$
\left\| \cdot \right\|_U ) $
be separable $ \R $-Hilbert spaces, let 
$ T \in (0,\infty) $, 
$\iota\in[0,\nicefrac{1}{4}]$,
$\gamma\in[0,\nicefrac{1}{2}]$,
let $ ( \Omega, \mathcal{F}, \P, 
$
$
( \mathcal{F}_t )_{ t \in [0,T] } ) $
be a stochastic basis,
let $ ( W_t )_{ t \in [0,T] } $ be an $ \operatorname{Id}_U $-cylindrical
$ ( \Omega, \mathcal{F}, \P, ( \mathcal{F}_t )_{ t \in [0,T] } ) $-Wiener process,
let $ (e_n)_{ n \in \N } \subseteq H $ be an orthonormal basis of $ H $,
let $ ( \lambda_n )_{ n \in \N } \subseteq ( 0, \infty ) $ 
be an increasing sequence,
let 
$ A \colon D(A) \subseteq H \to H $ be a closed linear operator 
which satisfies 
$
D( A ) 
= 
\{ 
v \in H 
\colon 
\sum_{ n\in\N }
\left| 
\lambda_n
\langle e_n, v \rangle_H 
\right|^2
< \infty
\}
$
and
$ 
\forall \, n \in \N \colon
A e_n = 
-\lambda_n
e_n
$,
let
$  
( 
H_r 
,
\langle \cdot, \cdot \rangle_{ H_r }
,
\left\| \cdot \right\|_{ H_r } 
)
$,
$ r \in \R $,
be a family of interpolation spaces associated to $ - A $, 
let 
$\xi\in H_\iota$,
$\varphi\in C(H_\iota,\R)$, 
$F\in C(H_\iota,H_{\iota-1})$, 
$B\in C(H_\iota,HS(U,H_{\iota-\nicefrac{1}{2}}))$
satisfy for all 
$\varepsilon\in(0,\infty)$
that
$
  (H_\iota \ni v \mapsto \varphi(v) \in \R)
$, 
$
  (H_\iota \ni v \mapsto F(v) \in H_{\iota-\gamma-\varepsilon})
$, 
and 
$
  (H_\iota \ni v \mapsto (U \ni u \mapsto B(v)u \in H_{\iota-\nicefrac{\gamma}{2}-\varepsilon}) \in HS(U,H_{\iota-\nicefrac{\gamma}{2}-\varepsilon}))
$ 
are four times continuously Fr\'{e}chet differentiable with globally bounded derivatives,
let
$
( P_N )_{ N \in \N } \subseteq L( H_{ - 1 } )
$ 
satisfy for all 
$ N \in \N $, $ v \in H $ that 
$
P_N(v) = \sum^N_{n=1} \left< e_n, v \right>_H e_n
$, 
and let 
$ 
X \colon [0,T] \times \Omega \to H_\iota
$
and 
$ 
X^N \colon [0,T] \times \Omega \to P_N( H )
$,
$ N \in \N $,
be continuous 
$
( \mathcal{F}_t )_{ t \in [0,T] }
$-adapted stochastic processes
which satisfy that for all 
$ N\in\N $, 
$ t \in [0,T] $
it holds $\P$-a.s.\ that
\begin{equation}
\label{eq:lifting.SEE}
X_t 
= 
e^{ A t } \xi
+ 
\int_0^t e^{ A ( t - s ) } F( X_s ) \, ds
+ 
\int_0^t e^{ A ( t - s ) } B( X_s ) \, dW_s 
\end{equation}
and
\begin{equation}
\label{eq:lifting.galerkin}
X_t^N 
= 
e^{ A t } P_N( \xi )
+ 
\int_0^t e^{ A ( t - s ) } P_N F( X^N_s ) \, ds
+ 
\int_0^t e^{ A ( t - s ) } P_N B( X^N_s ) \, dW_s 
.
\end{equation}
Then for every $\varepsilon\in(0,\infty)$ 
there exists a real number $C\in\R$ 
such that for all $N\in\N$ 
it holds that 
\begin{equation}
\label{eq:weak.rate}
\left|
\ES\big[ 
\varphi( X_T )
\big]
-
\ES\big[ 
\varphi( X^N_T )
\big]
\right|
\leq
C \cdot
( \lambda_N )^{
	- ( 1 - \gamma - \varepsilon )
}
.
\end{equation}
\end{corollary}
\begin{proof}
	Throughout this proof 
	let
	$\tilde{A}\colon D(\tilde{A})
	\subseteq H_\iota \to H_\iota$ 
	be the linear operator which satisfies 
	$
	D( \tilde{A} ) 
	= 
	\{ 
	v \in H_\iota 
	\colon 
	\sum_{ n\in\N }
	\left| 
	\lambda_n
	\langle (-A)^{-\iota} e_n, v \rangle_{H_\iota} 
	\right|^2
	< \infty
	\}
	$
	and
	$ 
	\forall \, v \in D(\tilde{A}) \colon
	\tilde{A} v = 
	\sum_{ n\in\N }
	-\lambda_n
	\langle (-A)^{-\iota} e_n, v \rangle_{H_\iota}
	(-A)^{-\iota} e_n
	$, 
	and let 
	$  
	( 
	\tilde{H}_r 
	,
	\langle \cdot, \cdot \rangle_{ \tilde{H}_r }
	,
	\left\| \cdot \right\|_{ \tilde{H}_r } 
	)
	$,
	$ r \in \R $,
	be $\R$-Hilbert spaces which satisfy for all 
	$r\in\R$
	that 
	\begin{equation}
	\label{eq:transformed.domain}
	( 
	\tilde{H}_r 
	,
	\langle \cdot, \cdot \rangle_{ \tilde{H}_r }
	,
	\left\| \cdot \right\|_{ \tilde{H}_r } 
	)
	=
	( 
	H_{\iota+r} 
	,
	\langle \cdot, \cdot \rangle_{ H_{\iota+r} }
	,
	\left\| \cdot \right\|_{ H_{\iota+r} } 
	)
	.
	\end{equation}
	Observe that 
	$  
	( 
	\tilde{H}_r 
	,
	\langle \cdot, \cdot \rangle_{ \tilde{H}_r }
	,
	\left\| \cdot \right\|_{ \tilde{H}_r } 
	)
	$,
	$ r \in \R $,
	is a family of interpolation spaces associated to $-\tilde{A}$ 
	(cf., e.g., \cite[Section~3.7]{sy02}).
	Moreover, note that for all 
	$v\in H_{\iota-1}$, 
	$t\in[0,T]$
	it holds that 
	\begin{equation}
	e^{At} v=
	e^{\tilde{A} t}v.
	\end{equation}
	This, \eqref{eq:lifting.SEE}, and~\eqref{eq:lifting.galerkin} imply that for all 
	$ N \in \N $, 
	$t\in[0,T]$ 
	it holds $\P$-a.s.\ that 
	\begin{equation}
	X_t 
	= 
	e^{ \tilde{A} t } \xi
	+ 
	\int_0^t e^{ \tilde{A} ( t - s ) } F( X_s ) \, ds
	+ 
	\int_0^t e^{ \tilde{A} ( t - s ) } B( X_s ) \, dW_s 
	\end{equation}
	and
	\begin{equation}
	X_t^N 
	= 
	e^{ \tilde{A} t } P_N( \xi )
	+ 
	\int_0^t e^{ \tilde{A} ( t - s ) } P_N F( X^N_s ) \, ds
	+ 
	\int_0^t e^{ \tilde{A} ( t - s ) } P_N B( X^N_s ) \, dW_s 
	.
	\end{equation}
	Corollary~\ref{cor:weak.convergence.rate.simplified} 
	(with
	$H=H_\iota$,
	$U=U$,
	$T=T$,
	$W=W$,
	$\set=\{(-A)^{-\iota} e_n\colon n\in\N\}$,
	$\lambda((-A)^{-\iota} e_N)=-\lambda_N$,
	$A=\tilde{A}$,
	$H_r=\tilde{H}_r$,
	$P_{\{(-A)^{-\iota} e_1,\ldots,(-A)^{-\iota} e_N\}}(w)=P_N(w)$,
	$\varphi=\varphi$,
	$\theta=\gamma+\nicefrac{\varepsilon}{10}$,
	$F=(\tilde{H}_0\ni v\mapsto F(v) \in \tilde{H}_{-\gamma-\nicefrac{\varepsilon}{10}})$,
	$B=(\tilde{H}_0\ni v\mapsto (U\ni u\mapsto B(v)u \in \tilde{H}_{-\nicefrac{\gamma}{2}-\nicefrac{\varepsilon}{20}})\in HS(U,\tilde{H}_{-\nicefrac{\gamma}{2}-\nicefrac{\varepsilon}{20}}))$,
	$\xi=(\Omega\ni\omega\mapsto\xi\in \tilde{H}_0)$,
	$X^{\set}=X$,
	$X^{\{(-A)^{-\iota} e_1,\ldots,(-A)^{-\iota} e_N\}}=X^N$,
	$\varepsilon=\frac{9\varepsilon}{10}-\frac{2\varepsilon}{5}\mathbbm{1}_{\{\nicefrac{1}{2}\}}(\gamma)$
	for 
	$\varepsilon\in(0,5-10\gamma+\mathbbm{1}_{\{\nicefrac{1}{2}\}}(\gamma))$,
	$w\in H_{\iota-1}$, 
	$r\in\R$,
	$N\in\N$
	in the notation of Corollary~\ref{cor:weak.convergence.rate.simplified})
	therefore ensures that for all 
	$\varepsilon\in(0,5-10\gamma+\mathbbm{1}_{\{\nicefrac{1}{2}\}}(\gamma))$
	there exists a real number 
	$C\in[0,\infty)$
	such that for all $N\in\N$ it holds that
	\begin{equation}
	\begin{split}
	\big|
	\ES\big[
	\varphi(X_T)
	\big]
	-
	\ES\big[
	\varphi(X^N_T)
	\big]
	\big|
	&\leq
	C\cdot\left[\inf_{\{e_n\colon n\in\{N+1,N+2,\ldots\}\}}\lambda_n\right]^{-(1-\gamma-\varepsilon)}
	\\&\leq
	C\cdot(\lambda_N)^{-(1-\gamma-\varepsilon)}.
	\end{split}
	\end{equation}
	This and the fact that $(\lambda_n)_{n\in\N}\subseteq(0,\infty)$ is an increasing sequence imply that for all 
	$ \varepsilon \in (0,\infty) $, 
	$\epsilon\in(0,\min\{5-10\gamma+\mathbbm{1}_{\{\nicefrac{1}{2}\}}(\gamma),\varepsilon\})$
	there exists a real number 
	$C\in[0,\infty)$
	such that for all $N\in\N$ it holds that
	\begin{equation}
	\begin{split}
	\big|
	\ES\big[
	\varphi(X_T)
	\big]
	-
	\ES\big[
	\varphi(X^N_T)
	\big]
	\big|
	&\leq
	C\cdot(\lambda_N)^{-(1-\gamma-\epsilon)}
	\\&=
	C\cdot(\lambda_N)^{-(\varepsilon-\epsilon)}\cdot(\lambda_N)^{-(1-\gamma-\varepsilon)}
	\\&=
	\frac{C}{
		(\lambda_N)^{(\varepsilon-\epsilon)}
	}
	\cdot(\lambda_N)^{-(1-\gamma-\varepsilon)}
	\\&\leq
	\left[
	\frac{C}{
		(\lambda_1)^{(\varepsilon-\epsilon)}
	}
	\right]
	\cdot(\lambda_N)^{-(1-\gamma-\varepsilon)}
	.
	\end{split}
	\end{equation}
	The proof of Corollary~\ref{cor:sharp.weak.rates} is thus completed. 
\end{proof}

\section{Lower bounds for the weak error of Galerkin approximations for SEEs}
\label{sec:lower_bound}

In this section a few specific lower bounds for weak approximation errors
are established in the case of concrete examples {of SEEs}. 
More precisely, in this section we provide lower bounds for weak approximation errors for spatial spectral Galerkin approximations of linear stochastic heat equations with vanishing drift nonlinearities 
$F=0$.
Lower bounds for strong 
approximation errors for example SEEs and whole classes of SEEs can be found in 
\cite{dg01,mr07a,mrw07,mrw08}. In the case of finite dimensional stochastic
ordinary differential equations, lower bounds for both strong and weak approximation errors
have been established; for details see, e.g., the references in the overview article 
M\"{u}ller-Gronbach \& Ritter~\cite{mr08}.

\subsection{Setting}
\label{sec:setting_lower_bound}

Assume the setting in Section~\ref{sec:general_setting},
assume that 
$ 
  ( H, \left< \cdot , \cdot \right>_H , \left\| \cdot \right\|_H ) 
  = 
  ( U, \left< \cdot , \cdot \right>_U , \left\| \cdot \right\|_U ) 
$,
let $ \beta \in [ 0, \nicefrac{ 1 }{ 2 } ) $,
let $ \mu \colon \set \rightarrow \R $ be a function
such that 
$
  \sum_{ b \in \set }
  | \mu_b |^2
  \,
  | \lambda_b |^{ - 2 \beta }
  < \infty
$, 
and
let 
$
  B \in 
    HS( 
      H, 
      H_{ - \beta }
    ) 
$
satisfy that
for all $ v \in H $ 
it holds that
$
  B v = 
  \sum_{ b \in \set }
  \mu_b
  \left< b, v \right>_H
  b
$.

The above assumptions 
ensure 
%(cf., e.g., Proposition 3 in Da Prato et al.~\cite{DaPratoJentzenRoeckner2012}, 
%Theorem 4.3 in Brze{\'z}niak \cite{b97b}, 
%Theorem 6.2 in Van Neerven et al.~\cite{vvw08})
that there exist up-to-modifications unique 
$
  ( \mathcal{F}_t )_{ t \in [0,T] }
$-predictable stochastic processes 
$ 
  X^I \colon [0,T] \times \Omega \to H, 
  I \in \grid(\set), 
$
such that for all 
$ p \in (0,\infty) $, $ I \in \grid(\set) $ 
it holds that
$
  \sup_{ t \in [0,T] }
  \ES\big[
  \| X^I_t \|^p_H
  \big]
  < \infty
$ 
and such that
for all $ I \in \grid(\set) $, $ t \in [0,T] $
it holds $ \P $-a.s.\ that
$
  X^I_t
  = 
    \int_0^t e^{ A ( t - s ) } P_I B \, dW_s 
  .
$

\subsection{Lower bounds for the weak error}

\begin{lemma}
\label{lemma:standard_result}
Assume the setting in Section~\ref{sec:setting_lower_bound} 
and let $ I \in \grid(\set) $, $ b \in \set $, $ t \in [0,T] $.
Then
$
  \operatorname{Var}\!\left(
  \left<
    b, X^I_t
  \right>_H
  \right)
=
  \frac{
    \mathbbm{1}_{ I }( b )
    \,
    | \mu_b |^2
    \,
    (
      e^{ 2 \lambda_b t }
      - 1
    )
  }{ 
    2 \lambda_b 
  }
$.
\end{lemma}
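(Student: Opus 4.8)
The plan is to compute the variance directly by means of the It\^{o} isometry. First I would observe that, since $X^I_t = \int_0^t e^{ A ( t - s ) } P_I B \, dW_s$ is a mean-zero stochastic integral, for every fixed $ b \in \set $ the real-valued random variable $ \langle b, X^I_t \rangle_H $ has mean zero, so that $ \operatorname{Var}( \langle b, X^I_t \rangle_H ) = \ES[ | \langle b, X^I_t \rangle_H |^2 ] $. Writing $ \Phi_s = e^{ A ( t - s ) } P_I B \in HS( H, H ) $ for $ s \in [0,t) $, I would then use that $ \langle b, X^I_t \rangle_H = \int_0^t \langle \Phi_s^* b, dW_s \rangle_H $ (with $ \Phi_s^* $ the adjoint of $ \Phi_s $) together with the It\^{o} isometry for the cylindrical $ \operatorname{Id}_H $-Wiener process $ W $ to obtain
\begin{equation}
  \operatorname{Var}\!\left( \langle b, X^I_t \rangle_H \right)
  = \int_0^t \| \Phi_s^* b \|_H^2 \, ds .
\end{equation}

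The key computation is then to evaluate $ \Phi_s^* b $. Because $ A $, $ P_I $, and $ B $ are all diagonal in the orthonormal basis $ \set $ --- with $ A c = \lambda_c c $, $ P_I c = \mathbbm{1}_I(c) \, c $, and $ B c = \mu_c c $ for $ c \in \set $ --- the operator $ \Phi_s $ is self-adjoint and acts as $ \Phi_s c = \mathbbm{1}_I(c) \, e^{ \lambda_c ( t - s ) } \mu_c c $. Hence, using orthonormality of $ \set $, I would compute $ \Phi_s^* b = \Phi_s b = \mathbbm{1}_I(b) \, e^{ \lambda_b ( t - s ) } \mu_b \, b $ and therefore $ \| \Phi_s^* b \|_H^2 = \mathbbm{1}_I(b) \, | \mu_b |^2 \, e^{ 2 \lambda_b ( t - s ) } $.

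Substituting this into the isometry identity gives
\begin{equation}
  \operatorname{Var}\!\left( \langle b, X^I_t \rangle_H \right)
  = \mathbbm{1}_I(b) \, | \mu_b |^2 \int_0^t e^{ 2 \lambda_b ( t - s ) } \, ds ,
\end{equation}
and the substitution $ u = t - s $ reduces the remaining integral to the elementary $ \int_0^t e^{ 2 \lambda_b u } \, du = \frac{ e^{ 2 \lambda_b t } - 1 }{ 2 \lambda_b } $ (note that $ \lambda_b < \eta = 0 $, so dividing by $ 2 \lambda_b $ is legitimate), which yields exactly the claimed formula. I do not expect any serious obstacle here: the only points requiring care are the justification that $ \Phi_s^* b $ is the correct integrand for the scalar integral $ \langle b, X^I_t \rangle_H $ and that the It\^{o} isometry applies, both of which are routine, since $ B \in HS( H, H_{ - \beta } ) $ together with the analytic smoothing of the semigroup guarantees that $ s \mapsto \| \Phi_s^* b \|_H^2 $ is integrable on $ [0,t] $ (indeed it is bounded by $ | \mu_b |^2 $).
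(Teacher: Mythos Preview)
Your proposal is correct and follows essentially the same approach as the paper's proof: both express $\langle b, X^I_t\rangle_H$ as a one-dimensional stochastic integral by pushing the inner product through the integral via the (self-)adjoint, exploit the diagonality of $e^{A(t-s)}$, $P_I$, and $B$ in the basis $\set$ to identify the integrand as $\mathbbm{1}_I(b)\,\mu_b\,e^{\lambda_b(t-s)}$, and then apply the It\^{o} isometry followed by the elementary exponential integral. The only cosmetic difference is that the paper carries out the adjoint computation in stages rather than packaging it as $\Phi_s^* b$.
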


\begin{proof}
Observe that 
it holds $ \P $-a.s.\ that
\begin{equation}
\begin{split}
&
  \left< 
    b,
    X^I_t
  \right>_H
=
  \left< 
    b,
    \int_0^t
    e^{ A ( t - s ) } P_I B \, dW_s
  \right>_H
=
  \int_0^t
  \left< 
    P_I \,
    e^{ A ( t - s ) } 
    \, b,
    B \, dW_s
  \right>_H
\\ & =
  \mathbbm{1}_I( b )
  \int_0^t
  e^{ \lambda_b \left( t - s \right) }
  \left< 
    b,
    B \, dW_s
  \right>_H
=
  \mathbbm{1}_I( b )
  \,
  \mu_b
  \int_0^t
  e^{ \lambda_b \left( t - s \right) }
  \left< 
    b,
    dW_s
  \right>_H
  .
\end{split}
\end{equation}
This and It\^{o}'s isometry yield that 
\begin{equation}
  \operatorname{Var}\!\left(
    \left<
      b, X^I_t
    \right>_H
  \right)
=
  \mathbbm{1}_{
    I
  }( b )
  \,
  | \mu_b |^2
  \,
  \int^t_0
  e^{ 2 \lambda_b ( t - s ) }
  \, ds
=
  \frac{
    \mathbbm{1}_{
      I
    }( b )
    \,
    | \mu_b |^2
    \left(
      e^{ 2 \lambda_b t }
      - 1
    \right)
  }{ 
    2 \lambda_b 
  }
  .
\end{equation}
The proof of Lemma~\ref{lemma:standard_result}
is thus completed.
\end{proof}

The next elementary result, Lemma~\ref{lem:norm_square},
is an immediate consequence of Lemma~\ref{lemma:standard_result}
above.

\begin{lemma}
\label{lem:norm_square}
Assume the setting in Section~\ref{sec:setting_lower_bound},
let $ I \in \grid(\set) $,
and let 
$ \varphi \colon H \to [0,\infty) $
fulfill that for all $ x \in H $ it holds that
$ \varphi( x ) = \| x \|^2_H $.
Then
$ \varphi \in C^{ \infty }( H, [0,\infty) ) $
and
\begin{equation}
\begin{split}
&
  \ES\big[
    \varphi(
      X^\set_T
    )
  \big]
  -
  \ES\big[
    \varphi(
      X^I_T
    )
  \big]
  =
  \ES\big[
    \|
      X^{ \set \backslash I }_T
    \|^2_H
  \big]
\geq
  \left[
  \tfrac{
    1 - 
    e^{
      - 2 T \inf_{ b \in \set } | \lambda_b |
    }
  }{
    2
  }
  \right]
  \left[
    \smallsum_{ b \in \set \backslash I }
    \tfrac{
      | \mu_b |^2
    }{ 
      \left| \lambda_b \right| 
    }
  \right]
  .
\end{split}
\end{equation}
\end{lemma}

Lemma~\ref{lem:norm_square} establishes
a lower bound in the case of 
the squared norm as the test function.
The next result, Lemma~\ref{lem:lower_bound_exp},
establishes a similar lower bound for a test function 
in $ C^4_b( H, \R ) $.

\begin{lemma}
\label{lem:lower_bound_exp}
Assume the setting in Section~\ref{sec:setting_lower_bound}, 
let $ I \in \mathcal{P}( \set ) $ and let $ \varphi \colon H \to \R $
be given by $ \varphi( v ) = \exp\!\left( - \| v \|^2_H \right) $ for all $ v \in H $.
Then $ \varphi \in C^4_b( H, \R ) $
and
\begin{equation*}
\begin{split}
&
  \ES\!\left[ 
    \varphi( X^I_T )
  \right]
  -
  \ES\!\left[ 
    \varphi( X^{ \set }_T )
  \right]
% \\ & 
  \geq
  \tfrac{ 
    \ES\left[ \varphi( X_T^{ \set } ) \right]
    \,
    \ES\left[
      \| X^{ \set \backslash I }_T \|^2_H
    \right]
  }{ 
    2 
    \,
    (
      1 
      +
      \ES[
        \| X^{ \set \backslash I }_T \|^2_H
      ]
    )^{ 3 / 2 }
  }
\geq
  \tfrac{ 
    \ES\left[ 
      \varphi( X_T^{ \set } ) 
    \right]
    \,
    \left(
      1 - 
      e^{
        - 2 T \inf_{ b \in \set } | \lambda_b |
      }
    \right)
  }{ 
    4 
    \,
    (
      1 
      +
      \ES[
        \| X^{ \set \backslash I }_T \|^2_H
      ]
    )^{ 3 / 2 }
  }
  \left[
    \smallsum\limits_{ b \in \set \backslash I }
    \tfrac{
      | \mu_b |^2
    }{ 
      \left| \lambda_b \right| 
    }
  \right]
  .
\end{split}
\end{equation*}
\end{lemma}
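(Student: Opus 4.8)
\emph{Plan.} The plan is to exploit the Gaussianity of the processes and, above all, the fact that $X^{\set}_T$ splits as an orthogonal sum of two \emph{independent} Gaussian vectors. First I would record that $\varphi \in C^4_b(H,\R)$: the $k$-th Fr\'echet derivative of $v \mapsto e^{-\|v\|_H^2}$ equals $e^{-\|v\|_H^2}$ multiplied by a finite sum of products of at most $k$ factors of the form $\langle v, \cdot\rangle_H$ and $\langle \cdot, \cdot\rangle_H$, and since $\sup_{v \in H} \|v\|_H^m \, e^{-\|v\|_H^2} < \infty$ for every $m \in \N_0$, all derivatives up to order four are globally bounded.

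The structural observation is that $X^{\set}_T = X^I_T + X^{\set \backslash I}_T$, that the two summands are orthogonal in $H$ (they take values in the spans of $I$ and $\set\backslash I$, respectively), and that they are independent (by the proof of Lemma~\ref{lemma:standard_result} the coordinate $\langle b, X^{\set}_T\rangle_H$ depends only on the scalar noise indexed by $b$, and these are pairwise independent for distinct $b\in\set$). Hence $\|X^{\set}_T\|_H^2 = \|X^I_T\|_H^2 + \|X^{\set\backslash I}_T\|_H^2$ and, writing $P := \ES[\exp(-\|X^{\set\backslash I}_T\|_H^2)] \in (0,1]$, independence yields
\begin{equation*}
  \ES\big[\varphi(X^{\set}_T)\big] = \ES\big[\exp(-\|X^I_T\|_H^2)\big]\cdot P, \qquad \ES\big[\varphi(X^I_T)\big] = \tfrac{1}{P}\,\ES\big[\varphi(X^{\set}_T)\big].
\end{equation*}
Subtracting gives the exact identity $\ES[\varphi(X^I_T)] - \ES[\varphi(X^{\set}_T)] = \ES[\varphi(X^{\set}_T)]\,(\tfrac1P - 1)$, which reduces the first claimed inequality to showing $\tfrac1P - 1 \ge \tfrac{S}{2(1+S)^{3/2}}$, where $S := \ES[\|X^{\set\backslash I}_T\|_H^2]$.

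To estimate $P$ I would use Lemma~\ref{lemma:standard_result}: the coordinates $\langle b, X^{\set\backslash I}_T\rangle_H$, $b \in \set\backslash I$, are independent centered Gaussians with variances $\sigma_b^2 = \tfrac{|\mu_b|^2 (e^{2\lambda_b T}-1)}{2\lambda_b} \ge 0$, so by the scalar identity $\ES[e^{-a Z^2}] = (1+2a)^{-1/2}$ (for a standard normal $Z$ and $a\ge0$) together with Tonelli one obtains $P = \prod_{b\in\set\backslash I}(1+2\sigma_b^2)^{-1/2}$, the product converging since $S = \sum_b \sigma_b^2 < \infty$. Expanding the finite sub-products shows $\prod_b(1+2\sigma_b^2) \ge 1 + 2\sum_b\sigma_b^2 = 1+2S$, whence $\tfrac1P \ge (1+2S)^{1/2}$. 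It then remains to verify the one-variable inequality
\begin{equation*}
  (1+2S)^{1/2} - 1 = \tfrac{2S}{(1+2S)^{1/2}+1} \ge \tfrac{S}{2(1+S)^{3/2}},
\end{equation*}
which is equivalent to $4(1+S)^{3/2} \ge (1+2S)^{1/2}+1$; this follows from the crude bounds $(1+2S)^{1/2}+1 \le (1+S)+1 = 2+S \le 4(1+S) \le 4(1+S)^{3/2}$ valid for $S\ge0$. Chaining the three estimates proves the first inequality.

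Finally, the second inequality is immediate from Lemma~\ref{lem:norm_square}: after cancelling the common positive factor $\ES[\varphi(X^{\set}_T)]\,(1+S)^{-3/2}$ it reduces to $S \ge \tfrac12\big(1 - e^{-2T\inf_{b\in\set}|\lambda_b|}\big)\sum_{b\in\set\backslash I}\tfrac{|\mu_b|^2}{|\lambda_b|}$, which is exactly the content of Lemma~\ref{lem:norm_square} since $S = \ES[\|X^{\set\backslash I}_T\|_H^2]$. I expect the only genuinely delicate points to be the rigorous justification of the factorization of $\ES[\varphi(X^\set_T)]$ and of the infinite product in infinite dimensions (handled by restricting to finite coordinate subsets and passing to the limit by monotone convergence on the decreasing partial products), the remaining scalar inequalities being routine.
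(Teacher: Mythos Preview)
Your proof is correct and follows essentially the same route as the paper's: both exploit the Gaussian product formula $\ES[\varphi(X^J_T)]=\prod_{b\in J}(1+2\sigma_b^2)^{-1/2}$, the expansion bound $\prod_b(1+2\sigma_b^2)\ge 1+2\sum_b\sigma_b^2$, and a one-variable estimate, finishing with Lemma~\ref{lem:norm_square}. The only difference is cosmetic: you keep the exact identity $\ES[\varphi(X^I_T)]-\ES[\varphi(X^\set_T)]=\ES[\varphi(X^\set_T)]\big(\tfrac{1}{P}-1\big)$ and bound $\tfrac1P-1\ge(1+2S)^{1/2}-1$, whereas the paper first replaces $\prod_{b\in I}$ by the smaller $\prod_{b\in\set}$ to obtain $\ES[\varphi(X^\set_T)](1-P)$ and then applies $1-(1+x)^{-1/2}\ge \tfrac{x}{2}(1+x)^{-3/2}$.
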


\begin{proof}
First of all, observe that for all 
$ x, u_1, u_2, u_3, u_4 \in H $ 
it holds that
\begin{align}
\label{eq:norm_exponent_derivatives_first}
  \varphi^{(1)}( x )( u_1 )
  & =
  - 2 \,
  \varphi( x )
  \left< x, u_1 \right>_H
  ,
\\
  \varphi^{(2)}( x )( u_1, u_2 )
  & =
  - 2
  \left[
  \varphi^{(1)}( x )( u_2 )
  \left< x, u_1 \right>_H
  +
  \varphi( x )
  \left< u_2, u_1 \right>_H
  \right]
\nonumber
\\ & =
\label{eq:norm_exponent_derivatives_second}
 - 2 \, \varphi( x )
 \left[
   \left< u_1, u_2 \right>_H
   - 2
   \left< x, u_1 \right>_H
   \left< x, u_2 \right>_H
 \right]
  ,
\\ 
  \varphi^{(3)}( x )( u_1, u_2, u_3 )
  & =
  - 2 \,
  \big[
  \varphi^{(1)}( x )
    \big(
    \left< u_3, u_1 \right>_H
    u_2
    +
    \left< u_2, u_1 \right>_H
    u_3
    \big)
    \nonumber
%\label{eq:norm_exponent_derivatives_third_a}
\\ & \quad +
\label{eq:norm_exponent_derivatives_third_b}
  \varphi^{(2)}( x )( u_2, u_3 )
  \left< x, u_1 \right>_H
  \big]
  ,
\\ 
% \nonumber
  \varphi^{(4)}( x )( u_1, u_2, u_3, u_4 )
  & =
  - 2
  \big[
  \varphi^{(3)}( x )( u_2, u_3, u_4 )
  \left< x, u_1 \right>_H
  +
  \varphi^{(2)}( x )( u_2, u_3 )
  \left< u_4, u_1 \right>_H
  \nonumber
%\label{eq:norm_exponent_derivatives_fourth_a}
\\ & \quad
\label{eq:norm_exponent_derivatives_fourth_b}
  +
  \varphi^{(2)}( x )
    \big(
    \left< u_3, u_1 \right>_H
    u_2
    +
    \left< u_2, u_1 \right>_H
    u_3
    ,
    u_4
    \big)
  \big]
%   ,
% \\
%   \varphi^{(5)}( x )( u_1, u_2, u_3, u_4, u_5 )
%   & =
%   - 2
%   \big[
%   \varphi^{ ( 4 ) }( x )( u_2, u_3, u_4, u_5 )
%   \left< x, u_1 \right>_H
%   \nonumber
% \\ & \quad
%   +
%   \varphi^{(3)}( x )( u_2, u_3, u_4 )
%   \left< u_5, u_1 \right>_H
%   +
%   \varphi^{(3)}( x )( u_2, u_3, u_5 )
%   \left< u_4, u_1 \right>_H
%   \nonumber
% \\ & \quad
% \label{eq:norm_exponent_derivatives_fifth}
%   +
%   \varphi^{(3)}( x )
%     \big(
%     \left< u_3, u_1 \right>_H
%     u_2
%     +
%     \left< u_2, u_1 \right>_H
%     u_3
%     ,
%     u_4
%     ,
%     u_5
%     \big)
%   \big]
  .
\end{align}
%Identities~\eqref{eq:norm_exponent_derivatives_first}--\eqref{eq:norm_exponent_derivatives_fourth} 
%show that 
%$
%  \|
%  \varphi
%  \|_{
%  C^4( H, \R )
%  }
%  \leq
%  80
%  \cdot
%  \sup_{ x \in [0,\infty) }
%  ( 1 + x^4 )
%  \,
%  e^{ -x^2 }
%  < \infty
%  .
%$ 
Identity~\eqref{eq:norm_exponent_derivatives_first} and the fact that 
for all $ r \in [0,\infty) $ it holds that
$
  \sup_{ x \in H }
  \left(
    \left[ 1 + \| x \|^r_H \right]
    \varphi(x)
  \right)
  <
  \infty
$ 
show that for all 
$ r \in [ 0, \infty ) $ 
it holds that 
$
  \sup_{ x \in H }
  \left(
    \left[ 1 + \| x \|^r_H \right]
    \left[
      \varphi(x)
      +
      \|
        \varphi^{ (1) }(x)
      \|_{ L( H, \R ) }
    \right]
  \right)
  <
  \infty
$.
This and identity~\eqref{eq:norm_exponent_derivatives_second} imply that for all 
$ r \in [ 0, \infty ) $ 
it holds that 
\begin{equation}
  \sup_{ x \in H }
  \left(
  \left[ 
    1 + \| x \|^r_H 
  \right]
  \left[
    \smallsum_{ k = 1 }^2
    \|
      \varphi^{ (k) }(x)
    \|_{ L^{ (k) }( H, \R ) }
  \right]
  \right)
  <
  \infty
  .
\end{equation}
%\eqref{eq:norm_exponent_derivatives_third_a}--
This and~\eqref{eq:norm_exponent_derivatives_third_b} 
yield that for all 
$ r \in [ 0, \infty ) $ 
it holds that 
\begin{equation}
  \sup_{ x \in H }
  \left(
    \left[ 
      1 + \| x \|^r_H 
    \right]
    \left[
      \smallsum_{ k = 1 }^3
      \|
        \varphi^{ (k) }(x)
      \|_{ L^{ (k) }( H, \R ) }
    \right]
  \right)
  <
  \infty
  .
\end{equation}
%\eqref{eq:norm_exponent_derivatives_fourth_a}--
This and~\eqref{eq:norm_exponent_derivatives_fourth_b} 
% yield that for all 
% $ r \in [ 0, \infty ) $ 
% it holds that 
% \begin{equation}
%   \sup_{ x \in H }
%   \left(
%     \left[ 
%       1 + \| x \|^r_H 
%     \right]
%     \left[
%       \smallsum_{ k = 1 }^4
%       \|
%         \varphi^{ (k) }(x)
%       \|_{ L^{ (k) }( H, \R ) }
%     \right]
%   \right)
%   <
%   \infty
%   .
% \end{equation}
% This 
% and 
% \eqref{eq:norm_exponent_derivatives_fifth} 
prove that 
$ \varphi \in C^4_b( H, \R ) $.
Next observe that for all $ \sigma \in \R $ it holds that
\begin{equation}
\begin{split}
&
  \int_{ \R }
  \exp\!\left(
    - \left[ \sigma x \right]^2
  \right)
  \cdot
  \frac{ 1 }{ \sqrt{ 2 \pi } }
  \exp\!\left(
    - \frac{ x^2 }{ 2 }
  \right)
  dx
  =
  \int_{ \R }
  \frac{ 1 }{ \sqrt{ 2 \pi } }
  \exp\!\left(
    -     
    \frac{ x^2 }{ 2 }
    \left[ 1 + 2 \sigma^2 \right] 
  \right)
  dx
\\ & =
  \frac{ 1 }{ 
    \left[ 1 + 2 \sigma^2 \right]^{ \nicefrac{ 1 }{ 2 } }
  }
  \int_{ \R }
  \frac{ 1 }{ \sqrt{ 2 \pi } }
  \exp\!\left(
    -     
    \frac{ x^2 }{ 2 }
  \right)
  dx
  =
  \frac{ 1 }{ 
    \sqrt{ 
      1 + 2 \sigma^2 
    }
  }
  .
\end{split}
\end{equation}
This and Lemma~\ref{lemma:standard_result} imply that
\begin{equation}
\label{eq:phi_lower_bound_1}
\begin{split}
&
  \ES\!\left[ 
    \varphi( X^I_T )
  \right]
  -
  \ES\!\left[ 
    \varphi( X^{ \set }_T )
  \right]
\\ & =
  \prod_{ b \in I }
  \left[
    1 
    +
    \tfrac{
      | \mu_b |^2
    }{ 
      \lambda_b 
    }  
    \left(
      e^{ 2 \lambda_b T }
      - 1
    \right)
  \right]^{ - \nicefrac{ 1 }{ 2 } }
  -
  \prod_{ b \in \set }
  \left[
    1 
    +
    \tfrac{
      | \mu_b |^2
    }{ 
      \lambda_b 
    }  
    \left(
      e^{ 2 \lambda_b T }
      - 1
    \right)
  \right]^{ - \nicefrac{ 1 }{ 2 } }
\\ & =
  \prod_{ b \in I }
  \left[
    1 
    +
    \tfrac{
      | \mu_b |^2
    }{ 
      \lambda_b 
    }  
    \left(
      e^{ 2 \lambda_b T }
      - 1
    \right)
  \right]^{ - \nicefrac{ 1 }{ 2 } }
  \left[
  1
  -
  \prod_{ b \in \set \backslash I }
  \left[
    1 
    +
    \tfrac{
      | \mu_b |^2
    }{ 
      \lambda_b 
    }  
    \left(
      e^{ 2 \lambda_b T }
      - 1
    \right)
  \right]^{ - \nicefrac{ 1 }{ 2 } }
  \right]
\\ & \geq
  \prod_{ b \in \set }
  \left[
    1 
    +
    \tfrac{
      | \mu_b |^2
    }{ 
      \lambda_b 
    }  
    \left(
      e^{ 2 \lambda_b T }
      - 1
    \right)
  \right]^{ - \nicefrac{ 1 }{ 2 } }
  \left[
  1
  -
  \left[
    \prod_{ b \in \set \backslash I }
    \left[
      1 
      +
      \tfrac{
        | \mu_b |^2
      }{ 
        \lambda_b 
      }  
      \left(
        e^{ 2 \lambda_b T }
        - 1
      \right)
    \right]
    \right]^{ \! - \nicefrac{ 1 }{ 2 } }
  \right]
\\ & \geq
  \E\left[ \varphi( X_T^{ \set } ) \right]
  \left[
  1
  -
  \left[
      1 
      +
    \sum_{ b \in \set \backslash I }
      \tfrac{
        | \mu_b |^2
      }{ 
        \lambda_b 
      }  
      \left(
        e^{ 2 \lambda_b T }
        - 1
      \right)
    \right]^{ \! - \nicefrac{ 1 }{ 2 } }
  \right]
  .
\end{split}
\end{equation}
In the next step we note that the fundamental theorem 
of calculus ensures that for all
$ x \in [0,\infty) $
it holds that
$
  1 - 
  \left[ 
    1 + x
  \right]^{ - 1 / 2 }
  =
  \frac{ 1 }{ 2 }
  \int_0^x
  \left[
    1 + y
  \right]^{ - 3 / 2 }
  dy
  \geq
  \frac{ 1 }{ 2 }
  x
  \left[
    1 + x
  \right]^{ - 3 / 2 }
$.
Combining this with~\eqref{eq:phi_lower_bound_1}
and Lemma~\ref{lemma:standard_result}
proves that
\begin{equation}
\label{eq:phi_lower_bound_2}
\begin{split}
&
  \ES\!\left[ 
    \varphi( X^I_T )
  \right]
  -
  \ES\!\left[ 
    \varphi( X^{ \set }_T )
  \right]
\\ & \geq
  \frac{ 
    \E\left[ \varphi( X_T^{ \set } ) \right]
  }{ 2 }
  \left[
    \sum_{ b \in \set \backslash I }
      \tfrac{
        | \mu_b |^2
      }{ 
        \lambda_b 
      }  
      \left(
        e^{ 2 \lambda_b T }
        - 1
      \right)
  \right]
  \left[
    1 
    +
    \sum_{ b \in \set \backslash I }
      \tfrac{
        | \mu_b |^2
      }{ 
        \lambda_b 
      }  
      \left(
        e^{ 2 \lambda_b T }
        - 1
      \right)
  \right]^{ \! - \nicefrac{ 3 }{ 2 } }
\\ & 
% =
%   \frac{ 
%     \E\left[ \varphi( X_T^{ \set } ) \right]
%     \ES\big[
%       \| X^{ \set \backslash I }_T \|^2_H
%     \big]
%   }{ 2 }
%   \left[
%     1 
%     +
%     \ES\big[
%       \| X^{ \set \backslash I }_T \|^2_H
%     \big]
%   \right]^{ \! - \nicefrac{ 3 }{ 2 } }
% \\ & 
\geq
  \frac{ 
    \E\left[ \varphi( X_T^{ \set } ) \right]
    \ES\big[
      \| X^{ \set \backslash I }_T \|^2_H
    \big]
  }{ 
    2 
    \,
    \big(
      1 
      +
      \ES\big[
        \| X^{ \set \backslash I }_T \|^2_H
      \big]
    \big)^{ \nicefrac{ 3 }{ 2 } }
  }
  .
\end{split}
\end{equation}
This and Lemma~\ref{lem:norm_square}
complete the proof of Lemma~\ref{lem:lower_bound_exp}.
\end{proof}

\begin{proposition}[A more concrete lower bound for the weak error]
\label{prop:lower_bound_exp}
Assume the setting in Section~\ref{sec:setting_lower_bound}, 
let $ b \colon \N \to \set $ be a bijective function,
let 
$ I \in \mathcal{P}( \set ) $, $ N \in \N $,
$ c, \rho \in ( 0, \infty ) $, 
$ 
  \delta \in ( - \infty , \frac{ 1 }{ 2 } - \frac{ 1 }{ 2 \rho } )
$
satisfy for all $ n \in \N $ that
$
  \lambda_{ b_n }
  =
  - c \, n^{ \rho }
$ 
and 
$
  \mu_{ b_n }
  =
  | \lambda_{ b_n } |^{ \delta }
$, 
and let $ \varphi \colon H \to \R $ satisfy for all $ v \in H $ that
$ \varphi( v ) = \exp\!\left( - \| v \|^2_H \right) $.
Then 
$ 
  \varphi \in C^4_b( H , \R ) 
$,
$
  B 
  \in
  \cap_{
    r \in 
    ( 
      - \infty , 
      -
      \frac{ 1 }{ 2 }
      \left[ 
        \nicefrac{ 1 }{ \rho } + 2 \delta 
      \right]
    )
  }
  HS( 
    H, H_r 
  )
$,
and 
\begin{equation}
\begin{split}
&
  \ES\big[ 
    \varphi( X^{ \{ b_1, \dots, b_N \} }_T )
  \big]
  -
  \ES\!\left[ 
    \varphi( X^{ \set }_T )
  \right]
\geq
  \tfrac{ 
    \ES\left[ 
      \varphi( X_T^{ \set } ) 
    \right]
    \,
    \left(
      1 - 
      e^{
        - 2 T c
      }
    \right)
    \,
    |
      \lambda_{ 
        b_N 
      }
    |^{
      -	
      (
        1 
        -
        [
          \nicefrac{ 1 }{ \rho }
          +
          2 \delta 
        ]
      )
    }
  }{ 
    2^{ ( 1 - 2 \delta \rho + \rho ) }
    \,
    c^{
      \nicefrac{ 1 }{ \rho }
    }
    \,
    \left(
      \rho - 2 \delta \rho
      +
      c^{ ( 2 \delta - 1 ) }
      \,
      \left(
        \rho - 2 \delta \rho
        -
        1
      \right)^{ - 1 / 3 }
    \right)^{ 3 / 2 }
  }
  .
\end{split}
\end{equation}
\end{proposition}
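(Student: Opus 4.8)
The plan is to combine the general lower bound from Lemma~\ref{lem:lower_bound_exp} with the explicit choices $\lambda_{b_n} = -c\,n^{\rho}$ and $\mu_{b_n} = |\lambda_{b_n}|^{\delta}$, and then to control the two $N$-dependent quantities appearing there --- the tail sum $\sum_{b\in\set\backslash I}\frac{|\mu_b|^2}{|\lambda_b|}$ and the second moment $\ES[\|X^{\set\backslash I}_T\|^2_H]$ --- by elementary integral comparisons. I would first dispose of the two regularity claims. For $\varphi\in C^5_b(H,\R)$ I would extend the recursion \eqref{eq:norm_exponent_derivatives_first}--\eqref{eq:norm_exponent_derivatives_fourth_b} from the proof of Lemma~\ref{lem:lower_bound_exp} by one further differentiation: each $\varphi^{(k)}(x)$ is $\varphi(x)$ times a polynomial (of degree at most $k$ in $x$) built from inner products, so $\|\varphi^{(k)}(x)\|$ is bounded by $\varphi(x)[1+\|x\|_H^k]$ times a constant, and since $\sup_{x\in H}([1+\|x\|_H^r]\,\varphi(x))<\infty$ for every $r\in[0,\infty)$ this stays bounded through $k=5$. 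For the Hilbert--Schmidt claim I would use that $B$ is diagonal in $\set$ with $B\,b=\mu_b b$, so that $\|B\|^2_{HS(H,H_r)}=\sum_{b\in\set}|\mu_b|^2\|b\|^2_{H_r}=\sum_{n\in\N}|\lambda_{b_n}|^{2\delta+2r}=c^{2\delta+2r}\sum_{n\in\N}n^{\rho(2\delta+2r)}$, using $\|b\|_{H_r}=|\lambda_b|^r$; this converges exactly when $\rho(2\delta+2r)<-1$, i.e.\ when $r<-\tfrac12[\nicefrac{1}{\rho}+2\delta]$, which is the asserted range.

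Next I would specialize Lemma~\ref{lem:lower_bound_exp} to $I=\{b_1,\dots,b_N\}$, so that $\set\backslash I=\{b_{N+1},b_{N+2},\dots\}$. Since $\rho>0$ one has $\inf_{b\in\set}|\lambda_b|=c$, so the factor $1-e^{-2T\inf_b|\lambda_b|}$ becomes $1-e^{-2Tc}$, and the tail sum equals $c^{2\delta-1}\sum_{n=N+1}^{\infty}n^{\rho(2\delta-1)}$. Writing $\alpha:=\rho(2\delta-1)$, the hypothesis $\delta<\tfrac12-\tfrac{1}{2\rho}$ gives $\alpha<-1$, so I would lower-bound this tail by $\int_{N+1}^{\infty}x^{\alpha}\,dx=\frac{(N+1)^{\alpha+1}}{-\alpha-1}$ and then use $(N+1)^{\alpha+1}\ge 2^{\alpha+1}N^{\alpha+1}$ (valid because $\alpha+1<0$ and $1+\nicefrac{1}{N}\le2$). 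This produces the prefactor $2^{\alpha+1}$ which, together with the $\tfrac14$ from Lemma~\ref{lem:lower_bound_exp}, collapses to $2^{\alpha-1}=2^{-(1-2\delta\rho+\rho)}$, matching the power of $2$ in the claimed denominator. Recalling $|\lambda_{b_N}|=cN^{\rho}$, a direct computation gives $c^{2\delta-1}N^{\alpha+1}=c^{-\nicefrac{1}{\rho}}|\lambda_{b_N}|^{-(1-[\nicefrac{1}{\rho}+2\delta])}$, which accounts for the remaining powers of $c$ and of $|\lambda_{b_N}|$.

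Finally I would bound the denominator $\big(1+\ES[\|X^{\set\backslash I}_T\|^2_H]\big)^{3/2}$ from above. By Lemma~\ref{lemma:standard_result} and the elementary estimate $\frac{1-e^{2\lambda_b T}}{2|\lambda_b|}\le\frac{1}{2|\lambda_b|}$ one obtains $\ES[\|X^{\set\backslash I}_T\|^2_H]\le\frac{c^{2\delta-1}}{2}\sum_{n=N+1}^{\infty}n^{\alpha}\le\frac{c^{2\delta-1}}{2}\cdot\frac{N^{\alpha+1}}{-\alpha-1}\le\frac{c^{2\delta-1}}{2(-\alpha-1)}$, where the last step uses $N^{\alpha+1}\le1$ (again since $\alpha+1<0$ and $N\ge1$). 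Setting $D:=-\alpha-1=\rho-2\delta\rho-1>0$, it then suffices to establish $\big(1+\ES[\|X^{\set\backslash I}_T\|^2_H]\big)\,D^{2/3}\le D+1+c^{2\delta-1}D^{-1/3}=\rho-2\delta\rho+c^{2\delta-1}(\rho-2\delta\rho-1)^{-1/3}$, which follows from $D^{2/3}\le D+1$ (true for all $D>0$) together with $\tfrac12 D^{-1/3}\le D^{-1/3}$; raising this to the power $\nicefrac32$ yields the required bound on $\big(1+\ES[\cdots]\big)^{3/2}D$.

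Assembling these pieces converts the bound of Lemma~\ref{lem:lower_bound_exp} into the stated inequality. I expect the only real obstacle to be purely bookkeeping: keeping the constants $2^{\alpha-1}$, $c^{2\delta-1}$ and the factor $D=\rho-2\delta\rho-1$ aligned so that the final expression matches the denominator $2^{(1-2\delta\rho+\rho)}c^{\nicefrac{1}{\rho}}\big(\rho-2\delta\rho+c^{2\delta-1}(\rho-2\delta\rho-1)^{-1/3}\big)^{3/2}$ exactly; all the probabilistic content is already contained in Lemma~\ref{lemma:standard_result} and Lemma~\ref{lem:lower_bound_exp}.
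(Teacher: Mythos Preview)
Your approach is essentially the same as the paper's: both start from Lemma~\ref{lem:lower_bound_exp}, lower-bound the tail sum $\sum_{n>N} n^{\rho(2\delta-1)}$ by the integral $\int_{N+1}^\infty x^{\rho(2\delta-1)}\,dx$ together with $(N+1)^{\alpha+1}\ge(2N)^{\alpha+1}$, and upper-bound the same tail (for the denominator) by $\int_N^\infty x^{\rho(2\delta-1)}\,dx\le \tfrac{1}{\rho-2\delta\rho-1}$. The only cosmetic difference is that the paper first replaces $\ES[\|X^{\set\backslash I}_T\|^2_H]$ in the denominator by the larger quantity $c^{2\delta-1}\sum_{n>N} n^{\rho(2\delta-1)}$ and then bounds the sum, whereas you bound the expectation directly; both routes reduce to the elementary inequality $D^{2/3}\le D+1$ which the paper leaves implicit and you make explicit. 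You also handle the $C^5_b$ claim more carefully than the paper (which merely cites Lemma~\ref{lem:lower_bound_exp}, a lemma that only states $C^4_b$), and you verify the $HS(H,H_r)$ membership, which the paper's proof omits.
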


\begin{proof}
First of all, observe that
Lemma~\ref{lem:lower_bound_exp}
ensures that $ \varphi \in C^4_b( H, \R ) $
and
\begin{equation}
\label{eq:weak_estimate_lower_bound}
\begin{split}
&
  \ES\big[ 
    \varphi( X^{ \{ b_1, \dots, b_N \} }_T )
  \big]
  -
  \ES\!\left[ 
    \varphi( X^{ \set }_T )
  \right]
\geq
  \tfrac{ 
    \ES\left[ 
      \varphi( X_T^{ \set } ) 
    \right]
    \,
    \left(
      1 - 
      e^{
        - 2 T c
      }
    \right)
    \,
    c^{ ( 2 \delta - 1 ) }
  }{ 
    4 
    \,
    \left(
      1 
      +
      c^{ ( 2 \delta - 1 ) }
      \sum_{ n = N + 1 }^{ \infty }
      n^{ \rho ( 2 \delta - 1 ) }
    \right)^{ 3 / 2 }
  }
  \left[
    \smallsum\limits_{ n = N + 1 }^{ \infty }
    n^{ \rho ( 2 \delta - 1 ) }
  \right]
  .
\end{split}
\end{equation}
Next note that the assumption that
$
  \delta < \frac{ 1 }{ 2 } - \frac{ 1 }{ 2 \rho }
$
ensures that
$
  \rho \left( 2 \delta - 1 \right) < - 1
$.
This, in turn, implies that
\begin{equation}
\label{eq:sum_approximation_1}
\begin{split}
&
    \sum_{ n = N + 1 }^\infty
    n^{
      \rho \, ( 2 \delta - 1 )
    }
  =
    \sum_{ n = N + 1 }^\infty
    \int_{ n }^{ n + 1 }
    \frac{
      1
    }{
      n^{
        \rho \, ( 1 - 2 \delta )
      }
    }
    \,
    dx
  \geq
    \sum_{ n = N + 1 }^\infty
    \int_{ n }^{ n + 1 }
    \frac{
      1
    }{
      x^{
        \rho \, ( 1 - 2 \delta )
      }
    }
    \,
    dx
\\ &
  =
    \int^\infty_{ N + 1 }
      x^{
        \rho
        \,
        ( 2 \delta - 1 )
      }
    \, dx
  =
  \frac{ 
    -
    ( N + 1 )^{
      \left[
        1 +
        \rho
        \left( 
          2 \delta - 1 
        \right)
      \right]
    }
  }{
    \left[
      1 +
      \rho
      \left( 2 \delta - 1 \right)
    \right]
  }
\geq
  \frac{ 
    \left(
      2 N
    \right)^{
      \rho
      \,
      \left(
        \nicefrac{ 1 }{ \rho } +
          2 \delta - 1 
      \right)
    }
  }{
    \left[
      \rho
      \left( 1 - 2 \delta \right)
      - 1
    \right]
  }
\\ & =
  \frac{
    \left[ 
      \nicefrac{ 2^{ \rho } }{ c }
    \right]^{
      \left(
        \nicefrac{ 1 }{ \rho } + 2 \delta - 1
      \right)
    }
    \left|
      \lambda_{ 
        b_N 
      }
    \right|^{
      (
        \nicefrac{ 1 }{ \rho }
        +
        2 \delta - 1
      )
    }
  }{
    \left[
      \rho
      \left( 
        1 - 2 \delta
      \right)
      -
      1
    \right]
  }
  .
\end{split}
\end{equation}
Putting this into~\eqref{eq:weak_estimate_lower_bound}
proves that
\begin{equation}
\begin{split}
&
  \ES\big[ 
    \varphi( X^{ \{ b_1, \dots, b_N \} }_T )
  \big]
  -
  \ES\!\left[ 
    \varphi( X^{ \set }_T )
  \right]
\geq
  \tfrac{ 
    \ES\left[ 
      \varphi( X_T^{ \set } ) 
    \right]
    \,
    \left(
      1 - 
      e^{
        - 2 T c
      }
    \right)
    \,
    2^{
      \left(
        1 + 2 \delta \rho - \rho
      \right)
    }
    \,
    |
      \lambda_{ 
        b_N 
      }
    |^{
      (
        \nicefrac{ 1 }{ \rho }
        +
        2 \delta - 1
      )
    }
  }{ 
    4 
    \,
    c^{
      \nicefrac{ 1 }{ \rho }
    }
    \,
    \left(
      \rho - 2 \delta \rho
      -
      1
    \right)
    \,
    \left(
      1 
      +
      c^{ ( 2 \delta - 1 ) }
      \sum_{ n = N + 1 }^{ \infty }
      n^{ \rho ( 2 \delta - 1 ) }
    \right)^{ 3 / 2 }
  }
  .
\end{split}
\end{equation}
This and the fact that
\begin{equation}
\begin{split}
&
    \sum_{ n = N + 1 }^\infty
    n^{
      \rho \, ( 2 \delta - 1 )
    }
  =
    \sum_{ n = N + 1 }^\infty
    \int_{ n - 1 }^{ n }
    \frac{
      1
    }{
      n^{
        \rho \, ( 1 - 2 \delta )
      }
    }
    \,
    dx
  \leq
    \sum_{ n = N + 1 }^\infty
    \int_{ n - 1 }^n
    \frac{
      1
    }{
      x^{
        \rho \, ( 1 - 2 \delta )
      }
    }
    \,
    dx
\\ &
  =
    \int^\infty_N
      x^{
        \rho
        \,
        ( 2 \delta - 1 )
      }
    \, dx
  =
  \frac{ 
    -
    N^{
      \left[
        1 +
        \rho
        \left( 
          2 \delta - 1 
        \right)
      \right]
    }
  }{
    \left[
      1 +
      \rho
      \left( 2 \delta - 1 \right)
    \right]
  }
  =
  \frac{ 
    N^{
      \left(
        1 +
        2 \delta \rho
        -
        \rho
      \right)
    }
  }{
    \left(
      \rho - 2 \delta \rho - 1
    \right)
  }
  \leq
  \frac{ 
    1
  }{
    \left(
      \rho - 2 \delta \rho - 1
    \right)
  }
\end{split}
\end{equation}
complete the proof
of Proposition~\ref{prop:lower_bound_exp}.
\end{proof}

In the next result,
Corollary~\ref{prop:lower_bound_exp},
we specialize
Proposition~\ref{prop:lower_bound_exp}
to the case where
$ \rho = 2 $, $ c = \pi^2 $
(we think of $ A $ being, e.g., the Laplacian with Dirichlet boundary conditions
on $ H = L^2( (0,1) ; \R ) $)
and 
$ \delta \in ( - \infty, \nicefrac{ 1 }{ 4 } ) $.

\begin{corollary}
\label{cor:lower_bound}
Assume the setting in Section~\ref{sec:setting_lower_bound}, 
let $ b \colon \N \to \set $ be a bijective function,
let 
$ I \in \mathcal{P}( \set ) $, $ N \in \N $,
$ 
  \delta \in ( - \infty , \nicefrac{ 1 }{ 4 } )
$
satisfy that for all $ n \in \N $
it holds that
$
  \lambda_{ b_n }
  =
  - \pi^2 \, n^2
$ 
and 
$
  \mu_{ b_n }
  =
  | \lambda_{ b_n } |^{ \delta }
$, 
and let $ \varphi \colon H \to \R $
satisfy for all $ v \in H $ that $ \varphi( v ) = \exp\!\left( - \| v \|^2_H \right) $.
Then 
$ 
  \varphi \in C^4_b( H , \R ) 
$,
$
  B 
  \in
  \cap_{
    r \in 
    ( 
      - \infty , 
      -
      \frac{ 1 }{ 2 }
      \left[ 
        \nicefrac{ 1 }{ 2 } + 2 \delta 
      \right]
    )
  }
  HS( 
    H, H_r 
  )
$,
and 
\begin{equation}
\begin{split}
&
  \ES\big[ 
    \varphi( X^{ \{ b_1, \dots, b_N \} }_T )
  \big]
  -
  \ES\!\left[ 
    \varphi( X^{ \set }_T )
  \right]
\geq
  \left[
  \tfrac{ 
    \ES\left[ 
      \varphi( X_T^{ \set } ) 
    \right]
    \,
    2^{ ( 4 \delta - 5 ) }
    \,
    (
      1 - 
      e^{
        - T
      }
    )
  }{ 
    \left(
      2 - 4 \delta
      +
      2^{ ( 7 \delta - 7 ) }
      \,
      \left(
        1 - 4 \delta 
      \right)^{ - 1 / 3 }
    \right)^{ 3 / 2 }
  }
  \right]
    \left|
      \lambda_{ 
        b_N 
      }
    \right|^{
      -	
      (
        1 
        -
        [
          \nicefrac{ 1 }{ 2 }
          +
          2 \delta 
        ]
      )
    }
  .
\end{split}
\end{equation}
\end{corollary}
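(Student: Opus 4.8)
The plan is to obtain Corollary~\ref{cor:lower_bound} as the special case $\rho = 2$, $c = \pi^2$ of Proposition~\ref{prop:lower_bound_exp}. First I would verify that the hypotheses of Proposition~\ref{prop:lower_bound_exp} are in force: the standing assumptions of Section~\ref{sec:setting_lower_bound} are shared verbatim, the scalings $\lambda_{b_n} = -\pi^2 n^2$ and $\mu_{b_n} = |\lambda_{b_n}|^{\delta}$ are exactly those of the proposition once one sets $\rho = 2$ and $c = \pi^2$, and the proposition's constraint $\delta \in (-\infty, \tfrac{1}{2} - \tfrac{1}{2\rho})$ becomes precisely $\delta < \tfrac14$, which is assumed here. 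In particular the regularity conclusions $\varphi \in C^5_b(H,\R)$ and $B \in \cap_{r < -\frac12[\frac1\rho + 2\delta]} HS(H,H_r)$ of Proposition~\ref{prop:lower_bound_exp} specialise (using $\tfrac1\rho = \tfrac12$) to the two membership assertions of the corollary with no additional argument.

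For the quantitative estimate I would substitute $\rho = 2$ and $c = \pi^2$ directly into the displayed lower bound of Proposition~\ref{prop:lower_bound_exp}. This turns the exponent $-(1 - [\tfrac1\rho + 2\delta])$ of $|\lambda_{b_N}|$ into the required $-(1 - [\tfrac12 + 2\delta])$, and it produces the explicit prefactor in which $2^{(1 - 2\delta\rho + \rho)} = 2^{(3-4\delta)}$, $c^{1/\rho} = \pi$, $\rho - 2\delta\rho = 2 - 4\delta$, $\rho - 2\delta\rho - 1 = 1 - 4\delta$, and $c^{(2\delta - 1)} = \pi^{(4\delta - 2)}$. Thus, before simplification, Proposition~\ref{prop:lower_bound_exp} already delivers a bound of the asserted shape, namely $\ES[\varphi(X^\set_T)]\,(1 - e^{-2\pi^2 T})\,|\lambda_{b_N}|^{-(1-[\frac12+2\delta])}$ divided by $2^{(3-4\delta)}\,\pi\,(2 - 4\delta + \pi^{(4\delta-2)}(1-4\delta)^{-1/3})^{3/2}$.

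It then remains only to replace the transcendental constants by the cleaner dyadic quantities that appear in the statement of the corollary, in each case in the direction that can only \emph{decrease} the lower bound so that validity is preserved. Concretely I would use $2\pi^2 \geq 1$ to pass from $1 - e^{-2\pi^2 T}$ to the smaller $1 - e^{-T}$ in the numerator, and $\pi \leq 4 = 2^2$ to estimate $\tfrac{1}{2^{(3-4\delta)}\,\pi} \geq \tfrac{1}{2^{(5-4\delta)}} = 2^{(4\delta-5)}$, which accounts for the numerical prefactor $2^{(4\delta-5)}$ of the corollary. The cubic denominator factor must then be rewritten so as to arrive at the stated $\big(2 - 4\delta + 2^{(7\delta-7)}(1-4\delta)^{-1/3}\big)^{3/2}$. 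Finiteness of the right-hand side is inherited from Proposition~\ref{prop:lower_bound_exp}, and combining all the pieces gives the claimed inequality.

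The genuinely delicate part of the argument is precisely this last step of constant bookkeeping. While each individual substitution is elementary, I expect the main obstacle to be ensuring that every replacement — the passage $1 - e^{-2\pi^2 T} \rightsquigarrow 1 - e^{-T}$, the use of $\pi \leq 4$ in the numerator, and above all the rewriting of the factor $\pi^{(4\delta-2)}(1-4\delta)^{-1/3}$ inside the cubic term — is oriented so that the resulting expression remains a \emph{lower} bound uniformly in $\delta \in (-\infty,\tfrac14)$ and $N \in \N$. Keeping careful track of these directions (numerator shrunk, denominator enlarged) is the crux, whereas the structural content of the result is an immediate specialisation of Proposition~\ref{prop:lower_bound_exp}.
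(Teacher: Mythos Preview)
Your plan coincides with the paper's: Corollary~\ref{cor:lower_bound} is presented there without proof, simply as the specialisation of Proposition~\ref{prop:lower_bound_exp} with $\rho=2$ and $c=\pi^2$, and your verification of the hypotheses, the exponent $-(1-[\tfrac12+2\delta])$, and the two membership assertions is exactly right.

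There is, however, a concrete obstruction in the constant bookkeeping you outline, precisely at the step you flag as the crux. Once you have spent the slack $\pi\le 4$ to extract the prefactor $2^{(4\delta-5)}$, the remaining comparison would require
\[
  D_2:=2-4\delta+2^{(7\delta-7)}(1-4\delta)^{-1/3}
  \ \ge\
  D_1:=2-4\delta+\pi^{(4\delta-2)}(1-4\delta)^{-1/3},
\]
so that enlarging the denominator from $D_1^{3/2}$ to $D_2^{3/2}$ only weakens the lower bound. But $2^{(7\delta-7)}<\pi^{(4\delta-2)}$ for every $\delta<\tfrac14$ (e.g.\ at $\delta=0$ one has $2^{-7}\approx0.008$ versus $\pi^{-2}\approx0.101$), so $D_2<D_1$ throughout. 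Retaining the unused factor $4/\pi$ and asking only for $(4/\pi)^{2/3}D_2\ge D_1$ does not help either: as $\delta\to\tfrac14^{-}$ the $(1-4\delta)^{-1/3}$ term dominates and the required coefficient inequality reduces to $(4/\pi)^{2/3}\,2^{-21/4}\ge\pi^{-1}$, i.e.\ $\pi^{1/3}\ge 2^{47/12}$, which is false. Thus with your allocation of the estimates the cubic step cannot be closed; either the simplifications must be distributed differently (e.g.\ pulling $\pi$ into the bracket before estimating rather than discarding it via $\pi\le4$), or the printed constant $2^{(7\delta-7)}$ is not what a direct weakening of Proposition~\ref{prop:lower_bound_exp} actually produces.
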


\subsection*{Acknowledgements}
Jarred Foster is gratefully acknowledged for a few helpful suggestions regarding English wording. 
Michael Schatz, Xiaojie Wang, and an anonymous referee are gratefully acknowledged for bringing a few typos into our notice.
This project has been supported through the SNSF-Research project 200021\_156603 
``Numerical approximations of nonlinear stochastic ordinary and partial differential equations".

\bibliographystyle{acm}
%\bibliography{../Bib/bibfile}
  \bibliography{Bib/bibfile}
\end{document}